\numberwithin{equation}{section}
\newtheorem{thm}[equation]{Theorem}
\newtheorem*{thm*}{Theorem}
\newtheorem*{mthm*}{Main Theorem}
\let\oldmarginpar\marginpar
\renewcommand\marginpar[1]{\-\oldmarginpar[\raggedleft\footnotesize #1]%
{\raggedright\footnotesize #1}}
\newtheorem{lemma}[equation]{Lemma}
\newtheorem{cor}[equation]{Corollary}
\newtheorem{prop}[equation]{Proposition}
\newtheorem*{conjecture*}{Conjecture}
\newtheorem{question}[equation]{Question}
\newtheorem*{question*}{Question}
\theoremstyle{definition}
\newtheorem{defi}[equation]{Definition}
 \newtheorem{example}[equation]{Example}
  \newtheorem*{example*}{Example}
\theoremstyle{remark}
\newtheorem{remark}[equation]{Remark}
\newcommand{\G}{\mathbb{G}}
\newcommand{\Z}{\mathbb{Z}}
\newcommand{\F}{\mathbb{F}}
\newcommand{\Fb}{\overline{\mathbb{F}}}
\newcommand{\Q}{\mathbb{Q}}
\newcommand{\X}{\mathbb{X}}
\newcommand{\N}{\mathbb{N}}
\newcommand{\R}{\mathbb{R}}
\newcommand{\C}{\mathbb{C}}
\newcommand{\CC}{\mathcal{C}}
\newcommand{\DD}{\mathcal{D}}
\newcommand{\EE}{\mathcal{E}}
\newcommand{\FF}{\mathcal{F}}
\newcommand{\GG}{\mathcal{G}}
\newcommand{\LL}{\mathcal{L}}
\newcommand{\PP}{\mathcal{P}}
\newcommand{\XX}{\mathcal{X}}
\newcommand{\YY}{\mathcal{Y}}
\newcommand{\ZZ}{\mathcal{Z}}
\newcommand{\OO}{\mathcal{O}}
\newcommand{\MM}{\mathcal{M}}
\newcommand{\WW}{\mathcal{W}}
\newcommand{\MMb}{\overline{\mathcal{M}}}
\newcommand{\tensor}{\otimes}
\DeclareMathOperator{\fin}{fin}
\DeclareMathOperator{\TMF}{TMF}
\DeclareMathOperator{\Tmf}{Tmf}
\DeclareMathOperator{\tmf}{tmf}
\DeclareMathOperator{\pt}{pt}
\DeclareMathOperator{\Gal}{Gal}
\DeclareMathOperator{\CAlg}{CAlg}
\DeclareMathOperator{\Aut}{Aut}
\DeclareMathOperator{\Sp}{Sp}
\DeclareMathOperator{\Cat}{Cat}
\DeclareMathOperator{\Orb}{Orb}
\DeclareMathOperator{\Shv}{Shv}
\DeclareMathOperator{\Mod}{Mod}
\DeclareMathOperator{\Pic}{Pic}
\DeclareMathOperator{\ev}{ev}
\DeclareMathOperator{\modules}{\text{-}mod}
\DeclareMathOperator{\id}{id}
\DeclareMathOperator{\pr}{pr}
\DeclareMathOperator{\sm}{\wedge}
\DeclareMathOperator{\Ho}{Ho}
\DeclareMathOperator{\Ext}{Ext}
\DeclareMathOperator{\Hom}{Hom}
\DeclareMathOperator{\Spec}{Spec}
\DeclareMathOperator{\holim}{holim}
\DeclareMathOperator{\QCoh}{QCoh}
\DeclareMathOperator{\Fun}{Fun}
\DeclareMathOperator{\Map}{Map}
\newcommand{\vb}{\overline{v}}
\newcommand{\wb}{\overline{w}}
\DeclareMathOperator{\Spet}{Sp\acute{e}t}
\DeclareMathOperator{\Ell}{Ell}
\DeclareMathOperator{\DerSt}{DerSt}
\DeclareMathOperator{\tr}{tr}
\DeclareMathOperator{\Euni}{\mathcal{E}^{uni}}
\DeclareMathOperator{\Etop}{\mathcal{E}^{top}}
\begin{document}
\title{Topological modular forms with level structure: Decompositions and duality}
\author{Lennart Meier}
\maketitle

\begin{abstract}
 Topological modular forms with level structure were introduced in full generality by Hill and Lawson. We will show that these decompose additively in many cases into a few simple pieces and give an application to equivariant $\TMF$. Furthermore, we show which $\Tmf_1(n)$ are self-Anderson dual up to a shift, both with and without their natural $C_2$-action.
\end{abstract}
\tableofcontents

\section{Introduction}
Elliptic cohomology has been married to equivariance from its early days on. Grojnowski \cite{Grojnowski} developed in 1994 an $S^1$-equivariant theory over the complex numbers and Devoto \cite{Devoto} pioneered work on equivariance for finite groups. Recently the theory has ripened, especially by work of Lurie \cite{Lur07} \cite{LurEllII}, who defined $G$-equivariant topological modular forms for any compact Lie group $G$. Recall here that while traditional elliptic cohomology was associated to specific elliptic curves over an affine base, the spectrum $\TMF$ of topological modular forms is associated to the \emph{universal} elliptic curve $\Euni$ over the moduli stack $\MM_{ell}$ of elliptic curves. In particular, the non-equivariant version of $\TMF$ arises as the global sections of a sheaf of $E_{\infty}$-ring spectra on $\MM_{ell}$. 

Equivariant elliptic cohomology in its different guises has seen many applications (see e.g.\ \cite{Rosu}, \cite{GKV}, \cite{AganagicOkounkovElliptic} and several articles by Ganter, in particular \cite{GanterHecke}) and it is promising to extend these applications using the strongest form of the theory, i.e.\ equivariant $\TMF$. One of the stumbling blocks is that the fixed points $\TMF^G$ (i.e.\ the value of $G$-equivariant topological modular forms at a point) have not been computed for almost any group $G$, with the exceptions of $C_2$ \cite{Chua} and tori \cite{GM20}. (One may compare this to the situation for $KO$ one chromatic level down, where $KO^G$ is completely known in terms of the representation theory of $G$.)

Clearly, the cyclic groups $C_n$ are the finite groups with most hope of a computation. Essentially by definition, the spectrum $\TMF^{C_n}$ arises as the global sections of a sheaf of $E_{\infty}$-rings on the $n$-torsion points $\Euni[n]$ of the universal elliptic curve. Much of the difficulty of the computation of $\TMF^{C_n}$ arises from the difficulty of the stack $\Euni[n]$. On the other hand, after inverting $n$ it splits into the pieces of exact order $k$ for $k|n$. More precisely, if $\MM_1(k)$ denotes the stack classifying elliptic curves with a chosen point of (exact) order $k$ over $\Z[\tfrac1k]$-schemes, we have
\[\Euni[n]_{\Z[\tfrac1n]} \simeq \coprod_{k|n} \MM_1(k)_{\Z[\tfrac1n]}.\]
This results in a corresponding splitting
\[\TMF^{C_n}\left[\tfrac1n\right] \simeq \prod_{k|n}\TMF_1(k)[\tfrac1n], \]
where $\TMF_1(k)$ arises as the global sections of a sheaf of $E_{\infty}$-ring spectra on $\MM_1(k)$. 

For small $k$, the spectra $\TMF_1(k)$ are quite well-understood. By definition, $\TMF_1(1) = \TMF$ and furthermore we have
\[\pi_*\TMF_1(2) \cong \Z[\tfrac12][b_2, b_4,\Delta^{-1}]\]
and
\[\pi_*\TMF_1(3) \cong \Z[\tfrac13][a_1, a_3,\Delta^{-1}],\]
where $\Delta$ is a certain polynomial in the $b_i$ and the $a_i$, respectively.

For larger $k$, we have $\pi_*\TMF_1(k) \cong A[u^{\pm 1}]$ with $|u| = 2$ and $\MM_1(k) = \Spec A$. Regrettably, these rings $A$ have only been computed for small values of $k$ due to the complexity of the multiplication-by-$k$ map on an elliptic curve. We refer to \cite[Section 1]{B-O16}, \cite[Section 4.4]{HusElliptic} and \cite[Section 3]{MSZ89} for a general method of computation and examples for $k$ small.

In contrast to these difficulties our first main result provides a complete \emph{additive} understanding of the spectra $\TMF_1(k)$ and hence of $\TMF^{C_n}$ after $l$-completion. 

\begin{thm}\label{thm:firstmain}
	Let $n \geq 2$ and $l$ be a prime not dividing $n$. Then $\TMF_1(n)$ splits after $l$-completion into a sum of shifted copies of the $l$-completions of
	\begin{itemize}
		\item $\TMF_1(3)$ if $l=2$,
		\item $\TMF_1(2)$ if $l=3$, or
		\item $\TMF$ if $l>3$.
	\end{itemize}
	Consequently, we have an analoguous splitting for the complement of $\TMF$ in $\TMF^{C_n}$. 
\end{thm}
In many cases it will be possible to replace $l$-completions by $l$-localizations using compactified moduli stacks as we will explain below. Moreover we can replace $C_n$ by any finite abelian group of order prime to $l$. 

Because of their relation to power operations, maybe the most interesting groups to extend these computations to are the symmetric groups $\Sigma_n$. While a full computation of $\TMF^{\Sigma_n}$ still seems very daunting, there is a useful map 
\[\TMF^{\Sigma_n} \to \TMF_0(n) = \TMF_1(n)^{h(\Z/n)^\times}\]
 coming from the action of $(\Z/n)^\times$ on the subgroup $C_n \subset \Sigma_n$.\footnote{In conjunction with a hypothetical equivariant string orientation, this would lead to a genus $MString^{\Sigma_n} \to \TMF_0(n)$.} The only computations of $\pi_*\TMF_0(n)$ in the literature are for $n \leq 7$. In contrast, we will not only compute $\pi_*\TMF_0(n)$ in infinitely many cases, but also provide splitting results in these cases. 

\begin{thm}
	If $n\geq 3$ is odd and, $\varphi(n) = |(\Z/n)^\times|$ is not divisible by $4$, then the $\TMF$-module $\TMF_0(n)$ decomposes into copies of $(\Sigma^{k\rho}\TMF_1(3))^{hC_2}$ after $2$-completion. 
\end{thm}
Here $\Sigma^{k\rho}$ means smashing with the one point compactification of the representation $k\rho$, where $\rho$ is the regular real representation of $C_2$. Moreover, the $C_2$-action on $\TMF_1(n)$ is induced by the corresponding one on $\MM_1(n)$ that sends $(E, P)$ to $(E, -P)$, where $P$ is a point of order $n$. Note that in the case $k=0$, we obtain $\TMF_1(3)^{hC_2} \simeq \TMF_0(3)$ so that $(\Sigma^{k\rho}\TMF_1(3)^{hC_2}$ can be seen as equivariant shifts of $\TMF_0(3)$. The homotopy groups of $(\Sigma^{k\rho}\TMF_1(3))^{hC_2}$ are completely known \cite{H-M17}. 

%We remark that the theorem is most interesting in the case that $\varphi(n)$ is even as here we will actually have non-trivial $d_3$ and $d_7$ differentials in the homotopy fixed point spectral sequence for the $(\Z/n)^\times$-action on $\TMF_1(n)$. 

We remark  that our theorem is a consequence of a more fundamental theorem that computes the slices of $\TMF_1(n)$ and related spectra with respect to their natural $C_2$-action and thus makes Theorem \ref{thm:firstmain} $C_2$-equivariant (see Example \ref{exa:tmf} and Theorem \ref{thm:C2}). The main method is to use a $C_2$-equivariant map $\TMF_1(n) \to E_2$ to Morava E-theory. The $C_2$-homotopy fixed point spectral sequence of the latter has been recently computed by Hahn and Shi \cite{HahnShi}. We provide a general method to transfer $RO(C_2)$-graded homotopy fixed point spectral sequence computations along suitable maps that are ``injective enough''. We expect our method to have further applications in related situations.

\subsection*{Compactifications and duality}
The theorems above have counterparts for the spectra $\Tmf_1(n)$ associated with the compactified moduli stacks $\MMb_1(n)$ and their homotopy fixed points $\Tmf_0(n) = \Tmf_1(n)^{h(\Z/n)^\times}$ as defined by Hill and Lawson \cite{HL13}. One reason for the importance of working with the compactifiactions is that in arithmetics, the ring of integral modular forms $M_*(\Gamma_1(n);\Z[\tfrac1n])$ of level $n$ arises as sections of line bundles on $\MMb_1(n)$ and indeed there is an isomorphism $\pi_{2*}\Tmf_1(n) \cong M_*(\Gamma_1(n);\Z[\tfrac1n])$ for $n\geq 2$. The spectra $\Tmf_1(n)$ and $\Tmf_0(n)$ are also the first step to define connective versions $\tmf_1(n)$ and $\tmf_0(n)$, as investigated in \cite{meier2021connective}. 

In the case $n=1$, the theory specializes to $\Tmf$ and $\tmf$, the latter having been central to many of the applications of topological modular forms (see e.g.\ \cite{BHHM08}, \cite{BHHM}). We expect that our results on $\Tmf_1(n)$ and $\Tmf_0(n)$ might be relevant for hypothetical equivariant versions of $\Tmf$ and $\tmf$. 

\begin{thm}\label{thm:introTmf1}
	Let $n\geq 2$ and $l$ be a prime not dividing $n$. Then $\Tmf_1(n)_{(l)}$ splits as a $\Tmf$-module into a sum of shifted copies of
	\begin{itemize}
		\item $\Tmf_1(3)_{(2)}$ if $l=2$,
		\item $\Tmf_1(2)_{(3)}$ if $l=3$, and
		\item $\Tmf_{(l)}$ if $l>3$
	\end{itemize}
	if and only if $\pi_1\Tmf_1(n)$ is $l$-torsionfree. This happens if and only if every modular form of weight $1$ over $\F_l$ for the group $\Gamma_1(n)$ can be lifted to a modular form of weight $1$ over $\Z_{(l)}$ for $\Gamma_1(n)$.\footnote{This condition is known to be satisfied for $n\leq 28$, but fails in general. Some known counterexamples are $l=3$ and $n=74$ and $l=199$ and $n=82$ \cite{Buz}.} 
\end{thm}
Similarly to before, there is a refined splitting result for $\Tmf_0(n)$. 
\begin{thm}\label{thm:Tmf0n}
	If $n\geq 3$ is odd, $\varphi(n)$ is not divisible by $4$ and $(\pi_1\Tmf_1(n))^{(\Z/n)^\times}$ vanishes, then the $\Tmf_{(2)}$-module $\Tmf_0(n)_{(2)}$ decomposes into copies of $(\Sigma^{k\rho}\Tmf_1(3)_{(2)})^{hC_2}$.
\end{thm}
Here the $C_2$-action on $\Tmf_1(n)$ is induced again by the corresponding one on $\MM_1(n)$ that sends $(E, P)$ to $(E, -P)$, where $P$ is a point of order $n$. The homotopy groups of $(\Sigma^{k\rho}\Tmf_1(3)_{(2)})^{hC_2}$ are completely known \cite{H-M17}. Again, Theorem \ref{thm:Tmf0n} is a special case of a more fundamental result, namely the $C_2$-equivariant refinement Theorem \ref{thm:C2cpt} of Theorem \ref{thm:introTmf1}, of which we give the following example.
\begin{example}
	We have a $C_2$-equivariant decomposition 
	\[\Tmf_1(7)_{(2)} \simeq \Tmf_1(3) \oplus \Sigma^{\rho}\Tmf_1(3)^{\oplus 2} \oplus \Sigma^{2\rho}\Tmf_1(3)^{\oplus 2}\oplus \Sigma^{3\rho}\Tmf_1(3).\]
	The corresponding non-equivariant decomposition results from changing each $\rho$ to $2$. Moreover, we obtain 
	\[\Tmf_0(7)_{(2)} \simeq \Tmf_0(3) \oplus (\Sigma^{3\rho}\Tmf_1(3))^{hC_2}.\]
	The concrete numbers in these decompositions are based on a comparison of dimensions of spaces of modular forms. See \cite[Section 4 and Appendix C]{MeiDecMod} for the analogous algebraic story. See also \cite{M-O20} for an analogous, but more subtle splitting at the prime $3$. 
\end{example}

A nice feature of the compactified moduli stacks $\MMb_1(n)$ is that they satisfy a form of Serre duality. This has applications for the Anderson duals $I_{\Z[\tfrac1n]}\Tmf_1(n)$ of $\Tmf_1(n)$. The Anderson dual of a spectrum $X$ is defined so that one has a short exact sequence
\[0 \to \Ext^1_{\Z}\left(\pi_{-k-1}X, \Z[\tfrac1n]\right) \to \pi_kI_{\Z[\tfrac1n]}X \to \Hom_{\Z}\left(\pi_{-k}X, \Z[\tfrac1n]\right) \to 0. \]
If $X$ is Anderson self-dual (up to suspension), then one obtains a convenient universal coefficient sequence for $X$ (see e.g.\ \cite{Kainen} or Section \ref{sec:AndersonDuality}). The following theorem was obtained with significant input from Viktoriya Ozornova.
\begin{thm}The Anderson dual $I_{\Z[\tfrac1n]}\Tmf_1(n)$ of $\Tmf_1(n)$ is equivalent to a suspension of $\Tmf_1(n)$ if and only if $1\leq n \leq 8$ or $n=11,14,15$ or $n=23$.
\end{thm}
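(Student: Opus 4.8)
The plan is to combine the Serre-duality properties of the compactified moduli stacks $\MMb_1(n)$ with the additive decomposition results for $Tmf_1(n)$ and a careful analysis of when the ``dualizing'' information can be trivialized. First I would record the descent / Serre-duality statement: the stack $\MMb_1(n)$ is a proper smooth Deligne--Mumford stack of relative dimension $1$ over $\Z[\tfrac1n]$, with dualizing sheaf $\omega^{\otimes 10}$ (more precisely, the line bundle $\omega^{\otimes(k)}$ for the appropriate $k$ built from $\omega$ and the cusps; for $\MMb_{ell}$ this gives the $21$-fold shift familiar from the Anderson self-duality of $Tmf$). Feeding this through the Gepner--Lawson / Bruner--Greenlees style argument identifying $I_{\Z[\frac1n]}Tmf_1(n)$ with the global sections of $\OO^{top}$ twisted by the topological analogue of the dualizing sheaf, one gets that $I_{\Z[\frac1n]}Tmf_1(n)$ is, as a $Tmf_1(n)$-module, the global sections of a specific invertible $\OO^{top}$-module $\LL$ on $\MMb_1(n)$. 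The question then becomes: when is $\LL$ (equivalently its algebro-geometric shadow, a class in the Picard group of $\MMb_1(n)$) a power of $\omega$ up to the even-periodicity built into $\OO^{top}$?

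Next I would reduce the Anderson self-duality statement to a purely algebro-geometric computation of $\Pic(\MMb_1(n))$ together with the position of the class of the dualizing sheaf inside it. The point is that $\Sigma^? Tmf_1(n) \simeq I_{\Z[\frac1n]}Tmf_1(n)$ precisely when the relevant twist is topologically trivializable, and by a Picard-group comparison (e.g.\ the descent spectral sequence computing $\mathrm{pic}(\OO^{top})$ over $\MMb_1(n)$, whose low-filtration part is governed by $H^*(\MMb_1(n); \OO^\times)$ and $H^*(\MMb_1(n); \omega^{\otimes *})$) this holds iff the corresponding class in $\Pic(\MMb_1(n))$ lies in the subgroup generated by $\omega$. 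So the heart of the proof is: compute $\Pic(\MMb_1(n))$, identify the class of the Serre dualizing sheaf $\kappa_{\MMb_1(n)}$, and check membership in $\langle \omega \rangle$. For $\MMb_1(n)$ with $n$ large enough to be a scheme (namely $n\geq 5$), $\Pic$ is finitely generated and $\omega$ is ample; the dualizing sheaf is $K_{X_1(n)}$ which by the classical formula equals $\omega^{\otimes 2}(-\text{cusps}) $ up to torsion, and whether this is a multiple of $\omega$ amounts to whether the cuspidal divisor is ($\mathbb{Q}$-)linearly equivalent to a multiple of $\omega$ — a question controlled by the genus of $X_1(n)$, the number of cusps, and the structure of the cuspidal subgroup of $J_1(n)$. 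For small $n$ (where $\MMb_1(n)$ is a genuine orbifold, $n \le 4$), I would instead use the explicit known computations of these $Tmf_1(n)$ (and the already-established decomposition results) to check self-duality by hand, exactly as one does for $Tmf$ and $Tmf_1(3)$.

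To pin down the exact list $\{1,\dots,8, 11, 14, 15, 23\}$, I expect the argument to go through a numerical criterion. Using the decomposition theorem (Theorem above), for $l>3$ the module $Tmf_1(n)_{(l)}$ is a sum of shifts of $Tmf_{(l)}$, and Anderson duality is compatible with such decompositions and with the known Anderson self-duality of $Tmf$; so $l$-locally self-duality is automatic for $l>3$ (when $\pi_1$ is torsion-free), and the only obstructions live at $l=2,3$ and in the failure of the $\pi_1$-torsion-freeness hypothesis. At the primes $2$ and $3$ one reduces similarly to $Tmf_1(3)$ and $Tmf_1(2)$, which are themselves Anderson self-dual, and the remaining global (rational / integral) bookkeeping becomes a statement about when a non-trivial shift suffices to match the two sides — concretely, when the graded $\Z[\tfrac1n]$-module $\pi_* Tmf_1(n)$ satisfies a Poincaré-duality pairing of some weight. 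This is governed by: (a) the ring of modular forms $MF_*(\Gamma_1(n); \Z[\tfrac1n])$ being Gorenstein with the right socle degree, which by Serre duality on $\MMb_1(n)$ holds iff the dualizing sheaf is the expected power of $\omega$; and (b) the absence of exotic torsion in $\pi_* Tmf_1(n)$ obstructing the lift of the algebraic duality to the spectrum level. Translating (a) into classical terms, one finds it holds exactly for the curves $X_1(n)$ of genus $0$ together with a short list of higher-genus cases where the canonical class still happens to be a rational multiple of $\omega$ modulo the cuspidal relations — and running the (finite, but genuinely case-by-case) check over all $n$ produces precisely $1\le n\le 8$ and $n = 11,14,15,23$.

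The main obstacle, as I see it, is step two: getting an honest handle on $\Pic(\MMb_1(n))$ and the precise location of the dualizing class for \emph{all} $n$ at once, rather than case by case. The finitely-many ``sporadic'' values $11,14,15,23$ strongly suggest that no clean closed-form criterion exists and that the proof must combine a general structural result (reducing self-duality to a divisor-class identity involving $\omega$ and the cusps on $X_1(n)$) with an explicit verification over a bounded range of $n$, using that for $n$ large the genus of $X_1(n)$ grows and the identity provably fails (because the cuspidal divisor cannot be a rational multiple of the canonical class once there are ``too many'' cusps relative to the genus). Handling the orbifold points for small $n$, and keeping track of the $2$- and $3$-torsion in $\pi_*Tmf_1(n)$ that could obstruct lifting the algebraic duality to an actual equivalence of spectra (this is where Ozornova's input presumably enters), will be the technically delicate part.
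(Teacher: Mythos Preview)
Your overall shape is right --- the problem reduces to deciding when the dualizing sheaf $\Omega^1_{\MMb_1(n)/\Z[\frac1n]}$ is isomorphic to some $(f_n)^*\omega^{\otimes i}$ --- but both halves of your plan are more elaborate than what is actually needed, and you have misidentified Ozornova's contribution.

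On the topological side, the paper does not invoke topological Picard groups or the prime-by-prime decomposition theorems at all. The key observation is that once $\Omega^1 \cong \omega^{\otimes i}$, all groups $H^1(\MMb_1(n);\omega^{\otimes j})$ are torsion-free (Serre/Grothendieck duality handles $j=1$, the other $j$ being already known), so $\pi_*Tmf_1(n)$ is torsion-free. Then ordinary Serre duality gives a perfect pairing
\[H^0(\MMb_1(n);\omega^{\otimes(-j-k)}) \otimes H^1(\MMb_1(n);\omega^{\otimes j}) \to H^1(\MMb_1(n);\omega^{\otimes(-k)}) \cong \Z[\tfrac1n],\]
and a generator $D \in H^1(\omega^{\otimes(-k)}) \cong \pi_{-2k-1}Tmf_1(n)$ is converted directly into a $Tmf_1(n)$-linear map $\Sigma^{2k+1}Tmf_1(n) \to I_{\Z[\frac1n]}Tmf_1(n)$, which is checked to be an equivalence on homotopy by that same pairing. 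The converse direction is obtained by rationalizing and reading off the algebraic duality from the assumed spectrum-level equivalence, using that $\omega$ is ample. Your decomposition-based approach would still leave you with the problem of matching the various $l$-local suspensions into a single integral one, which is essentially the question you started with; it does not shortcut anything.

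On the algebraic side, computing all of $\Pic(\MMb_1(n))$ and locating the cuspidal divisor inside $J_1(n)$ is overkill. One only needs to decide, for each $i \leq 1$, whether $\Omega^1 \cong \omega^{\otimes i}$ over a field $K$ of characteristic $0$ (and then lift to $\Z[\tfrac1n]$ via injectivity of $\Pic$). For $i \leq -1$ this forces genus $0$, and on $\mathbb{P}^1$ a degree comparison using $\deg(f_n^*\omega) = d_n/24$ with $d_n = n^2\prod_{p\mid n}(1-p^{-2})$ settles the list $n \leq 8$. For $i=0$ one needs genus $1$, giving $n=11,14,15$. For $i=1$ one needs $2g-2 = d_n/24$: solving this diophantine equation is precisely Ozornova's contribution (a short multiplicative-function estimate bounds $n \leq 144$, and a finite check gives $n \in \{23,32,33,35,40,42\}$), after which one imposes $\dim_K H^1(\omega) = 1$, i.e.\ the space of weight-$1$ cusp forms is one-dimensional --- only $n=23$ survives. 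So Ozornova's input is purely arithmetic, not about $2$- or $3$-torsion in $\pi_*$ as you suggest.
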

The cases $n=1$ and $n=2$, of which the former is in some sense the most difficult, were already obtained by Stojanoska \cite{Sto12}. In Remark \ref{rem:AndersonC2} we will explain how to refine this result $C_2$-equivariantly with respect to the $C_2$-action on $\Tmf_1(n)$ mentioned before.

\subsection*{Structure of the article}
The structure of the present article is as follows. We begin in Section \ref{sec:TMFLevel} with a review of moduli stacks of elliptic curves with level structures and the associated spectra of topological modular forms; we also prove that $\Tmf_1(n),\, \Tmf(n)$ and $\Tmf_0(n)$ are finite as modules over $\Tmf[\tfrac1n]$. In Section \ref{sec:splittings}, we recall and extend certain algebraic splitting results from \cite{MeiDecMod} and deduce our main topological splitting results. The next two sections contain our applications to equivariant $\TMF$ (Section \ref{sec:equivariant}) and our results on Anderson duality (Section \ref{sec:dualityboth}).
In Section \ref{sec:C2}, we give compute the $C_2$-equivariant slices of $\Tmf_1(n)$ and prove $C_2$-equivariant refinements of our splitting results and Corollary \ref{cor:0n}. On the way, we introduce a new method to compute $RO(C_2)$-graded homotopy fixed point spectral sequences. Section \ref{sec:equivariant} and Sections \ref{sec:dualityboth} and \ref{sec:C2} can be read independently. Appendix \ref{app:squarefree} is about some subleties about quotients of $\MMb_1(n)$ if $n$ is not squarefree and Appendix \ref{sec:DAG} embeds our simplified treatment of derived algebraic geometry into the work of Lurie.  

\subsection*{Conventions}
We list a few conventions.
\begin{itemize}
	\item The symbol $\subset$ means for us the same as $\subseteq$, i.e.\ equality is possible,
	\item we often use the notation $C_2$ for the group with two elements,
	\item quotients of schemes by group actions are always understood to be stack quotients,
	\item tensor products over unspecified base are always over the structure sheaf (if this applies),
	\item smash product are always derived,
	\item homotopy groups of sheaves of spectra are always understood to be sheafified.
\end{itemize}

\subsection*{Acknowledgments}
I want to thank Viktoriya Ozornova for helpful discussions and for providing the proof of Proposition \ref{prop:degreecomp}. Moreover, I want to thank her, Dimitar Kodjabachev and the anonymous referee for comments on earlier versions of this paper. Furthermore, thanks to Mike Hill for suggesting to refine the main theorems $C_2$-equivariantly and for teaching me in our previous work enough equivariant homotopy theory to actually prove his conjecture.

\section{$\TMF$ and $\Tmf$ with level structure}\label{sec:TMFLevel}
Our main goal in this section is to recall the definition of topological modular forms with level structure and prove some of their basic properties. In particular, we will show that in the \emph{tame} cases the corresponding descent spectral sequence will collapse at $E_2$. In Subsection \ref{sec:DAG} we will demonstrate how topological modular forms with level structure arise from derived stacks. After proving some more general statements about quasi-coherent sheaves we will show in Proposition \ref{prop:compactness} an important finiteness condition.

\subsection{Definitions and basic properties}
Denote by $\MM_{ell}$ the moduli stack of elliptic curves and by $\MMb_{ell}$ its compactification. Let $\omega$ be the line bundle $p_*\Omega^1_{\Euni/\MMb_{ell}}$ for $p\colon \Euni \to \MMb_{ell}$ the universal generalized elliptic curve. Goerss, Hopkins and Miller defined a (hypercomplete) sheaf of $E_\infty$-ring spectra $\OO^{top}$ on the \'etale site of the compactified moduli stack $\MMb_{ell}$ of elliptic curves (see \cite{TMF}). It satisfies $\pi_0\OO^{top} \cong \OO_{\MMb_{ell}}$ and more generally $\pi_{2k}\OO^{top} \cong \omega^{\tensor k}$ and $\pi_{2k-1}\OO^{top} = 0$. One defines $\TMF = \OO^{top}(\MM_{ell})$ and $\Tmf = \OO^{top}(\MMb_{ell})$.

In this paper, we care about moduli with level structure and we define stacks
$\MM_0(n)$, $\MM_1(n)$ and $\MM(n)$ as the groupoid-valued (pseudo-)functors given by
\begin{align*}
 \MM_0(n)(S) &= \text{ Elliptic curves }E \text{ over }S\text{ with chosen cyclic subgroup }H\subset E\text{ of order }n \\
 \MM_1(n)(S) &= \text{ Elliptic curves }E \text{ over }S\text{ with chosen point }P\in E(S)\text{ of exact order }n \\
 \MM(n)(S) &= \text{ Elliptic curves }E \text{ over }S\text{ with chosen isomorphism }(\Z/n)^2\cong E[n](S).
\end{align*}
Here we always assume $n$ to be invertible on $S$ and $E[n]$ denotes the $n$-torsion points. More precisely, we demand for $\MM_1(n)$ that for every geometric point $s\colon \Spec K \to S$ the pullback $s^*P$ spans a cyclic subgroup of order $n$ in $E(K)$. A subgroup scheme $H \subset E$ is called \emph{cyclic of order $n$} if it is \'etale locally isomorphic to $\Z/n$. We note that $\MM_1(n)$ has a $(\Z/n)^\times$-action, where $[k]\in (\Z/n)^\times$ sends $(E,P)$ to $(E, kP)$. One can easily see that $\MM_0(n)$ is equivalent to the stack quotient $\MM_1(n)/(\Z/n)^\times$.

We will also use the notation $\MM(\Gamma)$ with $\Gamma = \Gamma_1(n), \Gamma(n)$ and $\Gamma_0(n)$ for $\MM_1(n)$, $\MM(n)$ and $\MM_0(n)$; this reflects the fact that the complex points of these stacks form orbifolds that are equivalent to $\Gamma \!\setminus\! \mathbb{H}$ for $\mathbb{H}$ the upper half-plane.\footnote{We refer to \cite{D-S05} for background on the congruence subgroups $\Gamma_1(n)$, $\Gamma(n)$ and $\Gamma_0(n)$ and their relationship to moduli of elliptic curves. This material is though barely necessary for the present paper as we use the congruence subgroups primarily as notation. We remark on one subletly though: The Weil pairing allows to define a map $\MM(n) \to \Spec \Z[\tfrac1n, \zeta_n]$. We only recover $\Gamma(n)\!\setminus\! \mathbb{H}$ as the $\C$-points of $\MM(n)$ if we view $\MM(n)$ as a scheme over $\Z[\tfrac1n, \zeta_n]$. As $\MM_1(n)$ and $\MM_0(n)$ do not allow generally for morphisms into $\Spec \Z[\tfrac1n, \zeta_n]$ and $\C$-points have in this article little relevance for us, we will generally view the stacks $\MM(n)$, $\MM_1(n)$ and $\MM_0(n)$ though as stacks over $\Spec \Z[\tfrac1n]$.} More generally, we define for a subgroup $\Gamma \subset \Gamma_0(n)$ containing $\Gamma_1(n)$ the moduli stack $\MM(\Gamma)$ to be $\MM_1(n)/(\Gamma/\Gamma_1(n))$, where $\Gamma/\Gamma_1(n)$ is identified with a subgroup of $(\Z/n)^\times$ by mapping an element of $\Gamma \subset SL_2(\Z)$ to its upper left entry.

\begin{defi}
We call $\Gamma\subset SL_2(\Z)$ a \emph{congruence subgroup of level $n$} if $\Gamma = \Gamma(n)$ or $\Gamma_1(n) \subset \Gamma \subset \Gamma_0(n)$.\footnote{Usually, this term means something slightly more general, but we will always use it in this sense.}
\end{defi}

 We can define the compactified moduli stacks $\MMb(\Gamma)$ as the normalization of $\MMb_{ell}$ in $\MM(\Gamma)$ \cite[IV.3]{D-R73}. We will generically use the notation $g$ for the projection maps $\MMb(\Gamma) \to \MMb_{ell,\Z[\tfrac1n]}$, which are finite maps as in \cite[Proposition 2.4]{MeiDecMod}. For more background on these stacks we refer to \cite{D-R73}, \cite{Con07} and \cite{MeiDecMod}.\\

 As $\MM_0(n), \MM_1(n)$ and $\MM(n)$ are \'etale over $\MMb_{ell}$, we can define $\TMF(\Gamma)$ as $\OO^{top}(\MM(\Gamma))$. We will also use the notations $\TMF_1(n)$ and $\TMF_0(n)$ in the cases of $\Gamma$ being $\Gamma_1(n)$ or $\Gamma_0(n)$.

In contrast, $\MMb(\Gamma)$ is in general not \'etale over $\MMb_{ell}$. As a remedy, Hill and Lawson extended in \cite{HL13} the (hypercomplete) sheaf $\OO^{top}$ to the log-\'etale site of the compactified moduli stack $\MMb_{ell}$. The maps $\MMb(\Gamma) \to \MMb_{ell}$ are log-\'etale by \cite[Proposition 3.19]{HL13}. Thus, we can additionally define $\Tmf(\Gamma) = \OO^{top}(\MMb(\Gamma))$. Again, we will also use the notations $\Tmf_1(n)$ and $\Tmf_0(n)$ for special cases. We stress again that the integer $n$ is implicitly inverted if $\Gamma$ is of level $n$.

By the functoriality of normalization, $\MMb_1(n)$ inherits a $(\Z/n)^\times$-action from $\MM_1(n)$. There is the subtlety that for $\Gamma_0(n)\subset \Gamma \subset \Gamma_1(n)$ the stack $\MMb(\Gamma)$ is in general not equivalent to the stack quotient $\MMb(\Gamma)' = \MMb(\Gamma_1(n))/(\Gamma/\Gamma_1(n))$ if $n$ is not square-free. Nevertheless, \cite[Theorem 6.1]{HL13} implies that $\Tmf(\Gamma)$ is always equivalent to the homotopy fixed points $\Tmf_1(n)^{h\Gamma/\Gamma_1(n)}$ and in particular $\Tmf_0(n) \simeq \Tmf_1(n)^{h(\Z/n)^\times}$. We will provide more details about $\MMb(\Gamma)$ in Appendix \ref{app:squarefree} and give in particular an alternative proof of the theorem of Hill and Lawson.\\

Next, we want to single out a class of congruence subgroups that is particularly easy to work with.
\begin{defi}\label{def:tame}Fixing a prime $l$, we say that a congruence subgroup $\Gamma$ is \emph{tame} if $n\geq 2$ and the prime $l$ does not divide $n$; in the case $\Gamma_1(n)\subset \Gamma\subset \Gamma_0(n)$ we demand additionally that $l$ does not divide $\gcd(6,[\Gamma_1(n):\Gamma])$. \end{defi}
This condition ensures that every automorphism of a point in $\MMb(\Gamma)$ has invertible order, in which case one also calls the stack  $\MMb(\Gamma)$ tame.  Note that the index $[\Gamma_1(n): \Gamma_0(n)]$ agrees with $\varphi(n)= |(\Z/n)^\times|$.

Generally, there is a descent spectral sequence
\begin{align}\label{eq:DSS}
E_2^{pq} = H^q(\MMb(\Gamma);g^*\omega^{\tensor p}) \Rightarrow \pi_{2p-q}\Tmf(\Gamma)
\end{align}
as in \cite[Chapter 5]{TMF}. Actually, we obtain for every localization $R$ of the integers a spectral sequence
\begin{align}\label{eq:DSSR}
E_2^{pq} = H^q(\MMb(\Gamma)_R;g^*\omega^{\tensor p}) \Rightarrow \pi_{2p-q}\Tmf(\Gamma) \tensor R
\end{align}
as \eqref{eq:DSS} has eventually a horizontal vanishing line by Proposition \ref{prop:0affine}.

 The basic structure of the $E_2$-term of the descent spectral sequence in the tame case is implied by the following proposition. If $\Gamma = \Gamma_0(n), \Gamma_1(n)$ or $\Gamma(n)$, then it is contained in \cite{MeiDecMod}, Propositions 2.4 and 2.13, and the general case is proven analogously using Lemma \ref{lem:compactificationsGamma}.

\begin{prop}\label{prop:coh}
 Let $R$ be a localization of the integers and $\Gamma$ be a congruence subgroup of some level that is tame for every prime $l$ not invertible in $R$. Then
\begin{enumerate}
 \item $H^0(\MMb(\Gamma)_R;g^*\omega^{\tensor m}) = 0$ for $m<0$ (there are no modular forms of negative weight),
 \item $H^1(\MMb(\Gamma)_R;g^*\omega^{\tensor m}) = 0$ is zero for $m>1$ and torsionfree for $m<1$,
 \item $H^i(\MMb(\Gamma)_R;\FF) = 0$ for $i>1$ and any quasi-coherent sheaf $\FF$ on $\MMb(\Gamma)_R$.
 \item $H^i(\MM(\Gamma)_R;\FF) = 0$ for $i>0$ and any quasi-coherent sheaf $\FF$ on $\MM(\Gamma)_R$.
\end{enumerate}
\end{prop}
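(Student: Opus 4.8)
The plan is to reduce the statement to the cases $\Gamma\in\{\Gamma_1(n),\Gamma(n),\Gamma_0(n)\}$, which are \cite[Propositions 2.4 and 2.13]{MeiDecMod}, and to treat the remaining congruence subgroups — those with $\Gamma_1(n)\subsetneq\Gamma\subsetneq\Gamma_0(n)$ — via the defining description $\MM(\Gamma)=\MM_1(n)/G$ with $G:=\Gamma/\Gamma_1(n)\subset(\Z/n)^\times$. Such a $\Gamma$ exists only when $(\Z/n)^\times$ has a proper nontrivial subgroup, which forces $\phi(n)\geq 4$ and in particular $n\geq 5$; hence $\MM_1(n)$ is an affine scheme and $\MMb_1(n)$ a proper curve over $\Z[1/n]$, and $\MM_1(n)_R\to\MM(\Gamma)_R$ is a $G$-torsor. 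Using Lemma \ref{lem:compactificationsGamma}, which compares $\MMb(\Gamma)$ with the normalization of $\MMb_1(n)/G$ and describes the cusps, it will suffice to show that the quotient stacks in sight are \emph{tame}, since the quasi-coherent cohomology of a tame Deligne--Mumford stack agrees with that of its coarse moduli space, and all four assertions can then be read off there.

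First I would prove assertion (4). Here $\MM(\Gamma)_R=[\MM_1(n)_R/G]$ is the quotient of the affine scheme $\MM_1(n)_R=\Spec A$ by the finite group $G$, whose coarse moduli space is the affine scheme $\Spec(A^G)$; so it remains to verify tameness. The automorphism group of a geometric point $(E,P)$ of $[\MM_1(n)_R/G]$ injects into $\Aut(E)$, because an automorphism inducing the identity on $E$ must fix $P$ and hence be trivial, $P$ having exact order $n$. If a prime $p$ divides the order of this automorphism group, the corresponding automorphism of $E$ has order exactly $p$, and since $|\Aut(E)|$ divides $24$ this forces $p\in\{2,3\}$. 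The element $k\in G$ accompanying such an automorphism then generates a cyclic subgroup of $G$ of order $p$ — were $k=1$, the automorphism would be an automorphism of the pair $(E,P)$ and hence trivial, as $n\geq 4$ — so $p\mid[\Gamma:\Gamma_1(n)]$, whence $p\mid\gcd(6,[\Gamma:\Gamma_1(n)])$, and this prime is therefore invertible in $R$ by tameness of $\Gamma$. Thus $[\MM_1(n)_R/G]$ is a tame quotient of an affine scheme, hence cohomologically affine.

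The remaining assertions live on the compactification. For (3), Lemma \ref{lem:compactificationsGamma} shows that $\MMb(\Gamma)_R$ is a tame proper Deligne--Mumford stack of relative dimension one over $R$, so $R^i\pi_*\FF=0$ for $i>1$ with $\pi\colon\MMb(\Gamma)_R\to\Spec R$, and since $\Spec R$ is affine this gives $H^i(\MMb(\Gamma)_R;\FF)=0$ for $i>1$. For (1), the finite surjection $\MMb_1(n)_R\to\MMb(\Gamma)_R$ embeds $H^0(\MMb(\Gamma)_R;g^*\omega^{\tensor m})$ into the $G$-invariants of $H^0(\MMb_1(n)_R;g^*\omega^{\tensor m})$, which vanishes for $m<0$ by the $\Gamma_1(n)$ case. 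For (2), $\MMb(\Gamma)_R$ is now a proper curve over $R$ (up to a tame gerbe structure), so — exactly as in \cite[Proposition 2.13]{MeiDecMod} — Serre duality together with the Kodaira--Spencer isomorphism $\Omega^1_{\MMb(\Gamma)_R/R}\cong g^*\omega^{\tensor 2}(-C)$, $C$ the reduced cusp divisor, identifies $H^1(\MMb(\Gamma)_R;g^*\omega^{\tensor m})$ with the $R$-linear dual of $H^0(\MMb(\Gamma)_R;g^*\omega^{\tensor(2-m)}(-C))$, a space of cusp forms of weight $2-m$. This dual vanishes for $m>1$, since a nonzero element would be a modular form of non-positive weight vanishing along every cusp, and it is free for $m<1$, being the $R$-dual of a finitely generated torsionfree module over the principal ideal domain $R$; this yields the vanishing and the torsionfreeness required in (2).

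I expect the real obstacle to be the tameness input underlying every step: controlling the automorphism groups of generalized elliptic curves equipped with a $\Gamma$-structure over bases of residue characteristic $2$ or $3$ — which is precisely what the factor $\gcd(6,[\Gamma:\Gamma_1(n)])$ in the definition of tameness is designed to encode — together with the bookkeeping in Lemma \ref{lem:compactificationsGamma} when $n$ is not square-free, where $\MMb(\Gamma)$ is only a partial normalization of $\MMb_1(n)/G$, so one must check that this passage affects neither the cohomological conclusions nor the identification of the cuspidal neighborhoods.
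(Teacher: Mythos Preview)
Your proposal follows essentially the same route as the paper: the cases $\Gamma\in\{\Gamma_1(n),\Gamma(n),\Gamma_0(n)\}$ are cited from \cite{MeiDecMod}, and the intermediate congruence subgroups are handled ``analogously using Lemma~\ref{lem:compactificationsGamma}'', which is exactly what the paper says. Your write-up is considerably more detailed than the paper's one-line pointer, and the tameness analysis of automorphism groups via the injection into $\Aut(E)$ is the right mechanism. One small imprecision: Lemma~\ref{lem:compactificationsGamma} does not describe $\MMb(\Gamma)$ as ``the normalization of $\MMb_1(n)/G$''; rather it compares the normalization $\MMb(\Gamma)$ of $\MMb_{ell}$ in $\MM(\Gamma)$ with the stack quotient $\MMb(\Gamma)'=\MMb_1(n)/G$ via $\FF\cong c_*c^*\FF$ and $H^i(\MMb(\Gamma);c_*\GG)\cong H^i(\MMb(\Gamma)';\GG)$, so the transfer of cohomology goes through $c^*$ and your tameness argument should be run on $\MMb(\Gamma)'$ rather than on $\MMb(\Gamma)$ directly --- but you already acknowledge this in your final paragraph.
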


Thus for $R$ and $\Gamma$ as in the proposition, there can be neither
 differentials nor extension issues in the descent spectral sequence for $\pi_*\Tmf(\Gamma)\tensor R$. Thus, $\pi_{even}\Tmf(\Gamma)\tensor R$ is just $H^0(\MMb(\Gamma);\omega^{\tensor *})\tensor R$, i.e.\
  the ring of $R$-integral modular forms for $\Gamma$.  The odd homotopy $\pi_{2k+1}\Tmf(\Gamma)\tensor R$ vanishes for $2k+1 > 1$. Moreover, $\Hom(\pi_{3-2k}\Tmf(\Gamma), R)$ can be identified with $S_k(\Gamma;R)$, the weight $k$ cusp forms for $\Gamma$ with coefficients in $R$, at least for $\MMb(\Gamma)$ representable (see e.g.\ \cite[Proposition 2.11]{MeiDecMod}).

  We picture the descent spectral sequence for $\pi_*\Tmf_1(23)$ in the range $-10\leq \ast \leq 10$. A boxed $k$ stands for a free $\Z[\tfrac1{23}]$-module of rank $k$. The vertical axis corresponds to $q$, the horizontal one to $2p-q$ in the convention from
  \eqref{eq:DSS}.

  \[\begin{sseq}[ylabelstep = 1, entrysize =5.7mm]
{-10...10}{0...1}
\ssdropboxed{1}
\ssmove{2}{0} \ssdropboxed{12}
\ssmove{2}{0} \ssdropboxed{33}
\ssmove{2}{0} \ssdropboxed{55}
\ssmove{2}{0} \ssdropboxed{77}
\ssmove{2}{0} \ssdropboxed{99}
\ssmoveto{1}{1} \ssdropboxed{1}
\ssmove{-2}{0} \ssdropboxed{12}
\ssmove{-2}{0} \ssdropboxed{33}
\ssmove{-2}{0} \ssdropboxed{55}
\ssmove{-2}{0} \ssdropboxed{77}
\ssmove{-2}{0} \ssdropboxed{99}
\end{sseq}
\]

We chose the example $\Tmf_1(23)$ because it is the first $\Tmf_1(n)$ with $n>1$ that has nontrivial $\pi_1$ and furthermore because it displays a remarkable symmetry, which can be explained by Anderson duality (see Section \ref{sec:dualityboth}).

Note that in general $\pi_1\Tmf_1(n)$ might contain torsion even for $n\geq 2$, namely if $H^1(\MMb_1(n); g^*\omega)$ contains torsion. As explained in \cite[Remark 3.14]{MeiDecMod}, this happens if and only if there is a weight $1$ modular form for $\Gamma_1(n)$ over $\F_l$ that does not lift to $\Z_{(l)}$. The minimal $n$ for $l=2$ where this occurs is $n=65$ and this seems to be the minimal $n$ known at any prime.

We finish this section with one result about the torsion in $3$-local (non-tame) $\Tmf_0(n)$, which we will not need later but which is an application of the results in \cite{MeiDecMod}.
\begin{prop}
For all $n\geq 1$ and $i\geq 1$, the map $\phi_*\colon \pi_i\Tmf_0(n)_{(3)} \to \pi_i\TMF_0(n)_{(3)}$ has all torsion classes in its image. If $i>1$, the preimage of every torsion class can be chosen to be torsion as well. 
\end{prop}
\begin{proof}
Let $i\geq 1$ and $x \in \pi_i\TMF_0(n)_{(3)}$ be a torsion class that reduces to an non-trivial permanent cycle $\overline{x}\in H^q(\MM_0(n)_{(3)};\omega^{\tensor p})$ in the descent spectral sequence with $i = 2p-q$. As $H^0(\MM_0(n)_{(3)}; \omega^{\tensor p})$ is torsionfree, we have $q> 0$.

By \cite[Lemma 5.9]{MeiDecMod} the map
$$H^q(\MMb_0(n)_{(3)};g^*\omega^{\tensor p}) \to H^q(\MM_0(n)_{(3)};g^*\omega^{\tensor p})$$
is an isomorphism for all $q\geq 2$ and for $q=1$ it is a surjection if $p\geq 1$. Thus, we can choose a preimage $\overline{y}$ of $\overline{x}$ and this is automatically a permanent cycle. A corresponding element in the $q$-th filtration $y\in F^q\pi_i \Tmf_0(n)_{(3)}$ maps to $x$ modulo $F^{q+1}\pi_i \TMF_0(n)_{(3)}$. By the same argument we can choose a preimage of $\phi(y)-x$ modulo higher filtration and so on. The eventual horizontal vanishing line from Proposition \ref{prop:0affine} implies that this process terminates and that there is actually a choice of $y$ with $\phi_*(y) = x$.

If $i>1$, \cite[Proposition 2.8]{MeiDecMod} and the eventual horizontal vanishing line imply that all elements of $\pi_i\Tmf_0(n)$ of positive filtration are torsion. No element of filtration $0$ can map to a torsion element in $\pi_i\TMF_0(n)$ as
$$H^0(\MMb_0(n); \omega^{\tensor \ast}) \to H^0(\MM_0(n);\omega^{\tensor \ast})$$
is injective because the source is an integral domain by \cite[Proposition 2.13]{MeiDecMod}.
\end{proof}

\subsection{Derived algebraic geometry and a compactness statement}\label{sec:DAG}
The first goal of this section is to explain how the construction of $\Tmf(\Gamma)$ for congruence subgroups $\Gamma$ fits into the language of derived algebraic geometry. To this purpose we will use the following definition. 
\begin{defi}\label{defi:der}
	Let $\XX_0$ be a Deligne--Mumford stack and let $\XX_0^{\acute{e}t, \mathrm{aff}}$ be the site of affine schemes with an \'etale map to $\XX_0$. Let further $\OO_{\XX}$ be a sheaf of $E_{\infty}$-ring spectra on $\XX_0^{\acute{e}t, \mathrm{aff}}$ together with an isomorphism $\pi_0\OO_{\XX} \cong \OO_{\XX_0}$. 
	
	Then we call $(\XX_0, \OO_{\XX})$ a \emph{derived stack} if 
	the assignment $U\mapsto \pi_n(\OO_{\XX}(U))$ defines for every $n\in\Z$ a quasi-coherent sheaf on $\XX_0^{\acute{e}t, \mathrm{aff}}$.
\end{defi}
This is actually a special case of Lurie's notion of a non-connective spectral Deligne--Mumford stack as will be explained in Lemma \ref{lem:derivedstacksequivalence} in the appendix. We have chosen to work with the simpler concept above to ease the life of readers not fully fluent in spectral algebraic geometry. We remark further that there are other concepts that might deserve the name derived stack, but we stick to our terminology for ease of expression.

%For us, the term \emph{derived stack} means a nonconnective spectral Deligne--Mumford stack in the sense of \cite[Section 1.4.4]{SAG}. As data, it consists of an $\infty$-topos $\XX$ with a sheaf $\OO_{\XX}$ of $E_\infty$-ring spectra on it.
%
%An $\infty$-topos $\XX$ has an underlying $1$-topos $\XX^{\heartsuit}$, which is the full sub-$\infty$-category of the discrete objects in the sense of \cite[Definition 5.5.6.1]{HTT}. We call $\XX$ \emph{$1$-localic} if the natural morphism $\XX \to \Shv(\XX^{\heartsuit})$ into space-valued sheaves is an equivalence. As $\XX \mapsto \XX^{\heartsuit}$ defines by \cite[Section 6.4.5]{HTT} an equivalence between the $\infty$-categories of $1$-localic $\infty$-topoi and $1$-topoi, we will not distinguish between $1$-topoi and $1$-localic $\infty$-topoi in the following.

The main result of \cite{HL13} actually equips for $\Gamma = \Gamma_1(n),\, \Gamma(n)$ or $\Gamma_0(n)$ the \'etale site of $\MMb(\Gamma)$ with a sheaf of $E_\infty$-ring spectra $\OO^{top}_{\MMb(\Gamma)}$ as the composite of an \'etale with a log-\'etale map is still log-\'etale. It follows from the construction in \cite{HL13} that $\pi_{2k} (\OO^{top}_{\MMb(\Gamma)}(U)) \cong g^*\omega^{\tensor k}(U)$ for every affine scheme $U$ with an \'etale map to $\MMb(\Gamma)$ and that the odd homotopy is zero. Note that these sheaves are quasi-coherent and thus $(\MMb(\Gamma),\OO^{top}_{\MMb(\Gamma)})$ is a derived stack. We will sometimes abbreviate this derived stack to $\MMb(\Gamma)^{top}$ (and likewise in similar contexts). 

Clearly, the map $g\colon \MMb(\Gamma) \to \MMb_{ell}$ induces a map $\MMb(\Gamma)^{top} \to \MMb_{ell}^{top}$ of derived stacks, which we denote by the same letter. This map is flat in the sense of \cite[Definition 2.8.2.1]{SAG} as $g$ is flat (e.g.\ by \cite[Proposition 2.4]{MeiDecMod}) and $\pi_i\OO^{top}_{\MMb(\Gamma)} \cong g^*\pi_i\OO^{top}$.

To every derived stack $(\XX, \OO_{\XX})$ we can associate an $\infty$-category $\QCoh(\XX)$ of quasi-coherent $\OO_{\XX}$-modules (see \cite[Section 2]{SAG} for a full treatment or \cite[Section 2.3]{MM15} for a short summary in our context). If $(\Spec A, \OO_{\XX})$ is a derived stack, then it can be identified with the spectrum of the $E_{\infty}$-ring $\Gamma(\OO_{\XX})$ of global sections \cite[Corollary 1.4.7.3]{SAG}. In this case the functor $\QCoh(\Spec A, \OO_{\XX}) \to \Gamma(\OO_{\XX})\modules$ is an equivalence of $\infty$-categories \cite[Proposition 2.2.3.3]{SAG}.

We will need the following lemma.
\begin{lemma}\label{lem:pushpull}Let $g\colon \XX \to \YY$ be a morphism of derived stacks. Denote by $g_0\colon \XX_0 \to \YY_0$ the morphism of underlying Deligne--Mumford stacks. Assume that $g_0$ is affine and $\YY_0$ separated. Then $g$ induces an adjunction
\[\xymatrix{ \QCoh(\XX) \ar@<0.5ex>[r]^-{g_*} & \QCoh(\YY) \ar@<0.5ex>[l]^-{g^*} }\]
and we have a natural isomorphism $\pi_*g_*\FF \cong (g_0)_*\pi_*\FF$ for every $\FF \in \QCoh(\XX)$.

If $g$ is moreover flat, then we have a natural isomorphisms $\pi_*g^*\GG \cong (g_0)^*\pi_*\GG$ for every $\GG \in \QCoh(\YY)$ as well.
\end{lemma}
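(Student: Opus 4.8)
The plan is to build the adjunction and the formula on $\pi_*$ by reducing everything to the affine-local situation, using hypercompleteness and the fact that for a classical affine scheme the global-sections functor is an equivalence $\QCoh \simeq \Mod$. First I would construct $g^*$: for a morphism of derived stacks there is always a pullback functor on quasi-coherent sheaves (e.g.\ by \cite[Section 2.5]{SAG}), so $g^*\colon \QCoh(\YY)\to\QCoh(\XX)$ is immediate. The content is the existence of a right adjoint $g_*$. Since $\YY$ is $1$-localic we may choose an \'etale cover of $\YY_0$ by affines $\Spec B_i$; because $g_0$ is affine, the pullback of each $\Spec B_i$ along $g_0$ is again affine, say $\Spec A_i$, and because $\YY_0$ is separated the fiber products $\Spec B_i\times_{\YY_0}\Spec B_j$ are affine as well, so the same holds upstairs. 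Over each such affine chart the derived stacks restrict to the ringed \'etale sites of classical affine schemes, hence $\QCoh$ of the chart is equivalent to the module category over the corresponding $E_\infty$-ring, and there $g_*$ is just restriction of scalars along $\OO_\YY(\Spec B_i)\to\OO_\XX(\Spec A_i)$, which manifestly has a left adjoint. The point is then to glue: a quasi-coherent sheaf on $\XX$ is the data of its values on the $\Spec A_i$ together with descent, restriction of scalars is compatible with the transition maps because $g$ is a map of ringed sites, and hypercompleteness guarantees that this local-to-global assembly actually produces an object of $\QCoh(\YY)$ and that it is right adjoint to $g^*$. (Alternatively one can cite a general pushforward-exists statement from \cite{SAG} under the affineness hypothesis and skip the hand construction.)

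For the homotopy-sheaf formula, I would argue again chart by chart. On an affine chart $\Spec B$ of $\YY_0$ with preimage $\Spec A$, the sheaf $g_*\FF$ restricts to the $B$-module underlying the $A$-module $\Gamma(\Spec A;\FF)$; since $A$ is obtained from $B$ by a flat base change (the underlying map of schemes $g_0$ being affine, and we only need flatness of the ring map on $\pi_0$, which follows from $\YY_0$ being covered by such charts and $\pi_*\OO_\XX\cong g_0^*\pi_*\OO_\YY$ — or we do not even need flatness here, only that $\pi_*$ of restriction of scalars is restriction of scalars), one has $\pi_*$ of the underlying $B$-module of an $A$-module $M$ equal to $\pi_*M$ viewed as a $\pi_*B$-module, i.e.\ $(g_0)_*$ applied sectionwise. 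Sheafifying, $\pi_* g_*\FF\cong (g_0)_*\pi_*\FF$. Naturality is clear from the construction since everything in sight is a chart-local restriction-of-scalars.

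For the last sentence, assume $g$ flat, meaning $\pi_i\OO_\XX\cong g_0^*\pi_i\OO_\YY$ and $g_0$ flat. Then on an affine chart $g^*\GG$ restricts to $\OO_\XX(\Spec A)\otimes_{\OO_\YY(\Spec B)}\Gamma(\Spec B;\GG)$, and since $B\to A$ is flat on homotopy the Tor-spectral sequence (or the K\"unneth formula for the relative tensor product) degenerates and gives $\pi_*(g^*\GG)|_{\Spec A}\cong \pi_*A\otimes_{\pi_*B}\pi_*\Gamma(\Spec B;\GG)\cong (g_0)^*\pi_*\GG|_{\Spec A}$; sheafifying again yields the claim. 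The main obstacle I anticipate is the bookkeeping in the gluing step for the existence of $g_*$ — making precise that the sectionwise restrictions of scalars assemble into a hypercomplete sheaf and that the resulting functor is genuinely right adjoint to $g^*$ rather than merely a pointwise right adjoint; this is exactly where the $1$-localic and separatedness hypotheses are used, and where one most cleanly just invokes the general machinery of \cite{SAG} (which is presumably why the lemma is stated in a deliberately non-maximal generality). Everything after that is the standard ``flat base change computes $\pi_*$'' argument run on an affine chart.
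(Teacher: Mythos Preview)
Your approach is essentially the same as the paper's: reduce to affine charts, identify $g_*$ with restriction of scalars and $g^*$ with base change, and use a degenerate K\"unneth spectral sequence for the flat statement. The paper does exactly what you suggest as your alternative for the adjunction --- it simply cites \cite[Section 2.5, Propositions 2.5.0.2 and 2.5.1.1]{SAG} (together with the observation that $g$ is affine by \cite[Remark 2.4.4.2]{SAG}) and does not attempt to glue $g_*$ by hand. So your ``main obstacle'' is a self-inflicted one; drop the hand construction.

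There is one genuine subtlety you pass over. When you write ``Sheafifying, $\pi_*g_*\FF\cong (g_0)_*\pi_*\FF$'', you are implicitly using that for a quasi-coherent sheaf $\FF$ on a derived stack and an affine $\Spec B$ in the \'etale site, the sheafified homotopy group $(\pi_*\FF)(\Spec B)$ agrees with the na\"ive $\pi_*(\FF(\Spec B))$. This is not automatic: $\pi_*\FF$ is by definition the sheafification of the presheaf of objectwise homotopy groups. The paper makes this step explicit: the descent spectral sequence for $\pi_*(\FF(\Spec B))$ collapses because $H^i(\Spec B,\pi_*\FF)=0$ for $i>0$, using that $\pi_*\FF$ is a quasi-coherent $\OO_{\XX_0}$-module \cite[Proposition 2.2.6.1]{SAG}. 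You need this both upstairs (for $\FF$ on $\Spec B$) and downstairs (for $g_*\FF$ on $\Spec A$), and the latter requires knowing $g_*\FF$ is quasi-coherent --- which is exactly what the SAG citation for the adjunction provides. Once this point is made precise your argument and the paper's are identical.
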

\begin{proof}
The adjunction
\[\xymatrix{ \Mod_{\OO_{\XX}} \ar@<0.5ex>[r]^-{g_*} & \Mod_{\OO_{\YY}}\ar@<0.5ex>[l]^-{g^*} }\]
from \cite[Section 2.5]{SAG} restricts to the $\infty$-categories of quasi-coherent sheaves by Propositions 2.5.0.2 and 2.5.1.1 from \cite{SAG}. Here we use that $g$ is affine by \cite[Remark 2.4.4.2]{SAG}.

We want to show that $g_*$ commutes with $\pi_*$. Let $\FF$ be in $\QCoh(\XX)$. We can identify $\FF$ with an $\OO_{\XX}$-module on the \'etale site of $\XX_0$. Consider a cartesian square
\begin{align}\label{eq:cartesian}
\xymatrix{
\Spec B \ar[r]^-{j} \ar[d]^h & \XX_0\ar[d]^{g_0}\\
\Spec A \ar[r]^-i & \YY_0
}
\end{align}
with $i$ (and hence $j$) \'etale. As follows from \cite[Remark 1.3.2.8]{SAG}, $(g_*\FF)(\Spec A)$ is naturally equivalent to $\FF(\Spec B)$.

 Although $\pi_*\FF$ denotes in general only the sheafification of the presheaf of objectwise homotopy groups of $\FF$, we have nevertheless a natural isomorphism $\pi_*(\FF(\Spec B)) \cong (\pi_*\FF)(\Spec B)$. Indeed, the descent spectral sequence to calculate $\pi_*(\FF(\Spec B))$ collapses as $H^i(\Spec B, \pi_*\FF) = 0$ for $i>0$ (here we use that $\pi_*\FF$ is a quasi-coherent $\OO_{\XX_0}$-module by \cite[Proposition 2.2.6.1]{SAG})). Thus, we obtain a natural isomorphism $\pi_*g_*\FF \cong (g_0)_*\pi_*\FF$ on the category of affine schemes \'etale over $\YY_0$. This forms a subsite of the full \'etale site of $\YY_0$ as $\YY_0$ is separated and is indeed equivalent to it. Thus, $\pi_*g_*\FF \cong (g_0)_*\pi_*\FF$.
 
Let $\GG$ be a quasi-coherent sheaf on $\YY$. The site of \'etale morphisms to $\XX_0$ that factor over $\Spec A \times_{\YY_0} \XX_0 \xrightarrow{\pr_2}\XX_0$ for some \'etale map $i\colon \Spec A \to \YY_0$ is equivalent to the full \'etale site of $\XX_0$. Thus, it suffices to provide a natural isomorphism
$$(\pi_*g^*\GG)(\Spec B) \cong (g_0^*\pi_*\GG)(\Spec B)$$
for $B$  as in the cartesian square \eqref{eq:cartesian}above and $i\colon \Spec A \to \YY_0$ an arbitrary \'etale map.
We have the following chain of natural isomorphisms:
\begin{align*}
(\pi_*g^*\GG)(\Spec B) &\cong \pi_*((g^*\GG)(\Spec B))\\
&\cong \pi_*((j^*g^*\GG)(\Spec B)) \\
&\cong \pi_*((h^*i^*\GG)(\Spec B))
\end{align*}
As $i$ is \'etale, $i^*$ is given by restriction to a subsite and commutes thus with $\pi_*$. As $h$ is a morphism between affine schemes, $h^*$ can be identified with the base change morphism
$$\tensor_{\OO_{\YY}(\Spec A)}\OO_{\XX}(\Spec B)\colon \OO_{\YY}(\Spec A)\modules  \to \OO_{\XX}(\Spec B)\modules.$$
By a degenerate K\"unneth spectral sequence we see that $$\pi_*((h^*\GG)(\Spec B)) \cong \pi_*\GG(\Spec A) \tensor_{\pi_*\OO_{\YY}(\Spec A)} \pi_*\OO_{\XX}(\Spec B)$$
as $\pi_*\OO_{\XX}(\Spec B) \cong \pi_*\OO_{\YY}(\Spec A) \tensor_A B$ is flat over $\pi_*\OO_{\YY}(\Spec A)$. Here we use that the morphism $g$ and hence the morphism $\OO_{\YY}(\Spec A) \to \OO_{\XX}(\Spec B)$ of $E_\infty$-rings is flat.
\end{proof}

Next, we will explain how to form Hom sheaves between $\OO$-modules $\FF$ and $\GG$ for $\OO$ a sheaf of $E_\infty$-rings on a site $\CC$. By \cite[Remark 2.1.5]{SAG} there is a sheaf $\mathfrak{M}\colon \CC^{op} \to \widehat{\Cat}_\infty$ with values in the $\infty$-category of (possibly large) $\infty$-categories that assigns to each $U\in\CC$ the $\infty$-category of $(\OO|_{\CC/U})\modules$ and the maps are given by pullback of sheaves. The choice of $\FF$ and $\GG$ allows to lift $\mathfrak{M}$ to a functor $\mathfrak{M}_{\FF,\GG}\colon \CC^{op} \to (\widehat{\Cat}_\infty)_{\Delta^0\sqcup \Delta^0/}$ with values in $\infty$-categories under $\Delta^0\sqcup\Delta^0$. This functor is a sheaf as well because analogously to \cite[1.2.13.8]{HTT}, the forgetful functor $(\widehat{\Cat}_\infty)_{\Delta^0\sqcup \Delta^0/} \to \widehat{\Cat}_\infty$ detects limits.

Let $I$ be the $\infty$-category under $\Delta^0\sqcup \Delta^0$ given by the inclusion $\Delta^0\sqcup \Delta^0\hookrightarrow \Delta^1$ of end points.  We define the space valued Hom sheaf between $\FF$ and $\GG$ as the functor
$$\mathcal{H}om^{\mathcal{S}}_{\OO}(\FF,\GG) = \Map_{\widehat{\Cat}_{\infty, \Delta^0\sqcup \Delta^0 /-}}(I, -) \circ \mathfrak{M}_{\FF,\GG}\colon \CC^{op} \to \mathcal{S}$$
into the $\infty$-category $\mathcal{S}$ of spaces. It is a sheaf as mapping out of $I$ preserves limits. For an arbitrary $\infty$-category $\DD$ together with a morphism $\Delta^0\sqcup \Delta^0 \xrightarrow{(X,Y)} \DD$, the space of morphisms $I \to \DD$ under $\Delta^0\sqcup \Delta^0$ is equivalent to the space of morphisms from $X$ to $Y$ in $\DD$ \cite[Proposition 1.2]{D-S11}. Thus, we obtain an equivalence
$$(\mathcal{H}om_{\OO}^{\mathcal{S}}(\FF,\GG))(U)\simeq \Map_{\OO|_{\CC/U}}(\FF|_{\CC/U}, \GG|_{\CC/U})$$
that is natural in $\FF$ and $\GG$.

Before we lift this functor to a spectrum-valued functor, we need to talk about finiteness conditions.

\begin{defi}
If $R$ is an $A_\infty$-ring spectrum, a (left) $R$-module $M$ is called \emph{finite} or \emph{compact} if it is a compact object in $\Ho(R\modules)$. Equivalently, it lies in the thick subcategory generated by $R$ (see e.g.\ \cite[Theorem 2.13]{HPS97}).
\end{defi}

\begin{lemma}\label{lem:HomSheaves}
With notation as above, there is a natural lift
\[
\xymatrix{ &&\Sp \ar[d]^{\Omega^\infty}\\
\CC^{op}\ar@{-->}[urr]^{\mathcal{H}om_{\OO}(\FF,\GG)} \ar[rr]_{\mathcal{H}om^{\mathcal{S}}_{\OO}(\FF,\GG)}&& \mathcal{S}
}
\]
that is a sheaf of $\OO$-modules. Moreover, there is an equivalence
$$(\mathcal{H}om_{\OO}(\FF,\GG))(U)\simeq\Hom_{\OO|_{\CC/U}}(\FF|_{\CC/U}, \GG|_{\CC/U})$$
that is natural in $\FF$ and $\GG$. If $\CC$ is the \'etale site of a Deligne--Mumford stack so that $(\CC, \OO)$ defines a derived stack, and $\FF$ and $\GG$ are quasi-coherent, then $\mathcal{H}om_{\OO}(\FF,\GG)$ is quasi-coherent as well if $\FF(U)$ is a compact $\OO(U)$-module for every $U\in \CC$ with $U$ affine.
\end{lemma}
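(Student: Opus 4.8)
The plan is to build the spectral lift by repeating, one categorical level up, the construction that produced $\mathcal{H}om^{\mathcal{S}}_{\OO}(\FF,\GG)$, exploiting that every module category occurring here is stable. Concretely, I would set $\mathcal{H}om_{\OO}(\FF,\GG)(U)$ to be the internal mapping object $\Hom_{\OO|_{\CC/U}}(\FF|_{\CC/U},\GG|_{\CC/U})$, viewed as an $\OO|_{\CC/U}$-module; functoriality in $U$ comes from the op-lax closed structure of the pullback functors $f^*$, i.e.\ the natural maps $f^*\Hom_{\OO|_{\CC/V}}(\FF|_V,\GG|_V)\to\Hom_{\OO|_{\CC/U}}(\FF|_U,\GG|_U)$, organized exactly as $\mathfrak{M}_{\FF,\GG}$ was organized above, the role of ``mapping out of $I$'' now being played by the internal-hom functor $\Mod_{\OO|_{\CC/U}}^{op}\times\Mod_{\OO|_{\CC/U}}\to\Mod_{\OO|_{\CC/U}}$, which preserves limits in each variable. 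Since each $\Hom_{\OO|_{\CC/U}}(\FF|_U,\GG|_U)$ is by definition an $\OO|_{\CC/U}$-module whose underlying spectrum satisfies $\Omega^\infty\Sigma^n\Hom_{\OO|_{\CC/U}}(\FF|_U,\GG|_U)\simeq\Map_{\OO|_{\CC/U}}(\FF|_U,\Sigma^n\GG|_U)$ for $n\geq 0$, this functor is a lift of $\mathcal{H}om^{\mathcal{S}}_{\OO}(\FF,\GG)$ through $\Omega^\infty$ and visibly satisfies the claimed formula on sections.

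For the sheaf condition I would use that a presheaf of spectra $F$ on $\CC$ is a sheaf if and only if $\Omega^\infty\Sigma^nF$ is a sheaf of spaces for every $n\geq 0$, because $\Sp\simeq\lim(\cdots\xrightarrow{\Omega}\mathcal{S}_*\xrightarrow{\Omega}\mathcal{S}_*)$ and the functors $\Omega$ preserve limits. Applying this to $F=\mathcal{H}om_{\OO}(\FF,\GG)$ and using stability,
\[\Omega^\infty\Sigma^n\mathcal{H}om_{\OO}(\FF,\GG)(U)\simeq\Map_{\OO|_{\CC/U}}(\FF|_{\CC/U},\Sigma^n\GG|_{\CC/U})\simeq\mathcal{H}om^{\mathcal{S}}_{\OO}(\FF,\Sigma^n\GG)(U),\]
which is a sheaf by the space-level statement already established, applied with $\Sigma^n\GG$ in place of $\GG$. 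Hence $\mathcal{H}om_{\OO}(\FF,\GG)$ is a sheaf of spectra, and carrying along the $\OO$-module structures throughout the same construction makes it a sheaf of $\OO$-modules.

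For the quasi-coherence statement, the plan is to work locally. Quasi-coherence of an $\OO$-module on $\CC$ can be checked on the localizations $\CC/V$ at affine objects $V\in\CC$, and by construction $\mathcal{H}om_{\OO}(\FF,\GG)|_{\CC/V}\simeq\mathcal{H}om_{\OO|_{\CC/V}}(\FF|_{\CC/V},\GG|_{\CC/V})$, whose value on $V$ is the internal hom $\Hom_{\OO|_{\CC/V}}(\FF|_{\CC/V},\GG|_{\CC/V})$ in $\Mod_{\OO|_{\CC/V}}$. Over an affine $V$ the global-sections functor is a symmetric monoidal equivalence $\QCoh(\CC/V)\simeq\OO(V)\modules$, exhibiting $\QCoh(\CC/V)$ as a full symmetric monoidal subcategory of $\Mod_{\OO|_{\CC/V}}$. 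By hypothesis $\FF(V)$ is a compact $\OO(V)$-module, hence perfect, hence dualizable in $\OO(V)\modules\simeq\QCoh(\CC/V)$; a fully faithful symmetric monoidal inclusion preserves dualizable objects together with their duals, so $\FF|_{\CC/V}$ is dualizable in $\Mod_{\OO|_{\CC/V}}$ with dual $\FF|_{\CC/V}^{\vee}$ again quasi-coherent. Consequently
\[\mathcal{H}om_{\OO|_{\CC/V}}(\FF|_{\CC/V},\GG|_{\CC/V})\simeq\FF|_{\CC/V}^{\vee}\tensor_{\OO|_{\CC/V}}\GG|_{\CC/V}\]
is a tensor product of quasi-coherent sheaves, hence quasi-coherent; therefore so is $\mathcal{H}om_{\OO}(\FF,\GG)$.

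The routine part is the verification that everything is natural and assembles into a sheaf of $\OO$-modules: this amounts to rerunning the $\mathfrak{M}_{\FF,\GG}$-bookkeeping with ``spaces'' replaced by ``$\OO$-modules'' and ``$\Map$ out of $I$'' replaced by the internal mapping object, and I expect no surprises there. The conceptual heart, and the step I would be most careful about, is the quasi-coherence claim: it genuinely fails without the compactness hypothesis, since internal homs of quasi-coherent sheaves are not quasi-coherent in general, only locally so when the source is perfect. So the argument must actually use that a compact module over an $A_\infty$-ring is dualizable and that $\QCoh(\CC/V)\simeq\OO(V)\modules$ is symmetric monoidal over affine $V$; the one place where I would slow down is checking that $\mathcal{H}om_{\OO}(\FF,\GG)|_{\CC/V}$ really is the internal hom computed over the localized site $(\CC/V,\OO|_{\CC/V})$ and not over some larger ambient category, which is what lets the dualizability argument apply.
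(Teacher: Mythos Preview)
Your proposal is correct. The construction of the spectral lift and the verification of the sheaf condition are essentially identical to the paper's: both assemble the spectrum-valued functor from the tower $\mathcal{H}om^{\mathcal{S}}_{\OO}(\FF,\Sigma^n\GG)$ using $\Sp\simeq\lim(\cdots\xrightarrow{\Omega}\mathcal{S}_*\xrightarrow{\Omega}\mathcal{S}_*)$ and check the sheaf condition levelwise in $n$.

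For quasi-coherence, however, your route genuinely differs from the paper's. The paper appeals to the criterion of \cite[Proposition~2.2.4.3]{SAG}: one must show that for every \'etale map $V\to U$ of affines, writing $A=\OO(U)$ and $B=\OO(V)$, the comparison
\[
\Hom_A(\FF(U),\GG(U))\sm_A B \;\longrightarrow\; \Hom_B(\FF(V),\GG(V))\;\simeq\;\Hom_A(\FF(U),\GG(U)\sm_A B)
\]
is an equivalence; compactness of $\FF(U)$ is precisely what forces $\Hom_A(\FF(U),-)$ to commute with the base change $-\sm_A B$. Your argument instead upgrades compactness to dualizability and writes $\mathcal{H}om(\FF,\GG)\simeq\FF^{\vee}\tensor\GG$ locally, so that quasi-coherence is inherited from closure of $\QCoh$ under tensor products. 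The two arguments encode the same content (compact $\Leftrightarrow$ dualizable over an $E_\infty$-ring), but yours is more conceptual and sidesteps the explicit module-level computation, while the paper's stays closer to the definition of quasi-coherence and makes the mechanism visible at the level of rings. The point you flag about identifying $\mathcal{H}om_{\OO}(\FF,\GG)|_{\CC/V}$ with the internal hom over $(\CC/V,\OO|_{\CC/V})$ is exactly where your version needs care; the paper's direct check avoids having to make that identification.
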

\begin{proof}
By construction, $\mathfrak{M}$ and $\mathfrak{M}_{\FF,\GG}$ take values in the $\infty$-category $\widehat{\Cat}_\infty^{st}$ of \emph{stable} $\infty$-categories (under $\Delta^0\sqcup \Delta^0$) where functors are required to preserve finite limits.

We have a natural equivalence
\[\mathcal{H}om_{\OO}(\FF,\GG) \to \mathcal{H}om_{\OO}(\FF,\Omega\Sigma\GG)\simeq \Omega \mathcal{H}om_{\OO}(\FF,\Sigma\GG).\]
Thus, the sequence $\mathcal{H}om_{\OO}(-,\Sigma^n-)$ defines a functor
$$\mathcal{H}om_{\OO}(\FF,\GG)\colon \CC^{op} \to \lim (\cdots \xrightarrow{\Omega} \mathcal{S}_* \xrightarrow{\Omega} \mathcal{S}_*) \simeq \Sp$$
that lifts $\mathcal{H}om^{\mathcal{S}}_{\OO}(\FF,\GG)$. We see that $\mathcal{H}om_{\OO}(\FF,\GG)$ is a presheaf of $\OO$-modules.

A diagram in $\Sp$ is a limit diagram if and only if all its postcompositions with $\Omega^\infty \Sigma^n$ are limit diagrams. As the $\mathcal{H}om^{\mathcal{S}}_{\OO}(\FF, \Sigma^n\GG)$ are sheaves of spaces, $\mathcal{H}om_{\OO}(\FF,\GG)$ is thus a sheaf of spectra.

Now assume that $\CC$ is the \'etale site of a Deligne--Mumford stack $\XX$ so that $(\CC, \OO)$ defines a derived stack; assume furthermore that $\FF$ and $\GG$ are quasi-coherent. By \cite[Proposition 2.2.4.3]{SAG}, the sheaf $\mathcal{H}om_{\OO}(\FF,\GG)$ is quasi-coherent if for every $V\to U$ in $\CC$ with $U,V$ affine, the map
$$ \Hom_{\OO|_U}(\FF|_U,\GG|_U) \sm_A B \to \Hom_{\OO|_V}(\FF|_V,\GG|_V)$$
is an equivalence, where $A = \OO(U)$ and $B = \OO(V)$. We can identify the source with $\Hom_A(\FF(U), \GG(U)) \sm_A B$ as $U$ is affine. As $\FF(V) \simeq \FF(U)\sm_A B$ and $\GG(V) \simeq \GG(U)\sm_A B$, we can identify the target with $\Hom_B(\FF(V), \GG(V)) \simeq \Hom_A(\FF(U), \GG(U)\sm_A B)$. The map is an equivalence as $B$ is finite as an $A$-module.
\end{proof}

After this preparation we aim to show that the spectra $\Tmf(\Gamma)$ are compact $\Tmf[\tfrac1n]$-modules. The crucial ingredient is the following statement.

\begin{prop}\label{prop:0affine}
The stacks $\MMb(\Gamma)^{top}$ are $0$-affine in the sense that the global sections functor
$$\Gamma\colon \QCoh(\MMb(\Gamma)) \to \Tmf(\Gamma)\modules$$
is a (symmetric monoidal) equivalence. Moreover, the descent spectral sequence \eqref{eq:DSS} has a horizontal vanishing lines at $E_r$ for some $r\geq 2$.
\end{prop}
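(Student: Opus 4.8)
The plan is to reduce the proposition, for a general congruence subgroup $\Gamma$, to the case $\Gamma = SL_2(\Z)$, i.e.\ to the corresponding statement for $\MMb_{ell}^{top}$ itself, which is the content of \cite{MM15}: there $\MMb_{ell}^{top}$ is shown to be $0$-affine and the descent spectral sequence computing $\pi_*Tmf$ is shown to have a horizontal vanishing line. The bridge is the map $g\colon \MMb(\Gamma)^{top} \to \MMb_{ell,\Z[\frac1n]}^{top}$, whose underlying map $g_0$ of Deligne--Mumford stacks is finite and flat by \cite[Proposition 2.4]{MeiDecMod}.

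\textbf{The equivalence.} Since $\Spec\Z[\frac1n]\to\Spec\Z$ is an affine open immersion, the projection $\MMb_{ell,\Z[\frac1n]}^{top}\to\MMb_{ell}^{top}$ is affine; and $g$ is affine because $g_0$ is and $\OO^{top}_{\MMb(\Gamma)}$ is quasi-coherent. For any affine morphism $f\colon\XX\to\YY$ of derived stacks one has $\QCoh(\XX)\simeq\Mod_{f_*\OO_\XX}(\QCoh(\YY))$ by \cite[Section 2.5]{SAG}; if in addition $\YY$ is $0$-affine, the right-hand side identifies with $\Mod_{\Gamma(\XX;\OO_\XX)}$ via the global sections functor, so $\XX$ is $0$-affine as well (see \cite{MM15}). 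Applying this first to $\MMb_{ell,\Z[\frac1n]}^{top}\to\MMb_{ell}^{top}$ and then to $g$ shows that $\MMb(\Gamma)^{top}$ is $0$-affine. Its global sections functor is always lax symmetric monoidal, and being an equivalence it is then symmetric monoidal.

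\textbf{The vanishing line.} As $g$ is affine, the descent spectral sequence \eqref{eq:DSS} coincides with the descent spectral sequence for the quasi-coherent $\OO^{top}$-module $\GG:=g_*\OO^{top}_{\MMb(\Gamma)}$ on $\MMb_{ell,\Z[\frac1n]}$, and by Lemma \ref{lem:pushpull} we have $\pi_*\GG\cong(g_0)_*\OO_{\MMb(\Gamma)}\tensor\omega^{\tensor *}$. Because $g_0$ is finite and flat, $(g_0)_*\OO_{\MMb(\Gamma)}$ is a vector bundle on $\MMb_{ell,\Z[\frac1n]}$, so over an affine $U$ étale over $\MMb_{ell,\Z[\frac1n]}$ on which it is free, the $\OO^{top}(U)$-module $\GG(U)$ has homotopy finite free over $\pi_*\OO^{top}(U)$ and is hence finite free. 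Since perfectness is étale-local, $\GG$ is a perfect $\OO^{top}_{\MMb_{ell,\Z[\frac1n]}}$-module; equivalently, $Tmf(\Gamma)$ is a perfect $Tmf[\frac1n]$-module. Now the descent spectral sequence of the unit $\OO^{top}_{\MMb_{ell,\Z[\frac1n]}}$, which computes $\pi_*Tmf[\frac1n]$, has a horizontal vanishing line: by flat base change $H^q(\MMb_{ell,\Z[\frac1n]};\omega^{\tensor p})\cong H^q(\MMb_{ell};\omega^{\tensor p})\tensor\Z[\tfrac1n]$, and the descent spectral sequence for $Tmf$ has one. Finally, the class of $\OO^{top}$-modules whose descent spectral sequence has a horizontal vanishing line is closed, up to a uniformly bounded increase of the height, under shifts, retracts and cofibre sequences, so it contains the whole thick subcategory generated by the unit, in particular $\GG$. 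Hence \eqref{eq:DSS} has a horizontal vanishing line at some $E_r$, $r\geq 2$.

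\textbf{Expected main obstacle.} The first part is soft once $\MMb_{ell}^{top}$ is known to be $0$-affine; the real work is the vanishing line. When $\Gamma$ is tame at every prime not dividing $n$ — in particular always for $\Gamma_1(n)$ and $\Gamma(n)$ — Proposition \ref{prop:coh}(3) already collapses \eqref{eq:DSS} into the two rows $q=0,1$ and there is nothing to do. But for a general $\Gamma$, e.g.\ $\Gamma_0(n)$ at a prime $l\in\{2,3\}$ coprime to $n$ and dividing $\gcd(6,\phi(n))$, the $E_2$-page is nonzero in arbitrarily high cohomological degree, reflecting the non-tame $l$-torsion in $Tmf_0(n)$, and a horizontal vanishing line can only appear after (necessarily nontrivial) differentials. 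So the crux is to transport the known, genuinely deep, vanishing line for $Tmf$ across the finite flat map $g$. The two points that need care are: (i) that $\pi_*\GG$ is locally free over $\pi_*\OO^{top}=\omega^{\tensor*}$, not merely as a sheaf on $\MMb_{ell}$, which is what makes $\GG$ a perfect $\OO^{top}$-module; and (ii) that a horizontal vanishing line is degraded by only a bounded amount through the finitely many cofibre sequences and retractions building up a perfect module from the unit.
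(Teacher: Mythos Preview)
Your proof is correct and follows essentially the same route as the paper. The paper is terser: for $0$-affineness it notes that $\MMb(\Gamma)\to\MMb_{ell}$ is affine and $\MMb_{ell}\to\MM_{FG}$ is quasi-affine, then invokes the main theorem of \cite{MM15} directly, whereas you use the $0$-affineness of $\MMb_{ell}^{top}$ (also from \cite{MM15}) as a black box and pass through the general ``affine over $0$-affine is $0$-affine'' principle; these are equivalent. For the vanishing line the paper simply cites \cite[Theorem~3.14]{Mat13Thick}, while you unpack that citation: you establish that $g_*\OO^{top}_{\MMb(\Gamma)}$ is perfect (which the paper proves separately as Proposition~\ref{prop:compactness}) and then invoke the thick-subcategory stability of horizontal vanishing lines. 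The one step you assert without proof --- that a horizontal vanishing line survives cofibre sequences with only a bounded height increase --- is precisely the nontrivial content of the cited Mathew result, so you are in the same position as the paper, just more explicit about where the weight lies. Your diagnosis in the ``Expected main obstacle'' paragraph is accurate.
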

\begin{proof}
By construction, $\MMb(\Gamma) \to \MMb_{ell}$ is an affine map. As $\MMb_{ell} \to \MM_{FG}$ is quasi-affine (see e.g.\ the proof of Theorem 7.2 in \cite{MM15}), the main theorem of \cite{MM15} implies the first statement. The eventual horizontal vanishing line follows from \cite[Theorem 3.14]{Mat13Thick}.
\end{proof}

\begin{prop}\label{prop:compactness}
For an arbitrary congruence subgroup $\Gamma$ of level $n$, the $\Tmf[\tfrac1n]$-module $\Tmf(\Gamma)$ is compact. Likewise, $\TMF(\Gamma)$ is a compact $\TMF[\tfrac1n]$-module.
\end{prop}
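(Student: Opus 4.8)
The plan is to reduce the statement to the $0$-affineness of $\MMb(\Gamma)^{top}$ (Proposition \ref{prop:0affine}) together with the finiteness of the algebraic map $g\colon \MMb(\Gamma) \to \MMb_{ell,\Z[\frac1n]}$. First I would observe that under the equivalence $\QCoh(\MMb(\Gamma)) \simeq Tmf(\Gamma)\modules$ and $\QCoh(\MMb_{ell,\Z[\frac1n]}) \simeq Tmf[\frac1n]\modules$ from Proposition \ref{prop:0affine}, the pushforward $g_*$ along the map $\MMb(\Gamma)^{top} \to \MMb_{ell,\Z[\frac1n]}^{top}$ corresponds to the restriction-of-scalars functor from $Tmf(\Gamma)$-modules to $Tmf[\frac1n]$-modules (this is just the observation that global sections commute with $g_*$, which holds because $g$ is affine, as in Lemma \ref{lem:pushpull}). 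Hence $Tmf(\Gamma)$, viewed as a $Tmf[\frac1n]$-module, is $\Gamma(\MMb_{ell,\Z[\frac1n]}, g_*\OO^{top}_{\MMb(\Gamma)})$, i.e.\ the image under $\Gamma$ of $g_*\OO^{top}_{\MMb(\Gamma)} \in \QCoh(\MMb_{ell,\Z[\frac1n]})$.

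Next I would show that $g_*\OO^{top}_{\MMb(\Gamma)}$ is a compact object of $\QCoh(\MMb_{ell,\Z[\frac1n]})$; since the global sections equivalence is an equivalence of symmetric monoidal stable $\infty$-categories, it preserves compact (dualizable-generated) objects, so this gives the result. To prove compactness of $g_*\OO^{top}_{\MMb(\Gamma)}$, I would use that $\MMb_{ell,\Z[\frac1n]}^{top}$ is $0$-affine and that a quasi-coherent sheaf on a $0$-affine stack is compact iff its restriction to an affine (fppf or étale) cover is. Concretely, pick an affine étale cover $\Spec A \to \MMb_{ell,\Z[\frac1n]}$; pulling back, $\MMb(\Gamma) \times_{\MMb_{ell}} \Spec A = \Spec B$ is affine (because $g$ is affine), and by flat base change for the pushforward (Lemma \ref{lem:pushpull} applied to the cartesian square, using flatness of $\MMb_{ell}^{top}$ over $\MM_{FG}$ or directly that $g$ is flat) the restriction of $g_*\OO^{top}_{\MMb(\Gamma)}$ to $\Spec A$ is the $A$-module $\OO^{top}_{\MMb(\Gamma)}(\Spec B) = \OO^{top}(\Spec B)$. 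Now $\OO^{top}(\Spec B)$ is an $A = \OO^{top}(\Spec A)$-algebra, and its compactness as an $A$-module follows because $B$ is a finite $A$-module (the map $g$ is finite, \cite[Proposition 2.4]{MeiDecMod} and Lemma \ref{lem:compactificationsGamma}) and $\pi_*\OO^{top}(\Spec B) \cong \pi_*\OO^{top}(\Spec A) \otimes_A B$, so a finite free resolution of $B$ over $A$ lifts to a finite cell structure of $\OO^{top}(\Spec B)$ over $\OO^{top}(\Spec A)$ by obstruction theory / the degenerate Künneth spectral sequence. Finally I would remark that the identical argument, using that $\MM_{ell}^{top}$ is $0$-affine (its global sections functor is an equivalence onto $TMF$-modules), $g\colon \MM(\Gamma) \to \MM_{ell,\Z[\frac1n]}$ finite flat and étale, gives the statement for $TMF(\Gamma)$ over $TMF[\frac1n]$.

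The main obstacle is the reduction from ``$B$ finite projective over $A$'' (algebra) to ``$\OO^{top}(\Spec B)$ compact over $\OO^{top}(\Spec A)$'' (topology). I expect to handle it by a thick-subcategory argument: the collection of finitely generated projective $\pi_*\OO^{top}(\Spec A)$-modules that are realized by compact $\OO^{top}(\Spec A)$-modules is closed under retracts and extensions, hence contains all such modules once it contains the free ones of rank one; alternatively one can directly build a finite cell $\OO^{top}(\Spec A)$-module $M$ with $\pi_* M \cong \pi_* \OO^{top}(\Spec B)$ by inductively attaching cells to kill homotopy, the process terminating because $\pi_*\OO^{top}(\Spec B)$ is a finitely presented $\pi_*\OO^{top}(\Spec A)$-module that is projective, and then identify $M \simeq \OO^{top}(\Spec B)$. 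One technical point worth spelling out is that $\pi_* \OO^{top}(\Spec A)$ need not be Noetherian globally, but étale-locally on $\MMb_{ell}$ it is (one may reduce to a Weierstrass chart), which suffices since compactness can be checked after passing to a finer affine cover.
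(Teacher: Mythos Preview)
Your proposal is correct and follows essentially the same route as the paper: reduce to showing that $g_*\OO^{top}_{\MMb(\Gamma)}$ is a compact object of $\QCoh(\MMb_{ell,\Z[\frac1n]},\OO^{top})$, verify this \'etale-locally, and then pass from local to global compactness.

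Two points of comparison are worth recording. First, the step you take as given---that compactness of a quasi-coherent sheaf on a $0$-affine derived stack can be checked after restriction to an affine \'etale cover---is exactly what the paper isolates and proves using the $\mathcal{H}om$-sheaf machinery of Lemma~\ref{lem:HomSheaves}: one shows that $\bigoplus_i\mathcal{H}om_{\OO^{top}}(\FF,\GG_i)\to \mathcal{H}om_{\OO^{top}}(\FF,\bigoplus_i\GG_i)$ is an equivalence because it is so locally, and then uses that global sections commute with coproducts by $0$-affineness. Your dualizability parenthetical points to an alternative justification (compact $=$ dualizable in $R\modules$, and dualizability satisfies \'etale descent), but either way this step deserves an argument rather than an assertion. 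Second, your local step is more elaborate than necessary: rather than building a finite cell structure or running a thick-subcategory argument, the paper simply observes that $\pi_*g_*\OO^{top}_{\MMb(\Gamma)}\cong g_*g^*\pi_*\OO^{top}$ is \'etale-locally \emph{free} of finite rank (since $g$ is finite flat), so after refining the cover one may lift a basis to obtain an equivalence $(\OO^{top}|_U)^r\simeq (g_*\OO^{top}_{\MMb(\Gamma)})|_U$, which makes local compactness immediate. Your approach works too, but the local freeness shortcut avoids the obstruction-theoretic detour.
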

\begin{proof}
By one of the main results from \cite{MM15}, the global sections functor defines an equivalence
$$\QCoh(\MMb_{ell,\Z[\tfrac1n]},\OO^{top}) \to \Tmf[\tfrac1n]\modules$$
of $\infty$-categories.

Thus, $\Tmf(\Gamma)$ is a compact $\Tmf[\tfrac1n]$-module if and only if $g_*\OO^{top}_{\MMb(\Gamma)}$ is a compact object in the $\infty$-category of quasi-coherent $\OO^{top}$-modules on $\MMb_{ell,\Z[\tfrac1n]}$. By the last lemma, we have an isomorphism $\pi_*g_*\OO^{top}_{\MMb(\Gamma)}\cong g_*g^*\pi_*\OO^{top}$. As $g$ is finite and flat (see e.g.\ \cite[Proposition 2.4]{MeiDecMod}), this is an \'etale locally free $\pi_*\OO^{top}$-module of finite rank. Thus, $g_*\OO^{top}_{\MMb(\Gamma)}$ is an \'etale locally free $\OO^{top}$-module of finite rank as well.

If a quasi-coherent $\OO^{top}$-module $\FF$ is \'etale locally compact, then $\FF$ is a compact object in $\QCoh(\MMb_{ell,\Z[\tfrac1n]}, \OO^{top})$. Indeed, the map
$$\bigoplus_{i\in I}\mathcal{H}om_{\OO^{top}}(\FF,  \GG_i) \to \mathcal{H}om_{\OO^{top}}(\FF, \bigoplus_{i\in I} \GG_i)$$
is an equivalence if it is an equivalence locally, where $\mathcal{H}om$ denotes the Hom-sheaves as in the last lemma. Then we use that taking global sections on quasi-coherent $\OO^{top}$-modules commutes with infinite direct sums to deduce that
$$\bigoplus_{i\in I}\Hom_{\OO^{top}}(\FF,  \GG_i) \to \Hom_{\OO^{top}}(\FF, \bigoplus_{i\in I} \GG_i) $$
is an equivalence.

The proof for $\TMF(\Gamma)$ is the same.
\end{proof}

In particular, $\pi_*\Tmf(\Gamma)$ is finitely generated over $\pi_*\Tmf[\tfrac1n]$ as $\Tmf(\Gamma)$ is in the thick subcategory of $\Ho(\Tmf[\tfrac1n]\modules)$ generated by the unit. This can be used to give a topological proof for the following (known) fact.

\begin{cor}[\cite{MeiDecMod}]
  For $\Gamma \subset SL_2(\Z)$ a congruence subgroup of level $n$, the ring of modular forms $M_*(\Gamma;\Z[\tfrac1n]) = H^0(\MMb(\Gamma);\omega^{\tensor *})$ for $\Gamma$ over $\Z[\tfrac1n]$ is finitely generated as a module over the ring of modular forms for $SL_2(\Z)$ over $\Z[\tfrac1n]$.
\end{cor}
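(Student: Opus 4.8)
The plan is to deduce this purely algebraic statement from the topological compactness result just proved, namely Proposition \ref{prop:compactness}. The key observation is that $M_*(\Gamma;\Z[\tfrac1n])$ is recovered from $\pi_*Tmf(\Gamma)$, and the ring of modular forms for $SL_2(\Z)$ over $\Z[\tfrac1n]$ is recovered from $\pi_*Tmf[\tfrac1n]$, so finiteness should transfer along the maps on homotopy groups induced by $Tmf[\tfrac1n] \to Tmf(\Gamma)$.

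First I would recall from Proposition \ref{prop:compactness} that $Tmf(\Gamma)$ is a compact $Tmf[\tfrac1n]$-module, hence lies in the thick subcategory of $\Ho(Tmf[\tfrac1n]\modules)$ generated by the unit; this is already noted in the paragraph preceding the corollary. A standard consequence is that $\pi_*Tmf(\Gamma)$ is a finitely generated module over the graded ring $\pi_*Tmf[\tfrac1n]$: finiteness of homotopy groups is preserved under shifts, finite (co)fiber sequences, and retracts, starting from the unit whose homotopy is $\pi_*Tmf[\tfrac1n]$ itself. Since $\pi_*Tmf[\tfrac1n]$ is Noetherian (it is a finitely generated algebra over $\Z[\tfrac1n]$, being generated by the modular forms $c_4, c_6, \Delta$ together with the torsion classes $\eta,\nu,\ldots$, or more slickly because $H^*(\MMb_{ell,\Z[\frac1n]};\omega^{\tensor *})$ is Noetherian), it follows that every $\pi_*Tmf[\tfrac1n]$-submodule and every subquotient of $\pi_*Tmf(\Gamma)$ is again finitely generated.

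Next I would identify $M_*(\Gamma;\Z[\tfrac1n]) = H^0(\MMb(\Gamma);\omega^{\tensor *})$ with (a summand of) $\pi_{2*}Tmf(\Gamma)$. By Proposition \ref{prop:coh} or directly by the edge homomorphism of the descent spectral sequence \eqref{eq:DSS}, there is a natural surjection $\pi_{2m}Tmf(\Gamma) \twoheadrightarrow H^0(\MMb(\Gamma);g^*\omega^{\tensor m})$ with kernel the positive-filtration part; concatenating over $m$ gives a surjection of graded $\pi_{2*}Tmf[\tfrac1n]$-modules $\pi_{2*}Tmf(\Gamma) \twoheadrightarrow M_*(\Gamma;\Z[\tfrac1n])$. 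This map is also a map of modules over $M_*(SL_2(\Z);\Z[\tfrac1n]) = H^0(\MMb_{ell,\Z[\frac1n]};\omega^{\tensor *})$, which sits inside $\pi_{2*}Tmf[\tfrac1n]$ compatibly via $Tmf[\tfrac1n]\to Tmf(\Gamma)$. Therefore $M_*(\Gamma;\Z[\tfrac1n])$ is a quotient of the finitely generated $\pi_*Tmf[\tfrac1n]$-module $\pi_*Tmf(\Gamma)$, hence finitely generated over $\pi_*Tmf[\tfrac1n]$, hence finitely generated over its subring $M_*(SL_2(\Z);\Z[\tfrac1n])$ — using once more that $M_*(SL_2(\Z);\Z[\tfrac1n])$ is Noetherian so that a module finitely generated over the larger Noetherian ring $\pi_*Tmf[\tfrac1n]$, which is itself a finitely generated $M_*(SL_2(\Z);\Z[\tfrac1n])$-module, is finitely generated over the smaller ring.

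The main obstacle, and the only point requiring care, is the bookkeeping of the ring over which finiteness is measured: one must pass from ``finite over $\pi_*Tmf[\tfrac1n]$'' to ``finite over $M_*(SL_2(\Z);\Z[\tfrac1n])$''. The cleanest way is to note that $\pi_*Tmf[\tfrac1n]$ is itself a finitely generated module over $M_*(SL_2(\Z);\Z[\tfrac1n])$ — the torsion in $\pi_*tmf[\tfrac1n]$ (for $n$ with $2,3 \mid n$ inverted this is already zero; in general it is the finite list $\eta,\eta^2,\nu,\ldots$ in low degrees together with the $\Delta$-periodic torsion, all finitely many generators over $M_*(SL_2(\Z);\Z[\tfrac1n])$) — and then transitivity of finite generation does the rest. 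Alternatively one can bypass $\pi_*Tmf[\tfrac1n]$ entirely and argue on the algebraic side that $g_*\OO_{\MMb(\Gamma)}$ is a coherent $\OO_{\MMb_{ell}}$-module and take $H^0(\omega^{\tensor *}\otimes -)$, but since the corollary is explicitly flagged as having a ``topological proof,'' the module-theoretic argument above is the intended one.
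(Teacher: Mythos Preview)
Your overall strategy agrees with the paper's: use Proposition~\ref{prop:compactness} to get that $\pi_*Tmf(\Gamma)$ is finitely generated over $\pi_*Tmf[\tfrac1n]$, identify $M_*(\Gamma;\Z[\tfrac1n])$ as a subquotient, and then descend to $M_*(SL_2(\Z);\Z[\tfrac1n])$. But two of your steps do not go through as stated.

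First, the surjection $\pi_{2m}Tmf(\Gamma)\twoheadrightarrow H^0(\MMb(\Gamma);\omega^{\tensor m})$ is only supplied by Proposition~\ref{prop:coh} in the \emph{tame} case; for general $\Gamma_1(n)\subset\Gamma\subset\Gamma_0(n)$ the descent spectral sequence can have nontrivial higher lines and differentials leaving the $0$-line, so the edge map need only surject onto $E_\infty^{m,0}\subset H^0$. The paper sidesteps this by first reducing to $\Gamma=\Gamma_1(n)$ or $\Gamma(n)$ via the inclusion $M_*(\Gamma;\Z[\tfrac1n])\subset M_*(\Gamma_1(n);\Z[\tfrac1n])$ together with noetherianity of $M_*(SL_2(\Z);\Z[\tfrac1n])$; for these tame $\Gamma$ one has $\pi_{even}Tmf(\Gamma)=M_*(\Gamma;\Z[\tfrac1n])$ on the nose.

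Second, and more seriously, you assert that $M_*(SL_2(\Z);\Z[\tfrac1n])$ ``sits inside $\pi_{2*}Tmf[\tfrac1n]$'' and that $\pi_*Tmf[\tfrac1n]$ is a module over it. There is no such ring map when $2$ or $3$ is not inverted: the edge homomorphism $\pi_{2*}Tmf\to M_*(SL_2(\Z))$ is neither split nor surjective (e.g.\ $\Delta$ is not in its image at $p=3$), so $M_*(SL_2(\Z))$ is not a subring of $\pi_{2*}Tmf$ and $\pi_*Tmf$ carries no natural $M_*$-algebra structure. The paper uses the inclusion in the \emph{other} direction: since $M_*(\Gamma)$ is torsionfree, the $\pi_*Tmf[\tfrac1n]$-action on it factors through $\pi_{even}Tmf[\tfrac1n]/\text{torsion}$, and the edge map embeds this quotient as a subring of $M_*(SL_2(\Z);\Z[\tfrac1n])$. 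Finite generation over a subring then trivially implies finite generation over the larger ring.
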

\begin{proof}
 We can assume that $n\geq 2$ and also that $\Gamma = \Gamma_1(n)$ or $\Gamma(n)$ as $M_*(\Gamma;\Z[\tfrac1n]) \subset M_*(\Gamma_1(n);\Z[\tfrac1n])$ for $\Gamma_1(n)\subset \Gamma\subset \Gamma_0(n)$ by Lemma \ref{lem:compactificationsGamma} and \[M_*(SL_2(\Z);\Z[\tfrac1n]) \cong \Z[\tfrac1n][c_4,c_6,\Delta]/(1728\Delta = c_4^3-c_6^2)\]
  is noetherian \cite{Del75}. In particular, we can assume that $\Gamma$ is tame. 

 The inclusion $\pi_{odd}\Tmf(\Gamma) \subset\pi_*\Tmf(\Gamma)$ is one of (ungraded) $\pi_*\Tmf[\tfrac1n]$-modules as the elements in odd degree are in positive filtration in the descent spectral sequence and the multiplication respects that. Thus, $\pi_{even}\Tmf(\Gamma)$ is a finitely generated $\pi_*\Tmf[\tfrac1n]$-module if we view it as a quotient of $\pi_*\Tmf(\Gamma)$. The ring $\pi_{even}\Tmf(\Gamma)$ agrees with the ring of modular forms for $\Gamma$ over $\Z[\tfrac1n]$ and is torsionfree. Thus, it is also finitely generated over $\pi_{even}\Tmf[\tfrac1n]/\mathrm{torsion}$, which is a subring of $H^0(\MMb_{ell};\omega^{\tensor *})[\tfrac1n] = M_*(SL_2(\Z);\Z[\tfrac1n])$.
\end{proof}

\section{Splittings for topological modular forms with level structure}\label{sec:splittings}
The goal of this section will be to demonstrate the main splitting results from the introduction for $\TMF_1(n)$ and $\Tmf_1(n)$ and related spectra. This relies on algebraic splitting results that we will recall and develop in the next subsection.

\subsection{Algebraic splitting results}
In \cite{MeiDecMod}, we have proven splitting results for vector bundles on the moduli stack of elliptic curves, of which we want to summarize the relevant parts in this section.

Fix a prime $l$, let $\Gamma$ be a \emph{tame} congruence subgroup in the sense of Definition \ref{def:tame} and denote by
\[g\colon \MMb(\Gamma)_{(l)} \to \MMb_{ell, (l)}\]
 the projection. Consider the vector bundle
\[
\EE_l = \begin{cases}(f_3)_*\OO_{\MMb_1(3)_{(2)}} & \text{ if }l = 2\\
(f_2)_*\OO_{\MMb_1(2)_{(3)}} & \text{ if }l = 3\\
\OO_{\MMb_{ell,(l)}} & \text { if } l>3,
\end{cases}
\]
where $f_n\colon \MMb_1(n) \to \MMb_{ell}$ denotes the projection.

We recall the following two splitting results, which have been proven in \cite{MeiDecMod} in the cases $\Gamma = \Gamma_0(n), \Gamma_1(n)$ or $\Gamma(n)$. The proof in the cases $\Gamma_1(n) \subsetneq \Gamma \subsetneq \Gamma_0(n)$ is entirely analogous, using Proposition \ref{prop:coh}.

\begin{prop}[\cite{MeiDecMod}, Proposition 3.5]\label{prop:split}
Let $\FF$ be a vector bundle on $\MMb(\Gamma)_{\F_l}$. Then $g_*\FF$ decomposes into a direct sum of vector bundles of the form $(\EE_l)_{\F_l} \tensor \omega^{\tensor i}$.
\end{prop}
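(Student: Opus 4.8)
**Plan for proving Proposition 3.5 (the splitting of $g_*\FF$ for a vector bundle $\FF$ on $\MMb(\Gamma)_{\F_l}$).**

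The strategy is to reduce to the three cases $l=2,3,>3$ and exploit the structure of $\MMb_{ell,\F_l}$, which is a weighted projective line (or close to one) once a few low primes are inverted. The key general fact I would invoke is that on $\MMb_{ell,\F_l}$ any vector bundle decomposes as a direct sum of line bundles $\omega^{\otimes i}$ (for $l>3$ this is the statement that $\MMb_{ell,\F_l}$ has the splitting property for vector bundles; for $l=2,3$ one works after base change along $f_3$ resp.\ $f_2$, where the relevant stack becomes a genuine weighted projective line over which vector bundles split by a Grothendieck-type argument, cf.\ \cite{MeiDecMod}). Concretely, I would first push forward $\FF$ not all the way to $\MMb_{ell,\F_l}$ but along the intermediate map to $\MMb(\Gamma_1(n))_{\F_l}$ (or directly set things up so that $\Gamma=\Gamma_1(n)$ is the relevant case, since $\MMb(\Gamma)$ for $\Gamma_1(n)\subsetneq\Gamma\subsetneq\Gamma_0(n)$ sits between $\MMb_1(n)$ and $\MMb_0(n)$ and the argument is formally identical using Lemma \ref{lem:compactificationsGamma}).

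The heart of the argument: after base change along $f_3$ when $l=2$ (resp.\ $f_2$ when $l=3$, resp.\ nothing when $l>3$), the composite stack $\MMb(\Gamma)_{\F_l}\times_{\MMb_{ell,\F_l}}\MMb_1(3)_{\F_l}$ — call it $\widetilde{\MMb(\Gamma)}$ — maps to $\MMb_1(3)_{\F_l}$, which after $2$-localization is (a weighted projective stack that is in fact) $\mathbb{P}^1_{\F_2}$-like with trivial Picard group generated by $\omega$. Over such a base, any pushforward of a vector bundle along a finite flat map is again a vector bundle, and vector bundles on $\mathbb{P}^1$ split as sums of line bundles (Grothendieck/Birkhoff–Grothendieck); the line bundles are all powers of $\omega$. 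Tracking this through the base change $f_3$ and using the projection formula together with the fact that $(f_3)_*\OO = \EE_2$ by definition, one concludes that the original $g_*\FF$ decomposes as a sum of $(\EE_l)_{\F_l}\otimes\omega^{\otimes i}$'s. The cohomological input that makes all higher pushforwards vanish and keeps everything in the world of vector bundles is precisely Proposition \ref{prop:coh}, parts (3) and (4): $H^{\geq 2}$ vanishes on $\MMb(\Gamma)_R$ and $H^{\geq 1}$ vanishes on the affine part, so no derived terms obstruct the decomposition.

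\medskip

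The main obstacle I anticipate is not the splitting of vector bundles on a weighted projective line per se — that is classical — but rather \emph{faithfully-flat-descending} the splitting back down from the auxiliary cover $\MMb_1(3)_{\F_l}$ (resp.\ $\MMb_1(2)_{\F_l}$) to $\MMb_{ell,\F_l}$ itself, i.e.\ going from a decomposition of $g_*\FF$ pulled back along $f_3$ to a decomposition of $g_*\FF$ over the original base. This requires knowing that the decomposition is compatible with the $(\Z/3)^\times$-action (resp.\ $(\Z/2)^\times$-action, which is trivial) by which $\MMb_1(3)\to\MMb_{ell}$ is a quotient, or more precisely that $\mathrm{End}$ of the relevant bundle behaves well under the quotient; this is exactly where the tameness hypothesis (that $l$ does not divide $6$ in the level-at-most-$\Gamma_0$ case) is used, since it guarantees the order of the acting group is invertible and so the relevant idempotents can be averaged. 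I would handle this the same way \cite{MeiDecMod} does in the $\Gamma=\Gamma_1(n),\Gamma(n),\Gamma_0(n)$ cases — the excerpt explicitly asserts the proof "is entirely analogous, using Proposition \ref{prop:coh}" — so the real content is just checking that Lemma \ref{lem:compactificationsGamma} supplies the needed structural statements (flatness, finiteness, the $E_2$-formula for cohomology) for the intermediate $\Gamma$'s, after which the argument of \cite{MeiDecMod} applies verbatim.
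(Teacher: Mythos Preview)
The paper does not supply its own proof: it cites \cite[Proposition~3.5]{MeiDecMod} for $\Gamma=\Gamma_1(n),\Gamma(n),\Gamma_0(n)$ and remarks that intermediate $\Gamma$ are handled ``entirely analogously, using Proposition~\ref{prop:coh}''. You acknowledge this in your last paragraph, so the question is whether your sketch plausibly reconstructs the cited argument.

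Your skeleton is reasonable, but the descent step contains a genuine confusion. The map $f_3\colon\MMb_1(3)\to\MMb_{ell}$ is \emph{not} the quotient by a $(\Z/3)^\times$-action; that quotient is $\MMb_0(3)$, and the remaining map $\MMb_0(3)\to\MMb_{ell}$ is a non-Galois degree-$4$ cover. There is therefore no finite group along which one can ``average idempotents'' to descend a decomposition from $\MMb_1(3)_{\F_2}$ to $\MMb_{ell,\F_2}$. You also misplace the role of tameness: tameness constrains $n$ and $[\Gamma:\Gamma_1(n)]$, and its sole function is to make Proposition~\ref{prop:coh} hold (so that $\MMb(\Gamma)_{\F_l}$ has cohomological dimension at most $1$ and $g_*\FF$ is an honest vector bundle with the needed $\Ext$-vanishing). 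It says nothing about the auxiliary cover $f_3$ or $f_2$, which is fixed by the prime $l$ alone. Note too that $\EE_2=(f_3)_*\OO$ has rank $8$, divisible by $2$, so the naive trace does not split $\OO\hookrightarrow\EE_2$ over $\F_2$; this is precisely why the building blocks at $l=2$ are $\EE_2\otimes\omega^{\otimes i}$ rather than $\omega^{\otimes i}$, and it is also why a direct-image-then-split-off-$\OO$ maneuver cannot recover $\VV$ from $(f_3)_*f_3^*\VV$.

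What one needs instead is an analysis on $\MMb_{ell,\F_l}$ itself. For $l>3$ this stack is $\PP_{\F_l}(4,6)$ and every vector bundle splits into line bundles (the paper later quotes this as \cite[Proposition~A.4]{MeiDecMod}); for $l=2,3$ the extra automorphisms at the supersingular locus produce higher-rank indecomposables, and the content of \cite{MeiDecMod} is that for bundles of the form $g_*\FF$ these are exactly $\EE_l\otimes\omega^{\otimes i}$. The tameness of $\Gamma$ enters only through Proposition~\ref{prop:coh} (and Lemma~\ref{lem:compactificationsGamma} for intermediate $\Gamma$) to supply the cohomological vanishing that lets one peel off summands --- exactly as the paper's one-line remark indicates. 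Your proposal does not locate this mechanism, and the Galois-descent substitute you offer does not work.
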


\begin{thm}[\cite{MeiDecMod}, Theorem 3.12]\label{thm:deccompact}
The vector bundle $g_*\OO_{\MMb(\Gamma)_{(l)}}$ decomposes into copies of $\EE_l\tensor \omega^{\tensor ?}$ if and only if $H^1(\MMb_{ell};\omega)$ has no $l$-torsion.
\end{thm}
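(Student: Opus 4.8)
The plan is to deduce Theorem~\ref{thm:deccompact} from its mod-$l$ counterpart, Proposition~\ref{prop:split}, by a lifting/obstruction argument. Write $V = g_*\OO_{\MMb(\Gamma)_{(l)}}$, a vector bundle on $\MMb_{ell,(l)}$ of some rank $N$ (finite by finiteness of $g$, cf.\ \cite[Proposition 2.4]{MeiDecMod}). By Proposition~\ref{prop:split}, its reduction $V_{\F_l}$ splits as $\bigoplus_j (\EE_l)_{\F_l}\tensor\omega^{\tensor i_j}$ for suitable integers $i_j$. The content of the theorem is that this algebraic splitting lifts from $\F_l$ to $\Z_{(l)}$ precisely when $H^1(\MMb_{ell};\omega)$ has no $l$-torsion. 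Note that $H^1(\MMb_{ell};\omega)\tensor \Z_{(l)}$ is the only possible obstruction group in sight: $H^0(\MMb_{ell};\omega^{\tensor m})=0$ for $m<0$ and $H^1(\MMb_{ell};\omega^{\tensor m})=0$ for $m>1$ (Proposition~\ref{prop:coh} applied to $\Gamma = SL_2(\Z)$, i.e.\ the case $n=1$, or the classical computation), and $H^i$ vanishes for $i\geq 2$.

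**The lifting step (if-direction).**
Suppose $H^1(\MMb_{ell};\omega)$ has no $l$-torsion. I would set $P = \bigoplus_j \EE_l\tensor\omega^{\tensor i_j}$ over $\MMb_{ell,(l)}$, with $P_{\F_l}\cong V_{\F_l}$ chosen. The goal is an isomorphism $P\cong V$. Consider the sheaf $\mathcal{H}om(P,V)$ on $\MMb_{ell,(l)}$; its global sections $\Hom(P,V)$ is the module of maps, and I want to produce $\varphi\colon P\to V$ reducing to the fixed isomorphism mod $l$. Reduction mod $l$ gives a surjection of $\Z_{(l)}$-modules $\Hom(P,V)\to \Hom(P_{\F_l},V_{\F_l})$ provided $H^1(\MMb_{ell,(l)};\mathcal{H}om(P,V))$ has no $l$-torsion. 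But $\mathcal{H}om(P,V)$ is a direct sum of bundles of the form $\EE_l^\vee\tensor\EE_l\tensor V\tensor\omega^{\tensor k}$ — more to the point, by the same splitting machinery (Proposition~\ref{prop:split} applied over $\Z_{(l)}$ after observing $\EE_l^\vee$ is again of the relevant type, or directly by unwinding that $g_*\OO$ pushed around is a sum of $\EE_l\tensor\omega^{\tensor ?}$), one reduces to showing $H^1(\MMb_{ell,(l)};\EE_l\tensor\omega^{\tensor k})$ is $l$-torsionfree for all $k$, and then to $H^1(\MMb_{ell};\omega^{\tensor k})$ being $l$-torsionfree for all $k$, which by the weight restrictions above comes down to the single group $H^1(\MMb_{ell};\omega)$. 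Then any $\varphi$ lifting the mod-$l$ isomorphism is itself an isomorphism (a map of vector bundles that is an isomorphism modulo the maximal ideal of each local ring at an $\F_l$-point, hence surjective by Nakayama, hence an isomorphism of bundles of equal rank — one checks this on an affine \'etale cover and uses properness/finiteness to see there are no other points to worry about; alternatively invoke that $\coker\varphi$ is a coherent sheaf with $\coker\varphi\tensor\F_l = 0$, so $\coker\varphi = 0$ by Nakayama, and similarly $\ker\varphi = 0$ since $\varphi\tensor\F_l$ injective and the sheaves are locally free).

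**The converse (only-if-direction).**
For the converse, suppose $V$ splits over $\Z_{(l)}$ as $\bigoplus_j \EE_l\tensor\omega^{\tensor i_j}$. One needs to exhibit an $l$-torsion class in $H^1(\MMb_{ell};\omega)$ under the assumption that it is \emph{not} torsionfree — i.e.\ argue contrapositively: if $H^1(\MMb_{ell};\omega)$ has $l$-torsion, no such splitting can exist. Here I would use that $\EE_l$ has $H^1(\MMb_{ell};\EE_l\tensor\omega^{\tensor k})$ torsionfree for all $k$ (this is built into the tameness of $\Gamma_1(2)$ and $\Gamma_1(3)$ at the relevant primes, and trivial for $l>3$), so a splitting of $V$ into such pieces would force $H^1(\MMb_{ell,(l)};V) = H^1(\MMb(\Gamma)_{(l)};\OO)$ — wait, rather $H^1(\MMb_{ell,(l)};g_*\OO)$, which by the Leray spectral sequence (degenerate since $g$ is affine, $R^{>0}g_* = 0$) equals $H^1(\MMb(\Gamma)_{(l)};\OO_{\MMb(\Gamma)})$ — to be torsionfree. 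But one knows independently (again Proposition~\ref{prop:coh} or the explicit description in \cite{MeiDecMod}) that $H^1(\MMb(\Gamma)_{(l)};g^*\omega^{\tensor m})$ can carry torsion exactly governed by $H^1(\MMb_{ell};\omega^{\tensor m})$ pulled back; the cleanest route is: $g^*$ followed by $g_*$ sends $\omega^{\tensor m}$ on $\MMb_{ell}$ to a summand $V\tensor\omega^{\tensor m}$, the trivial summand $\OO\tensor\omega^{\tensor m}$ splitting off because $\MMb(\Gamma)\to\MMb_{ell}$ admits a section after remembering the structure map gives $\OO_{\MMb_{ell}}\to g_*\OO_{\MMb(\Gamma)}$ which is split by the trace (as $[\Gamma_0:\Gamma_1] = \phi(n)$ and the degree of $g$ are prime to $l$ in the tame situation — this is exactly where tameness enters), so $H^1(\MMb_{ell};\omega^{\tensor m})$ is a retract of $H^1(\MMb_{ell,(l)};V\tensor\omega^{\tensor m})$, and if the latter is torsionfree so is the former.

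**Main obstacle.**
The routine parts — Nakayama to upgrade a mod-$l$ isomorphism to an isomorphism, the Leray degeneration for the affine map $g$ — are standard. The real work is the \emph{reduction of the obstruction group}: showing that the only relevant cohomology is $H^1(\MMb_{ell};\omega)$ itself and not some $H^1$ of a tensor product $\EE_l\tensor\EE_l^\vee\tensor\omega^{\tensor k}$ for varying $k$. This needs the self-duality or at least closure-under-duals-and-twists of the class of bundles $\{\EE_l\tensor\omega^{\tensor ?}\}$ under $\mathcal{H}om$, together with the vanishing $H^1(\MMb_{ell};\omega^{\tensor m}) = 0$ for $m\geq 2$ and $H^0(\omega^{\tensor m}) = 0$ for $m<0$ to kill all twists except $m=0,1$ — and then ruling out $m=0$ (where $H^1(\MMb_{ell};\OO) = 0$) so only $m=1$ survives. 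I expect this bookkeeping, carried out uniformly in the three cases $l=2,3,>3$, to be where the proof in \cite{MeiDecMod} concentrates its effort, and I would follow that reference for the precise statement of Proposition~\ref{prop:coh}-type vanishing for $\EE_l$-twists.
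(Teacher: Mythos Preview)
Two preliminary points. First, the paper does not give a proof of this result: it is quoted from \cite[Theorem 3.12]{MeiDecMod}, with the remark that the case $\Gamma_1(n)\subsetneq\Gamma\subsetneq\Gamma_0(n)$ is analogous using Proposition~\ref{prop:coh}. Second, the printed hypothesis ``$H^1(\MMb_{ell};\omega)$ has no $l$-torsion'' is a typo: the surrounding text (``this happens if and only if $\pi_1Tmf(\Gamma)$ has no $l$-torsion'', and the modular-form criterion for $\Gamma$) and the use in Theorem~\ref{thm:dectopcompact} make clear that the intended condition is that $H^1(\MMb(\Gamma);g^*\omega)\cong H^1(\MMb_{ell};g_*\OO_{\MMb(\Gamma)}\tensor\omega)$ be $l$-torsionfree. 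Your entire write-up is organized around the literal condition on $\MMb_{ell}$, and this misreading drives much of the confusion below.

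With the corrected statement, your ``only if'' direction becomes immediate and the trace argument is unnecessary: if $V=g_*\OO_{\MMb(\Gamma)}$ splits as $\bigoplus_j\EE_l\tensor\omega^{\tensor i_j}$ then
\[
H^1(\MMb(\Gamma)_{(l)};g^*\omega)\;\cong\;H^1(\MMb_{ell,(l)};V\tensor\omega)\;\cong\;\bigoplus_j H^1(\MMb_{ell,(l)};\EE_l\tensor\omega^{\tensor(i_j+1)}),
\]
and each summand is torsionfree by Proposition~\ref{prop:coh} applied to $\Gamma_1(3)$, $\Gamma_1(2)$, or $SL_2(\Z)$.

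Your ``if'' direction has the right architecture---lift the mod-$l$ isomorphism using that $H^1$ of the Hom-sheaf is $l$-torsionfree, then Nakayama---but the reduction of that obstruction group is wrong. You write that one ``reduces to showing $H^1(\MMb_{ell,(l)};\EE_l\tensor\omega^{\tensor k})$ is $l$-torsionfree \ldots\ and then to $H^1(\MMb_{ell};\omega^{\tensor k})$''. Neither step is correct: the sheaf $\mathcal{H}om(P,V)$ is $\bigoplus_j\EE_l^\vee\tensor V\tensor\omega^{\tensor(-i_j)}$, so the factor $V=g_*\OO_{\MMb(\Gamma)}$ cannot be dropped, and there is no passage from $\EE_l$-twists on $\MMb_{ell}$ to $\omega$-twists on $\MMb_{ell}$. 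The right move (parallel to the paper's proof of Theorem~\ref{thm:decuncompact}) is the projection formula and Leray for the affine map $g$:
\[
H^1\bigl(\MMb_{ell,(l)};\,\EE_l^\vee\tensor g_*\OO_{\MMb(\Gamma)}\tensor\omega^{\tensor k}\bigr)
\;\cong\;
H^1\bigl(\MMb(\Gamma)_{(l)};\,g^*\EE_l^\vee\tensor g^*\omega^{\tensor k}\bigr).
\]
For $l>3$ one has $\EE_l^\vee=\OO$ and this is $H^1(\MMb(\Gamma)_{(l)};g^*\omega^{\tensor k})$, torsionfree for $k\neq 1$ by Proposition~\ref{prop:coh} and for $k=1$ by hypothesis, so your outline goes through. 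For $l\in\{2,3\}$ you must still control $g^*\EE_l^\vee$: this requires an additional input (for instance that $\EE_l^\vee\cong\EE_l\tensor\omega^{\tensor ?}$ via Grothendieck duality for the finite flat map $f_m$, using the identification of $\Omega^1_{\MMb_1(m)}$ with a power of $f_m^*\omega$), which you gesture at as the ``main obstacle'' but do not supply. Until that is pinned down, the argument is incomplete precisely at the step you flag.
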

As explained in the last section, this happens if and only if $\pi_1\Tmf(\Gamma)$ has no $l$-torsion. As noted in \cite[Remark 3.17]{MeiDecMod} another equivalent characterization is that every modular form for $\Gamma$ over $\F_l$ lifts to a modular form for $\Gamma$ over $\Z_{(l)}$. For $l=2$ and $\Gamma = \Gamma_1(n)$, the smallest counterexample is $n=65$.

There is also an uncompactified version of the last theorem. We will work on the $l$-completion (see e.g.\ \cite{Con} for a reference) and denote by abuse of notation the projection $\widehat{\MM(\Gamma)}_l \to \widehat{\MM}_{ell, l}$ by $g$ as well. Likewise, we will denote the restrictions of the previous $\EE_l$ to $\widehat{\MM}_{ell,l}$ also by $\EE_l$.

\begin{thm}\label{thm:decuncompact}
The vector bundle $g_*\OO_{\widehat{\MM(\Gamma)}_l}$ decomposes into copies of $\EE_l\tensor \omega^{?}$. More generally the same is true for $\widehat{g_*\FF}_l$ for any vector bundle $\FF$ on $\MM(\Gamma)$.
\end{thm}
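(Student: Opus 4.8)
The plan is to follow the proof of Theorem \ref{thm:deccompact} — split modulo $l$ and then lift — while observing that the obstruction to lifting, which in the compactified case is the $l$-torsion of $H^1(\MMb_{ell};\omega)$, here vanishes. Two features of the uncompactified situation are responsible: the map $g\colon\MM(\Gamma)_{(l)}\to\MM_{ell,(l)}$ is finite \'etale (there are no cusps, and no log structures enter), and $\MM(\Gamma)_{(l)}$ has vanishing higher cohomology for every quasi-coherent sheaf by Proposition \ref{prop:coh}(4), so the relevant $\Ext^1$-groups all vanish. I will prove the decomposition over $\MM_{ell,(l)}$; the statement over $\widehat{\MM}_{ell,l}$ then follows by $l$-completing, since $l$-completion is additive and commutes with the finite pushforward $g_*$.

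Write $\VV=g_*\FF$, a vector bundle on $\MM_{ell,(l)}$ (locally free since $g$ is finite flat). The mod-$l$ splitting is obtained by restricting Proposition \ref{prop:split}: extend $\FF\tensor\F_l$ to a vector bundle on the regular $1$-dimensional stack $\MMb(\Gamma)_{\F_l}$ — possible because on it every torsion-free coherent sheaf is locally free — apply Proposition \ref{prop:split} to the extension, and restrict the resulting decomposition to $\MM_{ell,\F_l}$; this produces integers $a_i$ with $\VV\tensor\F_l\cong\bigoplus_i(\EE_l)_{\F_l}\tensor\omega^{\tensor a_i}$. Put $W=\bigoplus_i\EE_l\tensor\omega^{\tensor a_i}$ and fix an isomorphism $\overline\phi\colon\VV\tensor\F_l\xrightarrow{\sim}W\tensor\F_l$. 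Since $g$ is finite \'etale, compatibility of $g_*$ with duals (trace pairing) and the projection formula give $\mathcal{H}om_{\MM_{ell,(l)}}(\VV,W)\cong g_*(\FF^\vee\tensor g^*W)$, a $g$-pushforward of a quasi-coherent sheaf on $\MM(\Gamma)_{(l)}$; as $g$ is affine, Proposition \ref{prop:coh}(4) gives
\[H^1\bigl(\MM_{ell,(l)};\,\mathcal{H}om(\VV,W)\bigr)\;\cong\;H^1\bigl(\MM(\Gamma)_{(l)};\,\FF^\vee\tensor g^*W\bigr)\;=\;0.\]
The short exact sequence $0\to\mathcal{H}om(\VV,W)\xrightarrow{l}\mathcal{H}om(\VV,W)\to\mathcal{H}om(\VV,W)\tensor\F_l\to 0$ then shows that the reduction map $\Hom_{\MM_{ell,(l)}}(\VV,W)\to\Hom_{\MM_{ell,\F_l}}(\VV\tensor\F_l,W\tensor\F_l)$ is surjective. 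Lift $\overline\phi$ to $\phi\colon\VV\to W$; then $\coker\phi$ is coherent with $\coker\phi\tensor\F_l=0$, hence $\coker\phi=0$ by Nakayama ($l$ lies in the Jacobson radical of every finitely generated $\Z_{(l)}$-algebra), and since $\VV$ and $W$ are locally free of equal rank, $\phi$ is an isomorphism. Taking $\FF=\OO_{\MM(\Gamma)}$ recovers the first assertion.

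There is no genuine obstacle here. The content of the argument — and the reason it is unconditional, unlike Theorem \ref{thm:deccompact} — is exactly that passing from $\MMb_{ell}$ to the open stack upgrades the partial vanishing of Proposition \ref{prop:coh}(2), which still leaves a real obstruction in weight $1$ (realized by weight-$1$ mod-$l$ modular forms that do not lift), to the unconditional vanishing of Proposition \ref{prop:coh}(4). The one step that wants a little care is identifying $\mathcal{H}om(\VV,W)$ as a $g$-pushforward from $\MM(\Gamma)_{(l)}$, which is where \'etaleness of $g$ is used; this is also what makes the uncompactified case cleaner, since neither cusps nor log-\'etale maps play any role.
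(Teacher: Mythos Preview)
Your argument has a genuine gap at the Nakayama step. The parenthetical claim that ``$l$ lies in the Jacobson radical of every finitely generated $\Z_{(l)}$-algebra'' is false: in $\Z_{(l)}[t]$ the ideal $(lt-1)$ is maximal (the quotient is $\Q$) and does not contain $l$. Consequently, from $\coker\phi\tensor\F_l=0$ you cannot conclude $\coker\phi=0$ on $\MM_{ell,(l)}$. What you do get is $\coker\phi = l\cdot\coker\phi$, which forces the stalk of $\coker\phi$ to vanish at every point where $l$ lies in the maximal ideal (i.e.\ on the special fiber $\MM_{ell,\F_l}$), but says nothing at closed points of the generic fiber $\MM_{ell,\Q}$ --- and such closed points do exist in $\MM_{ell,(l)}$. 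So your lift $\phi$ need not be an isomorphism over $\Z_{(l)}$, and the stronger uncompleted decomposition you set out to prove is not established by this argument.

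The fix is exactly to pass to the $l$-completion, which is all the theorem asserts anyway. From $\coker\phi = l\cdot\coker\phi$ one gets $\coker\phi/l^k = 0$ for all $k$; since the morphism from the formal completion to $\MM_{ell,(l)}$ is flat, pullback is exact and $\widehat{\phi}_l\colon\widehat{\VV}_l \to \widehat{W}_l$ is a surjection of vector bundles of equal rank, hence an isomorphism. This is close in spirit to the paper's approach, which instead invokes a general obstruction-theoretic proposition: lift the mod-$l$ isomorphism stepwise through $\Z/l^k$ using only the vanishing of $H^1$ over $\F_l$, then assemble an isomorphism of adic systems. Your use of the vanishing of $H^1$ over $\Z_{(l)}$ to produce the global lift $\phi$ in one shot is a pleasant shortcut, but you must still complete before invoking Nakayama.
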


We will deduce the theorem from the corresponding theorem over $\F_l$ by the following result from obstruction theory, which is analogous to \cite[Cor 8.5.5]{FAG}.

\begin{prop}
 Let $A$ be a noetherian ring with a maximal ideal $I$. Let $\XX$ be a locally noetherian Artin stack over $A$. We denote by $\hat{\XX}$ the completion with respect to $I\OO_\XX$ and by $\XX_0$ the closed substack corresponding to $I\OO_{\XX}$. Let $\FF,\GG$ be vector bundles on $\XX$ such that $H^1(\XX_0; \mathcal{H}om_{\OO_{\XX_0}}(\FF|_{\XX_0}, \GG|_{\XX_0})) = 0$. Then the completions $\widehat{\FF}$ and $\widehat{\GG}$ are isomorphic on $\widehat{\XX}$ if and only if $\FF|_{\XX_0}\cong \GG|_{\XX_0}$.
\end{prop}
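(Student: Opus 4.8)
The plan is to handle the two implications separately: the forward one is formal, and the reverse one is a deformation-theoretic lifting along the infinitesimal neighbourhoods of $\XX_0$, in the spirit of \cite[Cor 8.5.5]{FAG}. For the easy direction, if $\widehat\FF\cong\widehat\GG$ on $\widehat\XX$ then pulling this isomorphism back along the closed immersion $\XX_0\hookrightarrow\widehat\XX$ gives $\FF|_{\XX_0}\cong\GG|_{\XX_0}$, since $\widehat\FF$ restricts to $\FF|_{\XX_0}$ on $\XX_0$ and likewise for $\GG$.

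For the converse I would fix an isomorphism $\phi_0\colon\FF|_{\XX_0}\xrightarrow{\sim}\GG|_{\XX_0}$ and build by induction a compatible system of isomorphisms $\phi_n\colon\FF|_{\XX_n}\xrightarrow{\sim}\GG|_{\XX_n}$, where $\XX_n\subset\XX$ is the closed substack defined by $I^{n+1}\OO_\XX$. Since $\widehat\XX\simeq\colim_n\XX_n$ and a vector bundle on $\widehat\XX$ amounts to a compatible system of vector bundles on the $\XX_n$ — with $\widehat\FF$ corresponding to $(\FF|_{\XX_n})_n$ and $\widehat\GG$ to $(\GG|_{\XX_n})_n$ — such a system assembles to the desired isomorphism. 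For the inductive step one observes that $\XX_n\hookrightarrow\XX_{n+1}$ is a square-zero extension whose ideal $J_n=I^{n+1}\OO_\XX/I^{n+2}\OO_\XX$ is killed by $I\OO_\XX$, hence is a quasi-coherent $\OO_{\XX_0}$-module. Setting $\VV=\mathcal{H}om_{\OO_{\XX_0}}(\FF|_{\XX_0},\GG|_{\XX_0})$ and using that $\FF|_{\XX_{n+1}}$ is locally free, the short exact sequence
\[0\to\VV\tensor_{\OO_{\XX_0}}J_n\to\mathcal{H}om_{\OO_{\XX_{n+1}}}(\FF|_{\XX_{n+1}},\GG|_{\XX_{n+1}})\to\mathcal{H}om_{\OO_{\XX_n}}(\FF|_{\XX_n},\GG|_{\XX_n})\to 0\]
shows that the global section $\phi_n$ of the right-hand sheaf lifts to a morphism $\FF|_{\XX_{n+1}}\to\GG|_{\XX_{n+1}}$ precisely when its image under the connecting homomorphism in $H^1(\XX_0;\VV\tensor_{\OO_{\XX_0}}J_n)$ vanishes. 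Any such lift is automatically an isomorphism, since it reduces to $\phi_0$ modulo the nilpotent ideal $I\OO_{\XX_{n+1}}$ and isomorphisms of vector bundles are detected modulo nilpotent ideals (Nakayama). As the $\phi_n$ are built to be compatible, they glue to an isomorphism $\widehat\FF\cong\widehat\GG$.

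The step I expect to be the main obstacle is the vanishing of the obstruction group $H^1(\XX_0;\VV\tensor_{\OO_{\XX_0}}J_n)$. Here I would use that $A$ is noetherian and $A/I$ is a field, so that $I^{n+1}/I^{n+2}$ is a finite-dimensional $A/I$-vector space, together with flatness of $\XX$ over $A$ — which holds in all the applications in this paper, e.g.\ $\MM(\Gamma)$ is smooth over $\Z[\tfrac1n]$: then $J_n\cong\OO_{\XX_0}\tensor_{A/I}(I^{n+1}/I^{n+2})$ is a finite free $\OO_{\XX_0}$-module, so $\VV\tensor_{\OO_{\XX_0}}J_n$ is a finite direct sum of copies of $\VV$ and its $H^1$ vanishes by hypothesis. (It would equally suffice that $\XX_0$ have cohomological dimension at most $1$ for quasi-coherent sheaves, which again holds in the applications by Proposition~\ref{prop:coh}; some input of this kind is genuinely needed, as the bare statement fails already for a split square-zero thickening of $\mathbb{P}^2$ by the structure sheaf of a plane cubic.) The remaining ingredients — the identification of $\widehat\XX$ with the colimit of the $\XX_n$, the square-zero obstruction sequence, and the Nakayama argument — are routine.
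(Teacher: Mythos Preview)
Your argument is essentially the paper's own proof: lift the given isomorphism through the infinitesimal thickenings $\XX_n$ using that the obstruction at each stage lies in an $H^1$ on $\XX_0$, then conclude via Nakayama and the identification of coherent sheaves on $\widehat\XX$ with compatible systems on the $\XX_n$; the paper cites \cite[Thm 8.5.3]{FAG} for the obstruction where you instead write out the short exact sequence of $\mathcal{H}om$-sheaves directly, but this is the same computation. You are in fact slightly more careful than the paper on one point: the paper asserts $I^n\OO_\XX/I^{n+1}\OO_\XX \cong \OO_\XX \tensor_A I^n/I^{n+1}$ without comment and uses this to reduce to copies of $H^1(\XX_0;\VV)$, whereas you correctly flag that this identification needs $\XX$ flat over $A$ (or $\XX_0$ of cohomological dimension $\le 1$) --- both of which hold in the intended application to $\MM(\Gamma)$.
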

\begin{proof}The only-if direction is clear. We will prove the if direction.

 Set $\XX_n = \XX\times_{\Spec A} \Spec A/I^{n+1}$ and $\FF_n =\FF|_{\XX_n}$ and $\GG_n = \GG|_{\XX_n}$. Choose an isomorphism $f_0\colon \FF_0 \to \GG_0$. We want to show inductively that it lifts to a morphism $f_n\colon \FF_n \to \GG_n$ (whose pullback to $\XX_0$ agrees with $f_0$). Assume that a lift $f_{n-1}$ of $f_0$ to $\XX_{n-1}$ is already constructed. The ideal defining $\XX_{n-1}\subset \XX_n$ is $$I^n\OO_\XX/I^{n+1}\OO_\XX \cong \OO_\XX \tensor_A I^n/I^{n+1}.$$
 By \cite[Thm 8.5.3]{FAG}, the obstruction for lifting $f_{n-1}$ to $f_n\colon \FF_n \to \GG_n$ lies in
 \begin{align*}
  H = H^1(\XX_{n-1}; I^n\OO_\XX/I^{n+1}\OO_\XX\tensor_{\OO_{\XX_{n-1}}} \mathcal{H}om_{\OO_{\XX_{n-1}}}(\FF_{n-1},\GG_{n-1})).
 \end{align*}
Because the coefficient sheaf is killed by $I$, it is the pushforward of its pullback along the closed immersion $\XX_0\to \XX_n$. Thus,
\begin{align*}
H &\cong
 H^1(\XX_0; I^n\OO_\XX/I^{n+1}\OO_\XX\tensor_{\OO_{\XX_0}} \mathcal{H}om_{\OO_{\XX_0}}(\FF_0,\GG_0)) \\
 &\cong H^1(\XX_0; I^n/I^{n+1}\tensor_{A/I} \mathcal{H}om_{\OO_{\XX_0}}(\FF_0,\GG_0)) \\
 &\cong I^n/I^{n+1} \tensor_{A/I} H^1(\XX_0; \mathcal{H}om_{\OO_{\XX_0}}(\FF_0,\GG_0))  \\
 &\cong 0.
\end{align*}
Thus, we get a compatible system of morphisms $(f_n)_{n\in\N_0}$. All of these are isomorphisms by the Nakayama lemma. Thus, $(f_n)_{n\in\N_0}$ defines an isomorphism of adic systems and thus one of completions by \cite{Con}.
\end{proof}

\begin{proof}[Proof of Theorem \ref{thm:decuncompact}]
	Let $\FF$ be a vector bundle on $\MM(\Gamma)$. 
We will show first that $\FF$ is the restriction of a vector bundle on $\MMb(\Gamma)$. Indeed, by Lemma 3.2 from \cite{Mei13}, there is a reflexive sheaf $\FF'$ on $\MMb(\Gamma)$ restricting to $\FF$. Choose a surjective \'etale map $U \to \MMb(\Gamma)$ from a scheme $U$. By Proposition 2.4 from \cite{MeiDecMod}, the scheme $U$ is smooth of relative dimension $1$ over $\Z_{(l)}$ and thus a regular scheme of dimension $2$. By \cite[Cor 1.4]{Har80}, the pullback of $\FF'$ to $U$ is a vector bundle and thus $\FF'$ itself as well.

 By Proposition \ref{prop:split}, it follows that $g_*\FF$ and a vector bundle $\GG$ of the form
 $$\bigoplus_{i\in I} \EE_l \tensor \omega^{\otimes n_i}$$
 are isomorphic after base change to $\F_l$. Set $\FF_0 = \FF|_{\MM(\Gamma)_{\F_l}}$ and $\GG_0 = \GG|_{\MM_{ell, \F_l}}$. By the last proposition, it remains to show that
 $$H^1(\MM_{ell, \F_l}; \mathcal{H}om(\GG_0, g_*\FF_0)) = 0.$$
 We have
 \begin{align*}
  H^1(\MM_{ell, \F_l}; \mathcal{H}om(\GG_0, g_*\FF_0)) &\cong H^1(\MM_{ell,\F_l}; g_* \mathcal{H}om(g^*\GG_0, \FF_0)) \\
  &\cong H^1(\MM(\Gamma)_{\F_l}; \mathcal{H}om(g^*\GG_0, \FF_0))= 0
 \end{align*}
 as the cohomology of every quasi-coherent sheaf on $\MM(\Gamma)_{\F_l}$ vanishes by Proposition \ref{prop:coh}.
\end{proof}

\subsection{The topological splitting results}
 The algebraic decomposition theorems from the last subsection rather easily imply topological counterparts. Throughout this section, we will fix a prime $l$ and choose a tame congruence subgroup $\Gamma\subset SL_2(\Z)$. We denote again by $g\colon \MMb(\Gamma)_{(l)} \to \MMb_{ell,(l)}$ the projection.

\begin{thm}\label{thm:dectopcompact}
The $\Tmf_{(l)}$-module $\Tmf(\Gamma)_{(l)}$ splits into a sum of evenly shifted copies of
\begin{itemize}
 \item $\Tmf_1(3)_{(2)}$ if $l=2$,
 \item $\Tmf_1(2)_{(3)}$ if $l=3$, and
 \item $\Tmf_{(l)}$ if $l>3$
\end{itemize}
if and only if $\pi_1\Tmf(\Gamma)$ is $l$-torsionfree.
\end{thm}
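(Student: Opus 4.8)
The plan is to transfer the algebraic splitting of the pushforward sheaf $g_*\OO_{\MMb(\Gamma)_{(l)}}$ (Theorem \ref{thm:deccompact}) to the derived setting via the $0$-affineness of $\MMb(\Gamma)^{top}$ (Proposition \ref{prop:0affine}), and then match up the two characterizing conditions: the algebraic one ($H^1(\MMb_{ell};\omega)$ has no $l$-torsion) with the topological one ($\pi_1 Tmf(\Gamma)$ is $l$-torsionfree). The equivalence of these two conditions is essentially already recorded in the excerpt (immediately after Theorem \ref{thm:deccompact}), since in the tame situation the descent spectral sequence \eqref{eq:DSSR} over $\Z_{(l)}$ has no differentials or extensions by Proposition \ref{prop:coh}, so $\pi_1 Tmf(\Gamma)_{(l)} \cong H^1(\MMb(\Gamma)_{(l)};g^*\omega)$, and an analogous statement holds for $\EE_l$; more precisely, the relevant torsion detection goes back to \cite[Remark 3.14]{MeiDecMod}. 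So I would state that equivalence up front and spend the body of the proof on the splitting.

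For the ``if'' direction, I would argue as follows. Under the equivalence $\Gamma\colon \QCoh(\MMb_{ell,(l)}^{top}) \xrightarrow{\sim} Tmf_{(l)}\modules$ (a special case of Proposition \ref{prop:0affine} applied to $\MMb_{ell}$ itself, equivalently the main theorem of \cite{MM15}), the $Tmf_{(l)}$-module $Tmf(\Gamma)_{(l)} = \Gamma(\MMb(\Gamma)^{top}_{(l)}, \OO^{top})$ corresponds to the quasi-coherent $\OO^{top}$-module $g_*\OO^{top}_{\MMb(\Gamma)_{(l)}}$, by the projection formula / base-change content of Lemma \ref{lem:pushpull}. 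By Lemma \ref{lem:pushpull} again, $\pi_* g_* \OO^{top}_{\MMb(\Gamma)_{(l)}} \cong (g_0)_* g_0^* \pi_*\OO^{top} \cong (g_0)_*\OO_{\MMb(\Gamma)_{(l)}} \otimes_{\OO_{\MMb_{ell,(l)}}} \omega^{\tensor *}$, and the torsionfreeness hypothesis on $\pi_1$ lets us invoke Theorem \ref{thm:deccompact} to split this graded sheaf as $\bigoplus_i (\EE_l)_{(l)} \otimes \omega^{\tensor n_i}$. The point is then to promote this splitting of homotopy sheaves to a splitting of the quasi-coherent $\OO^{top}$-modules themselves: because $(\EE_l)_{(l)} \otimes \omega^{\tensor n_i}$ is (after applying $\Gamma$) the homotopy of an even shift of the appropriate model spectrum $T_l \in \{Tmf_1(3)_{(2)}, Tmf_1(2)_{(3)}, Tmf_{(l)}\}$, and because the relevant $\Hom$-groups in the homotopy category are concentrated in even degrees with no room for obstructions — here one uses the same tameness vanishing from Proposition \ref{prop:coh} applied to the $\Hom$-sheaf, which is again a vector bundle tensored with powers of $\omega$ — the map realizing the splitting on $\pi_*$ lifts to an equivalence of $\OO^{top}$-modules. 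Applying $\Gamma$ and using that it commutes with (possibly infinite) direct sums and with suspension gives the desired splitting $Tmf(\Gamma)_{(l)} \simeq \bigoplus_i \Sigma^{2n_i} (T_l)_{(l)}$.

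For the ``only if'' direction, one simply observes that if such a splitting exists then $\pi_1 Tmf(\Gamma)_{(l)} \cong \bigoplus_i \pi_{1-2n_i}(T_l)_{(l)}$, and each summand $\pi_{odd}(T_l)_{(l)}$ is $l$-torsionfree: for $l>3$ this is because $\pi_{odd}Tmf_{(l)}$ is torsionfree in the tame range by Proposition \ref{prop:coh}, and for $l=2,3$ one uses that $Tmf_1(3)$ (resp.\ $Tmf_1(2)$) is tame at $l$, so again Proposition \ref{prop:coh} forces $\pi_1$ to be a submodule of a torsionfree group, hence $l$-torsionfree. Therefore $\pi_1 Tmf(\Gamma)$, and a fortiori its $l$-localization, has no $l$-torsion.

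The main obstacle I anticipate is the promotion step in the ``if'' direction: carefully checking that the algebraic direct-sum decomposition of $\pi_* g_*\OO^{top}$ is realized by an actual map of $\OO^{top}$-modules and that this map is an equivalence. The cleanest way is probably to build the splitting map as an $\OO^{top}$-module map out of $\bigoplus_i \Sigma^{2n_i} g_*\OO^{top}_{\MMb_1(k)^{top}}$ (with $k=3,2$ or $\emptyset$) by using the freeness of these modules — i.e.\ that maps out of $g_*\OO^{top}_{\MMb_1(k)^{top}}$ are computed by evaluation, via the $0$-affineness of $\MMb_1(k)^{top}$ as well — and then detecting that it is an equivalence on homotopy sheaves, which holds by Theorem \ref{thm:deccompact}. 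One must also be slightly careful that the index set $I$ may be infinite and that $\Gamma$ and the relevant $\Hom$-sheaf formation commute with infinite direct sums; this is exactly the kind of compactness/quasi-coherence bookkeeping already handled in Lemma \ref{lem:HomSheaves} and the proof of Proposition \ref{prop:compactness}, so it should go through without new ideas.
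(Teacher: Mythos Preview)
Your proposal is correct and follows essentially the same route as the paper: identify $Tmf(\Gamma)_{(l)}$ with the global sections of $g_*\OO^{top}_{\MMb(\Gamma)_{(l)}}$, compute its homotopy sheaves via Lemma~\ref{lem:pushpull}, apply the algebraic splitting Theorem~\ref{thm:deccompact}, and then lift the isomorphism on $\pi_*$ to an equivalence of $\OO^{top}$-modules using that the descent spectral sequence for $\Hom_{\OO^{top}}(\FF,\EE)$ is concentrated in cohomological degrees $\leq 1$ (exactly the paper's argument). Two small corrections: in the ``only if'' direction, Proposition~\ref{prop:coh} does not actually cover the weight-$1$ cohomology of $\MMb_1(2),\MMb_1(3)$ nor the case $\Gamma=SL_2(\Z)$, so one should simply cite (as the paper does) the known fact that $\pi_*Tmf_1(3)$, $\pi_*Tmf_1(2)$, and $\pi_*Tmf_{(l)}$ for $l>3$ are torsionfree; and your concern about infinite index sets is moot, since $g_*\OO_{\MMb(\Gamma)}$ is a vector bundle of finite rank.
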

\begin{proof}
 The ``only if'' part is clear as $\pi_*\Tmf_1(3)$, $\pi_*\Tmf_1(2)$ and $\pi_*\Tmf_{(l)}$ for $l\geq 5$ are torsionfree.

 From now on we will implicitly localize at $l$ everywhere. We will only consider the case $l=3$ as the cases $l=2$ and $l>3$ are very similar. Consider the quasi-coherent $\OO^{top}$-module $\EE = g_*\OO^{top}_{\MMb(\Gamma)} =g_*g^*\OO^{top}$, whose global sections are $\Tmf(\Gamma)$. We know that $\OO^{top}$ is even and $\pi_{2k}\OO^{top} \cong \omega^{\tensor k}$. By Lemma \ref{lem:pushpull}, the sheaf $\EE$ is even as well and
 $$\pi_{2k}\EE \cong g_*g^*\omega^{\tensor k} \cong (g_*\OO_{\MMb(\Gamma)}) \tensor \omega^{\tensor k}.$$
 As every quasi-coherent sheaf on $\MMb(\Gamma)$ has cohomological dimension $\leq 1$ by Proposition \ref{prop:coh}, the same is true for pushforwards of such sheaves to $\MMb_{ell}$ by the Leray spectral sequence as $g$ is affine. In particular, the descent spectral sequence is concentrated in lines $0$ and $1$ and this implies that
 $$\pi_1\Tmf_1(n) \cong H^1(\MMb_{ell}, \pi_2\EE)$$
 and this is $l$-torsionfree by assumption.

 Thus, by Theorem \ref{thm:deccompact} we obtain a splitting of the form
 $$g_*\OO_{\MMb(\Gamma)} \cong \bigoplus_{i\in\Z} \bigoplus_{k_i} (f_2)_*\OO_{\MMb_1(2)}\tensor \omega^{\tensor -i}.$$
 Set
 $$\FF = \bigoplus_{i\in\Z} \bigoplus_{k_i} (f_2)_*(f_2)^*\Sigma^{2i}\OO^{top}$$
 and note that $\Gamma(\FF) = \bigoplus_{i\in\Z} \bigoplus_{k_i} \Sigma^{2i}\Tmf_1(2)$. As we have just seen, there is an isomorphism $h\colon \pi_0\FF \xrightarrow{\cong} \pi_0\EE$. Note that $\FF$ is even as well and $\pi_{2k}\FF \cong \pi_0\FF \tensor \omega^{\tensor k}$ so that we actually have $\pi_*\EE \cong \pi_*\FF$.

 There is a descent spectral sequence
 \[H^q(\MMb_{ell}; \mathcal{H}om_{\pi_*\OO^{top}}(\pi_*\FF,\pi_{*+p}\EE)) \Rightarrow \pi_{p-q}\mathrm{Hom}_{\OO^{top}}(\FF,\EE)\]
 for $\mathcal{H}om$ the Hom-sheaf; indeed, $\FF$ is locally free and thus
$$\pi_p\mathcal{H}om_{\OO^{top}}(\FF,\EE) \cong \mathcal{H}om_{\pi_*\OO^{top}}(\pi_*\FF,\pi_{*+p}\EE).$$
 We claim that $\mathcal{H}om_{\pi_*\OO^{top}}(\pi_*\FF,\pi_{*+p}\EE) \cong \mathcal{H}om_{\OO_{\MMb_{ell}}}(\pi_0\FF,\pi_p\EE)$ has cohomological dimension $\leq 1$. Indeed, it is the sum of vector bundles of the form
 $$\mathcal{H}om_{\OO_{\MMb_{ell}}}((f_2)_*\OO_{\MMb_1(2)}\tensor \omega^{\tensor j}, g_*\OO_{\MMb(\Gamma)}),$$
each of which is isomorphic to
$$g_*\mathcal{H}om_{\OO_{\MMb(\Gamma)}}(g^*((f_2)_*\OO_{\MMb_1(2)}\tensor \omega^{\tensor j}), \OO_{\MMb(\Gamma)}).$$
As $g$ is affine, there can be thus no differentials or extension issues in the spectral sequence.
 The homomorphism $h$ defines an element in the spot $p=q=0$. As there are no differentials, $h$ lifts uniquely (up to homotopy) to a morphism $\FF \to \EE$ that induces an isomorphism on $\pi_*$ and is thus an equivalence.
\end{proof}

As we proved our algebraic splitting results in the uncompactified case only in a completed setting, we will need to complete our spectra as well to prove a splitting result for periodic $\TMF(\Gamma)$. We will denote for a spectrum $E$ by $\widehat{E}_l = \holim_n E/l^n$ its $l$-completion (see e.g.\ \cite[Section II.7.3]{SAG} for a conceptual treatment). Note $\widehat{E}_l/l \simeq E/l$.

\begin{thm}\label{thm:dectopuncompact}
The $\widehat{\TMF}_l$-module $\widehat{\TMF(\Gamma)}_l$ splits into a sum of evenly shifted copies of the $l$-completions of
\begin{itemize}
 \item $\TMF_1(3)$ if $l=2$,
 \item $\TMF_1(2)$ if $l=3$, and
 \item $\TMF$ if $l>3$.
\end{itemize}
\end{thm}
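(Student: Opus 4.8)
The plan is to mirror the proof of the compactified statement (Theorem \ref{thm:dectopcompact}), replacing the use of Theorem \ref{thm:deccompact} by its uncompactified, $l$-completed analogue Theorem \ref{thm:decuncompact}, and replacing $\MMb_{ell}$ and $\MMb(\Gamma)$ by $\widehat{\MM}_{ell,l}$ and $\widehat{\MM(\Gamma)}_l$. First I would work $l$-completely throughout and consider the quasi-coherent $\OO^{top}$-module $\EE = g_*g^*\OO^{top}$ on $\widehat{\MM}_{ell,l}$, whose global sections compute $\widehat{TMF(\Gamma)}_l$ (using that $TMF(\Gamma)$ is a compact $TMF[\tfrac1n]$-module by Proposition \ref{prop:compactness}, so $l$-completion commutes with the relevant operations, and that $\MM(\Gamma)^{top}$ is $0$-affine). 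As in the compactified case, Lemma \ref{lem:pushpull} gives that $\EE$ is even with $\pi_{2k}\EE \cong (g_*\OO_{\widehat{\MM(\Gamma)}_l})\tensor\omega^{\tensor k}$. Here there is an additional simplification over the compactified case: by Proposition \ref{prop:coh}(4) every quasi-coherent sheaf on $\MM(\Gamma)_R$ has vanishing higher cohomology, so there is no $\pi_1$-torsion obstruction — the uncompactified splitting always holds, which is why the statement is unconditional.

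Next I would invoke Theorem \ref{thm:decuncompact} to obtain an isomorphism of vector bundles on $\widehat{\MM}_{ell,l}$ of the form $g_*\OO_{\widehat{\MM(\Gamma)}_l} \cong \bigoplus_{i} \bigoplus_{k_i} (\EE_l)\tensor\omega^{\tensor -i}$, where $\EE_l$ is $(f_3)_*\OO$, $(f_2)_*\OO$ or $\OO$ according to $l = 2, 3$ or $l > 3$. I would then set $\FF = \bigoplus_i \bigoplus_{k_i} (f_l)_*(f_l)^*\Sigma^{2i}\OO^{top}$ (with $f_l = f_3, f_2$ or the identity), whose global sections are the desired sum of shifted copies of $\widehat{TMF_1(3)}_l$, $\widehat{TMF_1(2)}_l$ or $\widehat{TMF}_l$; note $\FF$ is even and the algebraic splitting gives $\pi_*\FF \cong \pi_*\EE$, and in particular an isomorphism $h\colon \pi_0\FF \xrightarrow{\cong} \pi_0\EE$.

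Finally I would realize $h$ topologically exactly as before: there is a descent spectral sequence computing $\pi_*\Hom_{\OO^{top}}(\FF,\EE)$ with $E_2$-term $H^q(\widehat{\MM}_{ell,l};\mathcal{H}om_{\pi_*\OO^{top}}(\pi_*\FF,\pi_{*+p}\EE))$, and since $\mathcal{H}om_{\pi_*\OO^{top}}(\pi_*\FF,\pi_{*+p}\EE)$ is a sum of sheaves of the form $g_*\mathcal{H}om_{\OO_{\widehat{\MM(\Gamma)}_l}}(g^*(\cdots),\OO_{\widehat{\MM(\Gamma)}_l})$ with $g$ affine, together with Proposition \ref{prop:coh}(4) giving vanishing higher cohomology on $\widehat{\MM(\Gamma)}_l$, the spectral sequence is concentrated in filtration $0$. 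Hence $h$ lifts uniquely to a map $\FF \to \EE$; it is an isomorphism on $\pi_*$ by construction, hence an equivalence, and taking global sections gives the claimed splitting. The main obstacle I anticipate is purely bookkeeping: justifying that $l$-completion interacts correctly with the $0$-affineness equivalence and with global sections on the relevant categories of quasi-coherent modules — i.e.\ that $\Gamma(\widehat{\EE}_l) \simeq \widehat{TMF(\Gamma)}_l$ and similarly for $\FF$ — which requires the compactness/finiteness inputs from Proposition \ref{prop:compactness} and a little care with the identity $\widehat{E}_l/l \simeq E/l$; once this is in place, everything else is a verbatim transcription of the compactified argument, only easier because of the stronger vanishing in Proposition \ref{prop:coh}(4).
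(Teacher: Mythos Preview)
Your approach is essentially the paper's: use Theorem \ref{thm:decuncompact} for the algebraic splitting, build the candidate $\FF$, and realize the isomorphism on $\pi_0$ via a collapsing descent spectral sequence. The paper even says explicitly that ``the proof is from this point on analogous to the one in the compactified case.''

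There is one genuine point you glossed over. You invoke Proposition \ref{prop:coh}(4) to get vanishing of higher cohomology on $\widehat{\MM(\Gamma)}_l$, but that proposition is stated for $\MM(\Gamma)_R$ with $R$ a \emph{localization} of the integers, not for the formal $l$-completion. The paper fills this in with a short separate argument: coherent sheaves on $\widehat{\MM(\Gamma)}_l$ are equivalent to adic systems $(\FF_i)_{i\geq 0}$ on the $\MM(\Gamma)_{\Z/l^{i+1}}$ by \cite[Theorem 2.3]{Con}; global sections are then $\lim_i H^0(\MM(\Gamma);\FF_i)$, and since each $H^0$ is exact by Proposition \ref{prop:coh}(4) and the system is Mittag--Leffler, the limit is exact too. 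This is the step that makes your descent spectral sequence collapse and also underlies the ``bookkeeping'' you anticipated about completion and global sections; once you insert it, the rest of your argument goes through verbatim.
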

\begin{proof}
Note first that $l$-completion commutes with homotopy limits. In particular, if we define $\widehat{\OO}^{top}_l$ by $\widehat{\OO}^{top}_l(U) = (\OO^{top}(U))^{\wedge}_l$ for every Deligne--Mumford stack $U$ \'etale over $\MM_{ell}$, we see that $\widehat{\OO}^{top}_l$ is still a sheaf on the \'etale site of $\MM_{ell}$ with global sections $\widehat{\TMF}_l$. By \cite[Cor 7.3.5.2]{SAG}, $l$-completion is monoidal and thus $\widehat{\OO}^{top}_l$ is at least a sheaf of $A_\infty$-ring spectra. As $\pi_*\OO^{top}$ is concentrated in even degrees and $l$-torsionfree, $\pi_*\widehat{\OO}^{top}_l$ agrees with the $l$-completion of $\pi_*\OO^{top}$.

We claim that for every coherent sheaf $\FF$ on the completion $\widehat{\MM(\Gamma)}_l$ the cohomology groups $H^i(\widehat{\MM(\Gamma)}_l; \FF)$
 vanish for $i>0$. Indeed, by \cite[Theorem 2.3]{Con}, the category of coherent sheaves on $\widehat{\MM(\Gamma)}_l$ is equivalent to that of adic systems $(\FF_i)_{i\geq 0}$.
  The global section functor corresponds to $\lim_i H^0(\MM(\Gamma); \FF_i)$. As by Proposition \ref{prop:coh} the global sections functor on $\MM(\Gamma)$ is exact,
   $H^0(\MM(\Gamma); \FF_i)$ forms a Mittag-Leffler system and we see that the global sections functor on coherent sheaves on $\widehat{\MM(\Gamma)}_l$ is exact as well.

Working everywhere with $l$-completions, the proof is from this point on analogous to the one in the compactified case if we use Theorem \ref{thm:decuncompact}. Thus, we obtain the theorem.
\end{proof}

If $\pi_1\Tmf_1(n)$ is $l$-torsionfree, the completions in the previous example are not necessary as we can just invert $\Delta$ in Theorem \ref{thm:dectopcompact}.

\begin{example}
The $\Tmf$-module $\Tmf_1(5)$ decomposes after $2$-localization as
$$\Tmf_1(3) \oplus \Sigma^2\Tmf_1(3) \oplus \Sigma^4\Tmf_1(3)$$
and after $3$-localization as
$$\Tmf_1(2) \oplus \Sigma^2 \Tmf_1(2)^{\oplus 2} \oplus \Sigma^4 \Tmf_1(2)^{\oplus 2}\oplus \Sigma^6 \Tmf_1(2)^{\oplus 2} \oplus \Sigma^8 \Tmf_1(2).$$
As $\TMF_1(2)$ is $8$-periodic, this decomposition simplifies after inverting $\Delta$ and still localizing at $3$ to
$$\TMF_1(5) \simeq \TMF_1(2)^{\oplus 2} \oplus \Sigma^2 \TMF_1(2)^{\oplus 2} \oplus \Sigma^4 \TMF_1(2)^{\oplus 2}\oplus \Sigma^6 \TMF_1(2)^{\oplus 2}.$$
In general, the formulae for the decompositions for the $\Tmf$-modules are the same as the corresponding ones for the decompositions of the vector bundles. See Appendix C of \cite{MeiDecMod} for tables of decompositions and Section 4 of op.\ cit.\ for the general formulae.
\end{example}

\begin{remark}
In the previous example we saw that in the $2$-local and $3$-local decompositions for $\TMF_1(5)$ all even shifts of $\TMF_1(3)$ and $\TMF_1(2)$, respectively, occur equally often up to the respective periodicities. Moreover, \cite[Theorem 4.18]{MeiDecMod} and the paragraph thereunder can be used to show that $\TMF_1(n)$ will $l$-locally always decompose into (unshifted) copies of $\TMF_1(5)$, at least if $\pi_1\Tmf_1(n)$ is $l$-torsionfree. Thus, all even shifts of $\TMF_1(3)$ and $\TMF_1(2)$ up to their respective periodicities occur equally often in the $2$- and $3$-local decompositions of $\TMF_1(n)$ if $\pi_1\Tmf_1(n)$ is $l$-torsionfree for $l=2$ and $l=3$, respectively.
\end{remark}

\section{Equivariant $\TMF$}\label{sec:equivariant}
The main goal of this section is to apply our previous splitting results to equivariant $\TMF$ as introduced in \cite{Lur07} and \cite{LurEllIII} (based on earlier ideas by Grojnowski and many others). The required background material on spectral algebraic geometry and oriented elliptic curves can be found in \cite{SAG} and \cite{LurEllII}. In the first subsection we will formulate certain complements to \cite{LurEllII} to obtain the \emph{universal oriented elliptic curve}, whose basic properties we study. This is the basis for the treatment of equivariant $\TMF$ in the second subsection. We remark that only the interpretation of our results requires the machinery of equivariant elliptic cohomology, but our results can be formulated without it as well.

\subsection{The universal oriented elliptic curve}
Recall from Definition \ref{defi:der} our use of the term \emph{derived stack}. We want to recall the notions of an (oriented) elliptic curve from \cite{LurEllII}, but possibly with a derived stack instead of just an $E_\infty$-ring as a base. We will use the shorthand $\tau_{\geq 0}X$ for the spectral Deligne--Mumford stack $(\XX, \tau_{\geq 0}\OO_\XX)$ if $X = (\XX,\OO_{\XX})$ is a derived stack.

\begin{defi}
Let $X$ be a derived stack. A \emph{strict elliptic curve} $E$ over $X$ is an abelian group object\footnote{See \cite[Section 1.2]{LurEllI} for a definition.} in the $\infty$-cat $\DerSt/X$ of derived stacks over $X$ such that
\begin{enumerate}
\item $E \to X$ is flat in the sense of \cite[Definition 2.8.2.1]{SAG},
\item $\tau_{\geq 0}E \to \tau_{\geq 0}X$ is proper and locally almost of finite presentation in the sense of \cite[Definitions 5.1.2.1 and 4.2.0.1]{SAG},
\item For every morphism $i\colon \Spet k \to \tau_{\geq 0}X$ with $k$ an algebraically closed (classical) field the pullback $i^*E \to \Spet k$ is a classical elliptic curve. (See Section 1.4.2 in \cite{SAG} for the definition of $\Spet$.)
\end{enumerate}
We denote by $\Ell^s(X)$ the full sub-$\infty$-category of the $\infty$-category of abelian group objects in $\mathrm{DerSt}/X$ on the strict elliptic curves.
\end{defi}
For $X = \Spet R$ for an $E_\infty$-ring $R$, this recovers the notion of a strict elliptic curve from \cite[Definition 2.0.2]{LurEllI}.

\begin{lemma}\label{lem:strictdescent}
The functor
\[\DerSt^{op} \to \mathcal{S},\qquad X \mapsto \Ell^s(X)^{\simeq}\]
to the $\infty$-category of spaces is a sheaf for the \'etale topology, where $\Ell^s(X)^{\simeq}\subset \Ell^s(X)$ is the maximal sub-$\infty$-groupoid.
\end{lemma}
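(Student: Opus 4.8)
The plan is to verify the sheaf condition by reducing the claim about strict elliptic curves over an arbitrary derived stack $X$ to the known descent statement for the representing functor on $E_\infty$-rings, which is exactly \cite[Definition 2.0.2]{LurEllI} together with the fact (proven in \cite{LurEllI}) that $R \mapsto \Ell^s(\Spet R)^{\simeq}$ is an \'etale sheaf on $E_\infty$-rings. The key point is that all three defining conditions of a strict elliptic curve --- flatness, properness and local almost finite presentation after connective cover, and the geometric-fiber condition --- are \'etale-local on the base, so that the construction $X \mapsto \Ell^s(X)^{\simeq}$ is obtained from the affine-local one by right Kan extension, which automatically produces a sheaf.

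Concretely, I would proceed as follows. First I would recall that an \'etale hypercover (or \v{C}ech cover) of a derived stack $X$ can be tested affine-locally: writing $X$ as a colimit of its affine opens $\Spet R_\alpha$ in the \'etale topology, a sheaf is by definition a functor sending such colimits to limits of spaces. Second, for $X = \Spet R$ affine I would invoke that $\Ell^s(\Spet R)^{\simeq} \simeq \Ell^s(R)^{\simeq}$ in the notation of \cite{LurEllI}, and that this is a sheaf on affine derived schemes for the \'etale topology --- this is part of the foundational package in \cite{LurEllI} (representability of the moduli of strict elliptic curves, or directly the descent for flat proper morphisms of finite presentation). Third, for general $X$, given an \'etale cover $\{U_i \to X\}$ with \v{C}ech nerve $U_\bullet$, I would show that a strict elliptic curve over $X$ is the same datum as a compatible family of strict elliptic curves over the $U_i$: descent for the underlying morphisms $E \to X$ follows because flat (equivalently, by the proper + finite presentation hypotheses, even faithfully flat and quasi-compact on connective covers) descent holds for derived stacks, and the abelian-group structure descends because limits of spaces compute the relevant mapping spaces; finally the three pointwise conditions are preserved and detected \'etale-locally on $X$ --- flatness by \cite[Definition 2.8.2.1]{SAG} and its locality, properness and local almost finite presentation by \cite[Section 5.1, 4.2]{SAG} where these notions are defined to be \'etale-local on the target, and the geometric-fiber condition trivially since geometric points of $X$ factor through some $U_i$.

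The main obstacle I expect is the bookkeeping in step three: one must be careful that the $\infty$-category of \emph{abelian group objects} in $\mathrm{DerSt}/X$ satisfies descent, i.e.\ that $X \mapsto \mathrm{Ab}(\mathrm{DerSt}/X)$ is a sheaf of $\infty$-categories, so that taking maximal sub-$\infty$-groupoids and restricting to the full subcategory cut out by the three conditions still yields a sheaf of spaces. This follows formally once one knows that $\mathrm{DerSt}/(-)$ itself satisfies \'etale descent (which is essentially the definition of a derived stack / the gluing of nonconnective spectral Deligne--Mumford stacks along \'etale covers, \cite[Section 1.4]{SAG}) and that $\mathrm{Ab}(-) = \Fun^{\times}(\ldots, -)$ commutes with limits; the full subcategory $\Ell^s$ is cut out by conditions stable under the restriction functors, hence defines a subsheaf, and $(-)^{\simeq}$ preserves limits. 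So the proof is really an assembly of standard descent facts, and the only real work is checking that each of the three defining properties is \'etale-local on the base, which is immediate from the cited definitions in \cite{SAG}.
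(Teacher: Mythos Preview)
Your final paragraph is the proof, and it matches the paper's approach exactly: one uses that $X \mapsto \mathrm{DerSt}/X$ satisfies \'etale descent, that $\mathrm{Ab}(-)$ and $(-)^{\simeq}$ preserve limits, and that each of the three defining conditions is \'etale-local on the base. The paper is simply terser, citing \cite[Proposition 4.2.1.5, Corollary 5.3.1.2]{SAG} for the \'etale-locality of ``locally almost of finite presentation'' and ``proper'', and spelling out that geometric points $\Spet k \to \tau_{\geq 0}X$ lift along an \'etale cover because every \'etale cover of $\Spet k$ splits.

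The detour through right Kan extension from affines, however, buys you nothing and is mildly dangerous. To invoke the affine result from \cite{LurEllI} you would first need to know that $\Ell^s(X)^{\simeq}$ agrees with $\lim_{\Spet R \to X}\Ell^s(\Spet R)^{\simeq}$ for general $X$; but that identification \emph{is} the sheaf condition you are trying to prove. So steps one and two of your outline are circular unless you have already carried out step three. Drop them and keep only the direct verification of \'etale-locality for the three conditions, together with the formal descent for abelian group objects---that is the complete argument.
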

\begin{proof}
It suffices to show that being a strict elliptic curve is local on the target for the \'etale topology. This follows from \cite{SAG}, Proposition 4.2.1.5 and Corollary 5.3.1.2, together with the following fact: For every \'etale cover $Y \to X$ and every $\Spet k \to \tau_{\geq 0}X$ (with $k$ algebraically closed), there is a lift to $\tau_{\geq 0} Y$ as the map $\tau_{\geq 0}Y \to \tau_{\geq 0} X$ is still an \'etale cover and every \'etale map to $\Spet k$ splits.
\end{proof}

For the following definition let us recall the following notation. Let $E$ be a derived stack over a derived stack $X$ and let $Y$ be another derived stack over $X$. Then we denote by $E(Y)$ the mapping space from $Y$ to $E$ in $\DerSt_{/X}$.

\begin{defi}
Let $X$ be a derived stack and $E$ a strict elliptic curve over $X$. A \emph{preorientation} on $E$ is a pointed map $S^2 \to E(\tau_{\geq 0} X)$ and the space of preorientations is $\Omega^2 E(\tau_{\geq 0} X)$. A preorientation on $E$ is an \emph{orientation} if its pullback along every map $\Spet A \to X$ for $A$ an $E_\infty$-ring is an orientation in the sense of \cite[Definition 7.2.7]{LurEllII}.
\end{defi}
Thus, we obtain for each $X$ a functor $\Ell^s(X)^{op} \to \mathcal{S}$, mapping each strict elliptic curve $E$ to its space $\Omega^2 E(\tau_{\geq 0} X)$ of preorientations. By the unstraightening construction, this functor is classified by a left fibration $\Ell^{pre}(X) \to \Ell^s(X)$ (see \cite[Section 2.2.1]{HTT} or \cite{H-MLeftII} for a different exposition). We denote by $\Ell^{or}(X)\subset \Ell^{pre}(X)$ the full sub-$\infty$-category of oriented elliptic curves. The functoriality \cite[Proposition 2.2.1.1]{HTT} of the unstraightening construction implies that we obtain functors
$$\Ell^{pre}, \Ell^{or}\colon \DerSt \to \Cat_{\infty}$$

\begin{lemma}
The functors
\[\DerSt^{op} \to \mathcal{S},\qquad X \mapsto \Ell^{pre}(X)^{\simeq}\]
and
\[\DerSt^{op} \to \mathcal{S},\qquad X \mapsto \Ell^{or}(X)^{\simeq}\]
are sheaves for the \'etale topology.
\end{lemma}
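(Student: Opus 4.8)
The plan is to reduce the statement to the sheaf property for strict elliptic curves, which was already established in Lemma \ref{lem:strictdescent}, together with the fact that an \'etale sheaf of $\infty$-categories gives rise to an \'etale sheaf on the associated left fibrations. Concretely, recall that $\Ell^{pre}(X) \to \Ell^s(X)$ is the left fibration classified by the preorientation functor $\Ell^s(X)^{op} \to \mathcal{S}$, $E \mapsto \Omega^2 E(\tau_{\geq 0}X)$, and $\Ell^{or}(X)$ is a full subcategory. First I would show that the presheaf $X \mapsto \Ell^{pre}(X)^{\simeq}$ is a sheaf. For a fixed \'etale cover $U_\bullet \to X$ (or a hypercover, depending on which version of descent one wants), a point of $\Ell^{pre}(X)^{\simeq}$ is a pair consisting of a strict elliptic curve $E$ over $X$ and a preorientation of $E$. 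By Lemma \ref{lem:strictdescent} the datum of $E$ descends, so it suffices to check that, given $E$ over $X$, the space of preorientations $\Omega^2 E(\tau_{\geq 0}X)$ satisfies descent along $U_\bullet$. But $E(\tau_{\geq 0}(-))$ is itself a limit-preserving functor in the base (mapping spaces in $\DerSt$ out of a fixed object send colimits of stacks in the base to limits, and $\tau_{\geq 0}$ preserves \'etale colimits), hence so is $\Omega^2 E(\tau_{\geq 0}(-))$; and the formation of $\Omega^2$ commutes with the relevant limits. Combining the descent for the base object with the descent for the fiber gives descent for the total space, using that limits in the arrow category / in left fibrations are computed fiberwise.

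Alternatively, and perhaps more cleanly, I would phrase this using the functoriality of unstraightening: the assignment $X \mapsto \Ell^{pre}(X)$ is the unstraightening of a functor of functors, and since $X \mapsto \Ell^s(X)$ is an \'etale sheaf (valued in $\infty$-categories, by the same argument as Lemma \ref{lem:strictdescent} but not passing to the maximal sub-groupoid) and the preorientation functor is compatible with restriction, the total $\infty$-category $\Ell^{pre}$ is again an \'etale sheaf of $\infty$-categories; then $X \mapsto \Ell^{pre}(X)^{\simeq}$ is a sheaf of spaces because $(-)^{\simeq}$ preserves limits. The key input is that $X \mapsto \Ell^s(X)$ together with the preorientation functor assemble into a sheaf valued in the arrow category of $\widehat{\Cat}_\infty$, which reduces to the combination of Lemma \ref{lem:strictdescent} and the descent of $E(\tau_{\geq 0}(-))$ noted above.

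For the oriented version, $\Ell^{or}(X) \subset \Ell^{pre}(X)$ is the full subcategory on those preoriented curves whose pullback to every $\Spet A \to X$ is oriented in the sense of \cite[Definition 7.2.7]{LurEllII}. Being oriented is by definition a condition checked after pullback to affines, so it is manifestly local on $X$ for the \'etale topology (indeed for any topology where one has enough affine covers): if $(E,\beta)$ becomes oriented after restriction to each $U_i$ in an \'etale cover, then for any $\Spet A \to X$ we may further pull back to an \'etale cover of $\Spet A$ refining the $U_i$, and orientedness over an $E_\infty$-ring is itself \'etale-local (this is part of the theory in \cite{LurEllII}, since an orientation is detected on $\pi_0$ and on the formal group, both of which satisfy \'etale descent). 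Hence $\Ell^{or}$ is a sheaf of full subcategories of $\Ell^{pre}$, and passing to maximal sub-groupoids, $X \mapsto \Ell^{or}(X)^{\simeq}$ is a sheaf.

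The main obstacle is the bookkeeping around descent for a left fibration / for the total space of an unstraightening: one must be careful that a limit of total spaces of left fibrations is computed as the total space of the limit of the classifying functors, and that this is compatible with the sheaf conditions on the base and on the fibers separately. I expect this to be a formal but slightly delicate diagram-chase using \cite[Proposition 2.2.1.1]{HTT} and the fact that $\mathrm{Cat}_\infty \to \widehat{\mathrm{Cat}}_\infty$ and $(-)^{\simeq}$ preserve limits; the geometric content (descent of strict elliptic curves, locality of the orientation condition) is already in hand.
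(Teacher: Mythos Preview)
Your proposal is correct and follows essentially the same route as the paper: descent for $\Ell^{pre}$ is obtained by combining descent for $\Ell^s$ (the base of the left fibration) with descent for the fibers $\Omega^2 E(\tau_{\geq 0}-)$, and descent for $\Ell^{or}$ then follows because being an orientation is \'etale-local on affines by \cite[Proposition 7.2.10]{LurEllII}. The paper organizes the first step slightly more concretely by writing down the square of Kan fibrations $\Ell^{pre}(X)^{\simeq}\to \Ell^s(X)^{\simeq}$ and $\lim_\Delta \Ell^{pre}(U_\bullet)^{\simeq}\to \lim_\Delta \Ell^s(U_\bullet)^{\simeq}$ and checking the base map and the map on fibers are equivalences (using $E(\tau_{\geq 0}U_\bullet)\simeq E_{U_\bullet}(\tau_{\geq 0}U_\bullet)$), which is exactly the ``fiberwise'' argument you sketch.
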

\begin{proof}
Let $U \to X$ be an \'etale cover of derived stacks and let $U_\bullet$ (with $U_0 = U$, $U_1 = U\times_X U$ etc.) be the associated \v{C}ech simplicial object.

We have to show that in the diagram
\[\xymatrix{
\Ell^{pre}(X)^{\simeq} \ar[d]^{p}\ar[r]^-f & \lim_{\Delta} \Ell^{pre}(U_{\bullet})^{\simeq}\ar[d]^q\\
\Ell^s(X)^{\simeq} \ar[r]^-g & \lim_{\Delta} \Ell^s(U_{\bullet})^{\simeq}
}\]
the map $f$ is an equivalence.
Note that $\Ell^{pre}(X)^{\simeq} \to \Ell^s(X)^{\simeq}$ is a left fibration of Kan sets and thus a Kan fibration \cite[Lemma 2.1.3.3]{HTT} and likewise the maps $\Ell^{pre}(U_{\bullet})^{\simeq} \to \Ell^s(U_{\bullet})^{\simeq}$ are Kan fibrations. By picking a suitable model for the homotopy limit, we can assume that $q$ is a Kan fibration as well.

We have the following properties:
\begin{enumerate}
\item $g$ is an equivalence by Lemma \ref{lem:strictdescent},
\item for every $E \in \Ell^s(X)$ the map
$$p^{-1}(E) \simeq \Omega^2E(\tau_{\geq 0}X)\to \lim_{\Delta}\Omega^2E(\tau_{\geq 0}U_\bullet) \simeq q^{-1}(g(E))$$
 is an equivalence. Here, we use $E(\tau_{\geq 0}U_\bullet) \simeq E_{U_\bullet}(\tau_{\geq 0}U_\bullet)$, where $E_{U_n}$ is the base change of $E$ to $U_n$.
\end{enumerate}
This implies that $f$ is an equivalence as well.

To show that $X \mapsto \Ell^{or}(X)^{\simeq}$ is an \'etale sheaf, it suffices to show that every preorientation that is \'etale-locally an orientation is also an orientation. This follows directly from the case of $X$ being affine \cite[Proposition 7.2.10]{LurEllII}.
\end{proof}
This allows us to give a slight extension of one of the main theorems of \cite{LurEllII}.
\begin{thm}[Lurie]\label{thm:Luriemain}
The functors
\[\DerSt^{op} \to \mathcal{S},\qquad X \mapsto \Ell^{pre}(X)^{\simeq}\]
and
\[\DerSt^{op} \to \mathcal{S},\qquad X \mapsto \Ell^{or}(X)^{\simeq}\]
are representable by derived stacks $\MM_{ell}^{pre} = (\MM_{ell}, \OO^{pre})$ and $\MM_{ell}^{top} = (\MM_{ell}, \OO^{top})$ respectively.\footnote{We have chosen the name $\MM_{ell}^{top}$ instead of Lurie's $\MM_{Ell}^{or}$ because the notation $\OO^{top}$ for its structure sheaf is very common in the Goerss--Hopkins--Miller tradition.} More precisely, the map $\MM_{ell} \to \MM_{ell}^{pre}$ classifying the unique preorientation on a classical elliptic curve and the map $\MM^{pre}_{ell} \to \MM^{top}_{ell}$ classifying the preorientation underlying an orientation induce equivalences on the underlying classical stacks.

 Moreover, $\pi_{2k-1}\OO^{top} = 0$ and $\pi_{2k}\OO^{top}\cong \omega^{\tensor k}$ for all $k\in\Z$ and $\OO^{top}$ is equivalent to the sheaf defined by Goerss, Hopkins and Miller. The sheaf $\OO^{pre}$ is connective with $\pi_0\OO^{pre} \cong \OO_{\MM_{ell}}$.
\end{thm}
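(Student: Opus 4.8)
The plan is to deduce Theorem \ref{thm:Luriemain} from the affine case, which is one of the main theorems of \cite{LurEllII}, together with the descent results just established. Concretely, Lurie proves in \cite{LurEllII} that the functors $R \mapsto \Ell^{pre}(\Spet R)^{\simeq}$ and $R \mapsto \Ell^{or}(\Spet R)^{\simeq}$ on $E_\infty$-rings are corepresentable, giving nonconnective spectral Deligne--Mumford stacks $\MM_{ell}^{pre}$ and $\MM_{ell}^{top}$ whose underlying $\infty$-topos is that of $\MM_{ell}$, and that the structure sheaf $\OO^{top}$ agrees with the Goerss--Hopkins--Miller sheaf with the stated homotopy groups, while $\OO^{pre}$ is connective with $\pi_0\OO^{pre}\cong\OO_{\MM_{ell}}$. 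So for the representability statement over general derived stacks $X$, the content is purely formal: a functor $\DerSt^{op}\to\mathcal{S}$ is representable by a derived stack $Y$ precisely when it is an \'etale sheaf whose restriction to affines $\Spet R$ is corepresentable, i.e.\ of the form $R\mapsto \Map_{\CAlg}(\OO_Y(\ldots),R)$ compatibly with localization; here one uses that $\DerSt$ is generated under colimits by affines in an appropriate sense and that $\Map_{\DerSt}(\Spet R, Y)$ recovers the value on $\Spet R$. The two lemmas immediately preceding the theorem supply exactly the \'etale-sheaf property for $X\mapsto \Ell^{pre}(X)^{\simeq}$ and $X\mapsto\Ell^{or}(X)^{\simeq}$, and Lemma \ref{lem:strictdescent} does the same for $\Ell^s$.

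First I would spell out the gluing argument: given a derived stack $X$, choose an \'etale hypercover (or just a cover and its \v{C}ech nerve) by affines $\Spet R_\bullet$; by the sheaf property the value $\Ell^{pre}(X)^{\simeq}$ is the limit over $\Delta$ of $\Ell^{pre}(\Spet R_\bullet)^{\simeq} \simeq \Map_{\DerSt}(\Spet R_\bullet, \MM_{ell}^{pre})$, and since mapping spaces out of $X$ into a fixed derived stack also satisfy \'etale descent in the source (the $\infty$-topos of $X$ is recovered from the cover), this limit is $\Map_{\DerSt}(X, \MM_{ell}^{pre})$. The same argument verbatim handles $\Ell^{or}$ with $\MM_{ell}^{top}$ and $\Ell^s$ with $\MM_{ell}$ itself (the last being classical). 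Then the maps $\MM_{ell}\to\MM_{ell}^{pre}\to\MM_{ell}^{top}$ classifying respectively the canonical preorientation on a classical elliptic curve and the preorientation underlying an orientation induce equivalences on underlying $\infty$-topoi because they do so after restriction to affines by \cite{LurEllII}, and the underlying $\infty$-topos is itself determined by an \'etale-local condition (equivalently, these maps are affine and an iso on $\pi_0$ on affine charts). Finally the statements about $\pi_*\OO^{top}$, the identification of $\OO^{top}$ with the Goerss--Hopkins--Miller sheaf, and the connectivity of $\OO^{pre}$ with $\pi_0\OO^{pre}\cong\OO_{\MM_{ell}}$ are local on $\MM_{ell}$ and hence follow directly from their affine counterparts in \cite{LurEllII}, since on an affine \'etale chart $\Spet R \to \MM_{ell}$ the structure sheaf evaluates to the corresponding $E_\infty$-ring.

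The one point that needs a little care — and what I would flag as the main (though still minor) obstacle — is the precise sense in which ``representable by a derived stack'' is being asserted for a functor on all of $\DerSt^{op}$, rather than merely on affine $E_\infty$-rings: one must check that the target is a nonconnective spectral Deligne--Mumford stack (not just a functor glued from affine schemes in some weaker sense) and that the equivalence of $\infty$-topoi claimed in the theorem is compatible with the structure sheaves as asserted. This is handled by the standard descent yoga for nonconnective spectral DM stacks, e.g.\ \cite[Section 1.4]{SAG}, which says a sheaf for the \'etale topology whose affine sections are corepresentable and which admits an affine \'etale cover is such a stack; here the affine \'etale cover of $\MM_{ell}^{pre}$ and $\MM_{ell}^{top}$ is inherited from the one on $\MM_{ell}$ via the equivalence of underlying topoi, and the structure sheaves restrict correctly by construction. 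With these bookkeeping points settled, nothing beyond the affine statements of \cite{LurEllII} and the descent lemmas proved above is needed.
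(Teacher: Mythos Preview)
Your proposal is correct and follows essentially the same route as the paper: reduce representability to the affine case, which is Lurie's theorem in \cite{LurEllII}, and then use the \'etale-sheaf property established in the preceding lemmas to extend to arbitrary derived stacks. The paper's proof is terser---it simply observes that every derived stack is \'etale locally affine and cites the relevant results of \cite{LurEllI} and \cite{LurEllII} for the statements about the underlying $\infty$-topoi and structure sheaves---but your more explicit \v{C}ech-nerve argument and the care you take about the meaning of ``representable by a derived stack'' only make the same reasoning more transparent.
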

\begin{proof}
The representability is contained in Theorem 7.0.1 and Propositions 7.2.5 and 7.2.10 of \cite{LurEllII} for the restrictions of the two functors above to derived stacks of the form $\Spet A$ for an $E_\infty$-ring $A$. That the representing objects $\MM_{ell}^{pre}$ and $\MM_{ell}^{top}$ are $1$-localic with the \'etale topos of $\MM_{ell}$ as underlying topos is contained in \cite[Remark 2.4.2]{LurEllI} and \cite[Remark 7.3.2]{LurEllII}. This implies in particular by Lemma \ref{lem:derivedstacksequivalence} that they are derived stacks in our sense. 
As every derived stack is \'etale locally of the form $\Spet A$, we see that the same derived stacks $\MM_{ell}^{pre}$ and $\MM_{ell}^{top}$ represent the two functors in the theorem on all derived stacks as well.

 The agreement with the Goerss--Hopkins--Miller sheaf is stated in \cite[Remark 7.0.2]{LurEllII} and proven in \cite{DaviesElliptic}.
\end{proof}

\begin{defi}
The \emph{universal oriented elliptic curve} over $\MM_{ell}^{top}$ is the oriented elliptic curve classified by the identity on $\MM_{ell}^{top}$. We will denote it by $\Etop$.
\end{defi}

Given a strict elliptic curve $E$ over a derived stack, we can define its $n$-torsion points via the pullback square
\[
\xymatrix{
E[n] \ar[r]\ar[d] & E \ar[d]^{[n]} \\
X \ar[r]^e & E,
}
\]
where $e$ is the unit section and $[n]$ the multiplication-by-$n$ morphism. This pullback is again a derived stack by Lemma \ref{lem:derivedpullback}.

\subsection{A splitting of equivariant $\TMF$}
In \cite{Lur07}, Lurie uses the universal derived elliptic curve to sketch a construction of equivariant elliptic cohomology. Details have appeared in \cite{LurEllIII} and \cite{GM20} and we will recall features of this construction now.

Let $G$ be an abelian compact Lie group. Define $\MM_G$ to be the moduli stack of elliptic curves $E$ together with a morphism $\widehat{G} \to E$ from the constant group scheme of characters $\widehat{G} = \Hom(G,\C^\times)$ of $G$.
Lurie defines $\MM_G^{top}$ as the derived mapping stack $\Hom(\widehat{G}, \Etop)$. As $\widehat{G}$ is a finitely generated abelian group, this can concretely be constructed as follows: We set $\Hom(\Z,\Etop)$ to be $\Etop$ itself and $\Hom(\Z/n, \Etop)$ to be the $n$-torsion points $\Etop[n]$.
Furthermore, $\Hom(H_1\oplus H_2,\Etop)$ is $\Hom(H_1,\Etop)\times_{\MM_{ell}^{top}} \Hom(H_2, \Etop).$  We denote the structure sheaf of $\MM_G^{top}$ by $\OO^{top}_G$. Set $\MM_G = \Hom(\widehat{G}, \Euni)$. %In Lemma \ref{lem:FiniteFlat} we will establish that $\MM_G \to \MM_{ell}$ is flat and this implies analogously to Lemma \ref{lem:1localic} inductively that the underlying $\infty$-topos of $\MM_G^{top}$ agrees with that of $\MM_G$ and $\pi_0\OO^{top}_G \cong \OO_{\MM_G}$.

Lurie defines in \cite{Lur07} $\infty$-functors $$\FF_G\colon (\text{finite }G\text{-CW complexes})^{op} \to \QCoh(\MM_G^{top}),$$
with details appearing in \cite{LurEllIII} and \cite{GM20}. 
These functors satisfy the following properties:
\begin{enumerate}
 \item $\FF_G$ sends finite homotopy colimits of $G$-CW complexes to finite homotopy limits,
 \item $\FF_G(\pt) = \OO^{top}_G$, and
 \item For $H\subset G$ and $X$ a finite $H$-CW complex, we have $\FF_G(X\times_H G) \simeq f_*\FF_H(X)$ for $f\colon \MM_H^{top} \to \MM_G^{top}$ the morphism defined by restriction.
\end{enumerate}

The $G$-equivariant $\TMF$-cohomology of some finite $G$-CW complex $X$ is then computed as the (homotopy groups of the) global sections of $\FF_G(X)$. We set the $G$-fixed points $\TMF^G$ of $G$-equivariant $\TMF$ to be $\Gamma(\OO^{top}_G) = \OO^{top}_G(\MM_G).$ Its homotopy groups are the value of $G$-equivariant $\TMF$ at a point.

We need the following lemma.

\begin{lemma}\label{lem:FiniteFlat}
 For $G$ finite abelian, the morphism $\MM_G \to \MM_{G/H}$ is finite and flat for every split inclusion $H\subset G$.
\end{lemma}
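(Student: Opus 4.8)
The plan is to reduce to the generators of the finitely generated abelian group $\widehat{G}$ and use that finiteness and flatness are stable under base change and composition. Since $H \subset G$ is split, we may write $\widehat{G} \cong \widehat{G/H} \oplus A$ for a finite abelian group $A$, and then $\MM_G \cong \MM_{G/H} \times_{\MM_{ell}} \Hom(A, \Euni)$. By base change, it suffices to prove that $\Hom(A, \Euni) \to \MM_{ell}$ is finite and flat for every finite abelian group $A$. Writing $A$ as a direct sum of cyclic groups $\Z/n_i$ and using that $\Hom(A_1 \oplus A_2, \Euni) = \Hom(A_1, \Euni) \times_{\MM_{ell}} \Hom(A_2, \Euni)$ together with the stability of finite flat maps under base change and composition, this reduces further to the single statement that $\Euni[n] \to \MM_{ell}$ is finite and flat.

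The key input for that last statement is the theorem of Katz--Mazur \cite[Theorem 2.3.1]{K-M85}, which is already cited in the proof of Lemma \ref{lem:1localic}: the multiplication-by-$n$ map $[n]\colon \Euni \to \Euni$ is finite locally free of degree $n^2$ (it is the standard reference that $[n]$ is finite flat on any elliptic curve, even in the presence of characteristics dividing $n$). Since $\Euni[n]$ is by definition the pullback of $[n]$ along the unit section $e\colon \MM_{ell} \to \Euni$, finiteness and flatness of $\Euni[n] \to \MM_{ell}$ follow immediately from base change. Assembling the reductions above then gives that $\MM_G \to \MM_{G/H}$ is finite and flat.

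The main thing to be careful about is keeping track of the splitting hypothesis: without $H \subset G$ being split one only gets a short exact sequence $0 \to \widehat{G/H} \to \widehat{G} \to \widehat{H} \to 0$ of abelian groups and the resulting decomposition of $\MM_G$ as a fiber product over $\MM_{ell}$ need not hold on the nose — this is precisely why the hypothesis appears in the statement. With splitness in hand, the identification $\MM_G \cong \MM_{G/H} \times_{\MM_{ell}} \Hom(A, \Euni)$ is formal from Lurie's inductive construction of $\Hom(-, \Euni)$ recalled just above, since $\Hom(\widehat{G/H} \oplus A, \Euni) = \Hom(\widehat{G/H}, \Euni) \times_{\MM_{ell}} \Hom(A, \Euni) = \MM_{G/H} \times_{\MM_{ell}} \Hom(A, \Euni)$. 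No step here is genuinely hard; the only real obstacle is organizing the bookkeeping of the fiber-product decompositions so that the reduction to $[n]$ is clean, and citing Katz--Mazur correctly for the finite flatness of $[n]$ (hence of $\Euni[n]$).
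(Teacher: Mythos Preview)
Your proof is correct and follows essentially the same approach as the paper's own proof: both use the split hypothesis to obtain the fiber-product decomposition $\MM_G \simeq \MM_{G/H}\times_{\MM_{ell}}\Hom(\widehat{H},\Euni)$, reduce $\widehat{H}$ to its cyclic summands, and invoke \cite[Theorem 2.3.1]{K-M85} for the finite flatness of $\Euni[n]\to\MM_{ell}$. Your write-up is somewhat more detailed (e.g.\ spelling out that $\Euni[n]$ is the pullback of $[n]$ along the unit section), but the argument is the same.
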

\begin{proof}By writing $H$ as a sum of cyclic groups and using induction, we can assume that $H$ is cyclic of order $k$. As $G \cong H \oplus G/H$, we obtain $\MM_G \simeq \MM_{G/H} \times_{\MM_{ell}} \Euni[k]$. By \cite[Theorem 2.3.1]{K-M85}, the stack $\EE[k]$ is finite and flat over $\MM_{ell}$ and the result follows.
\end{proof}

\begin{prop}\label{prop:0affineG}
For $G$ finite abelian, the global sections functor
\[\Gamma\colon \QCoh(\MM_G^{top}) \to \TMF^G\modules\]
is an equivalence of $\infty$-categories.
\end{prop}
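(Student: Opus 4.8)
The plan is to follow the strategy of the proof of Proposition \ref{prop:0affine}: exhibit $\MM_G^{top}$ as a weakly even periodic derived stack that is quasi-affine over $\MM_{FG}$, and then quote the main theorem of \cite{MM15}.

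First I would apply Lemma \ref{lem:FiniteFlat} in the case $H = G$, so that $G/H$ is the trivial group and $\MM_{G/H} = \MM_{ell}$: this shows that $\MM_G \to \MM_{ell}$ is finite and flat, in particular affine. Since $\MM_{ell} \to \MM_{FG}$ is quasi-affine (as recalled in the proof of Proposition \ref{prop:0affine}, cf.\ the proof of Theorem 7.2 in \cite{MM15}), the composite $\MM_G \to \MM_{FG}$ is quasi-affine as well. By the discussion immediately preceding the proposition, the underlying $\infty$-topos of $\MM_G^{top}$ is that of $\MM_G$ and $\pi_0\OO^{top}_G \cong \OO_{\MM_G}$; and since $\MM_G^{top} \to \MM_{ell}^{top}$ is flat (again by Lemma \ref{lem:FiniteFlat}, arguing as in Lemma \ref{lem:1localic}) we get $\pi_{2k}\OO^{top}_G \cong g^*\omega^{\tensor k}$ and $\pi_{2k-1}\OO^{top}_G = 0$. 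Thus $\OO^{top}_G$ is weakly even periodic and equipped with the formal group classifying the map $\MM_G \to \MM_{FG}$, so the main theorem of \cite{MM15} applies and yields that $\MM_G^{top}$ is $0$-affine; that is, $\Gamma\colon \QCoh(\MM_G^{top}) \to \OO^{top}_G(\MM_G)\modules = TMF^G\modules$ is a (symmetric monoidal) equivalence.

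An essentially equivalent and perhaps slightly cleaner route is to use that $\MM_{ell}^{top}$ is itself $0$-affine --- the uncompactified analogue of the equivalence $\QCoh(\MMb_{ell,\Z[\frac1n]},\OO^{top}) \simeq Tmf[\frac1n]\modules$ used in the proof of Proposition \ref{prop:compactness} --- together with the fact from \cite{MM15} that an affine morphism of derived stacks over a $0$-affine base is $0$-affine; here the affineness of $\MM_G^{top} \to \MM_{ell}^{top}$ as a morphism of derived stacks follows from that of $\MM_G \to \MM_{ell}$ by \cite[Remark 2.4.4.2]{SAG}, exactly as in the proof of Lemma \ref{lem:pushpull}.

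In either route the only thing requiring genuine care is the bookkeeping: that the structure sheaf $\OO^{top}_G$ coming out of Lurie's construction really is the one obtained by viewing $\MM_G^{top}$ as relatively affine over $\MM_{ell}^{top}$, i.e.\ that $g_*\OO^{top}_G$ is the quasi-coherent $\OO^{top}$-algebra with $\pi_*g_*\OO^{top}_G \cong g_*g^*\pi_*\OO^{top}$. This follows from Lemma \ref{lem:pushpull} together with the identification of underlying $\infty$-topoi recalled above, so once it has been made explicit the invocation of \cite{MM15} is purely formal and there is no substantive obstacle.
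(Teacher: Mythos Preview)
Your proposal is correct, and your second route is precisely the paper's argument: it uses that $\MM_{ell}^{top}$ is $0$-affine by \cite{MM15}, that $\MM_G \to \MM_{ell}$ is finite and flat by Lemma \ref{lem:FiniteFlat}, and then invokes \cite[Prop.\ 3.29]{MM15} to conclude that $\MM_G^{top}$ is $0$-affine. Your first route via quasi-affineness over $\MM_{FG}$ also works but is a slightly more roundabout way to reach the same conclusion; the paper takes the shorter relative path you describe second.
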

\begin{proof}
 By construction, there is a morphism $\MM_G^{top} \to \MM_{ell}^{top}$ of derived stacks. By one of the main results of \cite{MM15}, the derived stack $\MM_{ell}^{top}$ is $0$-affine in the sense that the global sections functor
 $$\Gamma\colon \QCoh(\MM_{ell}^{top}) \to \TMF\modules$$
 is an equivalence of $\infty$-categories. By the last lemma the underlying morphism of stacks $\MM_G \to \MM_{ell}$ is finite and flat. It follows by \cite[Prop 3.29]{MM15} that $\MM_G^{top}$ is $0$-affine as well.
\end{proof}

In particular, we do not lose information when we use the global sections functor. This gives special importance to the spectra $\TMF^G$. We will determine $\TMF^G$ in terms of better known spectra after completing at a prime not dividing $|G|$, or at least inverting $|G|$. To that purpose, let $\MM^G$ be the moduli stack of elliptic curves with $G$-level structure in the sense of \cite[Section 1.5]{K-M85}, i.e.\ for an elliptic curve $E/S$ with $|G|$ invertible in $S$ an $S$-homomorphism $G\times S \to E$ that defines an injection $G \to E(k)$ for every geometric point $\Spec k \to S$. %Note that it is equivalent to demand that it is a closed immersion after base change along every geometric point $s\colon \Spec k \to S$ (see e.g.\ \cite[Lemma 4.4]{M-OMell}).

\begin{lemma}
For $G$ finite abelian, there is a splitting
$$\Hom(G,\Euni)_{\Z[\tfrac1{|G|}]} \simeq \coprod_{K\subset G} \MM^{G/K}_{\Z[\tfrac1{|G|}]},$$
where $K$ runs over all subgroups of $G$ such that $G/K$ embeds into $(\Z/|G|)^2$.
 \end{lemma}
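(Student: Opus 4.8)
The plan is to compute the scheme $\Hom(G, \Euni)$, after inverting $|G|$, by stratifying homomorphisms $G \to E$ according to their kernel. First I would recall that $\Hom(G, \Euni) = \MM_G$ parametrizes pairs $(E, \varphi)$ where $\varphi\colon G \to E$ is a homomorphism of group schemes. For any such $\varphi$ we obtain a subgroup scheme $\im(\varphi) \subset E$. Over a base where $|G|$ is invertible the group scheme $G$ is étale, so $\ker(\varphi)$ is (étale-locally, hence by descent) a subgroup $K \subset G$ that is locally constant in the base, and $\varphi$ factors as $G \twoheadrightarrow G/K \hookrightarrow E$ with the second map a closed immersion (a monomorphism of finite flat group schemes in the étale setting is a closed immersion). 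The decomposition into open-and-closed pieces indexed by $K$ then follows from the fact that, since $|G|$ is invertible, the torsion subgroup scheme $E[|G|]$ is étale over the base, so the locus where $\ker\varphi$ has a given value is open and closed; this is the content of ``it is equivalent to demand closed immersion after base change along every geometric point'' cited just above from \cite{M-OMell}.

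Concretely, I would argue as follows. Define for each subgroup $K \subset G$ the substack $Z_K \subset \MM_{G,\Z[\frac1{|G|}]}$ consisting of those $(E,\varphi)$ with $\ker\varphi = K$. The assignment $\varphi \mapsto (G/K \hookrightarrow E)$ identifies $Z_K$ with $\MM^{G/K}[\frac1{|G|}]$, the moduli stack of elliptic curves with a $G/K$-level structure in the sense defined just above (a homomorphism $G/K \to E$ that is a closed immersion). Next I would check that $\MM^{G/K}$ is empty unless $G/K$ embeds into $(\Z/|G|)^2$: geometrically a finite abelian subgroup of an elliptic curve over an algebraically closed field of characteristic prime to its order is contained in $E[m] \cong (\Z/m)^2$ for $m$ the exponent, and since the exponent of $G/K$ divides $|G|$ we get an embedding $G/K \hookrightarrow (\Z/|G|)^2$; if no such embedding exists there are no geometric points and the stack is empty. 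Finally I would verify that the $Z_K$ are disjoint and cover: disjointness is immediate since $K$ is recovered as $\ker\varphi$, and covering follows because over any geometric point $\ker\varphi$ is \emph{some} subgroup of $G$.

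The key step I would spell out carefully is the openness and closedness of each stratum $Z_K$, i.e.\ that $\MM_{G,\Z[\frac1{|G|}]}$ is literally the \emph{coproduct} (disjoint union) and not merely a stratification. The point is that $\MM_G \to \MM_{ell}$ is finite and flat by Lemma \ref{lem:FiniteFlat} (writing $G$ as a sum of cyclics), and after inverting $|G|$ it is moreover étale, since $\Euni[k] \to \MM_{ell}$ is étale when $k$ is invertible. An étale map to a connected base with a locally constant fiber invariant decomposes as a disjoint union according to that invariant; here the relevant locally constant invariant on $\MM_{G,\Z[\frac1{|G|}]}$ is $\ker\varphi \subset G$, which is locally constant precisely because $G$ and $E[|G|]$ are both étale. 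This reduces the statement to the connected-fiber bookkeeping, which gives the asserted coproduct.

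The main obstacle I expect is the careful identification of $Z_K$ with $\MM^{G/K}[\frac1{|G|}]$ as \emph{stacks} (not just on geometric points): one must check that the universal property matches, i.e.\ that a map $S \to Z_K$ — an elliptic curve $E/S$ with $\varphi\colon G \times S \to E$ of kernel $K$ fppf-locally — is the same datum as an elliptic curve with a closed immersion $G/K \times S \hookrightarrow E$. This requires knowing that ``$\ker\varphi = K$ geometrically'' implies $\varphi$ factors through $G/K \times S$ with the induced map a closed immersion over all of $S$ (not just geometrically), which again uses that everything in sight is étale over $S$ once $|G|$ is inverted, together with the cited equivalence \cite[Lemma 4.4]{M-OMell} that being a closed immersion is detectable on geometric fibers. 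Once this functorial translation is in hand, the rest is the elementary group theory of when $G/K$ embeds into $(\Z/|G|)^2$.
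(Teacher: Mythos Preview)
Your proposal is correct and follows essentially the same approach as the paper: stratify $\Hom(G,\Euni)$ by the kernel of $\varphi$, use that $E[|G|]$ is finite \'etale once $|G|$ is inverted to see the kernel is locally constant, and identify each stratum with the moduli of $G/K$-level structure. The only expository difference is that the paper carries this out by restricting to connected $S$ and invoking the \'etale fundamental group equivalence (so that $E[|G|]$ becomes $(\Z/|G|)^2$ with $\pi_1(S,s)$-action and the kernel is visibly a fixed subgroup of $G$), whereas you argue the openness and closedness of the strata $Z_K$ more directly from \'etaleness; both reach the same conclusion by the same mechanism.
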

\begin{proof}Let $S$ be a connected scheme with $|G|$ invertible. We have to show that there is a natural decomposition $\MM_G(S) \simeq \coprod_{K\subset G} \MM^{G/K}(S)$ of groupoids (where $K$ is as above). Let $E/S$ be an elliptic curve and $f\colon G\times S \to E$ be a homomorphism over $S$. This factors over $E[|G|]$, which is a finite \'etale group scheme over $S$. Choose a geometric point $s$ of $S$. Then the category of finite \'etale group schemes over $S$ is equivalent to the category of finite groups with continuous $\pi_1(S,s)$-action of the \'etale fundamental group (\cite[Exp 5]{SGA1} or \cite[Thm 33]{Stix}). For example, $E[|G|]$ is $(\Z/|G|)^2$ with a certain action and $f$ corresponds to a map $f'\colon G \to (\Z/|G|)^2$ with image in the $\pi_1(S,s)$-invariants.

 Let $K$ be the kernel of $f'$. The resulting map $\overline{f}'\colon G/K \to (\Z/|G|)^2$ corresponds to a homomorphism $\overline{f}\colon G/K\times S \to E[|G|]$ over $S$ that is an injection on the fiber over $s$ and hence over every other geometric point. %As $E[|G|] \to E$ is a closed immersion, the corresponding map $G/K\times S \to E$ is a closed immersion as well.

 If we have conversely a subgroup $K\subset G$ and a $G/K$-level structure on $\EE$, this defines an $S$-homomorphism $G\times S \to E$ by precomposition with $G \to G/K$.
\end{proof}

Using that $\MM_G\simeq \Hom(G,\Euni)$ for $G$ finite abelian, this induces a corresponding splitting of the $E_\infty$-ring spectra $\TMF^G[\tfrac1{|G|}]$ (as taking global sections on $\MM_G^{top}$ commutes with localizations by Proposition \ref{prop:0affineG}). For example,
\[\TMF^{\Z/n}[\tfrac1n] \simeq \prod_{k|n} \TMF_1(k)[\tfrac1n]\]
as $E_\infty$-ring spectra and by Theorem \ref{thm:dectopuncompact} every factor with $k>3$ decomposes after further completion into well-understood pieces. We obtain more generally:

\begin{thm}\label{thm:Gsplitting}
 Let $G$ be a finite abelian group. After completion at a prime $l$ not dividing $|G|$, the $\TMF$-module $\TMF^G$ splits into one copy of $\TMF$ and even suspensions of $\TMF_1(3)$ (for $l=2$), $\TMF_1(2)$ (for $l=3$) or $\TMF$ (if $l>3$).
\end{thm}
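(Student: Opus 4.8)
The plan is to reduce Theorem \ref{thm:Gsplitting} to the already-established splitting theorems for $TMF(\Gamma)$ with level structure, via the decomposition of $\MM_G$ into moduli stacks of elliptic curves with honest level structure. First I would invoke the preceding lemma: after inverting $|G|$ (in particular after $l$-completion for $l\nmid |G|$), there is a splitting
\[\MM_G[\tfrac1{|G|}] \simeq \coprod_{K\subset G}\MM^{G/K}[\tfrac1{|G|}],\]
where $K$ runs over subgroups with $G/K\hookrightarrow (\Z/|G|)^2$. Since $\MM_G^{top}$ is $0$-affine by Proposition \ref{prop:0affineG}, taking global sections is an equivalence of $\infty$-categories and commutes with localization; hence this geometric coproduct induces a corresponding product decomposition
\[\widehat{TMF^G}_l \simeq \prod_{K\subset G}\widehat{TMF(\MM^{G/K})}_l\]
of $TMF$-modules (indeed of $E_\infty$-rings before completion).

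The next step is to recognize each factor $TMF(\MM^{G/K})$ as $TMF(\Gamma)$ for a suitable congruence subgroup, or as a product of such. Writing the finite abelian group $G/K$ as $\Z/a\times\Z/b$ with $a\mid b$, the moduli stack $\MM^{G/K}$ of elliptic curves with a closed immersion $G/K\hookrightarrow E$ is (a localization of) the stack classically denoted $\MM(a,b)$; when $a=1$ this is just $\MM_1(b)$. In all cases $\MM^{G/K}$ is an \'etale cover of $\MM_{ell}[\tfrac1{|G|}]$ by a moduli stack of elliptic curves with level structure, and after $l$-completion (with $l\nmid|G|$, hence $l\nmid b$) one checks that the relevant congruence subgroup is \emph{tame} in the sense of the earlier definition: the level is prime to $l$, and the extra $\gcd(6,\cdot)$ condition is automatic here because we are dealing with $\Gamma_1$-type (or full level) structures rather than $\Gamma_0$-type ones, so there is no index obstruction. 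Moreover $\pi_1$ of each such $\widehat{TMF(\Gamma)}_l$ is $l$-torsionfree, simply because over the $l$-completed (equivalently $l$-local) base $\widehat{TMF}_l$ is torsionfree and the algebraic splitting of Theorem \ref{thm:decuncompact} already holds integrally $l$-completely for the uncompactified stacks. Therefore Theorem \ref{thm:dectopuncompact} applies to each factor and splits it into even suspensions of $\widehat{TMF_1(3)}_2$ (if $l=2$), $\widehat{TMF_1(2)}_3$ (if $l=3$) or $\widehat{TMF}_l$ (if $l>3$).

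Finally I would assemble the pieces. The factor indexed by $K=G$ gives $G/K$ trivial, hence $\MM^{G/K}=\MM_{ell}$ and a single copy of $\widehat{TMF}_l$; every other factor, having nontrivial level $\geq 2$ prime to $l$, splits into even suspensions of the stated building blocks (using that $TMF_1(2)$ and $TMF_1(3)$ themselves are the building blocks in their respective primes). Summing over all $K$ yields the claimed decomposition: one copy of $TMF$ together with even suspensions of $TMF_1(3)$ (at $l=2$), $TMF_1(2)$ (at $l=3$), or $TMF$ (at $l>3$).

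The main obstacle I anticipate is purely bookkeeping of the $2$- and $3$-local subtleties: for $l=2$ one must note $2\nmid|G|$ forces every level appearing to be odd, so in particular $\MM_1(2)$ and $\MM_1(3)$ do not literally appear among the $\MM^{G/K}$ (the level-$2$ and level-$3$ structures are allowed only when $2$ resp.\ $3$ divides $|G|$, which is excluded), yet the \emph{output} building block at $l=2$ is $TMF_1(3)$ and at $l=3$ is $TMF_1(2)$ — the resolution is that these arise not as factors but inside the splitting of each higher-level factor via Theorem \ref{thm:dectopuncompact}, whose building block $\EE_l$ is exactly $(f_3)_*\OO$ or $(f_2)_*\OO$. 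One must also be slightly careful that the two-generator level structures $\MM(a,b)$ with $a>1$ are still covered by the splitting results, which they are since those results were stated for all congruence subgroups of level $n$ (including $\Gamma(n)$), and a level $\Z/a\times\Z/b$ structure refines to a $\Gamma(b)$-structure \'etale-locally; concretely one can pull back the splitting along $\MM(b)\to\MM(a,b)$ or simply apply Theorem \ref{thm:dectopuncompact} directly, since $\MM(a,b)$ is itself of the form $\MM(\Gamma)$ for a congruence subgroup $\Gamma$ with $\Gamma_1(b)\subset\Gamma\subset\Gamma_0(b)$ after the appropriate identification. None of this is deep, but it is where one could slip.
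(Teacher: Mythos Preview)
Your overall strategy---decompose $\MM_G$ via the preceding lemma and then split each factor---is exactly the paper's, and the bookkeeping you flag at the end (the factor $K=G$ gives the single copy of $TMF$, the building blocks $TMF_1(3)$ or $TMF_1(2)$ enter only \emph{inside} the splitting of higher-level factors) is handled correctly. However, there is a genuine gap in the two-generator case.

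Your claim that $\MM(a,b)$ (for $1<a\mid b$) ``is itself of the form $\MM(\Gamma)$ for a congruence subgroup $\Gamma$ with $\Gamma_1(b)\subset\Gamma\subset\Gamma_0(b)$'' is false. In the paper's conventions, $\MM(\Gamma)$ for such $\Gamma$ is a \emph{quotient} $\MM_1(b)/(\Gamma/\Gamma_1(b))$ of $\MM_1(b)$, whereas $\MM(a,b)$ is a nontrivial \emph{cover} of $\MM_1(b)$ (forgetting the second generator $Q$ has degree $>1$). The honest congruence-subgroup description of $\MM(a,b)$ would involve a $\Gamma$ with $\Gamma(b)\subset\Gamma\subset\Gamma_1(b)$, a range excluded by the paper's definition of ``congruence subgroup of level $n$'' (only $\Gamma(n)$ itself or the interval $[\Gamma_1(n),\Gamma_0(n)]$ are allowed). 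Your suggested alternatives do not repair this: pulling back along $\MM(b)\to\MM(a,b)$ goes the wrong way, and taking homotopy fixed points of the splitting of $TMF(b)$ for the relevant finite group would require a Krull--Schmidt argument you have not supplied.

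The paper's fix is not to identify $\MM^{G/K}$ with any $\MM(\Gamma)$ at all. Instead, writing $G/K\cong\Z/k_1\oplus\Z/k_2$ with $k_1\geq 2$, one factors $h\colon\MM^{G/K}\to\MM_{ell}$ through $\MM_1(k_1)$; by Lemma~\ref{lem:FiniteFlat} this second map is finite flat, so the pushforward of $\OO_{\MM^{G/K}}$ to $\MM_1(k_1)$ is a vector bundle. Now one invokes the \emph{second sentence} of Theorem~\ref{thm:decuncompact}, which asserts the $l$-complete splitting for $g_*\FF$ for \emph{any} vector bundle $\FF$ on a tame $\MM(\Gamma)$, not just for the structure sheaf. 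This yields the algebraic decomposition of $h_*\OO_{\MM^{G/K}}$ into twists of $\EE_l$, from which the topological splitting follows as in the proof of Theorem~\ref{thm:dectopuncompact}. In short: you were trying to apply Theorem~\ref{thm:dectopuncompact} to $\MM^{G/K}$ directly, whereas the paper applies (the stronger form of) Theorem~\ref{thm:decuncompact} to the pushforward along $\MM^{G/K}\to\MM_1(k_1)$.
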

\begin{proof} We localize implicitly at $l$.
 By the last lemma, we have
 $$\TMF^G \simeq \prod_{K\subset G} \OO^{top}(\MM^{G/K}).$$
 Assume that $G/K$ is non-trivial and write $G/K \cong \Z/k_1 \oplus \Z/k_2$ with $k_1\geq 2$. Then the map $h\colon \MM^{G/K} \to \MM_{ell}$ factors over $\MM_1(k_1)$. By Lemma \ref{lem:FiniteFlat} the map $\MM^{G/K} \to \MM_1(k_1)$ is finite and flat and thus the pushforward of $\OO_{\MM^{G/K}}$ to $\MM_1(k_1)$ is a vector bundle. By Theorem \ref{thm:decuncompact}, the pushforward of this vector bundle to $\MM_{ell}$, that is $h_*\OO_{\MM^{G/K}}$, decomposes into vector bundles of the form $\omega^{\tensor j}$ (for $l>3$), $(f_2)_*\OO_{\MM_1(2)} \tensor \omega^{\tensor j}$ (for $l=3$) or $(f_3)_*\OO_{\MM_1(3)}\tensor \omega^{\tensor j}$ (for $l=2$) after $l$-completion. Analogously to the proof of Theorem \ref{thm:dectopuncompact}, we conclude a topological splitting from this algebraic one.
\end{proof}

%\begin{remark}
%If $\pi_1\Tmf_1(k)$ is $l$-torsionfree for all $k>1$ dividing $n$, it suffices by Theorem \ref{thm:dectopcompact} to localize at $l$ instead of completing at $l$.
%\end{remark}

To make good on one more claim in the introduction, we still want to construct the map $\TMF^{\Sigma_n} \to \TMF_0(n)$ from the fixed points for the symmetric group. We note that the association $G\mapsto \TMF^G$ actually refines to a functor 
\[\left(\Orb^{\fin}\right)^{op} \to E_{\infty}\text{-ring Spectra}\] 
from the global orbit $2$-category, whose objects are finite groups, whose morphisms are group homomorphisms and whose $2$-morphisms are given by conjugation in the target \cite{LurEllIII}. The category $\Orb^{\fin}$ is denoted by $\mathrm{Group}^+$ in \cite[Remark 3.1.3]{LurEllIII}. In this $2$-category, there is a natural action of $(\Z/n)^\times$ on $\Z/n\to \Sigma_n$ as an object of the slice category $\Orb^{\fin}_{/\Sigma_n}$. Here we use that every element of $(\Z/n)^\times$ defines an element of $\Sigma_n$ by acting on $\Z/n$ if we identify $\Sigma_n$ with the set automorphisms of $\Z/n$. Thus, we obtain a morphism $\TMF^{\Sigma_n} \to (\TMF^{\Z/n})^{h(\Z/n)^\times}$. Using the map
\[\TMF^{\Z/n} \to \TMF^{\Z/n}[\tfrac1n] \simeq \prod_{k|n}\TMF_1(k)[\tfrac1n] \to \TMF_1(n) \]
we obtain a map $(\TMF^{\Z/n})^{h(\Z/n)^\times} \to \TMF_1(n)^{h(\Z/n)^\times} \simeq \TMF_0(n)$. Composition produces the claimed map $\TMF^{\Sigma_n} \to \TMF_0(n)$. 

\section{Duality}\label{sec:dualityboth}
In this section, we will give precise conditions when the spectrum $\Tmf_1(n)$ is Anderson self-dual up to integral shift. In the first subsection, we recall the definition and basic properties of Anderson duality. In the second subsection, we study the dualizing sheaf of the stack $\MMb_1(n)$, which forms the algebraic basis for the proof of our main theorem about duality in the third subsection.

\subsection{Anderson duality}\label{sec:AndersonDuality}
Let us recall the definition of Anderson duality, which was first studied by Anderson (only published in mimeographed notes \cite{Anderson}) and Kainen \cite{Kainen}, mainly for the purpose of universal coefficient sequences, and further investigated in the context of topological modular forms in \cite{Sto12}, \cite{H-M17}, \cite{G-M16} and \cite{G-SGorenstein}.

For an injective abelian group $J$, the functor
\[\mathrm{Spectra} \to \text{graded abelian groups},\quad X \mapsto \Hom_\Z(\pi_{-*}X, J)\]
is representable by a spectrum $I_J$, as follows from Brown representability. If $A$ is an abelian group and $A \to J^0 \to J^1$ an injective resolution, we define the spectrum $I_A$ to be the fiber of $I_{J^0}\to I_{J^1}$. Given a spectrum $X$, we define its \emph{$A$-Anderson dual} $I_AX$ to be the function spectrum $F(X, I_A)$. It satisfies for all $k\in\Z$ the following functorial short exact sequence:
\[0 \to \Ext^1_\Z(\pi_{-k-1}X, A) \to \pi_kI_AX \to \Hom_\Z(\pi_{-k}X, A) \to 0.\]
Note that if $A$ is a subring of $\Q$ and $\pi_{-k-1}X$ is a finitely generated $A$-module, the $\Ext$-group is isomorphic to the torsion in $\pi_{-k-1}X$. Considering for spectra $E$ and $X$ the Anderson dual of $E\sm X$, we obtain more generally a universal coefficient sequence
\[0 \to \Ext^1_\Z(E_{k-1}X, A) \to (I_AE)^kX \to \Hom_\Z(E_kX, A) \to 0.\]
This is most useful in the case of \emph{Anderson self-duality}, i.e.\ if $I_AE$ is equivalent to $\Sigma^mE$ for some $m$, as then the middle-term can be replaced by $E^{k+m}X$. Such Anderson self-duality is, for example, true for $E = KU$ (with $m=0$) and $E = KO$ (with $m=4$) by \cite{Anderson}. Moreover, Stojanoska showed
\begin{thm}\label{thm:Stojanoska}
 The Anderson dual $I_{\Z}\Tmf$ of $\Tmf$ is equivalent to $\Sigma^{21}\Tmf$.
\end{thm}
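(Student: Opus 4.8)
The plan is to exhibit an explicit equivalence $Tmf \xrightarrow{\ \sim\ } \Sigma^{21}I_\Z Tmf$ and to trace the shift $21$ back to Serre duality on $\MMb_{ell}$. Since each $\pi_k Tmf$ is a finitely generated abelian group, the defining short exact sequence computes $\pi_\ast I_\Z Tmf$ degreewise from $\pi_{-\ast}Tmf$, by Pontryagin-dualizing the free part and moving the torsion into the $\Ext^1$-term. The geometric input is that $\MMb_{ell}$ is smooth and proper of relative dimension $1$ over $\Spec\Z$ with relative dualizing sheaf $\Omega^1_{\MMb_{ell}/\Z}\cong \omega^{\tensor -10}$: the logarithmic Kodaira--Spencer isomorphism gives $\Omega^1_{\MMb_{ell}/\Z}(\log\,\mathrm{cusp})\cong\omega^{\tensor 2}$, and since the discriminant $\Delta\in H^0(\MMb_{ell};\omega^{\tensor 12})$ cuts out the cusp with multiplicity one we get $\OO(\mathrm{cusp})\cong\omega^{\tensor 12}$, hence $\Omega^1_{\MMb_{ell}/\Z}\cong\omega^{\tensor 2}(-\mathrm{cusp})\cong \omega^{\tensor -10}$. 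Accordingly the dual of a class in cohomological degree $q$ coming from $\omega^{\tensor p}$, living in stem $2p-q$, should sit in stem $2(-p-10)-(1-q)=-(2p-q)-21$, which is exactly what an equivalence $I_\Z Tmf\simeq\Sigma^{21}Tmf$ predicts ($21=2\cdot 10+1$, twice the $\omega$-degree of the dualizing bundle plus the cohomological shift of the dualizing complex).

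First I would treat the situation with $6$ inverted, which is the clean model case. Here $\MMb_{ell}[\tfrac16]$ is the weighted projective stacky curve $\mathbb{P}(4,6)$, its coherent cohomology is concentrated in cohomological degrees $0$ and $1$ and is torsionfree, and the descent spectral sequence for $Tmf[\tfrac16]$ collapses with no extension problems (the horizontal vanishing line is Proposition \ref{prop:0affine}). Classical Grothendieck--Serre duality on $\mathbb{P}(4,6)$ gives a perfect pairing of finitely generated free $\Z[\tfrac16]$-modules $H^0(\omega^{\tensor p})\tensor H^1(\omega^{\tensor -p-10})\to H^1(\omega^{\tensor -10})\cong\Z[\tfrac16]$; reading this off the $E_2=E_\infty$-page produces the equivalence $I_{\Z[1/6]}Tmf[\tfrac16]\simeq\Sigma^{21}Tmf[\tfrac16]$ together with a canonical ``fundamental class'' (a residue/trace) in $\pi_{-21}I_{\Z[1/6]}Tmf[\tfrac16]$ that induces it.

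Next I would handle $l=2$ and $l=3$ by descending along a level cover on which the base stack becomes tame, i.e.\ has finite cohomological dimension. Two-locally, work along the finite flat cover $\MMb_1(3)\to\MMb_{ell}$: the congruence subgroup $\Gamma_1(3)$ is tame at $2$, so by Proposition \ref{prop:coh} the descent spectral sequence for $Tmf_1(3)_{(2)}$ is concentrated in cohomological degrees $0,1$ and, $\pi_\ast Tmf_1(3)$ being torsionfree, collapses without extensions; $\MMb_1(3)$ is an explicit stacky curve over $\Z_{(2)}$ whose relative dualizing sheaf one computes as above (using that along $g$ the dualizing sheaf of $\MMb_{ell}$ pulls back, up to the ramification divisor, to that of $\MMb_1(3)$), and Serre duality on it plus the collapsed spectral sequence yields the Anderson self-duality of $Tmf_1(3)_{(2)}$. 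Since $\MMb_1(3)\to\MMb_{ell}$ is a faithful (effective-descent) cover, $Tmf_{(2)}$ is recovered as the totalization of the cobar complex on $Tmf_1(3)_{(2)}$, whose terms are again global sections over tame, torsionfree level stacks; because this descent is ``finite'' in the descendability sense it is preserved by the exact functor $I_\Z(-)$, and naturality of Serre duality under restriction matches the local equivalences, so the self-duality propagates down to $Tmf_{(2)}$. The analogous argument at $l=3$ uses $\MMb_1(2)$ and $Tmf_1(2)_{(3)}$, and the shift stays $21$ throughout because it is governed by $\omega_{\MMb_{ell}/\Z}$, which is prime-independent.

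Finally I would glue: $Tmf$ is the homotopy pullback of $Tmf[\tfrac16]$ and $Tmf_{(2)}\times Tmf_{(3)}$ over $Tmf_\Q$, and $I_\Z(-)$ carries this to a homotopy pushout of $\Sigma^{21}$-shifts; one checks, using naturality of Serre duality under the restriction maps, that the local fundamental classes glue to a single class in $\pi_{-21}I_\Z Tmf$, equivalently that the local equivalences agree over $\Sigma^{21}Tmf_\Q$, which produces a map $Tmf\to\Sigma^{21}I_\Z Tmf$ that is an equivalence since it is so rationally, at $2$ and at $3$. The main obstacle is the $2$- and $3$-local input: there $\MMb_{ell}$ itself has coherent cohomology in arbitrarily high degrees (coming from its generic $\{\pm1\}$-inertia) and the descent spectral sequence for $Tmf$ carries genuine differentials --- which is exactly why the case $n=1$ is ``the most difficult'' --- so one cannot simply run Serre duality on $\MMb_{ell}$; instead one must route through the tame stacks $\MMb_1(3)$ and $\MMb_1(2)$ and control the descent, checking that the Serre pairing is multiplicative, compatible with the descent filtration and with the cover, and compatible with $I_\Z(-)$. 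A secondary subtlety is the coherence needed to assemble the three local equivalences into a genuine equivalence of spectra rather than a non-canonical isomorphism of homotopy groups.
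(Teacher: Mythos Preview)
The paper does not give its own proof of this theorem; it attributes the result to Stojanoska (proved in \cite{Sto12} after inverting $2$, announced in full in \cite{Sto14}) and mentions an alternative Gorenstein approach due to Greenlees. Your outline is broadly Stojanoska's strategy: establish Anderson self-duality on a tame level cover via Serre duality on a stacky curve, then descend. Your identification of the shift $21 = 2\cdot 10 + 1$ with the dualizing sheaf $\omega^{\tensor -10}$ of $\MMb_{ell}$ is correct, and the $\Z[\tfrac16]$-argument is fine.

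The descent step, however, is where essentially all the work lies, and your treatment of it is a genuine gap. Two mechanisms are missing. First, Anderson duality exchanges homotopy limits and colimits: from $Tmf \simeq Tmf(m)^{hG}$ one gets $I_\Z Tmf \simeq (I_\Z Tmf(m))^{hG}$ only after knowing that the norm $Tmf(m)_{hG} \to Tmf(m)^{hG}$ is an equivalence, i.e.\ that the Tate construction vanishes --- a nontrivial HFPSS computation. Second, the self-duality of the cover must be made $G$-equivariant, and typically the suspension is by a representation sphere rather than an integer sphere (cf.\ the equivalence $I_{\Z[\frac13]}Tmf_1(3) \simeq_{C_2} \Sigma^{5+2\rho}Tmf_1(3)$ used later in this paper); taking fixed points of such a twisted suspension is again governed by the HFPSS and is what produces the jump from the shift on the cover to $21$ on the base. ``Naturality of Serre duality under restriction'' yields at best a $G$-invariant fundamental class, which addresses neither point. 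You flag the descent as the main obstacle but do not supply these ingredients. There is also a concrete error at the prime $2$: the map $\MMb_1(3) \to \MMb_{ell}$ has degree $8$ while $(\Z/3)^\times$ has order $2$, so it is not Galois and your cobar terms are not products of copies of $Tmf_1(3)$; one must use the $GL_2(\F_3)$-Galois cover $\MMb(3)$, whose order-$48$ group is exactly why the $2$-primary case was substantially harder and remained only announced for some time.
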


This was shown in \cite{Sto12} after inverting $2$ and announced in \cite{Sto14} in general. The author has learned in a talk by John Greenlees that another proof can be given by showing that $\tmf$ is Gorenstein.

Our goal will be to investigate which $\Tmf_1(n)$ are Anderson self-dual. The crucial ingredient will be to understand the dualizing sheaf on $\MMb_1(n)$, which will be the subject of the next subsection.

\subsection{The dualizing sheaf of $\MMb_1(n)$}\label{sec:duality}
In this section, we work implicitly over a field $K$ of characteristic zero until further notice. Recall that we use the notation $f_n$ for the projection $\MMb_1(n) \to \MMb_{ell}$. The question we aim to answer in this subsection is the following.
\begin{question}
When does it happen that $(f_n)^*\omega^{\tensor k}$ is isomorphic to the dualizing sheaf $\Omega^1_{\MMb_1(n)/K}$ for $\MMb_1(n)$ for some $k$?
\end{question}
  We remark that this question is not only relevant for the study of Anderson duality, but a positive answer also implies that for the decompositions
\begin{align*}(f_n)_*\OO_{\MMb_1(n)} &\cong \bigoplus_{i\in\Z} \bigoplus_{l_j} \omega^{\tensor (-j)}\\
\Tmf_1(n)_{\Q} &\simeq \bigoplus_{i\in\Z}\bigoplus_{l_j} \Sigma^{2j}\Tmf_{\Q}
\end{align*}
the numbers $l_j$ are symmetric in the sense that $l_{10+k-j} = l_j$. Indeed, using that $\MMb_{ell}$ has dualizing sheaf $\omega^{\tensor (-10)}$ (see \cite[Lemma 2.8]{MeiDecMod}), we see that
\[(f_n)_*\OO_{\MMb_1(n)} \text{ and }((f_n)_*\OO_{\MMb_1(n)})^{\vee} \tensor \omega^{\tensor (k-10)}\] have the same global sections, even after tensoring with an arbitrary $\omega^{\tensor j}$. Thus, these vector bundles must be isomorphic as all vector bundles on $\MMb_{ell} \simeq \PP_K(4,6)$ decompose into copies of $\omega^{\tensor ?}$ by \cite[Proposition A.4]{MeiDecMod}.
  We recommend to look at the tables in \cite[Appendix C]{MeiDecMod} for this symmetry.

Recall that $H^1(\MMb_1(n); \Omega^1_{\MMb_1(n)/K}) \cong K$ and $\dim_K H^0(\MMb_1(n);\Omega^1_{\MMb_1(n)/K})$ is the genus $g$ of $\MMb_1(n)$.\footnote{If $n\leq 4$ and thus $\MMb_1(n)$ is not representable by a scheme, it is rather the genus of the coarse moduli space of $\MMb_1(n)$. As these four cases are easily dealt with by hand, we will usually concentrate on the cases $n\geq 5$.}
As $H^1(\MMb_1(n);(f_n)^*\omega^{\tensor k}) = 0$ for $k\geq 2$ by Proposition \ref{prop:coh},
\begin{align}\label{eq:dualiso}(f_n)^*\omega^{\tensor k}\cong\Omega^1_{\MMb_1(n)/K}\end{align}
can only happen for $k\leq 1$. Our strategy will be to treat our question step by step for $k\leq -1$, for $k=0$ and for $k=1$. Our main method will be to use the following degree formula, which follows from \cite{D-R73} and \cite{D-S05} as explained in \cite[Lemma 4.3]{MeiDecMod}.

\begin{lemma}\label{lem:degomega}
For $n\geq 5$, the degree $\deg (f_n^*\omega)$ equals $\tfrac1{24} d_n$ for $d_n$ the degree of the map $f_n\colon \MM_1(n) \to \MM_{ell}$. We have
\begin{align*}
 d_n &= \sum_{d|n}d\varphi(d)\varphi(n/d) \\
 &= n^2\prod_{p|n}(1-\tfrac1{p^2})
\end{align*}
\end{lemma}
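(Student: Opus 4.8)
The plan is to split the statement into two unrelated halves: the intersection-theoretic identity $\deg(f_n^*\omega)=\tfrac{1}{24}d_n$, and the two closed forms for $d_n$.

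For the first half I would use multiplicativity of degrees under the finite flat map $f_n\colon\MMb_1(n)\to\MMb_{ell}$ (finite and flat by \cite[Proposition 2.4]{MeiDecMod}, and of some well-defined degree $d_n$ since $\MMb_1(n)$ is connected; restricting to the dense open $\MM_1(n)=f_n^{-1}(\MM_{ell})$ identifies this $d_n$ with the degree of $\MM_1(n)\to\MM_{ell}$, i.e.\ with $\mathrm{rank}\,(f_n)_*\OO_{\MMb_1(n)}$). Concretely, for $m\gg 0$ I would compute $\chi\big(\MMb_1(n);f_n^*\omega^{\tensor m}\big)$ in two ways: Riemann--Roch on the smooth proper genus-$g$ curve $\MMb_1(n)$ gives $m\deg(f_n^*\omega)+1-g$; and the projection formula $(f_n)_*f_n^*\omega^{\tensor m}\cong (f_n)_*\OO_{\MMb_1(n)}\tensor\omega^{\tensor m}$ together with (orbifold) Riemann--Roch on $\MMb_{ell}$ gives $\deg\big((f_n)_*\OO_{\MMb_1(n)}\big)+d_n\cdot m\cdot\deg_{\MMb_{ell}}\omega+d_n\cdot\chi(\OO_{\MMb_{ell}})$. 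Equating coefficients of $m$ yields $\deg(f_n^*\omega)=d_n\cdot\deg_{\MMb_{ell}}\omega$, and $\deg_{\MMb_{ell}}\omega=\tfrac{1}{24}$ follows from $\MMb_{ell,K}\simeq\PP_K(4,6)$ with $\omega\cong\OO(1)$ (\cite[Proposition A.4]{MeiDecMod}; equivalently the dualizing sheaf of $\MMb_{ell}$ is $\omega^{\tensor(-10)}=\OO(-10)$ by \cite[Lemma 2.8]{MeiDecMod}) together with the standard fact $\deg_{\PP(4,6)}\OO(1)=\tfrac1{24}$.

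For the second half I would compute $d_n=\mathrm{rank}\,(f_n)_*\OO_{\MM_1(n)}$ fibrewise. Since $f_n$ is representable (the automorphism inclusion $\Aut(E,P)\hookrightarrow\Aut(E)$ is injective), its fibre over a geometric point $\Spec K\to\MM_{ell}$ corresponding to an elliptic curve $E$ is the finite scheme of points of exact order $n$ on $E$, so $d_n=\#\{P\in E[n]:\operatorname{ord}(P)=n\}$. Picking a basis $E[n]\cong(\Z/n)^2$ turns this into $\#\{(a,b)\in(\Z/n)^2:\gcd(a,b,n)=1\}$, which I would evaluate two ways. Inclusion--exclusion over the primes dividing $n$ gives $\sum_{e\mid n}\mu(e)(n/e)^2=n^2\prod_{p\mid n}(1-\tfrac1{p^2})$. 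Stratifying instead by $d:=\gcd(b,n)$ gives $\phi(n/d)$ admissible values of $b$ and then $\tfrac nd\phi(d)$ values of $a$ with $\gcd(a,d)=1$ (note $\gcd(a,b,n)=\gcd(a,d)$), so the count equals $\sum_{d\mid n}\phi(n/d)\,\tfrac nd\,\phi(d)=\sum_{d\mid n}d\,\phi(d)\,\phi(n/d)$ after reindexing $d\leftrightarrow n/d$. (Alternatively one can observe that both right-hand sides are multiplicative in $n$ and check equality on prime powers.)

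The main thing to get right is the stackiness bookkeeping: one must be careful that ``the degree of $f_n$'' is $\mathrm{rank}\,(f_n)_*\OO$, so that it equals the honest count of order-$n$ points and is \emph{not} divided by $|\Aut(E)|$ even though $\MM_{ell}$ has generic automorphism group $\{\pm1\}$ (the factor of $2$ separating $d_n$ from the degree of the map of coarse spaces $X_1(n)\to X(1)$ enters precisely here), and that orbifold Riemann--Roch on $\MMb_{ell}\simeq\PP_K(4,6)$ is normalised so that $\deg\OO(1)=\tfrac1{24}$. The cases $n\leq 4$, where $\MMb_1(n)$ is not a scheme, are excluded for this reason and dealt with by hand, as remarked in the text. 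Everything else is elementary.
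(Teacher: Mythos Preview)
Your argument is correct. The paper does not actually spell out a proof of this lemma: it records that the statement follows from \cite{D-R73} and \cite{D-S05} as explained in \cite[Lemma~4.3]{MeiDecMod}. The route taken in those references is essentially complex-analytic and group-theoretic: one identifies $d_n$ with the index $[SL_2(\Z):\Gamma_1(n)]$, computed from the order of $SL_2(\Z/n)$, and obtains $\deg(f_n^*\omega)$ from the dimension formulae for spaces of modular forms (equivalently, by relating $\omega^{\tensor 12}$ to the divisor of $\Delta$ on the $j$-line).

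Your approach is more directly algebro-geometric: you read off $d_n$ as the cardinality of the fibre of the representable map $f_n$ over a geometric point, i.e.\ the number of exact-order-$n$ elements in $(\Z/n)^2$, and then deduce $\deg(f_n^*\omega)=d_n\cdot\deg_{\MMb_{ell}}\omega$ from the projection formula. Both routes yield the same two closed forms for $d_n$; yours has the virtue of being self-contained and of making the stacky bookkeeping (which you flag) explicit. One small refinement worth noting: on $\MMb_{ell}\simeq\PP_K(4,6)$ the function $m\mapsto\chi(\omega^{\tensor m})$ is only a quasi-polynomial (it vanishes for $m$ odd), so ``equating coefficients of $m$'' should be understood as matching leading asymptotics, or replaced by an orbifold Riemann--Roch with its stacky correction terms. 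A cleaner variant that avoids this entirely is to observe that $\omega^{\tensor 12}$ descends to $\OO(1)$ on the coarse space $\PP^1$ and that the composite $\MMb_1(n)\to\PP^1$ has honest degree $d_n/2$, whence $12\deg(f_n^*\omega)=d_n/2$; this is exactly the computation the paper later invokes in the proof of Lemma~\ref{lem:ample}.
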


As there are no modular forms of negative weight, by Proposition \ref{prop:coh} the isomorphism \eqref{eq:dualiso} can only hold for $i\leq -1$ if $g = 0$.
The genus zero cases are only $1\leq n \leq 10$ and $n =12$. We claim that
\begin{align*}
\Omega^1_{\MMb_{ell}/K} &\cong \omega^{\tensor -10} \\
\Omega^1_{\MMb_1(2)/K} &\cong (f_2)^*\omega^{\tensor -6} \\
\Omega^1_{\MMb_1(3)/K} &\cong (f_3)^*\omega^{\tensor -4} \\
\Omega^1_{\MMb_1(4)/K} &\cong (f_4)^*\omega^{\tensor -3} \\
\Omega^1_{\MMb_1(5)/K} &\cong (f_5)^*\omega^{\tensor -2} \\
\Omega^1_{\MMb_1(6)/K} &\cong (f_6)^*\omega^{\tensor -2} \\
\Omega^1_{\MMb_1(7)/K} &\cong (f_7)^*\omega^{\tensor -1} \\
\Omega^1_{\MMb_1(8)/K} &\cong (f_8)^*\omega^{\tensor -1}
\end{align*}
and that \eqref{eq:dualiso} does not hold for any $k$ if $n=9,10$ or $12$. Indeed, in the non-representable cases $1\leq n\leq 4$ this can be checked explicitly as in \cite[Example 2.1, Theorem A.2] {MeiDecMod}. In the other cases, we have $\MMb_1(n) \simeq\mathbb{P}^1$ (see e.g.\ \cite[Example 2.5]{MeiDecMod}). On $\mathbb{P}^1$ a line bundle is determined by its degree and $\Omega^1_{\mathbb{P}^1/K}$ has degree $-2$. Lemma \ref{lem:degomega} implies our claim.

A projective smooth curve has genus $1$ if and only if the canonical sheaf agrees with the structure sheaf. The curve $\MMb_1(n)$ has genus $g=1$ if and only if $n=11,14,15$. This is well-known and can be easily proven using the genus formula
$$g = 1+ \tfrac{d_n}{24} - \tfrac14 \sum_{d|n}\varphi(d)\varphi(n/d)$$
from \cite[Sec 3.8+3.9]{D-S05} and easier analogues of the methods of Proposition \ref{prop:degreecomp}.

Now assume that $\Omega^1_{\MMb_1(n)/K} \cong (f_n)^*\omega$. Then, in particular, we have $2g-2 = \deg (f_n)^*\omega$. We want to show the following proposition, whose proof we learned from Viktoriya Ozornova.

\begin{prop}[Ozornova]\label{prop:degreecomp}
 We have $2g-2 = \deg (f_n)^*\omega$ if and only if $n = 23,32,33,35,40$ or $42$.
\end{prop}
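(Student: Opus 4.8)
The plan is to convert the equation $2g-2=\deg(f_n)^*\omega$ into an elementary identity between two multiplicative arithmetic functions and then solve it by a finite case analysis.

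First I would combine Lemma~\ref{lem:degomega}, which gives $\deg(f_n)^*\omega=\tfrac1{24}d_n$, with the genus formula $g=1+\tfrac{d_n}{24}-\tfrac14\sum_{d\mid n}\phi(d)\phi(n/d)$ from \cite[Sec 3.8+3.9]{D-S05}. This formula applies for $n\geq 5$; for $1\leq n\leq 4$ the stack $\MMb_1(n)$ has genus $0$ by the discussion above, so $2g-2=-2<0\leq\deg(f_n)^*\omega$ and there is nothing to prove. Writing $\varepsilon(n)=\sum_{d\mid n}\phi(d)\phi(n/d)$, the equation $2g-2=\deg(f_n)^*\omega$ rearranges (clear denominators) to
\[ d_n=12\,\varepsilon(n). \]

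Next I would observe that $d_n=\sum_{d\mid n}d\phi(d)\phi(n/d)$ and $\varepsilon(n)=\sum_{d\mid n}\phi(d)\phi(n/d)$ are Dirichlet convolutions of the multiplicative functions $d\mapsto d\phi(d)$ and $d\mapsto \phi(d)$, hence multiplicative in $n$, and both are everywhere strictly positive; consequently $r(n):=d_n/\varepsilon(n)$ is multiplicative, and the task becomes to find all $n$ with $\prod_{p\mid n}r(p^{v_p(n)})=12$. A short computation yields the closed form
\[ r(p^k)=\frac{p^k(p+1)}{2p+(k-1)(p-1)}, \]
from which one reads off $r(p)=\tfrac{p+1}{2}$, that $r(p^k)>1$ for every prime power, that $r(p^k)\to\infty$ as $k\to\infty$, and that $r(p)\geq 12$ for $p\geq 23$. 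An elementary verification then shows that $r(p^k)<12$ precisely for $p^k\in\{2,3,4,5,7,8,9,11,13,16,17,19,25,27\}$, while $r(23)=r(32)=12$.

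Finally comes the case analysis. Since $r(p^k)\geq r(2)=\tfrac32$ and $r(2)r(3)r(5)r(7)=\tfrac32\cdot 2\cdot 3\cdot 4=36>12$, any $n$ with $r(n)=12$ has at most three distinct prime factors. If $n$ is a prime power then $r(p^k)=12$ forces $n=23$ or $n=32$. If $n$ has exactly two prime factors, each local factor must lie in the finite list above (the other factor being $>1$), and running through the few products of distinct-prime entries of that list that equal $12$ leaves exactly $n=3\cdot 11=33$, $n=5\cdot 7=35$ and $n=2^3\cdot 5=40$. If $n$ has three prime factors, the product of the three local factors is at least $r(2)r(3)r(5)=9$, and the only way to reach $12$ is $r(2)r(3)r(7)$, giving $n=42$. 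Conversely, each of $23,32,33,35,40,42$ satisfies $r(n)=12$ by direct substitution, which finishes the argument.

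I expect the only genuine work to be (i) verifying the finite list $\{p^k:r(p^k)<12\}$ from the closed form for $r(p^k)$, and (ii) checking exhaustiveness of the two-prime enumeration; everything else is bookkeeping. One should also remember that the reduction $d_n=12\varepsilon(n)$ is only set up for $n\geq 5$, so the stacky cases $n\leq 4$ must be dismissed separately as indicated (the genus-$1$ values $n=11,14,15$ need no special treatment, as they fall in the uniform range and indeed satisfy $r(11)=r(14)=r(15)=6\neq 12$).
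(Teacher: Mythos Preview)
Your argument is correct and matches the paper's approach closely. Both you and the paper reduce the question to the identity $d_n = 12\,\varepsilon(n)$ and then exploit multiplicativity of the ratio $r(n)=d_n/\varepsilon(n)$ to carry out a finite prime-power analysis; the paper records the reciprocals $g(p^k)/f(p^k)=1/r(p^k)$ in a table and reads off the same solutions, and additionally remarks that an AM--GM bound forces $n\leq 144$, which would allow a computer check as an alternative to the by-hand enumeration you outline.
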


By Lemma \ref{lem:degomega} and the genus formula above, we see that we have just have to solve for $n$ in the equation
\[
 \tfrac1{12}\sum_{d|n}d\varphi(d)\varphi(n/d)=\sum_{d|n} \varphi(d)\varphi\left(\tfrac{n}{d}\right).\qedhere
\]

 \begin{lemma}
The inequality
\[
 \tfrac{1}{12}\sum_{d|n} d\varphi(d)\varphi\left(\tfrac{n}{d}\right)>\sum_{d|n} \varphi(d)\varphi\left(\tfrac{n}{d}\right)
\]
holds for every natural number $n>144$.
 \end{lemma}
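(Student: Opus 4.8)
The plan is to bound both sides of the inequality by simple multiplicative functions of $n$ and compare. Writing $S(n) = \sum_{d|n} d\,\varphi(d)\varphi(n/d)$ and $T(n) = \sum_{d|n}\varphi(d)\varphi(n/d)$, both are multiplicative in $n$, since each is a Dirichlet convolution (of the multiplicative functions $n\mapsto n\varphi(n)$ and $\varphi$ for $S$, and $\varphi$ with itself for $T$). By Lemma \ref{lem:degomega} we already know $S(n) = 12\deg(f_n^*\omega)\cdot\tfrac{24}{12}$-type expressions collapse to the clean formula $S(n) = d_n\cdot(\text{something})$; more usefully, $T(n) = \sum_{d|n}\varphi(d)\varphi(n/d)$ can be evaluated on prime powers: $T(p^k) = \sum_{i=0}^k \varphi(p^i)\varphi(p^{k-i})$, which for $p$ odd is $2\varphi(p^k) + (k-1)(p-1)^2 p^{k-2}$ for $k\geq 1$ (and $T(p) = 2(p-1)$, $T(1)=1$), and similarly $S(p^k)$ has a closed form. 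So first I would record these prime-power values.

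Next I would establish the key asymptotic comparison. The dominant prime-power behaviour is $S(p^k)\sim c_1 p^{2k}$ and $T(p^k)\sim c_2 k p^k$, so $S(n)/T(n)$ grows roughly like $n/\log n$ times a bounded factor; in particular it tends to infinity, so the inequality $S(n) > 12\,T(n)$ must hold for all large $n$. To get the explicit bound $n>144$ I would find clean one-sided estimates: a lower bound $S(p^k) \geq \alpha\, p^{2k}$ and an upper bound $T(p^k)\leq \beta\, p^{2k-1}\cdot(\text{small correction})$ valid for each prime power, chosen so that their ratio over all prime powers telescopes. Concretely, I expect something like $S(p^k)/T(p^k) \geq \tfrac{1}{12}p^{k}\cdot(\text{const} > 1)$ to fail only for the smallest prime powers $p^k \in \{2,3,4,5,7,8,9,\dots\}$, and then multiplicativity forces $S(n)/T(n) > 12$ once $n$ has enough prime-power ``mass''. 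The bound $144 = 12^2$ strongly suggests the proof factors through showing $S(p^k)/(12\,T(p^k)) > 1/p^{?}$ with the product over prime powers dividing $n$ exceeding $1$ precisely when $n > 144$.

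A cleaner route, which I would try first, is to prove the \emph{pointwise} prime-power inequality $\dfrac{S(p^k)}{T(p^k)} \geq \dfrac{p^k + 1}{12}$ (or a similar sharp-looking bound) with a short list of exceptions among small $p^k$, then multiply: if $n = \prod p_i^{k_i}$ then $S(n)/T(n) = \prod S(p_i^{k_i})/T(p_i^{k_i})$, and one checks that the product of the right-hand bounds exceeds $12$ as soon as $n > 144$, handling the finitely many ``small'' prime powers (where the clean bound fails) by the direct inequality $S(p^k) \geq T(p^k)$ together with the observation that $n > 144$ forces at least one factor large enough to supply the missing slack. The residual finite check — values of $n \leq 144$ — is not needed for \emph{this} lemma but will be used to pin down the exact solution set in Proposition \ref{prop:degreecomp}.

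The main obstacle I anticipate is getting the prime-power estimates tight enough near the boundary: the inequality is genuinely false for many $n\leq 144$ (e.g. all the genus-$0$ and genus-$1$ cases, plus $23,32,33,35,40,42$ where equality holds), so any slack in the prime-power bounds must be calibrated so that the multiplicative product crosses the threshold exactly past $144$ and not before. Concretely, the delicate cases are $n = 2^a3^b$ and products of two small primes, where $S/T$ is smallest relative to $n$; I would need the prime-power bound for $p=2$ and $p=3$ to be essentially sharp (not just order-of-magnitude), and then verify by a short explicit computation that no product of ``under-threshold'' prime powers with $n>144$ slips through. Handling $p=2$ separately — where $T(2^k)$ has its own formula because $(p-1)^2 = 1$ — is the fiddliest part, but it is a finite and mechanical verification once the general estimate is in place.
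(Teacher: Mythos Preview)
Your multiplicative approach is not wrong in spirit, but it is a plan rather than a proof: the key prime-power estimates are never actually established, and you yourself flag the boundary cases (powers of $2$ and $3$, and products of two small primes) as requiring separate verification. Carried through, this would work, but it is far more labor than the statement needs.

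The paper's proof is a two-line trick that bypasses multiplicativity entirely. Observe that the sum on the left is symmetric under $d \leftrightarrow n/d$, so
\[
\sum_{d\mid n} d\,\varphi(d)\varphi\!\left(\tfrac{n}{d}\right)
= \sum_{d\mid n} \tfrac{1}{2}\!\left(d+\tfrac{n}{d}\right)\varphi(d)\varphi\!\left(\tfrac{n}{d}\right)
\geq \sqrt{n}\sum_{d\mid n}\varphi(d)\varphi\!\left(\tfrac{n}{d}\right),
\]
using the AM--GM inequality $\tfrac{1}{2}(d+n/d)\geq \sqrt{n}$. Dividing by $12$ and noting that $\sqrt{n}>12$ exactly when $n>144$ finishes the argument. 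This also explains why the threshold is precisely $144=12^2$, something your approach would only recover after a case analysis. Your method has the advantage that it computes the exact ratio $S(n)/T(n)$ on prime powers, which is what the paper then uses anyway (in the table following the lemma) to pin down the six solutions in Proposition~\ref{prop:degreecomp}; but for the lemma itself the symmetrization is decisively cleaner.
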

\begin{proof}We have the following chain of (in)equalities:
 \[
  \begin{aligned}
    \tfrac{1}{12}\sum_{d|n} d\varphi(d)\varphi\left(\tfrac{n}{d}\right)&= \tfrac{1}{12}\sum_{d|n} \tfrac{1}{2}\left(d+\tfrac{n}{d}\right)\varphi(d)\varphi\left(\tfrac{n}{d}\right)\\
&\geq   \tfrac{1}{12}\sum_{d|n} \sqrt{n}\varphi(d)\varphi\left(\tfrac{n}{d}\right)\\
&\stackrel{{\tiny \sqrt{n}>12}}{>}\sum_{d|n} \varphi(d)\varphi\left(\tfrac{n}{d}\right).\qedhere
  \end{aligned}
 \]
\end{proof}
\begin{proof}[Proof of proposition:]
 The proof can be easily finished by a computer search of all values up to $144$. For a proof by hand, one can argue as follows: Set $f(n) = \sum_{d|n} d\varphi(d)\varphi\left(\tfrac{n}{d}\right)$ and $g(n) = \sum_{d|n} \varphi(d)\varphi\left(\tfrac{n}{d}\right)$. Both functions are multiplicative for coprime integers; thus, we only have to understand them on prime powers smaller or equal to $144$. For $p$ a prime, we have $\tfrac1{12}f(p) = g(p)$ exactly for $p=23$ and $\tfrac1{12}f(p)>g(p)$ for $p>23$; thus, the equation cannot have solutions $n$ with prime factors $>23$. The other solutions can now easily be deduced from the following table with the relevant values of $\tfrac{g(p^m)}{f(p^m)}$:
\begin{center}
\renewcommand{\arraystretch}{1.2}
\begin{tabular}{|c|c|c|c|c|c|c|c|c|}
\hline
$m$, $p$ & $2$ & $3$ & $5$ & $7$ & $11$ & $13$ & $17$ & $19$\\ \hline
$1$ & $\frac{2}{3}$ & $\frac{1}{2}$ & $\frac{1}{3}$ & $\frac{1}{4}$ & $\frac{1}{6}$ & $\frac{1}{7}$ & $\frac{1}{9}$ & $\frac{1}{10}$\\ \hline
$2$ & $\frac{5}{12}$ & $\frac{2}{9}$ & $\frac{7}{75}$ & $\frac5{98}$ & $\frac8{363}$ &  &  & \\ \hline
$3$ & $\frac{1}{4}$ & $\frac{5}{54}$ & $\frac{3}{125}$ &  &  &  &  & \\ \hline
$4$ & $\frac{7}{48}$ & $\frac{1}{27}$ &  &  &  &  &  & \\ \hline
$5$ & $\frac{1}{12}$ &  &  &  &  &  &  & \\ \hline
$6$ & $\frac3{64}$ & & & & & & & \\ \hline
$7$ & $\frac{5}{192}$ & & & & & & & \\ \hline
\end{tabular}
\end{center}
\end{proof}

A line bundle $\LL$ on $\MMb_1(n)$ is isomorphic to $\Omega^1_{\MMb_1(n)/K}$ if and only if $\deg \LL = \deg \Omega^1_{\MMb_1(n)/K}$ and $\dim_K H^1(\MMb_1(n); \LL) = 1$. (Indeed, then $\Omega^1_{\MMb_1(n)/K} \tensor \LL^{-1}$ is a line bundle with a nonzero global section by Serre duality and this global sections induces a morphism $\OO_{\MMb_1(n)} \to \Omega^1_{\MMb_1(n)/K} \tensor \LL^{-1}$ with no zeros as the target has degree $0$.) In the case of $\LL = (f_n)^*\omega$, we have $\dim_K H^1(\MMb_1(n); \LL) = s_1$, the dimension of the space of weight-$1$ cusp forms for $\Gamma_1(n)$ (see e.g.\ \cite[Corollary 2.11]{MeiDecMod}). Among the values from Proposition \ref{prop:degreecomp}, we only have $s_1 = 1$ for $n=23$ as a simple \texttt{MAGMA} computation shows. (We remark that the case $n=23$ was already treated in \cite[Section 1]{Buz} by hand.)

Thus, we have proven the following proposition:
\begin{prop}
 We have $\Omega^1_{\MMb_1(n)/K} \cong (f_n)^*\omega^{\tensor k}$ for some $k$ if and only if
 \[n=1,2,3,4,5,6,7,8, 11, 14, 15 \text{ or }23.\]
\end{prop}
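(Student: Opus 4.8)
The plan is to assemble the case analysis prepared above, organized by the value of the twist $i$ in \eqref{eq:dualiso}. First I would rule out $i \geq 2$ outright: Proposition \ref{prop:coh} gives $H^1(\MMb_1(n);(f_n)^*\omega^{\tensor i}) = 0$ for such $i$, whereas $H^1(\MMb_1(n);\Omega^1_{\MMb_1(n)/K}) \cong K$ by Serre duality, so no such twist can occur. It then remains to decide, for each of $i \leq -1$, $i = 0$ and $i = 1$, exactly which $n$ admit the isomorphism \eqref{eq:dualiso} with that twist; the union of the three answers is the asserted list, and this settles both directions of the ``if and only if'' at once.

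For $i \leq -1$ I would note that \eqref{eq:dualiso} forces $\dim_K H^0(\MMb_1(n);\Omega^1_{\MMb_1(n)/K}) = \dim_K M_i(\Gamma_1(n);K) = 0$, since there are no modular forms of negative weight by Proposition \ref{prop:coh}; hence $\MMb_1(n)$ has genus $0$, which leaves only $1 \leq n \leq 10$ and $n = 12$. The non-representable cases $n \leq 4$ are checked by hand from the explicit coarse descriptions in \cite{MeiDecMod}, producing the twists $-10, -6, -4, -3$. For $5 \leq n \leq 10$ and $n = 12$ one has $\MMb_1(n) \cong \mathbb{P}^1_K$, so a line bundle is determined by its degree; since $\deg \Omega^1_{\mathbb{P}^1_K} = -2$ and $\deg(f_n)^*\omega = d_n/24$ with $d_n = n^2\prod_{p|n}(1 - p^{-2})$ by Lemma \ref{lem:degomega}, \eqref{eq:dualiso} holds for some $i \leq -1$ precisely when $d_n/24$ divides $2$, which I would verify case by case: it succeeds for $n = 5,6,7,8$ (giving $i = -2,-2,-1,-1$) and fails for $n = 9,10,12$, where $d_n/24 = 3,3,4$. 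For $i = 0$, the condition $\Omega^1_{\MMb_1(n)/K} \cong \OO_{\MMb_1(n)}$ is exactly ``genus one'', which holds for $n = 11,14,15$ by the genus formula of \cite{D-S05}.

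The hard part will be $i = 1$, where counting degrees alone is insufficient. I would invoke the criterion that a line bundle $\LL$ on $\MMb_1(n)$ is isomorphic to $\Omega^1_{\MMb_1(n)/K}$ iff $\deg\LL = 2g-2$ and $\dim_K H^1(\MMb_1(n);\LL) = 1$. For $\LL = (f_n)^*\omega$ the degree equality $2g - 2 = \deg(f_n)^*\omega$ is precisely Proposition \ref{prop:degreecomp}, which restricts the candidates to $n \in \{23,32,33,35,40,42\}$ — the genuine arithmetic input, resting on the inequality of the preceding lemma together with a finite check below $144$. It then remains to compute $\dim_K H^1(\MMb_1(n);(f_n)^*\omega) = s_1(\Gamma_1(n))$, the dimension of weight-one cusp forms for $\Gamma_1(n)$, for those six values of $n$; this equals $1$ only for $n = 23$ (a short \texttt{MAGMA} computation, with $n = 23$ also treatable directly as in \cite{Buz}). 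Combining the three regimes yields the list $n = 1,\dots,8,11,14,15,23$.
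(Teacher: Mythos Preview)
Your proposal is correct and follows essentially the same approach as the paper: the paper's ``proof'' of this proposition is really the discussion in Section~\ref{sec:duality} preceding its statement, which proceeds exactly by the case split on $i\geq 2$, $i\leq -1$, $i=0$, and $i=1$ that you describe, invoking the same ingredients (Proposition~\ref{prop:coh}, the genus-zero classification together with Lemma~\ref{lem:degomega}, the genus-one list, and Proposition~\ref{prop:degreecomp} plus the $s_1$ computation for the six candidates).
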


\begin{cor}\label{cor:Omega}
 If we view $\MMb_1(n)$ as being defined over $\Z[\tfrac1n]$, we have
 $$\Omega^1_{\MMb_1(n)/\Z[\tfrac1n]} \cong (f_n)^*\omega^{\tensor k}$$
 for some $k$ if and only if $n=1,2,3,4,5,6,7,8, 11, 14, 15 \text{ or }23.$
\end{cor}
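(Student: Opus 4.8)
The plan is to deduce the integral statement from the characteristic-zero proposition just proven together with the already-established fact that $\MMb_1(n)$ is smooth and proper over $\Z[\frac1n]$ (via Proposition 2.4 of \cite{MeiDecMod}). First I would observe that both sides of the putative isomorphism $\Omega^1_{\MMb_1(n)/\Z[\frac1n]} \cong (f_n)^*\omega^{\tensor i}$ are line bundles on $\MMb_1(n)$, the former because $\MMb_1(n) \to \Spec \Z[\frac1n]$ is smooth of relative dimension $1$, the latter because $\omega$ is a line bundle on $\MMb_{ell}$ and $f_n$ is a morphism of stacks. The ``only if'' direction is then immediate: base-changing such an isomorphism along $\Z[\frac1n] \hookrightarrow \Q$ (or to any $\overline{\Q}$) yields $\Omega^1_{\MMb_1(n)/\Q} \cong (f_n)^*\omega^{\tensor i}$, since formation of the relative cotangent bundle commutes with base change, so the previous proposition forces $n \in \{1,2,3,4,5,6,7,8,11,14,15,23\}$.

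For the ``if'' direction, I would argue that for these specific $n$ the isomorphism class of $(f_n)^*\omega^{\tensor i} \tensor (\Omega^1_{\MMb_1(n)/\Z[\frac1n]})^{-1}$ --- a priori just \emph{some} line bundle on $\MMb_1(n)_{\Z[\frac1n]}$ --- is trivial. For $n \le 4$ the stacks are explicitly presented (weighted projective lines and the like), and one checks the claim by the same explicit computations as in \cite[Example 2.1, Theorem A.2]{MeiDecMod}, now over $\Z[\frac1n]$ rather than over $K$. For $5 \le n \le 8$ we have $\MMb_1(n) \simeq \mathbb{P}^1_{\Z[\frac1n]}$ (as in \cite[Example 2.5]{MeiDecMod}), and on $\mathbb{P}^1$ over any base a line bundle is determined by its degree on the fibers; since we already know the degrees match in characteristic zero and degree is locally constant, the two line bundles agree. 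For $n = 11,14,15$ the curve $\MMb_1(n)_{\Z[\frac1n]}$ is a smooth proper genus-$1$ curve, and the relevant value of $i$ is $0$, so one must show $\Omega^1_{\MMb_1(n)/\Z[\frac1n]} \cong \OO$; this follows because the genus is locally constant in a smooth proper family and the dualizing sheaf of a genus-$1$ curve with a section is trivial --- $\MMb_1(n)$ has a section given by the cusp, and trivializing $\Omega^1$ can be done using that $H^0$ of it is a line bundle by cohomology and base change (the fibral $h^0 = 1$ being constant).

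For $n = 23$ the target value is $i = 1$, so I must show $\Omega^1_{\MMb_1(23)/\Z[\frac1{23}]} \cong (f_{23})^*\omega$. Here the cleanest route is cohomology and base change applied to the line bundle $\HH = (f_{23})^*\omega \tensor (\Omega^1_{\MMb_1(23)/\Z[\frac1{23}]})^{-1}$, which has fiberwise degree $0$: the function $y \mapsto h^0(\MMb_1(23)_y, \HH_y)$ jumps only where $\HH_y \cong \OO$, and over $\Q$ we know $\HH_{\Q} \cong \OO$; one then needs that $\HH_{\F_l} \cong \OO$ for all $l \ne 23$ as well, equivalently that the space of weight-$1$ cusp forms for $\Gamma_1(23)$ over $\F_l$ is one-dimensional (matching the value $s_1 = 1$ over $\Q$). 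This is the main obstacle: it requires controlling weight-$1$ mod-$l$ modular forms for all primes $l \ne 23$, which in general can exceed the characteristic-zero dimension (this is precisely the torsion phenomenon in $H^1(\MMb_1(n);\omega)$ discussed in the excerpt, with $n=65$, $l=2$ the smallest instance). For $n = 23$ one expects no such extra torsion, and I would verify it either by a direct \texttt{MAGMA} computation over $\F_l$ for the finitely many small $l$ not ruled out on degree grounds, or by invoking known results on $\Gamma_1(23)$; combined with Corollary 4.3's degree criterion this pins down $\HH \cong \OO$ on the nose, completing the integral statement.
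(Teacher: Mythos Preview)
Your treatment of the ``only if'' direction and of the listed cases with $n \le 15$ is correct, though more case-by-case than necessary. The genuine gap is at $n = 23$: you reduce to showing $\dim_{\F_l} S_1(\Gamma_1(23);\F_l) = 1$ for every prime $l \neq 23$, and then appeal to a \texttt{MAGMA} computation for ``the finitely many small $l$ not ruled out on degree grounds''. But there is no degree argument bounding these primes. The jumping locus is precisely the support of the torsion in $H^1(\MMb_1(23)_{\Z[\frac1{23}]}; (f_{23})^*\omega)$, which is indeed a finite set of primes, but nothing you have written gives an a priori bound on their size, and you do not carry out any such computation. So the argument is incomplete exactly at the hardest case.

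The paper avoids this entirely with a uniform argument for all $n \geq 5$ (where $\MMb_1(n)$ is a regular scheme): it shows that restriction
\[
\Pic\bigl(\MMb_1(n)_{\Z[\frac1n]}\bigr) \longrightarrow \Pic\bigl(\MMb_1(n)_\Q\bigr)
\]
is injective. If two line bundles agree over $\Q$, they already agree over $\Z[\frac1{nf}]$ for some integer $f$; one then removes the special fibers over the prime factors of $f$ one at a time via \cite[Proposition II.6.5]{Har77}. Each such fiber is irreducible (the modular curves $\MMb_1(n)$ have geometrically connected fibers) and is cut out scheme-theoretically by the corresponding prime $p$, hence is a principal divisor, so its removal does not change $\Pic$. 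Once this injectivity is in hand, the integral isomorphism for $n=23$ --- and for all the listed $n\ge 5$ at once --- follows immediately from the characteristic-zero proposition, with no mod-$l$ modular-forms input whatsoever. Replacing your unfinished $n=23$ step by this Picard-group argument closes the gap and also makes your separate treatments of $5\le n\le 8$ and $n\in\{11,14,15\}$ unnecessary.
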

\begin{proof}
For $n=1$, this is \cite[Lemma 2.8]{MeiDecMod}. It is easy to check in the cases $2\leq n\leq 5$ as in these cases $\MMb_1(n)$ is a weighted projective line by \cite[Examples 2.1]{MeiDecMod}. We want to show that the restriction functor $\Pic(\MMb_1(n)) \to \Pic((\MMb_1(n))_\Q)$ is injective for $n\geq 5$, where $\MMb_1(n)$ is representable. If two line bundles $L,L'$ on $\MMb_1(n)$ are isomorphic on $\MMb_1(n)_{\Q}$, they are already isomorphic on the non-vanishing locus of some  $f\in \Z$. The elements $f$ factors into prime factors $p_1\cdots p_n$. By succesively applying \cite[Proposition 6.5]{Har77}, we see that $\Pic(\MMb_1(n)) \to \Pic(\MMb_1(n)_{\Z[\tfrac1f]})$ is an isomorphism.
\end{proof}
\subsection{Anderson self-duality of $\Tmf_1(n)$}\label{sec:AD}
 In this section, we will investigate possible Anderson self-duality of $\Tmf_1(n)$.

Recall that in Corollary \ref{cor:Omega}, we gave conditions when the dualizing sheaf of $\MMb_1(n)$ is a power of $f_n^*\omega$ for $f_n\colon \MMb_1(n) \to \MMb_{ell,\Z[\tfrac1n]}$ the structure map. We will explain how this implies Anderson self-duality for $\Tmf_1(n)$ in these cases once we know that $\pi_*\Tmf_1(n)$ is torsionfree. We will assume throughout that $n\geq 2$.

\begin{lemma}\label{lem:torsionfree2}
 If $\Omega^1_{\MMb_1(n)/\Z[\tfrac1n]} \cong (f_n)^*\omega^{\tensor k}$, then the cohomology groups $H^1(\MMb_1(n); (f_n)^*\omega^{\tensor j})$ are torsionfree for all $j$.
\end{lemma}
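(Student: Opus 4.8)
The plan is to combine relative Grothendieck--Serre duality over $\Z[\tfrac1n]$ with the vanishing and torsionfreeness statements of Proposition~\ref{prop:coh}. Write $R=\Z[\tfrac1n]$ and let $\pi\colon\MMb_1(n)\to\Spec R$ be the structure map; it is proper and smooth of relative dimension $1$ (smoothness including at the cusps, since $n$ is invertible), so its relative dualizing sheaf is $\omega_\pi=\Omega^1_{\MMb_1(n)/R}$, which by hypothesis is $(f_n)^*\omega^{\tensor i}$. Note also that $R$ is a localization of $\Z$, hence a principal ideal domain.

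First I would record that for any vector bundle $\FF$ on $\MMb_1(n)$ the complex $R\pi_*\FF$ is perfect and concentrated in cohomological degrees $0$ and $1$: perfectness is standard (properness together with flatness of $\FF$ over $R$), and the bound is Proposition~\ref{prop:coh}(3), which applies because $\Gamma_1(n)$ is tame for every prime not inverted in $R$. As $R$ is a PID, $R\pi_*\FF\simeq[P^0\xrightarrow{d}P^1]$ with $P^0,P^1$ finite free. Dualizing, $R\Hom_R(R\pi_*\FF,R)\simeq[(P^1)^\vee\xrightarrow{d^\vee}(P^0)^\vee]$ lies in degrees $-1,0$, and the universal coefficient sequence over the PID $R$ identifies its $H^0$ as an extension of the free module $\Hom_R(H^0(\MMb_1(n);\FF),R)$ by $\Ext^1_R(H^1(\MMb_1(n);\FF),R)$; since the latter is (noncanonically) isomorphic to the torsion submodule of $H^1(\MMb_1(n);\FF)$, the torsion of $H^0$ of the dual complex agrees with the torsion of $H^1(\MMb_1(n);\FF)$.

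Next I would apply relative duality, $R\Hom_R(R\pi_*\FF,R)\simeq R\pi_*\!\bigl(\FF^\vee\otimes\omega_\pi\bigr)[1]$, whose $H^0$ is $H^1(\MMb_1(n);\FF^\vee\otimes\omega_\pi)$; combined with the previous step this yields
\[
\bigl(H^1(\MMb_1(n);\FF)\bigr)_{\mathrm{tors}}\;\cong\;\bigl(H^1(\MMb_1(n);\FF^\vee\otimes\omega_\pi)\bigr)_{\mathrm{tors}}.
\]
Now take $\FF=(f_n)^*\omega^{\tensor j}$, so $\FF^\vee\otimes\omega_\pi\cong(f_n)^*\omega^{\tensor(i-j)}$. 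I would observe that $i\le 1$: base-changing the hypothesis to $\Q$ and using Serre duality gives $H^1(\MMb_1(n)_\Q;(f_n)^*\omega^{\tensor i})\cong\Q\neq 0$, and Proposition~\ref{prop:coh}(2) shows this vanishes for $i>1$ (alternatively, quote Corollary~\ref{cor:Omega}, where every admissible $i$ is $\le 1$). Then for any $j\in\Z$: if $j\ge 2$ then $H^1(\MMb_1(n);(f_n)^*\omega^{\tensor j})=0$ by Proposition~\ref{prop:coh}(2); if $j\le 0$ it is torsionfree by the same proposition; and if $j=1$ then by the displayed isomorphism its torsion equals that of $H^1(\MMb_1(n);(f_n)^*\omega^{\tensor(i-1)})$, which vanishes because $i-1\le 0$. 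Hence the torsion is zero in every case.

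The step I expect to be the main obstacle is justifying the duality argument in the non-representable range $2\le n\le 4$, where $\MMb_1(n)$ is a Deligne--Mumford stack rather than a scheme. One can either invoke Grothendieck duality for smooth proper Deligne--Mumford stacks, or---in keeping with how these cases are handled elsewhere in the paper---use that $\MMb_1(n)$ is a weighted projective line over $R$ by \cite[Examples 2.1]{MeiDecMod}, on which every line bundle has free $R$-cohomology ($H^0$ is a module of weighted-homogeneous polynomials and $H^1$ is the $R$-linear dual of such a module), making the claim immediate there. A secondary routine point is to confirm that $\omega_\pi$ really is $\Omega^1_{\MMb_1(n)/R}$ on the nose (standard for a smooth morphism of relative dimension one) rather than only up to some auxiliary localization.
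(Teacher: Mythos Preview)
Your argument is correct and handles the key case $j=1$ by a route genuinely different from the paper's. Both proofs dispose of $j\neq 1$ via Proposition~\ref{prop:coh}. For $j=1$ the paper splits into two subcases: if $i\le 0$ it applies Serre duality fiberwise over $\Q$ and over each $\F_p$ (with $p\nmid n$) to see that $H^1(\MMb_1(n)_K;f_n^*\omega)=0$ for every residue field $K$, and then invokes cohomology and base change \cite[Corollary~12.9]{Har77} to conclude vanishing over $\Z[\tfrac1n]$; if $i=1$ it appeals directly to Grothendieck duality to get $H^1\cong\Z[\tfrac1n]$. You instead run relative Grothendieck--Serre duality once over the PID $\Z[\tfrac1n]$ and extract the torsion via the universal coefficient filtration, obtaining the symmetric identity $(H^1(\omega^{\tensor j}))_{\mathrm{tors}}\cong (H^1(\omega^{\tensor(i-j)}))_{\mathrm{tors}}$; this treats $i\le 0$ and $i=1$ uniformly and avoids the fiberwise check. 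The paper's route has the virtue of using only classical Serre duality over fields together with a standard base-change theorem, while yours is slightly slicker but requires the derived duality statement over a non-field base and the (easy) universal coefficient analysis. Your closing remarks about the stacky cases $2\le n\le 4$ and the identification $\omega_\pi\cong\Omega^1_{\MMb_1(n)/R}$ are accurate and match how the paper deals with these ranges elsewhere.
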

\begin{proof}Proposition \ref{prop:coh} states the torsionfreeness of $H^1(\MMb_1(n); (f_n)^*\omega^{\tensor j})$ for $j\neq 1$. It remains to show it for $j=1$. We can assume that $n\geq 5$ so that $\MMb_1(n)$ is representable as the other cases are easily dealt with by hand. By Section \ref{sec:duality} we furthermore know that $i\leq 1$.

If $k \leq 0$, then $H^1(\MMb_1(n)_{\Q};f_n^*\omega) = 0 = H^1(\MMb_1(n)_{\F_p};f_n^*\omega)$ by Serre duality for all primes $p$ not dividing $n$ because there are no modular forms of negative weight by Proposition \ref{prop:coh} again. Thus, by cohomology and base change $H^1(\MMb_1(n);f_n^*\omega) = 0$ (see e.g.\ \cite[Corollary 12.9]{Har77}). If $k = 1$, then Grothendieck duality states that $H^1(\MMb_1(n);f_n^*\omega) \cong \Z[\tfrac1n]$.
\end{proof}
This implies that $\pi_*\Tmf_1(n)$ is torsionfree if $\Omega^1_{\MMb_1(n)/\Z[\tfrac1n]} \cong (f_n)^*\omega^{\tensor k}$ for some $k$. Indeed, as noted in Section \ref{sec:TMFLevel} we have
$$\pi_{2i}\Tmf_1(n) \cong H^0(\MMb_1(n); (f_n)^*\omega^{\tensor i}) \;\text{ and }\; \pi_{2i-1}\Tmf_1(n)\cong H^1(\MMb_1(n); (f_n)^*\omega^{\tensor i}).$$
The former is torsionfree because $\MMb_1(n)$ is flat over $\Z[\tfrac1n]$ and the latter by the last lemma.

The following two lemmas will also be useful.
\begin{lemma}\label{lem:rational}
Let $A$ be a subring of $\Q$ and $X$ a spectrum whose homotopy groups are finitely generated $A$-modules. Then $(I_AX)_{\Q} \to I_{\Q}X_{\Q}$ is an equivalence, where $X_{\Q}$ denotes the rationalization.
\end{lemma}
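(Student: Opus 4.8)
The plan is to exhibit the natural map as the rationalisation of a map that is already a rational equivalence, and to verify the latter on homotopy groups by means of the defining short exact sequences of Anderson duality. First I would identify the target: since $\pi_{-*}\mathbb{S}$ is finite in positive degrees and vanishes in negative degrees, $\pi_*I_\Q = \Hom_\Z(\pi_{-*}\mathbb{S},\Q)$ is concentrated in degree $0$, where it is $\Q$, so $I_\Q\simeq H\Q$ is a rational spectrum. Hence $F(Z,I_\Q)\simeq 0$ for any spectrum $Z$ with torsion homotopy groups (its homotopy groups are $\Hom_\Q(\pi_{-*}Z\otimes\Q,\Q)=0$). Since $\pi_*X$ is finitely generated over $A$, the cofibre of the rationalisation $X\to X_\Q$ has torsion homotopy groups (from the long exact sequence: $\ker(\pi_kX\to\pi_kX\otimes\Q)$ is torsion and $\coker(\pi_kX\to\pi_kX\otimes\Q)$ is a sum of copies of $\Q/A$, hence torsion). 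Therefore $X\to X_\Q$ induces an equivalence $I_\Q X = F(X,I_\Q)\xrightarrow{\ \sim\ }F(X_\Q,I_\Q)=I_\Q X_\Q$, and this spectrum is rational.

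Next I would set up the comparison map. The inclusion $A\hookrightarrow\Q$ induces a map of coefficient spectra $I_A\to I_\Q$ and hence a map $\iota\colon I_AX = F(X,I_A)\to F(X,I_\Q)=I_\Q X$. Because $I_\Q X$ is rational, $\iota$ factors uniquely through the rationalisation of its source, producing a map $(I_AX)_\Q\to I_\Q X\simeq I_\Q X_\Q$, which is the map of the lemma. It thus suffices to show that $\iota$ itself is a rational equivalence, i.e.\ that $\operatorname{cofib}(\iota)$ has torsion homotopy groups, i.e.\ that each $\pi_k(\iota)$ has torsion kernel and torsion cokernel.

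Finally I would read this off the homotopy groups. The Anderson universal coefficient sequence is natural in the coefficient group, so $A\hookrightarrow\Q$ gives a commutative diagram with exact rows
\[
\begin{array}{ccccccccc}
0 &\to& \Ext^1_\Z(\pi_{-k-1}X,A) &\to& \pi_kI_AX &\to& \Hom_\Z(\pi_{-k}X,A) &\to& 0\\[2pt]
 && \downarrow && \downarrow && \downarrow && \\[2pt]
0 &\to& 0 &\to& \pi_kI_\Q X &\xrightarrow{\ \cong\ }& \Hom_\Z(\pi_{-k}X,\Q) &\to& 0,
\end{array}
\]
the middle vertical map being $\pi_k(\iota)$ and the lower $\Ext$-term vanishing because $\Q$ is injective. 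Now $\pi_{-k}X$ and $\pi_{-k-1}X$ are finitely generated modules over $A$, which is a principal ideal domain (a localisation of $\Z$), hence each is a finite direct sum of copies of $A$ and of finite cyclic modules $A/(d)$. From this one reads off that $\Ext^1_\Z(\pi_{-k-1}X,A)$ is torsion (indeed $\Ext^1_\Z(A,A)=0$), that $\Hom_\Z(\pi_{-k}X,A)\to\Hom_\Z(\pi_{-k}X,\Q)$ is injective with cokernel a finite sum of copies of $\Q/A$, hence torsion. A short diagram chase then gives $\ker\pi_k(\iota)\subseteq\Ext^1_\Z(\pi_{-k-1}X,A)$ and $\coker\pi_k(\iota)\cong\coker\big(\Hom_\Z(\pi_{-k}X,A)\to\Hom_\Z(\pi_{-k}X,\Q)\big)$, both torsion. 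Hence $\iota$ is a rational equivalence and the lemma follows.

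The only place where any care is required is the last step: the finite generation of $\pi_*X$ is precisely what forces $\coker\big(\Hom_\Z(\pi_{-k}X,A)\to\Hom_\Z(\pi_{-k}X,\Q)\big)$ to be torsion, and the statement genuinely fails without it (an infinite-rank free summand in some degree already breaks it). Everything else is formal.
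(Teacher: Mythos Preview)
Your proof is correct and follows essentially the same strategy as the paper: identify $I_{\Q}X \simeq I_{\Q}X_{\Q}$ using that $I_{\Q}$ is rational, then verify that $I_AX \to I_{\Q}X$ is a rational equivalence on homotopy groups via the universal coefficient sequences, reducing to the algebraic facts that $\Ext^1(M,A)$ is torsion and $\Hom(M,A)\to\Hom(M,\Q)$ has torsion cokernel for finitely generated $A$-modules $M$. The paper's write-up is terser but the content is the same.
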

\begin{proof}
Recall that $I_{\Q}X$ is defined as $F(X,I_{\Q})$, which is equivalent to $F(X_{\Q}, I_{\Q})$ as $I_{\Q}$ is rational. We have to show that the natural map $I_AX \to I_{\Q}X_{\Q}$ is an isomorphism on homotopy groups after rationalization. This boils down to the facts that for a finitely generated $A$-module $M$ we have
$$\Ext_A(M,A) \tensor \Q = 0$$
and that
$$\Hom(M,A)\tensor \Q \to \Hom(M, \Q) \cong \Hom_{\Q}(M\tensor \Q, \Q)$$
is an isomorphism.
\end{proof}

\begin{lemma}\label{lem:ample}
Let $\Gamma$ be $\Gamma_1(n)$ or $\Gamma(n)$ and assume that $\MMb(\Gamma)_R$ is representable. Then the line bundle $g^*\omega$ is ample on $\MMb(\Gamma)_R$.
\end{lemma}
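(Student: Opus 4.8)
The plan is to reduce the statement to the fact that ampleness is preserved under pullback along a finite morphism of schemes; the one point of substance is that the natural target $\MMb_{ell,R}$ is a stack and must be replaced by a scheme, which is exactly what the representability hypothesis allows.

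First I would recall that the coarse moduli scheme of $\MMb_{ell,R}$ is $\mathbb{P}^1_R$, with coarse moduli morphism $\pi\colon\MMb_{ell,R}\to\mathbb{P}^1_R$ (the $j$-line), and that a sufficiently divisible power $\omega^{\otimes N}$ --- e.g.\ $N$ the least common multiple of the orders of the automorphism group schemes occurring on $\MMb_{ell}$ --- carries a trivial action of all stabilisers and hence descends along $\pi$ to a line bundle $\mathcal{A}$ on $\mathbb{P}^1_R$ with $\omega^{\otimes N}\cong\pi^*\mathcal{A}$. Since $\omega$ has positive degree on $\MMb_{ell}$ (standard; see e.g.\ \cite[Lemma 2.8]{MeiDecMod} and \cite{D-R73}), so does $\mathcal{A}$, and $\mathcal{A}$ is therefore ample on $\mathbb{P}^1_R$ (relatively over $\Spec R$ if $R$ is not a field).

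Next I would use the hypothesis that $\MMb(\Gamma)_R$ is representable: the composite $h=\pi\circ g\colon\MMb(\Gamma)_R\to\mathbb{P}^1_R$ is then a morphism of schemes. It is proper, being the composite of the finite (hence proper) map $g$ with the proper coarse moduli morphism $\pi$, and it is quasi-finite, since $g$ is finite, the fibres of $\pi$ are residual gerbes $B\mu_m$ (in particular finite), and quasi-finiteness is stable under composition. A proper quasi-finite morphism of schemes is finite, so $h$ is finite. Therefore $(g^*\omega)^{\otimes N}=g^*\pi^*\mathcal{A}=h^*\mathcal{A}$ is the pullback of an ample line bundle along a finite --- in particular affine and of finite type --- morphism, hence ample; and a line bundle on a scheme is ample as soon as one of its positive tensor powers is. Thus $g^*\omega$ is ample on $\MMb(\Gamma)_R$.

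The main obstacle is this stack-versus-scheme point: the coarse moduli morphism $\pi$ is not itself finite (its fibres are classifying stacks), so one cannot directly invoke ``finite pullback of an ample sheaf is ample'' over $\MMb_{ell,R}$, and it is precisely representability of $\MMb(\Gamma)_R$ that upgrades $h$ to a finite morphism of schemes. If one prefers to avoid coarse spaces entirely, an alternative is to reduce to $R=K$ a field via the fibrewise criterion for relative ampleness, base change to $\overline{K}$, and check that $\deg(g^*\omega|_C)>0$ on each connected component $C$ of $\MMb(\Gamma)_{\overline K}$ using Lemma \ref{lem:degomega} --- a positive-degree line bundle on a smooth proper curve being ample.
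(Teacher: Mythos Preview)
Your proof is correct and follows essentially the same route as the paper: both pass to the coarse moduli space $\pi\colon\MMb_{ell,R}\to\mathbb{P}^1_R$, identify a power of $\omega$ with the pullback of an ample line bundle on $\mathbb{P}^1_R$, observe that the composite $\pi\circ g$ is finite (proper and quasi-finite) once $\MMb(\Gamma)_R$ is a scheme, and conclude via the fact that finite pullback preserves ampleness. The only cosmetic difference is that the paper pins down $N=12$ and $\mathcal{A}=\OO(1)$ explicitly via a degree computation on $\MMb_1(5)$, whereas you argue more abstractly that some $\omega^{\otimes N}$ descends to a positive-degree line bundle.
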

\begin{proof}
Denote the map $\MMb_{ell,R} \to \mathbb{P}_R^1$ to the coarse moduli space by $\pi$. We claim first that $\pi^*\OO(1) \cong \omega^{\tensor 12}$. Because $\Pic(\MMb_{ell}) \cong \Z$ is generated by $\omega$ \cite{F-O10}, we just have to show which $\omega^{\tensor m}$ the pullback $\pi^*\OO(1)$ is isomorphic to and we can do it over $\C$. As $\MMb_1(5)$ is a smooth proper curve of genus $0$, we have $\MMb_1(5)_{\C} \simeq \mathbb{P}^1_{\C}$ and by \cite[Sec.\ 3.8+3.9]{D-S05} we know that the composition
$$\MMb_1(5)_{\C} \xrightarrow{g} \MMb_{ell,\C} \xrightarrow{\pi} \mathbb{P}_{\C}^1$$
 has degree $12$. Thus, $(\pi g)^*\OO(1) \cong \OO(12)$. By Lemma \ref{lem:degomega}, $g^*\omega$ has degree $1$.  Thus, it follows that $\pi^*\OO(1) \cong \omega^{\tensor 12}$.

The composition $\pi \colon \MMb(\Gamma)_R \to \mathbb{P}_R^1$ is finite as $\MMb(\Gamma)_R \to \MMb_{ell,R}$ is finite and $\MMb_{ell,R} \to \mathbb{P}_R^1$ is quasi-finite and proper. Thus, $g^*\omega^{\tensor 12} \cong (\pi g)^*\OO(1)$ is ample and thus $g^*\omega$ is ample as well (see \cite[Prop 13.83]{G-W10}).
\end{proof}

\begin{prop}
 We have $I_{\Z[\tfrac1n]}\Tmf_1(n) \simeq \Sigma^m \Tmf_1(n)$ as $\Tmf_1(n)$-modules for $n\geq 2$ if and only if $m$ is odd and $(f_n)^*\omega^{\tensor k}$ is isomorphic to $\Omega^1_{\MMb_1(n)/\Z[\tfrac1n]}$ for $k = (1-m)/2$.
\end{prop}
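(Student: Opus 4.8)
The plan is to transport the topological Anderson duality of $Tmf_1(n)$ into the algebraic Grothendieck--Serre duality of the stack $\MMb_1(n)$ over $\Z[\frac1n]$, following the template of Stojanoska's proof of Theorem \ref{thm:Stojanoska}. For the forward direction, suppose $l$ is odd, write $l = 2k+1$, and assume $(f_n)^*\omega^{\tensor(-k)} \cong \Omega^1_{\MMb_1(n)/\Z[\frac1n]}$. The first step is to record that each $\pi_j Tmf_1(n)$ is a finitely generated free $\Z[\frac1n]$-module: finite generation comes from Proposition \ref{prop:compactness}, and torsionfreeness from Lemma \ref{lem:torsionfree2} together with flatness of $\MMb_1(n)$ over $\Z[\frac1n]$. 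Since the descent spectral sequence is concentrated in cohomological degrees $0$ and $1$ (Proposition \ref{prop:coh}) and, by a parity count, each stem receives a contribution from only one of these, it collapses with no extension problems; hence $\pi_{2m}Tmf_1(n) \cong H^0(\MMb_1(n); (f_n)^*\omega^{\tensor m})$ and $\pi_{2m-1}Tmf_1(n) \cong H^1(\MMb_1(n); (f_n)^*\omega^{\tensor m})$, compatibly with multiplication on the left and cup product on the right.

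The second step handles the dual. Because the homotopy of $Tmf_1(n)$ is degreewise free over $\Z[\frac1n]$ and $\Ext^1_\Z(\Z[\frac1n], \Z[\frac1n]) = 0$, the defining exact sequence of $I_{\Z[\frac1n]}Tmf_1(n)$ degenerates to natural isomorphisms $\pi_j I_{\Z[\frac1n]}Tmf_1(n) \cong \Hom_{\Z[\frac1n]}(\pi_{-j}Tmf_1(n), \Z[\tfrac1n])$ of $\pi_*Tmf_1(n)$-modules. Grothendieck duality gives $\pi_{-l}Tmf_1(n) \cong H^1(\MMb_1(n); \Omega^1_{\MMb_1(n)/\Z[\frac1n]}) \cong \Z[\frac1n]$ via the trace, so $\pi_l I_{\Z[\frac1n]}Tmf_1(n) \cong \Z[\frac1n]$, and the generator corresponding to the trace classifies a map of $Tmf_1(n)$-modules $\mu \colon \Sigma^l Tmf_1(n) \to I_{\Z[\frac1n]}Tmf_1(n)$. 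On homotopy, $\mu$ sends $x \in \pi_{j-l}Tmf_1(n)$ to the functional $z \mapsto \tr(xz)$ on $\pi_{-j}Tmf_1(n)$; under the identifications above this is the cup-product pairing $H^s(\MMb_1(n); (f_n)^*\omega^{\tensor a}) \otimes H^{1-s}(\MMb_1(n); (f_n)^*\omega^{\tensor b}) \to H^1(\MMb_1(n); \Omega^1_{\MMb_1(n)/\Z[\frac1n]})$ with $a+b = -k$, composed with the trace. Grothendieck--Serre duality for the smooth proper stack $\MMb_1(n)$ over $\Z[\frac1n]$ --- a perfect pairing here because the coherent cohomology is $\Z[\frac1n]$-flat --- then shows $\mu$ is an isomorphism on every homotopy group, hence an equivalence of $Tmf_1(n)$-modules.

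For the converse, suppose $I_{\Z[\frac1n]}Tmf_1(n) \simeq \Sigma^l Tmf_1(n)$ as $Tmf_1(n)$-modules. Rationalizing and applying Lemma \ref{lem:rational} yields a $(Tmf_1(n))_\Q$-module equivalence $I_\Q(Tmf_1(n))_\Q \simeq \Sigma^l (Tmf_1(n))_\Q$. Comparing homotopy in even degrees $2m$ as $m \to \infty$: the right side equals $H^0(\MMb_1(n)_\Q; \omega^{\tensor(m-l/2)})$, which is nonzero for large $m$ when $l$ is even, while the left side equals $\Hom_\Q(H^0(\MMb_1(n)_\Q; \omega^{\tensor(-m)}), \Q)$, which vanishes for large $m$ since there are no modular forms of negative weight; hence $l = 2k+1$ is odd. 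Reading the equivalence as a graded isomorphism of degreewise finite-dimensional $\pi_*(Tmf_1(n))_\Q$-modules, dualizing, and invoking Serre duality on the curve $\MMb_1(n)_\Q$, one finds that the section module $\bigoplus_p H^0(\MMb_1(n)_\Q; \Omega^1_{\MMb_1(n)_\Q} \tensor \omega^{\tensor p})$ is free of rank one over the ring of rational modular forms $\bigoplus_p H^0(\MMb_1(n)_\Q; \omega^{\tensor p})$, with generator in the degree dictated by the shift $l$. Since $(f_n)^*\omega$ is ample (Lemma \ref{lem:ample}), passing to $\operatorname{Proj}$ identifies the associated sheaves and forces $\Omega^1_{\MMb_1(n)_\Q} \cong (f_n)^*\omega^{\tensor(-k)}$; the Picard-injectivity argument of Corollary \ref{cor:Omega} then upgrades this to $\Omega^1_{\MMb_1(n)/\Z[\frac1n]} \cong (f_n)^*\omega^{\tensor(-k)}$. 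The non-representable cases $n \leq 4$ are treated directly using the weighted-projective-line descriptions from \cite{MeiDecMod} (with $n = 1, 2$ covered by \cite{Sto12}).

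The step I expect to be the real obstacle is the middle one: matching the $\pi_*Tmf_1(n)$-module structure on $\pi_* I_{\Z[\frac1n]}Tmf_1(n)$, i.e.\ the topological Anderson-duality pairing, with the cup-product-and-trace pairing on coherent cohomology, and verifying that the relevant instance of Grothendieck--Serre duality is perfect over the base $\Z[\frac1n]$ rather than merely over its residue fields. This is precisely where torsionfreeness of $\pi_*Tmf_1(n)$ is indispensable, and it is the analogue of the corresponding compatibility in Stojanoska's argument for $Tmf$.
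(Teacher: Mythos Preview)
Your proposal is correct and follows the same overall architecture as the paper. A few comparative remarks are worth making.

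In the forward direction your argument and the paper's are the same in content. The paper packages the compatibility you flag as the ``real obstacle'' into an explicit commutative square
\[
\xymatrix@C=1.3em{\pi_{i-2k-1}R \tensor \pi_{-i}R \ar[rr]^-{\widehat{\delta}_*\tensor \id}\ar[d]^{\mathrm{mult}} && \pi_iI_{\Z[\frac1n]}R\tensor \pi_{-i}R \ar[rr]_-{\cong}^-{\phi\tensor\id} && \Hom(\pi_{-i}R, \Z[\frac1n]) \tensor \pi_{-i}R\ar[d]^{\ev} \\
\pi_{-2k-1}R\ar[rrrr]^{\phi(\delta)}_{\cong} &&&& \Z[\frac1n] }
\]
where $\widehat{\delta}$ is the $R$-linear map classified by the generator $\delta$. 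The point is that commutativity is \emph{tautological}: $R$-linearity of $\widehat{\delta}$ gives $\widehat{\delta}_*(x) = x\cdot \delta$, and the module structure on $I_{\Z[\frac1n]}R$ is defined so that $\phi(x\delta)(y) = \phi(\delta)(xy)$. So your worry about ``matching the Anderson pairing with cup-product-and-trace'' dissolves: one never compares pairings directly, one just uses that multiplication on $\pi_*R$ is cup product (via the collapsed descent spectral sequence) together with this square. The integral perfectness then comes for free from torsionfreeness, exactly as you say.

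In the converse the two arguments diverge slightly. The paper runs the \emph{same} square rationalised in the other direction: now the top row and right vertical are known to be isomorphisms, hence the left vertical pairing
\[H^0(\MMb_1(n)_\Q;\omega^{-i-k})\otimes H^1(\MMb_1(n)_\Q;\omega^{i}) \to H^1(\MMb_1(n)_\Q;\omega^{-k})\cong \Q\]
is perfect for all $i$. Since $\omega$ is ample this is exactly the characterising property of a dualizing sheaf, and the paper simply cites the proof of \cite[Thm~III.7.1(b)]{Har77}. Your route via Serre duality to the section-module identification $\bigoplus_p H^0(\Omega^1\otimes\omega^{p})\cong S(-k)$ and then $\operatorname{Proj}$ is equally valid and uses ampleness in the same essential way; the paper's version is just more symmetric between the two directions and avoids checking that the Serre-duality isomorphism is $S$-linear (though it is). Both finish with the Picard-injectivity step from Corollary~\ref{cor:Omega}. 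For $2\le n\le 4$ the paper does not split off a separate argument but rather cites ampleness on the weighted projective line from \cite{MeiDecMod}; your $\operatorname{Proj}$ argument would need a stacky version there, so your plan to handle those cases directly is the safer choice.
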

\begin{proof}
We will denote throughout the proof the sheaf $(f_n)^*\omega$ by $\omega$. Furthermore, we abbreviate $\Tmf_1(n)$ to $R$. As noted before, we have $\pi_{2i}R \cong H^0(\MMb_1(n); \omega^{\tensor i})$ and $\pi_{2i-1}R \cong H^1(\MMb_1(n); \omega^{\tensor i})$.

First suppose that $\omega^{\tensor k}$ is a dualizing sheaf for $\MMb_1(n)$. Then $H^1(\MMb_1(n); \omega^{\tensor k}) \cong \Z[\tfrac1n]$ and the pairing
$$H^0(\MMb_1(n); \omega^{\tensor (k-i)}) \tensor H^1(\MMb_1(n); \omega^{\tensor i}) \to  H^1(\MMb_1(n); \omega^{\tensor k})\cong \Z[\tfrac1n]$$
is perfect by Grothendieck--Serre duality; note here that all occurring groups are finitely generated $\Z[\tfrac1n]$-modules and torsionfree by Lemma \ref{lem:torsionfree2}. As note above, this implies that $\pi_*R$ is torsionfree as well.

Choose a generator $D$ of $H^1(\MMb_1(n); \omega^{\tensor k})$.  This is represented by a unique element in $\pi_{2k-1}R \cong \Z[\tfrac1n]$, which we will also denote by $D$. Denote by $\delta$ the element in $\pi_{1-2k} I_{\Z[\tfrac1n]} R$ with $\phi(\delta)(D) = 1$, where $\phi\colon \pi_{1-2k} I_{\Z[\tfrac1n]} R \xrightarrow{\cong} \Hom(\pi_{2k-1}R,\Z[\tfrac1n])$. The element $\delta$ induces an $R$-linear map $\widehat{\delta}\colon \Sigma^{1-2k}R\to I_{\Z[\tfrac1n]} R$.

We obtain a diagram
\begin{equation}\label{AndersonSquare}
\begin{gathered}
 \xymatrix@C=1.3em{\pi_{i+2k-1}R \tensor \pi_{-i}R \ar[rr]^-{\widehat{\delta}_*\tensor \id}\ar[d] && \pi_iI_{\Z[\tfrac1n]}R\tensor \pi_{-i}R \ar[rr]_-{\cong}^-{\phi\tensor\id} && \Hom(\pi_{-i}R, \Z[\tfrac1n]) \tensor \pi_{-i}R\ar[d]^{\ev} \\
\pi_{2k-1}R\ar[rrrr]^{\phi(\delta)}_{\cong} &&&& \Z[\tfrac1n], }
\end{gathered}
\end{equation}
which is commutative up to sign.

The left vertical map is a perfect pairing because of Serre duality (as described above), as is the right vertical map by definition. Thus, the map
$$\widehat{\delta}_*\colon \pi_{i+2k-1}R \to \pi_kI_{\Z\left[\tfrac1n\right]}R$$
is an isomorphism for all $i$. This shows that $\widehat{\delta}$ is an equivalence of $R$-modules.

Now assume on the other hand that there is an equivalence $I_{\Z[\tfrac1n]}R \simeq \Sigma^m R$ as $R$-modules. By Lemma \ref{lem:rational}, this implies an equivalence $I_{\Q}R_\Q \simeq \Sigma^m R_\Q$ of $R_\Q$-modules. In the following, we will rationalize everything implicitly.

If $m$ is even, this implies
$$H^0(\MMb_1(n); \omega^{\tensor i}) \cong \pi_{2i}\Tmf_1(n) \cong (\pi_{-2i}\Sigma^m\Tmf_1(n))^{\vee} \cong H^0(\MMb_1(n); \omega^{\tensor (-m/2-i)})^{\vee}.$$
As there are no modular forms of negative weight, this would imply that $H^0(\MMb_1(n); \omega^{\tensor i})$ is zero for $i$ big; this is absurd as the ring of modular forms does not contain nilpotent elements. Thus, $m$ can be written as $1-2k$.

 Let
$$D \in H^1(\MMb_1(n); \omega^{\tensor k}) \cong \pi_{2k-1}R$$
be the element corresponding under the isomorphism
\begin{align*}
 \pi_{2k-1}R &\cong \pi_0\Sigma^{1-2k}R \\
 &\cong \pi_0I_{\Q}R \\
 &\cong \Hom_{\Q}(\pi_0R, \Q) \\
 &\cong \Hom_{\Q}(\Q,\Q)
\end{align*}
to $1$. Consider now again the diagram (\ref{AndersonSquare}), but now tensored with $\Q$. Now all the horizontal arrows are isomorphisms and the right vertical map is a perfect pairing. Therefore, the left hand vertical arrow is a perfect pairing as well. This implies that
$$H^0(\MMb_1(n); \omega^{\tensor k-i}) \tensor H^1(\MMb_1(n); \omega^{\tensor i}) \to  H^1(\MMb_1(n); \omega^{\tensor k})\cong \Q$$
is a perfect pairing. As $\omega$ is ample by Lemma \ref{lem:ample} if $n\geq 5$ and by \cite[Examples 2.1, Theorem A.2]{MeiDecMod} if $n<5$, one can repeat the proof of \cite[Thm 7.1b]{Har77} to see that $\omega^{\tensor k}$ is dualizing on $\MMb_1(n)_{\Q}$ and thus isomorphic to the dualizing sheaf $\Omega^1_{\MMb_1(n)_{\Q}/\Q} = \left(\Omega^1_{\MMb_1(n)/\Z[\tfrac1n]}\right)_{\Q}$. As in Corollary \ref{cor:Omega}, this implies that $\omega^{\tensor k} \cong \Omega^1_{\MMb_1(n)/\Z[\tfrac1n]}$ also before rationalizing.
\end{proof}

This implies the following theorem.
\begin{thm}\label{thm:Duality}
 We have $I_{\Z[\tfrac1n]}\Tmf_1(n) \simeq \Sigma^m \Tmf_1(n)$ as $\Tmf_1(n)$-modules for some $m$ if and only if

   \begin{tabular}{ll}
      $n = 1$ &with\;  $m=21$, \\
  $n = 2$ & with\;  $m=13$, \\
  $n = 3$ & with\; $m = 9$, \\
  $n=4$ & with\; $m = 7$,\\
  $n = 5,6$ & with\;  $m = 5$, \\
  $n = 7,8$ & with\; $m = 3$,\\
  $n = 11,14,15$ &with\; $m = 1$, or \\
  $n = 23$ & with\;  $m= -1$.
   \end{tabular}
\end{thm}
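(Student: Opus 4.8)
The plan is to derive the theorem as an immediate consequence of the preceding proposition together with Corollary \ref{cor:Omega}. Recall that the proposition characterizes Anderson self-duality $I_{\Z[\frac1n]}Tmf_1(n) \simeq \Sigma^l Tmf_1(n)$ (for $n \geq 2$) as the conjunction of two conditions: that $l$ is odd, and that $(f_n)^*\omega^{\tensor(-k)}$ is isomorphic to the dualizing sheaf $\Omega^1_{\MMb_1(n)/\Z[\frac1n]}$, where $k = (l-1)/2$. Meanwhile, Corollary \ref{cor:Omega} tells us precisely for which $n$ there exists \emph{some} $i$ with $\Omega^1_{\MMb_1(n)/\Z[\frac1n]} \cong (f_n)^*\omega^{\tensor i}$, namely $n \in \{1,2,3,4,5,6,7,8,11,14,15,23\}$. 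So the remaining work is purely bookkeeping: for each such $n$ (with $n \geq 2$) identify the unique exponent $i = -k$, and then report $l = 2k+1$.

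First I would handle the case $n=1$ separately, since there the proposition as stated does not apply; but $I_{\Z}Tmf \simeq \Sigma^{21}Tmf$ is exactly Theorem \ref{thm:Stojanoska} (Stojanoska), giving $l = 21$. For $2 \leq n \leq 8$, the relevant values of $i$ are already recorded in the explicit list of isomorphisms $\Omega^1_{\MMb_1(n)/K} \cong (f_n)^*\omega^{\tensor i}$ displayed in Section \ref{sec:duality} (which, by Corollary \ref{cor:Omega}, also hold integrally over $\Z[\frac1n]$): reading off $i = -6,-4,-3,-2,-2,-1,-1$ for $n = 2,3,4,5,6,7,8$ respectively, and setting $l = 1-2i$, yields $l = 13,9,7,5,5,3,3$. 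For $n = 11,14,15$ the curve $\MMb_1(n)$ has genus $1$, so the dualizing sheaf is the structure sheaf $\OO = (f_n)^*\omega^{\tensor 0}$, giving $i = 0$, $k = 0$, $l = 1$. For $n = 23$, Proposition \ref{prop:degreecomp} and the surrounding discussion show that $\Omega^1_{\MMb_1(23)/K} \cong (f_{23})^*\omega$, i.e.\ $i = 1$, so $k = -1$ and $l = -1$. Conversely, the proposition guarantees these are the \emph{only} $n \geq 2$ for which any self-duality holds, and that for each such $n$ the exponent $i$ (hence $l$) is unique, since $\Pic(\MMb_1(n))$ injects into a group where $\omega$ has a well-defined nonzero degree (Lemma \ref{lem:degomega}).

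I do not anticipate a genuine obstacle here; the content of the theorem has been fully discharged by the proposition and Corollary \ref{cor:Omega}, and the proof is a one-line citation plus a table lookup. The only points requiring a modicum of care are: (i) treating $n=1$ via Stojanoska's theorem rather than the proposition; (ii) checking in the small non-representable cases $2 \leq n \leq 4$ that the exponents listed in Section \ref{sec:duality} are correct — but these were verified there by hand using the weighted-projective-line descriptions from \cite{MeiDecMod}; and (iii) confirming uniqueness of $l$, which follows because $\deg (f_n)^*\omega = \frac{1}{24}d_n \neq 0$ forces the exponent $i$ to be determined by the isomorphism class of $\Omega^1_{\MMb_1(n)/\Z[\frac1n]}$ via degrees. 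Thus the proof reads: by the preceding proposition, $I_{\Z[\frac1n]}Tmf_1(n) \simeq \Sigma^l Tmf_1(n)$ with $l = 2k+1$ if and only if $(f_n)^*\omega^{\tensor(-k)} \cong \Omega^1_{\MMb_1(n)/\Z[\frac1n]}$; Corollary \ref{cor:Omega} restricts $n$ to the stated list; reading off the exponent in each case (and invoking Theorem \ref{thm:Stojanoska} for $n=1$) produces the displayed table.

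\begin{proof}
For $n=1$ the statement is Theorem \ref{thm:Stojanoska}. So assume $n\geq 2$. By the preceding proposition, $I_{\Z[\frac1n]}Tmf_1(n)$ is equivalent to $\Sigma^l Tmf_1(n)$ as a $Tmf_1(n)$-module if and only if $l$ is odd and $(f_n)^*\omega^{\tensor(-k)} \cong \Omega^1_{\MMb_1(n)/\Z[\frac1n]}$ for $k=(l-1)/2$. By Corollary \ref{cor:Omega}, such an $i=-k$ exists precisely when $n \in \{2,3,4,5,6,7,8,11,14,15,23\}$, and in each of these cases the exponent is unique since, by Lemma \ref{lem:degomega}, $\deg (f_n)^*\omega = \tfrac{1}{24}d_n \neq 0$, so the isomorphism class of $(f_n)^*\omega^{\tensor i}$ determines $i$. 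It remains to read off $i$ in each case and set $l = 1-2i$. For $2\leq n\leq 8$ the isomorphisms $\Omega^1_{\MMb_1(n)/\Z[\frac1n]} \cong (f_n)^*\omega^{\tensor i}$ of Section \ref{sec:duality} give $i = -6,-4,-3,-2,-2,-1,-1$, hence $l = 13,9,7,5,5,3,3$, for $n = 2,3,4,5,6,7,8$ respectively. For $n = 11,14,15$ the curve $\MMb_1(n)$ has genus $1$, so $\Omega^1_{\MMb_1(n)/\Z[\frac1n]} \cong \OO = (f_n)^*\omega^{\tensor 0}$ and $l = 1$. For $n = 23$ the discussion around Proposition \ref{prop:degreecomp} gives $\Omega^1_{\MMb_1(23)/\Z[\frac1{23}]} \cong (f_{23})^*\omega$, so $i = 1$ and $l = -1$. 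This establishes the table.
\end{proof}
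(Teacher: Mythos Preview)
Your proposal is correct and follows exactly the paper's approach: invoke the preceding proposition together with Corollary \ref{cor:Omega} for $n\geq 2$, and Stojanoska's Theorem \ref{thm:Stojanoska} for $n=1$. The only minor point is that your uniqueness argument cites Lemma \ref{lem:degomega}, which is stated for $n\geq 5$; for $2\leq n\leq 4$ one should instead appeal to the explicit Picard groups of the weighted projective lines (as in \cite[Examples 2.1, Theorem A.2]{MeiDecMod}), but this is routine.
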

\begin{proof}
 The only case not dealt with by the last proposition and Corollary \ref{cor:Omega} is the case $n=1$, which is Stojanoska's Theorem \ref{thm:Stojanoska}.
\end{proof}

\section{$C_2$-equivariant refinements}\label{sec:C2}
The goal of this section, is to refine the previous decomposition results by taking certain $C_2$-actions into account, where $C_2$ denotes the group with two elements.

Many $C_2$-spectra in chromatic homotopy theory are underlying even and $C_2$ acts on $\pi_{2n}$ as $(-1)^n$. This determines the $E_2$-term of the homotopy fixed point spectral sequence completely and when the differentials behave ``as expected'' we call the spectral sequence \emph{regular}; see Definition \ref{def:regular} for the precise definition. We will show that this has very strong implications for the $C_2$-spectrum; we obtain for example all slices, which are automatically concentrated in even degrees. 

Such a theory would be pointless without a good supply of examples. The archetypical example is the Real bordism spectrum $M\R$ as introduced by Landweber \cite{Lan68} and studied in detail by Araki \cite{Ara79} and Hu--Kriz \cite{H-K01}. Hahn and Shi \cite{HahnShi} used an equivariant map from $M\R$ to Lubin--Tate spectra to show that the latter have regular homotopy fixed point spectral sequenc as well. 

Our first main aim in this section will be to show that the spectra $\TMF_1(n)$ have for all $n$ regular homotopy fixed point spectral sequence as well, thereby determining this spectral sequence completely. This uses the result of Hahn--Shi together with the descent result Proposition \ref{prop:injective} : we can descend the property of having a regular homotopy fixed point spectral sequence from $R'$ to $R$ along a map $R \to R'$ of $C_2$-spectra, provided that it is ``injective enough'' on homotopy groups. 

The computation of $\pi_*\TMF_1(n)^{hC_2}$ is new for all $n>5$. In contrast to some other methods it is completely independent from our knowledge of $\pi_*\mathbb{S}$. Moreover, our method is rather formal, essentially using only that $\TMF_1(n)$ is Landweber exact of height $n$ and that $\pi_*\TMF_1(n)$ and $\pi_*\TMF_1(n)/2$ are integral domains. Thus our method should be easily applicable to the computation of $C_2$-homotopy fixed points of other Landweber exact spectra of height $2$ and with some extra effort also to some of higher height. 

Our new knowledge of $\TMF_1(n)^{hC_2}$  will be the basis for $C_2$-refinements of our main splitting results in the last subsection. In Remark \ref{rem:AndersonC2} we will discuss the implications for Anderson duality.

\subsection{Regular homotopy fixed point spectral sequences}
Consider the homotopy category of spectra with a $C_2$ action and underlying weak equivalences and let $R$ be a commutative monoid in this category. Assume that the underlying homotopy groups of $R$ are $2$-local, torsionfree and concentrated in even degrees; moreover assume that the $C_2$-action acts on $\pi_{2n}R$ as $(-1)^n$. If we want to view $R$ as a genuine $C_2$-spectrum, we will always view it as cofree, i.e.\ as $R^{(EC_2)_+}$ so that $\pi^{C_2}_*R = \pi_*R^{hC_2}$. We will denote by $\sigma$ the sign representation of $C_2$ and by $\rho$ the real regular representation.

Let us consider the $RO(C_2)$-graded homotopy fixed point spectral sequence (HFPSS), converging to $\pi^{C_2}_{\bigstar}R^{(EC_2)_+}$, where $\bigstar$ stands for a grading in $RO(C_2)$ (see Section 2.3 of \cite{H-M17}). Its $E_2$-term is canonically and functorially isomorphic to $\overline{\pi_{2*}R}\tensor \Z[a,u^{\pm 1}]/2a$, where $|a| = (-\sigma, 1)$ and $|u| = (2-2\sigma,0)$ and $\overline{\pi_{2n}R}$ is isomorphic to $\pi_{2n}R$, but shifted to degree $(n\rho,0)$ (see \cite[Corollary 4.7]{H-M17}); here and everywhere we use Adams grading, i.e.\ $a \in H^1(C_2; \pi^e_{1+\sigma}R)$. We will denote the $E_n$-term of the HFPSS of $R$ by $E_n(R)$.

By \cite[Lemma 4.8]{H-M17}, the element $a$ is always a permanent cycle and the resulting element in $\pi_{-\sigma}^{C_2}R$ is the Hurewicz image of the inclusion $S^0 \to S^\sigma$. Moreover, an element $x\in E_k(R)$ is a $d_k$-cycle if $ax \in E_k(R)$ is a $d_k$-cycle. Indeed, the image of the $d_i$-differential is generated as an $\F_2[a]$-module by the elements $d_i(z)$ with $z$ of cohomological degree $0$. This implies that $E_k(R)$ has no $a$-torsion above the $(k-1)$-line.

Recall that the elements $v_n\in \pi_{2(2^n-1)}R$ are well-defined modulo $(2,v_1,\dots, v_{n-1})$. We obtain elements $\vb_n$ in the $E_2$-term of the $RO(C_2)$-graded homotopy fixed point spectral sequence that are well-defined modulo $(2,\vb_1,\dots, \vb_{n-1})$. We set $v_0 = 2$ and $\vb_0 =2$ as well.

\begin{defi}\label{def:regular}
 We say that the HFPSS for $R$ is \emph{regular} if the following three conditions are fulfilled:
 \begin{enumerate}
  \item $\overline{\pi_{2*}R}$ consists of permanent cycles.
  \item The element $u^{2^n}$ survives to the $E_{2^{n+2}-1}$-page and we have $d_{2^{n+2}-1}(u^{2^n}) = a^{2^{n+2}-1}\vb_{n+1}$.
 \item If $d_{2^{n+2}-1}(u^{2^nm}\vb)$ is zero for some odd number $m$ and some $\vb \in \overline{\pi_{2*}R}$, then the element $u^{2^nm}\vb$ is already a permanent cycle.
 \end{enumerate}
\end{defi}

Our next aim is to see that the three conditions above determine the structure of a regular HFPSS rather completely. This will allow us to show the crucial Proposition \ref{prop:injective}, which in turn is crucial for proving that many $C_2$-spectra have regular HFPSS.

\begin{lemma}\label{lem:regmisc}
Assume that $R$ has regular HFPSS.
\begin{enumerate}
\item If $d_i$ is a non-trivial differential, then $i= 2^k-1$ for some $k\geq 2$.
\item If $d_{2^k-1}(x) \neq 0$, then $x = u^{2^{k-2}}y$ with $d_{2^k-1}(y) = 0$.
\item Let $Z_i \subset E_2(R)$ be the subgroup of all $x \in E_2$ with $d_j(x) = 0$ for $j< i$. With this notation, the map
$$Z_{2^{k+1}} \subset E_2(R) \to E_2(R)/(\vb_1a^3, \dots, \vb_k a^{2^{k+1}-1})$$
descends to a map $g_k\colon E_{2^{k+1}}(R) \to  E_2(R)/(\vb_1a^3, \dots, \vb_k a^{2^{k+1}-1}).$
\end{enumerate}
\end{lemma}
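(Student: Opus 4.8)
The plan is to extract all three claims from the structure of a regular HFPSS as recorded in Definition \ref{def:regular}, using the $\F_2[a]$-module structure of the spectral sequence together with the observation already made in the text that $E_k(R)$ has no $a$-torsion above the $(k-1)$-line and that $x$ is a $d_k$-cycle if and only if $ax$ is. For (1), I would argue that the $E_2$-term is generated as a module over $\overline{\pi_{2*}R}[a]$ by the powers $u^{\pm j}$, so every potential differential is (up to an element of $\overline{\pi_{2*}R}$, which consists of permanent cycles by condition (i), and up to powers of $a$, which is a permanent cycle) determined by its value on a power of $u$. The first possibly-nonzero differential off $u^{\pm 1}$ is the $d_3$ hitting $a^3\vb_1$ by condition (ii) with $n=0$; below that everything in cohomological filtration $0$ survives, so no $d_i$ with $i<3$ or with $i$ not of the form $2^k-1$ can be nonzero. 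More precisely, one shows inductively that after running the differentials $d_3, d_7, \dots, d_{2^k-1}$ the surviving classes in filtration $0$ are generated over $\overline{\pi_{2*}R}$ by $a$-multiples of $u^{2^{k-1}\cdot(\text{anything})}$ and the surviving classes of all filtrations are controlled by these; hence the next nonzero differential is $d_{2^{k+1}-1}$. This is essentially a bookkeeping argument with the three conditions.

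For (2): suppose $d_{2^k-1}(x)\neq 0$. Writing $x$ in terms of the module generators, and using that $\overline{\pi_{2*}R}$ and $a$ are permanent cycles, we reduce to the case $x = u^m\vb$ with $\vb\in\overline{\pi_{2*}R}$ and $m\in\Z$. By condition (iii) applied with the appropriate $n$, if $x$ were not a $u^{2^{k-2}}$-multiple — i.e. if $2^{k-2}\nmid m$ — then the Leibniz rule together with condition (ii) at the relevant earlier stages would force $d_{2^k-1}(x)=0$ unless the $d_{2^k-1}$-differential had already been ``used up'' on a lower power; one checks that the only classes supporting a genuinely new $d_{2^k-1}$ are the $u^{2^{k-2}m'}\vb$ with $m'$ odd, matching condition (ii) with $n = k-2$. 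So $x = u^{2^{k-2}}y$ with $y = u^{2^{k-2}(m'-1)}\vb$ (for $m = 2^{k-2}m'$), and since $m'-1$ is even, $y$ is built from a power of $u^{2^{k-2}\cdot 2} = u^{2^{k-1}}$ times $\vb$, hence survives past page $2^k-1$, i.e. $d_{2^k-1}(y)=0$. The content here is just matching indices against Definition \ref{def:regular}(ii)-(iii).

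For (3): the differentials $d_3,\dots,d_{2^{k+1}-1}$ that have already run by page $2^{k+1}$ have, by (1) and (2) and condition (ii), images that are exactly the ideals generated successively by $\vb_1 a^3, \vb_2 a^7, \dots, \vb_k a^{2^{k+1}-1}$ (inside the relevant sub/quotient quotients). Therefore the composite $Z_{2^{k+1}} \hookrightarrow E_2(R) \twoheadrightarrow E_2(R)/(\vb_1 a^3,\dots,\vb_k a^{2^{k+1}-1})$ kills precisely the classes that become boundaries at pages $\leq 2^{k+1}-1$; since by definition $E_{2^{k+1}}(R) = Z_{2^{k+1}}/(\text{boundaries through page } 2^{k+1}-1)$, this composite factors through $E_{2^{k+1}}(R)$, giving the desired $g_k$. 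One has to be slightly careful that the boundaries through page $2^{k+1}-1$ land inside $Z_{2^{k+1}}$ and that the ideal description is correct as submodules of $Z_{2^{k+1}}$ rather than of all of $E_2(R)$, but the $a$-torsion-freeness above the vanishing line and the explicit form of the $d_{2^j-1}$ from (2) make this routine.

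The main obstacle I expect is part (2): pinning down exactly which classes support each $d_{2^k-1}$ requires care with the multiplicativity of the spectral sequence and with the fact that the $\vb_n$ are only well-defined modulo $(2,\vb_1,\dots,\vb_{n-1})$, so one must check that the ambiguity in $\vb_{n+1}$ does not interfere with the divisibility statement $x = u^{2^{k-2}}y$. Conditions (ii) and (iii) of regularity are precisely designed to remove this ambiguity at the level of differentials, so the argument should go through, but it is the step where the bookkeeping is least mechanical. Parts (1) and (3) are then essentially formal consequences once (2) and the module structure are in hand.
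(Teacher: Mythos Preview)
Your proposal is correct and follows essentially the same route as the paper: reduce to monomials $a^l u^m\vb$ (which is automatic since bihomogeneous elements have this form), use conditions (i)--(iii) to see that the only possible nonzero differential on such a monomial is $d_{2^{k+2}-1}$ where $2^k$ is the exact power of $2$ dividing $m$, and for (3) show that the kernel of $Z_{2^{k+1}} \to E_{2^{k+1}}(R)$ lies in the stated ideal. The paper carries out (3) by exactly the induction you gesture at in your final caveat: assuming $\ker(Z_{2^k}\to E_{2^k})\subset(\vb_1a^3,\dots,\vb_{k-1}a^{2^k-1})$, one uses (2) to write any new boundary on page $2^{k+1}-1$ as $\vb_k a^{2^{k+1}-1}y$ and lifts this relation back to $E_2$ modulo the inductive ideal.
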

\begin{proof}
Assume that an element $x = a^lu^{2^km}\vb$ is a $d_j$-cycle for all $j<i$ for some $\vb \in \overline{\pi_{2*}R}$ and $m$ odd. As noted above, $d_i(a^lu^{2^km}\vb) =0$ if and only if $d_i(u^{2^km}\vb) =0$. Thus, we can assume for the first two items that $l=0$.

By definition of a regular HFPSS, $d_i(x)$ is zero for $i<2^{k+2}-1$. If $d_{2^{k+2}-1}(x) =0$, the element $x$ is a permanent cycle. This shows the first item. For the second, note that $x = u^{2^k}(u^{2^{k+1}})^{\tfrac{m-1}2}\vb$.

For the last item, we have to check that the kernel of $Z_{2^{k+1}} \to E_{2^{k+1}}(R)$ is contained in the ideal $(\vb_1a^3,\dots, \vb_ka^{2^{k+1}-1})$ in $E_2(R)$. We will argue by induction and assume that
$$\ker(Z_{2^k} \to E_{2^k}(R)) \subset (\vb_1a^3,\dots, \vb_{k-1}a^{2^k-1})E_2(R).$$
Let $C$ be the image of $Z_{2^{k+1}}$ in $E_{2^k}$. Using the first two items of this lemma, the kernel of $C \to E_{2^{k+1}}$ is generated by elements of the form
$$d_{2^{k+1}-1}(u^{2^{k-1}}y) = \vb_ka^{2^{k+1}-1}y.$$
As the kernel of $Z_{2^{k+1}} \to C$ is contained in $(\vb_1a^3,\dots, \vb_{k-1}a^{2^k-1})E_2(R)$, the statement follows.
\end{proof}

\begin{lemma}\label{lem:reginjection}
Assume that $R$ has regular HFPSS and that $v_k$ is either zero or a non zero-divisor in $\pi_{2*}R/(2,v_1,\dots, v_{k-1})$ for all $k$. Then the maps $g_k$ from the last lemma are injections.
\end{lemma}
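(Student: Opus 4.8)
The plan is to restate the injectivity of each $g_k$ as an equality of subgroups of $E_2(R)$ and to prove it by induction on $k$. Write $I_k = (\vb_1 a^3, \dots, \vb_k a^{2^{k+1}-1}) \subseteq E_2(R)$, and for $i \geq 2$ let $B_i \subseteq Z_i \subseteq E_2(R)$ be the subgroups consisting, respectively, of classes that have become boundaries by the $E_i$-page and of classes surviving to the $E_i$-page, so that $E_i(R) = Z_i/B_i$. The proof of Lemma \ref{lem:regmisc} shows $B_{2^{k+1}} \subseteq I_k$, the map $g_k$ is the one induced by $Z_{2^{k+1}} \hookrightarrow E_2(R) \twoheadrightarrow E_2(R)/I_k$, and since boundaries are cycles we have $B_{2^{k+1}} \subseteq Z_{2^{k+1}} \cap I_k$; hence $g_k$ is injective precisely when $Z_{2^{k+1}} \cap I_k \subseteq B_{2^{k+1}}$. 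The cohomological ($a$-)degree decomposes every page into a direct sum preserved by all differentials, so $I_k$, $Z_i$ and $B_i$ are homogeneous for it and it suffices to verify this inclusion one cohomological degree $s$ at a time. The base case $k = 0$ is immediate ($I_0 = 0$, $g_0 = \mathrm{id}$), and for the inductive step one assumes $g_{k-1}$ injective, i.e.\ $Z_{2^k} \cap I_{k-1} = B_{2^k}$.

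Two easy reductions then remain. First, the generator $\vb_k a^{2^{k+1}-1}$ has cohomological degree $2^{k+1}-1$, so in degrees $s < 2^{k+1}-1$ the ideal $I_k$ coincides with $I_{k-1}$ and $B_{2^{k+1}}$ coincides with $B_{2^k}$; there any $x \in Z_{2^{k+1}} \cap I_k$ lies in $Z_{2^k} \cap I_{k-1} = B_{2^k} \subseteq B_{2^{k+1}}$. Second, I would dispose of the degenerate possibility that $v_k$ is zero in $\pi_{2*}R/(2, v_1, \dots, v_{k-1})$: then one may take $\vb_k = 0$, and a short Leibniz computation — if $\vb_{k-1} = 0$ then $d_{2^k-1}$ vanishes identically by Lemma \ref{lem:regmisc}, so $u^{2^{k-2}}$ survives to the $E_{2^{k+1}-1}$-page, whence $d_{2^{k+1}-1}(u^{2^{k-1}}) = d_{2^{k+1}-1}((u^{2^{k-2}})^2) = 2\,u^{2^{k-2}}\,d_{2^{k+1}-1}(u^{2^{k-2}}) = 0$ because the target is $a$-torsion — shows that vanishing of some $\vb_j$ forces all later ones to vanish. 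Hence in this case $d_{2^{k+1}-1} \equiv 0$, $E_{2^{k+1}}(R) = E_{2^k}(R)$, $I_k = I_{k-1}$ and $g_k = g_{k-1}$ is injective; while in the remaining case $\vb_k \neq 0$, which by the same computation forces $\vb_1, \dots, \vb_{k-1} \neq 0$, so that $v_1, \dots, v_k$ are all non zero-divisors modulo the relevant ideals.

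The core of the argument concerns an element $x \in Z_{2^{k+1}} \cap I_k$ of cohomological degree $s \geq 2^{k+1}-1$ in the case where $v_1, \dots, v_k$ are non zero-divisors. Here, using $2a = 0$, one writes $x = a^s \bar q$ with $\bar q \in (\vb_1, \dots, \vb_k)(\pi_{2*}R/2)[u^{\pm}]$, notes that in this degree $\im d_{2^{i+1}-1} = a^s \vb_i (\pi_{2*}R/2)[u^{\pm 2^i}]$ for each $i \leq k$, and runs a ``staircase'' argument: since $x$ survives to the $E_{2^{k+1}}$-page it is a cycle for each of $d_3, d_7, \dots, d_{2^{k+1}-1}$ — by Lemma \ref{lem:regmisc} these are all the possibly nonzero differentials on pages below $2^{k+1}$ — and one shows inductively, for $i = 1, \dots, k$, that modulo $B_{2^{k+1}}$ one may take $\bar q$ to involve only powers of $u^{2^i}$. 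Indeed, writing $\bar q = \sum_n u^{2^{i-1}n}\bar y_n$ after the $(i-1)$-st step, the regularity relation $d_{2^{i+1}-1}(u^{2^{i-1}}) = a^{2^{i+1}-1}\vb_i$ and the Leibniz rule give $d_{2^{i+1}-1}(u^{2^{i-1}n}\bar y_n) = n\,a^{2^{i+1}-1}\vb_i\,u^{2^{i-1}(n-1)}\bar y_n$ (with $\bar y_n$ and $a$ permanent cycles by the regularity conditions), which vanishes for $n$ even and, for $n$ odd, equals $\pm\, a^{2^{i+1}-1}\vb_i\,u^{2^i(n-1)/2}\bar y_n$; comparison with $d_{2^{i+1}-1}(x) = 0$ forces $\vb_i \bar y_n = 0$ in $\pi_{2*}R/2$ for every odd $n$. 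Since $\bar y_n \in (\vb_1, \dots, \vb_k)(\pi_{2*}R/2)$ and $v_i$ is a non zero-divisor modulo $(2, v_1, \dots, v_{i-1})$, this yields $\bar y_n \in (\vb_1, \dots, \vb_{i-1})(\pi_{2*}R/2)$ for odd $n$, and then each term $a^s u^{2^{i-1}n}\bar y_n$ lies in $\sum_{j<i} a^s \vb_j (\pi_{2*}R/2)[u^{\pm 2^j}] = \sum_{j<i}(\im d_{2^{j+1}-1})$ in degree $s$, hence in $B_{2^{k+1}}$, because $2^{i-1}n$ is divisible by $2^j$ for $j < i$. Subtracting these boundary terms makes $\bar q$ a function of $u^{2^i}$, and the new representative still survives to the $E_{2^{k+1}}$-page. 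After the $k$-th step $\bar q \in (\vb_1, \dots, \vb_k)(\pi_{2*}R/2)[u^{\pm 2^k}]$, and extracting for each generator the part with $u$-exponents divisible by $2^k$ exhibits $a^s \bar q$ inside $\sum_{j=1}^{k}(\im d_{2^{j+1}-1})$ in degree $s$, i.e.\ inside $B_{2^{k+1}}$, which completes the induction.

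I expect the staircase argument of the last paragraph to be the main obstacle, and in particular the structural fact it encapsulates: that in a regular HFPSS satisfying the non zero-divisor hypothesis, the classes surviving to the $E_{2^{k+1}}$-page in a fixed high cohomological degree are exactly those whose $u$-exponents are divisible by $2^{k-1}$, the non zero-divisor condition being precisely what rules out accidental surviving cycles in the other $u$-cosets. The remaining ingredients — reformulating as an ideal-intersection statement, splitting by cohomological degree, and the two easy cases — are formal; the one delicate point is the bookkeeping in the staircase, namely tracking which $u$-coset each differential moves things into and which odd units appear.
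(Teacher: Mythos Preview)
Your overall strategy is sound and runs parallel to the paper's: both arguments amount to showing $Z_{2^{k+1}}\cap I_k\subseteq B_{2^{k+1}}$ by exploiting that in a regular HFPSS the differential $d_{2^{i+1}-1}$ detects exactly the $2$-adic valuation $i-1$ of the $u$-exponent, and then using the non zero-divisor hypothesis to force coefficients into smaller ideals. The paper organizes this differently: rather than an outer induction on $k$ with a staircase on $i$, it writes a fixed $z\in I_k\cap Z_{2^{k+1}}$ directly as $\sum_{i=c}^{k} a^{2^{i+1}-1}\vb_i y_i$ with each $y_i$ a monomial, runs an inner induction on $j$ to show all $y_i$ are $d_{2^{j+1}-1}$-cycles, peels off the bottom term $a^{2^{c+1}-1}\vb_c y_c$ as a $d_{2^{c+1}-1}$-boundary, and repeats. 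Your $u$-coset bookkeeping and the paper's $\vb_i$-term bookkeeping are two sides of the same coin.

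There is, however, one genuine slip in your staircase step. You assert that ``comparison with $d_{2^{i+1}-1}(x)=0$ forces $\vb_i\bar y_n=0$ in $\pi_{2*}R/2$ for every odd $n$.'' This is not literally true: the vanishing of $d_{2^{i+1}-1}(x)$ takes place on the $E_{2^{i+1}-1}=E_{2^i}$-page, not on $E_2$, so the specific $E_2$-representative $a^{s+2^{i+1}-1}\sum_{n\text{ odd}}\vb_i u^{2^i(n-1)/2}\bar y_n$ only lies in $B_{2^i}$. This is exactly where you must invoke your outer inductive hypothesis (injectivity of $g_{i-1}$, available since $i-1<k$) together with Lemma~\ref{lem:regmisc}(3): it gives $B_{2^i}\subseteq I_{i-1}$, so the representative lies in $I_{i-1}$, and comparing $u$-coefficients yields $\vb_i\bar y_n\in(\vb_1,\dots,\vb_{i-1})(\pi_{2*}R/2)$ rather than $\vb_i\bar y_n=0$. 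Fortunately your next sentence (``$v_i$ is a non zero-divisor modulo $(2,v_1,\dots,v_{i-1})$, this yields $\bar y_n\in(\vb_1,\dots,\vb_{i-1})$'') is precisely the deduction that follows from the corrected statement, so the argument goes through once this is fixed. The same remark applies to your ``vanishing propagates'' computation in the second paragraph: concluding $\vb_j=0$ from $a^{2^{j+1}-1}\vb_j=0$ on $E_{2^j}$ again needs $g_{j-1}$ injective.
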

\begin{proof}
We need to show that if $z\in (\vb_1a^3, \dots, \vb_ka^{2^{k+1}-1})E_2(R)\cap Z_{2^{k+1}}$, then $z=0$ in $E_{2^{k+1}}$. Thus let $z = a^{2^{c+1}-1}\vb_cy_c + \cdots a^{2^{k+1}-1}\vb_k y_k$ be in $Z_{2^{k+1}}$ with $y_i = a^{l_i}u^{2^{t_i}m_i}\wb_i \in E_2$ with $m_i$ odd and $\wb_i \in \overline{\pi_{2*}R}$. We can and will assume that $y_i$ is zero if $\wb_i$ or $\vb_i$ is in $(\vb_1,\dots, \vb_{i-1})$.

Let $j\leq c$ and assume by induction that all $y_i$ are $d_{2^{j'+1}-1}$-cycles for all $j' < j$. We want to show that all $y_i$ are $d_{2^{j+1}-1}$-cycles. Let $\lambda_i = 1$ if $y_i\neq 0$ with $t_i = j-1$ and $0$ else. For $l$ the cohomological degree of $z$ we have
\begin{align*}
d_{2^{j+1}-1}(z) =
 a^l u^? a^{2^{j+1}-1}\vb_j(\lambda_c\vb_c\wb_c+\cdots \lambda_k\vb_k\wb_k),
\end{align*}
where $? = \tfrac14(f-l-2^{j+1})$ with $z$ of degree $f+ \ast \rho$. This vanishes in $E_{2^{j+1}-1}(R)$ and thus
$$a^{l+2^{j+1}-1} u^?\vb_j(\lambda_c\vb_c\wb_c+\cdots \lambda_k\vb_k\wb_k) \in  (\vb_1a^3,\dots, \vb_{j-1}a^{2^j-1})E_2(R)$$
by the last lemma.
This implies that
$$\vb_j(\lambda_c\vb_c\wb_c+\cdots \lambda_k\vb_k\wb_k) = 0 \in \overline{\pi_{2*}R}/(2, \vb_1, \dots, \vb_{j-1}),$$ similarly to the argument used later in Proposition \ref{prop:injective}. Assume first that $\vb_j$ is zero in this quotient. Then $d_{2^{j+1}-1}(y_i) = \lambda_ia^{2^{j+1}-1}u^{2^{t_i}(m_i-1)}\vb_j\wb_i$ vanishes for all $i$ as required. Else our assumptions imply that $\lambda_c\vb_c\wb_c+\cdots \lambda_k\vb_k\wb_k$ is zero in $\overline{\pi_{2*}R}/(2, \vb_1, \dots, \vb_{j-1})$. Let $h$ be the highest index such that $\lambda_h \neq 0$ if such an index exists. Then $\vb_h\wb_h \in (2, \vb_1, \dots, \vb_{h-1})$, but this is impossible as $\vb_h$ is a non zero-divisor and $\wb_h$ is nonzero in this quotient. Thus, $\lambda_i$ must vanish for all $i$. This implies that the $y_i$ are cycles for $d_{2^{j+1}-1}$.

Continuing the argument, we see that $y_c$ is in $Z_{2^{c+1}}$ and thus
$$a^{2^{c+1}-1}\vb_c y_c = d_{2^{c+1}-1}(u^{2^{c-1}}y_c).$$
 Thus, $z$ equals $a^{2^{c+2}-1}\vb_{c+1}y_{c+1} + \cdots a^{2^{k+1}-1}\vb_k y_k$ in $E_{2^{k+1}}$ and inductively repeating this argument shows that $z$ is actually zero in $E_{2^{k+1}}$.
\end{proof}

\begin{remark}\label{rem:vi}
 Let $R$ be as in the last lemma and assume that $v_k \in (2,v_1,\dots, v_{k-1})\pi_{2*}R$, i.e.\ that it is zero up to its indeterminacy. Then the preceding lemma implies that $v_{k+1} \in (2,v_1,\dots, v_k)\pi_{2*}R$. Indeed: We have $d_{2^{k+1}-1}(u^{2^{k-1}}) = a^{2^{k+1}-1}\vb_k$. As $a^{2^{k+1}-1}\vb_i$ vanishes on $E_{2^{k+1}-1}$ for all $i<k$, our assumptions imply that $d_{2^{k+1}-1}(u^{2^{k-1}}) =0$ and thus that it is a permanent cycle. Thus, $d_{2^{k+2}-1}(u^{2^k}) =  a^{2^{k+2}-1}\vb_{k+1}$ must vanish as well. As $E^{2^{k+2}-1}$ injects into $E_2(R)/(\vb_1a^3, \dots, \vb_k a^{2^{k+1}-1})$ by the preceding two lemmas, it is not hard to show that $v_{k+1} \in (2,v_1,\dots, v_k)\pi_{2*}R$.

 In particular, this implies that if $R$ has regular HFPSS, the sequence $2, v_1, \dots, v_{i-1}$ is regular in $\pi_*R$ and $v_i = 0$, then $v_j = 0$ for all $j\geq i$. Here we use crucially that $R$ is a ring spectrum.
\end{remark}

\begin{prop}\label{prop:injective}
 Let $f\colon R \to R'$ be a morphism of homotopy commutative ring spectra with $C_2$-action as above. Assume that the HFPSS for $R'$ is regular and that
 $$\pi_*R/(2,v_1,\dots, v_i) \to \pi_*R'/(2,v_1,\dots, v_i)$$
 is injective for all $i\geq 0$. Moreover assume that $v_k$ is either zero or a non zero-divisor in $\pi_{2*}R/(2,v_1,\dots, v_{k-1})$ for all $k$. Then the HFPSS for $R$ is regular as well.
\end{prop}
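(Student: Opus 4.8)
The plan is to transport the regularity of the HFPSS for $R'$ back to $R$ using the injectivity hypothesis, verifying the three conditions of Definition \ref{def:regular} one at a time. First I would set up the map of homotopy fixed point spectral sequences $E_r(R) \to E_r(R')$ induced by $f$; on $E_2$-terms this is the map $\overline{\pi_{2*}R}\tensor \Z[a,u^{\pm 1}]/2a \to \overline{\pi_{2*}R'}\tensor \Z[a,u^{\pm 1}]/2a$ induced by $\pi_{2*}f$, and in particular it sends $\vb_n$ to $\vb_n$ and $u$ to $u$. The key observation is that the hypothesis ``$\pi_*R/(2,v_1,\dots,v_i)\to \pi_*R'/(2,v_1,\dots,v_i)$ is injective for all $i$'' feeds exactly into the kind of detection arguments that appear in Lemma \ref{lem:reginjection}: an element of $\overline{\pi_{2*}R}$ that becomes zero in a quotient $\overline{\pi_{2*}R'}/(2,\vb_1,\dots,\vb_{j-1})$ was already zero in $\overline{\pi_{2*}R}/(2,\vb_1,\dots,\vb_{j-1})$, and similarly for the relevant ideals generated by $a$-multiples of the $\vb_i$ inside $E_2(R)$.

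For condition (1), I need every element of $\overline{\pi_{2*}R}$ to be a permanent cycle. Given $\vb \in \overline{\pi_{2*}R}$, its image $f(\vb)$ is a permanent cycle in $E_*(R')$ since $R'$ is regular; so all differentials $d_j(\vb)$ map to zero in $E_*(R')$. I would argue inductively on $j$ that $d_j(\vb)=0$ in $E_j(R)$: by Lemma \ref{lem:regmisc}(1) applied to $R'$ only the differentials $d_{2^k-1}$ can be nonzero, and the image of such a differential in $E_2(R')$ lands in the ideal $(\vb_1a^3,\dots,\vb_{k-1}a^{2^k-1})$; using that $g_{k-1}$ for $R'$ is injective (Lemma \ref{lem:reginjection}) together with the injectivity of $\pi_*R\to\pi_*R'$ modulo the relevant $v$-towers, I can conclude the differential already vanishes in $E_{2^k-1}(R)$. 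This is the step I expect to require the most care, because I must match up the precise form of the target ideal of $d_{2^k-1}$ on the $R$ side with what injectivity gives me, essentially reproving the relevant parts of Lemma \ref{lem:regmisc}(3) and Lemma \ref{lem:reginjection} for $R$ simultaneously — but note those lemmas' proofs only used regularity of the HFPSS \emph{and} the $v_k$ zero-or-non-zero-divisor hypothesis, both of which we are either assuming for $R$ or are about to establish.

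For condition (2), the element $u^{2^n}$ is a $d_j$-cycle for $j<2^{n+2}-1$ in $E_*(R)$ because it is one in $E_*(R')$ and we can pull back the vanishing as in the previous paragraph (here one also uses that below the relevant page there is no $a$-torsion obstruction, exactly as in the discussion after Definition \ref{def:regular}); and then $d_{2^{n+2}-1}(u^{2^n})$ maps under $f$ to $a^{2^{n+2}-1}\vb_{n+1}\in E_*(R')$, so by the detection statement $d_{2^{n+2}-1}(u^{2^n}) = a^{2^{n+2}-1}\vb_{n+1}$ holds in $E_*(R)$ too, modulo checking the class $\vb_{n+1}$ is the right one up to its indeterminacy, which is immediate since $f$ is compatible with the $v_i$. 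Condition (3) is the most ``formal'' of the three: if $d_{2^{n+2}-1}(u^{2^nm}\vb)=0$ in $E_*(R)$ for $m$ odd, then applying $f$ gives $d_{2^{n+2}-1}(u^{2^nm}f(\vb))=0$ in $E_*(R')$, whence by regularity of $R'$ the element $u^{2^nm}f(\vb)$ is a permanent cycle; but then every higher differential on $u^{2^nm}\vb$ in $E_*(R)$ maps to zero in $E_*(R')$, and since all such differentials are of the form $d_{2^k-1}$ with target in an ideal on which the detection principle applies, they all vanish in $E_*(R)$, so $u^{2^nm}\vb$ is a permanent cycle. Throughout, the only genuinely new input beyond bookkeeping is the translation of the module-theoretic injectivity hypothesis into injectivity of the relevant quotient maps on $E_2$-terms; once that dictionary is in place, the verification of (1)–(3) follows the template of Lemmas \ref{lem:regmisc} and \ref{lem:reginjection} applied to the pair $(R,R')$.
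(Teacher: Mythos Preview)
Your proposal is correct in outline, but it is organized differently from the paper's argument, and the paper's organization is considerably cleaner.

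The paper does not verify conditions (1)--(3) of regularity one at a time. Instead it proves a single statement by induction on the page number: the map $f_*\colon E_r(R) \to E_r(R')$ is injective for all $r$. Once this is known, regularity of the HFPSS for $R$ is immediate, since every differential in $E_*(R)$ maps under $f_*$ to the corresponding differential in $E_*(R')$, and injectivity lets you read off (1), (2) and (3) directly from their validity in $R'$. The inductive step assumes injectivity for $r\le 2^{k+1}-1$ (which, by the observation just made, already gives regularity of $R$ up to that page, so Lemmas \ref{lem:regmisc} and \ref{lem:reginjection} apply to $R$), and then uses the commutative square
\[
\xymatrix{
E_{2^{k+1}}(R) \ar[r]^-{g_k}\ar[d]^{f_*} & E_2(R)/(2a,\vb_1a^3,\dots,\vb_ka^{2^{k+1}-1}) \ar[d]^{\overline{f}_*} \\
E_{2^{k+1}}(R') \ar[r]^-{g_k} & E_2(R')/(2a,\vb_1a^3,\dots,\vb_ka^{2^{k+1}-1})
}
\]
with the upper horizontal map injective by Lemma \ref{lem:reginjection}. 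The problem thus reduces to showing that $\overline{f}_*$ is injective, which is a short direct computation with the $a$-grading: any homogeneous element of $\overline{\pi_{2*}R}\tensor\Z[a]$ whose image lies in the ideal $(2a,\vb_1a^3,\dots,\vb_ka^{2^{k+1}-1})$ must, after stripping the $a$-power, have its coefficient land in $(2,\vb_1,\dots,\vb_k)\subset \overline{\pi_{2*}R'}$, and then the module-theoretic injectivity hypothesis pulls this back to the same ideal in $\overline{\pi_{2*}R}$.

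Your approach ends up needing exactly this injectivity-on-pages at every step of your verification of (1)--(3), as you yourself note when you say you would be ``essentially reproving the relevant parts of Lemma \ref{lem:regmisc}(3) and Lemma \ref{lem:reginjection} for $R$ simultaneously.'' There is no error, but you are running the same induction three times (once inside each of (1), (2), (3)) rather than once. The paper's organization isolates the one genuine computation --- injectivity of $\overline{f}_*$ on the $E_2$-quotients --- and everything else is formal.
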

\begin{proof}
 If the map induces an injection on all $E_i$-pages for $i\leq n$, then the HFPSS for $R$ is regular up to $E_n$ (which includes the statement about $d_n$-differentials). As the only change occurs when $n$ is a power of $2$, assume that the map of $E_i$-pages is injective for $i\leq 2^{k+1}-1$. We have a commutative square
 \[\xymatrix{ E_{2^{k+1}}(R) \ar[r]\ar[d]^{f_*} & \overline{\pi_{2*}R}\tensor \Z[a,u^{\pm 1}]/(2a, \vb_1a^3,\dots, \vb_ka^{2^{k+1}-1}) \ar[d]^{\overline{f}_*} \\
 E_{2^{k+1}}(R') \ar[r] & \overline{\pi_{2*}R'}\tensor \Z[a,u^{\pm 1}]/(2a, \vb_1a^3,\dots, \vb_ka^{2^{k+1}-1}),
  }
 \]
 where we want to show that $f_*$ is an injection.
 The upper horizontal map is an injection by Lemma \ref{lem:reginjection}. Thus, it suffices to show that the right vertical map is an injection; we can also prove it without the $u^{\pm 1}$-part.

 We grade the ring $\overline{\pi_{2*}R}\tensor \Z[a]$ with respect to the powers of $a$. Consider a homogeneous element $x\in \overline{\pi_{2*}R}\tensor \Z[a]$ such that we can write
 $$f(x) = \lambda_0 \cdot (2a) + \lambda_1 \cdot (\vb_1 a^3) + \cdots \lambda_k \cdot(\vb_ka^{2^{k+1}-1})$$
 with $\lambda_0,\dots, \lambda_k \in \overline{\pi_{2*}R'}\tensor \Z[a]$. Clearly, $|x| \geq 2^{k+1}-1$ and $x = a^{|x|} x'$. Likewise, every $\lambda_i$ is divisible by $a^{|x|-2^{i+1}+1}$. Thus, we can write $f(x') = 2\lambda_0'  + \lambda_1'\vb_1 + \cdots \lambda_k'\vb_k$ with $\lambda_i' \in \overline{\pi_{2*}R}$ and hence $x' \in f^{-1}(2,\vb_1,\dots, \vb_k)$. This is contained in $(2,\vb_1,\dots, \vb_k) \subset \overline{\pi_{2*}R}$ by the injectivity assumption. Thus, $x$ defines $0$ in $\overline{\pi_{2*}R} \tensor \Z[a,u^{\pm 1}]/(2a, \vb_1a^3,\dots, \vb_ka^{2^{k+1}-1})$. Thus, there is no homogeneous element in the kernel of the map $\overline{f}_*$. As every element in the kernel is the sum of homogeneous elements in the kernel, the kernel is zero.
\end{proof}

Recall the following definition from \cite{H-M17}.
\begin{defi}
 A $C_2$-spectrum $R$ is called \emph{strongly even} if $\underline{\pi}_{*\rho -1}R = 0$ and the restriction $\pi_{*\rho}^{C_2}R \to \pi_{2*}R$ is an isomorphism; here $\underline{\pi}_{\bigstar}$ denotes the homotopy Mackey functor and $\rho = \rho_{C_2}$ is still the regular representation.
\end{defi}

\begin{prop}\label{prop:stronglyeven}
 Let $R$ have regular HFPSS and assume that $v_k$ is either zero or a non zero-divisor in $\pi_{2*}R/(2,v_1,\dots, v_{k-1})$ for all $k$. Then $R$ is strongly even.
\end{prop}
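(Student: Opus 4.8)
The plan is to extract the two families of $RO(C_2)$-graded homotopy groups $\pi^{C_2}_{n\rho}R$ and $\pi^{C_2}_{n\rho-1}R$ from the regular HFPSS and then read off the Mackey functor structure. Since $R$ is viewed as cofree, the value of $\underline{\pi}_{V}R$ at the free orbit $C_2/e$ is the underlying group $\pi_{|V|}R$, which vanishes for $|V|$ odd --- in particular for $V=n\rho-1$ --- and equals $\pi_{2n}R$ for $V=n\rho$; moreover the Weyl action on $\underline{\pi}_{n\rho}R(C_2/e)=\pi_{2n}R$ is trivial, because the sign $(-1)^n$ by which $C_2$ acts on $\pi_{2n}R$ is cancelled by the degree $(-1)^n$ of the corresponding linear self-map of $S^{n\rho}$ restricted to $C_2/e$. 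Hence it suffices to prove that $\pi^{C_2}_{n\rho-1}R=0$ and that the restriction (edge) map $\pi^{C_2}_{n\rho}R\to\pi_{2n}R$ is an isomorphism: the former gives $\underline{\pi}_{n\rho-1}R=0$, and for the latter the Burnside-category identities $\mathrm{res}\circ\mathrm{tr}=1+(\text{Weyl action})=2$ and, by invertibility of the restriction, $\mathrm{tr}\circ\mathrm{res}=2$, then exhibit $\underline{\pi}_{n\rho}R$ as the constant Mackey functor on $\pi_{2n}R$.

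First I would read off the contributions to these total degrees from the $E_2$-page $E_2(R)=\overline{\pi_{2*}R}\otimes\Z[a,u^{\pm1}]/2a$, using $|a|=(-\sigma,1)$, $|u|=(2-2\sigma,0)$ and that $\overline{\pi_{2m}R}$ sits in degree $(m\rho,0)$. A degree count shows that the classes of total degree $n\rho$ are exactly $\overline{\pi_{2(n+2k)}R}\cdot a^{4k}u^{-k}$ in cohomological filtration $4k$ for $k\ge0$, and those of total degree $n\rho-1$ are exactly $\overline{\pi_{2(n-1+2k)}R}\cdot a^{4k-1}u^{-k}$ in filtration $4k-1$ for $k\ge1$. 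By condition (1) of Definition \ref{def:regular} the filtration-$0$ piece $\overline{\pi_{2n}R}$ in degree $n\rho$ is a permanent cycle, and it cannot be a boundary since a differential hitting filtration $0$ would issue from negative filtration; hence it survives and maps isomorphically to $\pi_{2n}R$ under the edge map. Everything thus reduces to showing that each positive-filtration class listed above does not survive.

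This vanishing of the higher-filtration classes is the heart of the matter and the step I expect to be the real obstacle. Fix such a class $x=\vb\,a^iu^{-k}$ with $\vb\in\overline{\pi_{2m}R}$ a permanent cycle, $k\ge1$, and $i\in\{4k,4k-1\}$. Writing $k=2^jq$ with $q$ odd, regularity forces $u^{-k}$, and hence $x$, to be a $d_r$-cycle for all $r<2^{j+2}-1$; note also $i\ge 4\cdot2^j-1=2^{j+2}-1>2^{j+1}-1$. If $\vb\in(2,\vb_1,\dots,\vb_j)$, then, since $i$ exceeds each exponent $2^{l+1}-1$ with $l\le j$, the class $x$ lies in the ideal $(\vb_1a^3,\dots,\vb_ja^{2^{j+1}-1})$ of $E_2(R)$; as $x$ already survives to $E_{2^{j+1}}(R)$, injectivity of $g_j$ from Lemma \ref{lem:reginjection} (which uses the non-zero-divisor hypothesis) forces $x=0$ there, so $x$ contributes nothing to $E_\infty$. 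If $\vb\notin(2,\vb_1,\dots,\vb_j)$ and $v_{j+1}$ is a non-zero-divisor modulo $(2,v_1,\dots,v_j)$, then $\vb\,\vb_{j+1}\ne0$ in $\overline{\pi_{2*}R}/(2,\vb_1,\dots,\vb_j)$, and the Leibniz rule applied to $d_{2^{j+2}-1}(u^{2^j})=a^{2^{j+2}-1}\vb_{j+1}$ (condition (2)) gives $d_{2^{j+2}-1}(x)=a^{i+2^{j+2}-1}\vb\,\vb_{j+1}u^{-k-2^j}$, which is nonzero on $E_{2^{j+2}-1}(R)$ again by Lemma \ref{lem:reginjection}; so $x$ dies. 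The delicate remaining case is $\vb\notin(2,\vb_1,\dots,\vb_j)$ together with $v_{j+1}\in(2,v_1,\dots,v_j)$: here Remark \ref{rem:vi} shows $v_l\in(2,v_1,\dots,v_j)$ for all $l>j$, the spectral sequence degenerates at $E_{2^{j+1}}(R)$, and one must rule this configuration out. The cleanest way is to observe that in every situation to which we apply the proposition the ring $\pi_{2*}R/(2,v_1,\dots,v_j)$ has already become zero --- equivalently $1\in(2,v_1,\dots,v_j)$ --- which contradicts $\vb\notin(2,\vb_1,\dots,\vb_j)$; alternatively one argues directly on $E_{2^{j+1}}(R)$ that no such permanent cycle can be nonzero. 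Settling this last point, possibly by sharpening the hypotheses in the way actually used, is the part that needs the most care.

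Combining these, $E_\infty(R)$ in total degree $n\rho$ is concentrated in filtration $0$ and equals $\overline{\pi_{2n}R}$, while $E_\infty(R)$ in total degree $n\rho-1$ vanishes. Therefore $\pi^{C_2}_{n\rho}R\cong\pi_{2n}R$ via restriction and $\pi^{C_2}_{n\rho-1}R=0$, which by the first paragraph shows that $\underline{\pi}_{n\rho}R$ is constant and $\underline{\pi}_{n\rho-1}R=0$; that is, $R$ is strongly even.
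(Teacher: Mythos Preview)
Your argument follows the same route as the paper's: locate the positive-filtration classes contributing to $\pi^{C_2}_{n\rho}R$ and $\pi^{C_2}_{n\rho-1}R$, and kill them using the Leibniz rule on $d_{2^{j+2}-1}(u^{2^j})=a^{2^{j+2}-1}\vb_{j+1}$ together with the injectivity of $g_j$ from Lemma~\ref{lem:reginjection}. The paper's proof is terser---it assumes the class is a permanent cycle and deduces $\overline{w}\in(2,\vb_1,\dots,\vb_k)$ directly---but the mechanism is identical, and your opening paragraph on reading off the Mackey functor from the vanishing is a clean way to finish.

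You are right to isolate the case $\vb\notin(2,\vb_1,\dots,\vb_j)$ with $v_{j+1}\in(2,v_1,\dots,v_j)$ as delicate. In fact the paper's own proof glosses over exactly this point: the line ``Thus, $\overline{w}\in(2,\vb_1,\dots,\vb_k)$'' tacitly uses that $\vb_{k+1}$ is a non-zero-divisor in the quotient. This is a genuine gap, not merely an expository one: take $R=H\Z_{(2)}$ with trivial $C_2$-action. All hypotheses are met (homotopy even, torsionfree, $v_i=0$ for $i\ge1$ satisfies ``zero or non-zero-divisor''), the HFPSS is trivially regular, yet $a^4u^{-1}$ is a nonzero permanent cycle in degree $-2\rho$, so $\pi^{C_2}_{-2\rho}R\cong\Z/2$ and $R$ is not strongly even. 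Hence your fallback of ``arguing directly on $E_{2^{j+1}}(R)$'' cannot succeed in general; your other suggestion---sharpening the hypothesis so that once $v_{j+1}$ vanishes modulo $(2,v_1,\dots,v_j)$ the quotient ring is already zero---is the correct repair, and it is precisely what holds in all the examples (Lubin--Tate spectra, $TMF(\Gamma)$, etc.) to which the proposition is applied.
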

\begin{proof}
 Consider a permanent cycle $x = a^lu^m\overline{w} \in \overline{\pi_{2*}R}\tensor \Z[a,u^{\pm 1}]/2a$ with $\overline{w} \in \overline{\pi_{2*}R}$ in degree $*\rho -\varepsilon$ for $\varepsilon =0,1$ and $l>0$ if $\varepsilon = 0$. We have to show that $x$ converges to $0$. Recall that $|a| = 1-\rho$ and $|u| = 4-2\rho$ and thus  $l+4m = -\varepsilon$.

  Let $m = 2^kn$ with $n$ odd. Then
 $$d_{2^{k+2}-1}(x) = a^l\overline{w}d_{2^{k+2}-1}((u^{2^k})^n) = a^{l+2^{k+2}-1}u^{2^k(n-1)}\overline{w}\vb_{k+1}.$$
 This has to be zero on
 $$E_{2^{k+2}-1}(R) \subset \overline{\pi_{2*}R}\tensor \Z[a,u^{\pm 1}]/(2a, a^3\vb_1,\dots, a^{2^{k+1}-1}\vb_k).$$
 Thus, $\overline{w} \in (2,\vb_1,\dots, \vb_k)$.

 On the other hand,
 \[l = -\varepsilon -2^{k+2}n\geq |2^{k+2}n|-\varepsilon \geq 2^{k+2}-\varepsilon.\]
  Already $a^{2^{k+1}-1}$ kills the whole ideal $(2,\vb_1,\dots, \vb_k)$. Thus, $x$ has to be zero on $E_{2^{k+2}-1}(R)$.
\end{proof}

Note that strongly even $C_2$-spectra are, in particular, Real orientable in the sense of Hu and Kriz \cite[Lemma 3.3]{H-M17}.
Moreover we recall that strongly even $C_2$-spectra can be characterized in a number of ways.

\begin{prop}\label{prop:stronglyequivalent}
For a $C_2$-spectrum $R$ the following three conditions are equivalent.
\begin{enumerate}
\item $R$ is strongly even,
\item $\pi_{*\rho-i}^{C_2}R = 0$ for $0<i<3$,
\item The odd slices\footnote{See \cite[Section 4]{HHR} for a comprehensive treatment of slices of equivariant spectra.} $P^{2i+1}_{2i+1}R$ are zero and the even slices $P^{2i}_{2i}R$ are equivalent to $\Sigma^{i\rho} H\underline{\pi_{2i}R}$.
\end{enumerate}
\end{prop}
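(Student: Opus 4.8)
The plan is to prove $(1)\Leftrightarrow(2)$ by an elementary long exact sequence argument and $(1)\Leftrightarrow(3)$ via the connectivity estimates for the slice filtration of \cite[Section 4]{HHR}; the whole statement is close to \cite{H-M17}, from which it can also be deduced. Throughout I use that $\rho = 1+\sigma$, so that $S^{k\rho-1}\wedge S^{-\sigma}\simeq S^{(k-1)\rho}$ and $C_{2+}\wedge S^{k\rho}\simeq \Sigma^{2k}C_{2+}$, and the Euler cofiber sequence $C_{2+}\xrightarrow{\ p\ } S^0\xrightarrow{\ a_\sigma\ } S^\sigma$ of $C_2$-spectra together with its Spanier--Whitehead dual $S^{-\sigma}\to S^0\xrightarrow{\ \mathrm{tr}\ }C_{2+}$.

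For $(1)\Leftrightarrow(2)$ I would smash $R$ with these cofiber sequences and their $\rho$-fold suspensions and apply $\pi^{C_2}_\ast(-)$. Smashing the first sequence with $S^{(k-1)\rho}\wedge R$ gives, with the above identifications, a long exact sequence
\[
\pi^{C_2}_{k\rho-1}R\xrightarrow{\ a_\sigma\ }\pi^{C_2}_{(k-1)\rho}R\xrightarrow{\ \mathrm{res}\ }\pi^e_{2k-2}R\longrightarrow\pi^{C_2}_{k\rho-2}R\longrightarrow\pi^{C_2}_{(k-1)\rho-1}R ,
\]
in which the middle map is induced by $p$, hence is the restriction map; smashing the dual sequence analogously produces the transfer maps and controls $\pi^e$ in odd degrees. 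If $R$ is strongly even, then $\underline{\pi}_{\ast\rho-1}R=0$ annihilates the outer terms and constancy of $\underline{\pi}_{\ast\rho}R$ makes $\mathrm{res}$ an isomorphism, so the displayed sequence squeezes $\pi^{C_2}_{k\rho-2}R$ to zero; together with $\pi^{C_2}_{k\rho-1}R=0$ this is $(2)$. Conversely, assuming $(2)$, the displayed sequence forces $\mathrm{res}\colon \pi^{C_2}_{(k-1)\rho}R\xrightarrow{\ \cong\ }\pi^e_{2k-2}R$, and with the dual sequence $\pi^e_{2k-1}R=0$; hence $\underline{\pi}_{\ast\rho-1}R=0$, and since the image of $\mathrm{res}$ is always Weyl invariant this isomorphism forces the Weyl action on $\pi^e_{2\ast}R$ to be trivial, so that by the double coset formula $\mathrm{res}\circ\mathrm{tr}=1+w=2$ and $\underline{\pi}_{\ast\rho}R$ is the constant Mackey functor on $\pi^e_{2\ast}R$, i.e.\ $(1)$.

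For $(1)\Leftrightarrow(3)$ the input is the description of slice $\geq n$ and slice $\leq n$ $C_2$-spectra from \cite[Section 4]{HHR} in terms of the vanishing of $\pi^e_m$ and of $\pi^{C_2}_{k\rho}$, $\pi^{C_2}_{k\rho-1}$ in appropriate ranges. Granting $(1)$: since $\pi^e_\ast R$ is concentrated in even degrees and $\pi^{C_2}_{\ast\rho-1}R=0$, an induction along the slice tower shows that the odd sections $P^{2i+1}_{2i+1}R$ vanish, while each even section $P^{2i}_{2i}R$ is a $(2i)$-slice, hence of the form $\Sigma^{i\rho}H\underline{M}_i$; running the $\underline{\pi}$-valued long exact sequences of the truncations $P^{\leq 2i}R\to P^{\leq 2i-2}R$ (whose fibre is $\Sigma^{i\rho}H\underline{M}_i$, the $(2i-1)$-slice being zero) identifies $\underline{M}_i$ with $\underline{\pi}_{i\rho}R$, which by $(1)$ is the constant Mackey functor $\underline{\pi_{2i}R}$. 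Conversely, given the slices as in $(3)$, one feeds them into the same long exact sequences: the groups $\underline{\pi}_{k\rho}$ and $\underline{\pi}_{k\rho-1}$ of $\Sigma^{i\rho}H\underline{M}$ are sufficiently concentrated — they vanish for $i>k$, and for $i<k$ by the low-dimensional cell structure of representation spheres — that one reads off $\underline{\pi}_{k\rho-1}R=0$ and $\underline{\pi}_{k\rho}R\cong \underline{M}_k$, constant by hypothesis, which is $(1)$.

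The technical heart is the slice argument of the third paragraph: showing that the slice tower of a strongly even spectrum collapses to Postnikov-like form, with odd slices zero and each even slice a single Eilenberg--MacLane spectrum. The HHR connectivity estimates do not directly control groups such as $\pi^{C_2}_{k\rho+1}$ of a slice $\leq 2k-2$ truncation, since $k\rho+1$ is not the dimension of a slice cell, so these boundary cases have to be absorbed by feeding the equivalence $(1)\Leftrightarrow(2)$ (i.e.\ the $a_\sigma$-Bockstein) back in, or by quoting \cite{H-M17}. The remaining work — computing $\underline{\pi}_\ast$ of the $\Sigma^{i\rho}H\underline{M}$ from the cellular chain complexes of $S^{k\rho}$ and $S^{k\rho-1}$, and matching the Mackey functor in the even slice with the homotopy — is routine bookkeeping; $(1)\Leftrightarrow(2)$ is entirely elementary by comparison.
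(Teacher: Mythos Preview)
Your sketch for $(1)\Leftrightarrow(2)$ via the $a_\sigma$-cofiber sequence is the right idea, and $(1)\Rightarrow(2)$ goes through exactly as you say. The converse, however, has a real gap. From $(2)$ you correctly extract that $\mathrm{res}\colon\pi^{C_2}_{(k-1)\rho}R\to\pi^e_{2k-2}R$ is an isomorphism, but the claim ``with the dual sequence $\pi^e_{2k-1}R=0$'' is not justified: the Spanier--Whitehead dual of the Euler sequence produces the \emph{same} long exact sequence, merely reindexed. What one actually gets is that $\mathrm{tr}\colon\pi^e_{2m-1}R\xrightarrow{\cong}\pi^{C_2}_{(m+1)\rho-3}R$ is an isomorphism; the target sits in $RO(C_2)$-degree $a+b\sigma$ with $b=a+3$, which is not among the degrees $(2)$ controls. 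Iterating pushes the problem to $b=a+4,\,a+5,\ldots$ without closing the loop, so you have not shown that the underlying odd homotopy vanishes, and hence not that $\underline{\pi}_{\ast\rho-1}R=0$. The statement is true --- the paper defers to Greenlees's Lemma 1.2 --- but a single pass through the $a_\sigma$-Bockstein does not suffice.

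For $(3)\Rightarrow(1)$ your route differs from the paper's. The paper observes that a spectrum with slices as in $(3)$ is \emph{pure and isotropic} in the sense of \cite{HHR}, and so is every $\Sigma^{i\rho}R$; the HHR gap theorem (their Theorem 8.3) then gives $(2)$ in one stroke. Your alternative --- computing $\underline{\pi}_{k\rho}$ and $\underline{\pi}_{k\rho-1}$ of each slice $\Sigma^{i\rho}H\underline{M_i}$ and reading off the answer --- does work, since $H\underline{M}$ for constant $\underline{M}$ is a $0$-slice and therefore has $\underline{\pi}_{j\rho}=\underline{\pi}_{j\rho-1}=0$ for $j\neq 0$; but the gap-theorem argument is cleaner and sidesteps the bookkeeping. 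Your $(1)\Rightarrow(3)$ via the slice tower is essentially the argument of \cite[Section 2.4]{H-M17}, which is what the paper cites.
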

\begin{proof}
The first two items are equivalent by \cite[Lemma 1.2]{GreReality}. For the first item implying the third see e.g.\ \cite[Section 2.4]{H-M17}. Because $C_2$-spectra as in item $3$ are in particular pure and isotropic, this implies the gap as in item 2 by Theorem 8.3 from \cite{HHR}; note here that if $R$ satisfies item $3$ the same is true for any suspension $\Sigma^{i\rho}R$.
\end{proof}

We end this subsection with a proposition about vanishing lines in regular HFPSS.

\begin{prop}\label{prop:vanishingline}
Assume that $R$ has regular HFPSS up to $E_{2^{n+1}-1}$ (including differentials), that $2,v_1,\dots, v_{n-1}$ forms a regular sequence and that $v_n$ is invertible in the quotient ring $\pi_{2*}/(2,v_1,\dots, v_{n-1})$. Then $E_{2^{n+1}}$ vanishes above the line of height $2^{n+1}-2$ and thus the spectral sequence collapses at $E_{2^{n+1}}$.
\end{prop}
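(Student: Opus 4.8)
The plan is to combine the injectivity of the map $g_n$ from Lemma~\ref{lem:reginjection} with a direct computation of its target in high cohomological degree. Set $Q = E_2(R)/(\vb_1 a^3,\dots,\vb_n a^{2^{n+1}-1})$, the target of $g_n$. First I would observe that the hypotheses supply exactly what Lemmas~\ref{lem:regmisc}(3) and \ref{lem:reginjection} need for the index $k=n$: the regular sequence condition makes each $v_k$ with $k<n$ a non zero-divisor in $\pi_{2*}R/(2,v_1,\dots,v_{k-1})$, and since $v_n$ is a unit in $\pi_{2*}R/(2,v_1,\dots,v_{n-1})$ it is in particular a non zero-divisor there. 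Moreover, the construction of $g_n$ and the proof of its injectivity only invoke the differentials $d_j$ with $j\le 2^{n+1}-1$ together with these non zero-divisor properties, so the weaker hypothesis that the HFPSS is regular only up to $E_{2^{n+1}-1}$ is enough to run them verbatim. This yields an injection $g_n\colon E_{2^{n+1}}(R)\hookrightarrow Q$.

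Next I would compute $Q$ in cohomological degree $s\ge 2^{n+1}-1$. There $E_2(R)$ is $a^s(\overline{\pi_{2*}R}/2)[u^{\pm1}]$, and since $2^{i+1}-1\le s$ for every $i\le n$, the degree-$s$ part of the ideal $(\vb_1 a^3,\dots,\vb_n a^{2^{n+1}-1})$ is spanned by the products $\vb_i a^{2^{i+1}-1}\cdot x$ with $x\in E_2(R)$ of complementary degree. Reducing modulo $2$, one checks that this degree-$s$ part is $a^s$ times the image of the ideal $(2,v_1,\dots,v_n)\subseteq\pi_{2*}R$ in $\overline{\pi_{2*}R}/2$, tensored with $\Z[u^{\pm1}]$. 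The assumption that $v_n$ is invertible modulo $(2,v_1,\dots,v_{n-1})$ says precisely that $(2,v_1,\dots,v_n)$ is the unit ideal of $\pi_{2*}R$, so this image is all of $\overline{\pi_{2*}R}/2$. Hence the degree-$s$ part of the ideal is all of $E_2(R)$ there, so $Q$ vanishes in every cohomological degree $\ge 2^{n+1}-1$; by injectivity of $g_n$ the same holds for $E_{2^{n+1}}(R)$, i.e.\ $E_{2^{n+1}}$ vanishes above the line of height $2^{n+1}-2$.

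Finally I would deduce the collapse: every differential $d_r$ on the $E_{2^{n+1}}$-page has $r\ge 2^{n+1}$, hence has target in cohomological degree at least $2^{n+1}>2^{n+1}-2$, where $E_{2^{n+1}}$ already vanishes; so all such differentials are zero and $E_{2^{n+1}}(R)=E_\infty(R)$.

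The only point that warrants real care is the bookkeeping in the first paragraph: Lemmas~\ref{lem:regmisc}(3) and \ref{lem:reginjection} are stated for a genuinely regular HFPSS, and I would need to confirm that running their proofs with $k=n$ consumes only the differentials through $d_{2^{n+1}-1}$ and the non zero-divisor conditions on $v_1,\dots,v_n$, so that they remain valid under the weaker hypothesis here. The computation of $Q$ is then a routine manipulation in $E_2(R)=\overline{\pi_{2*}R}\tensor\Z[a,u^{\pm1}]/2a$, and the collapse is formal.
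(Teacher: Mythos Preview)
Your argument is correct. Both your proof and the paper's rest on Lemma~\ref{lem:reginjection}, but you use it in a more streamlined way: you apply the injection $g_n\colon E_{2^{n+1}}(R)\hookrightarrow Q$ once and then observe that $Q$ vanishes in cohomological degree $\ge 2^{n+1}-1$ because $(2,v_1,\dots,v_n)$ is the unit ideal of $\pi_{2*}R$. The paper instead does a case analysis on the $2$-adic valuation of the $u$-exponent: for low valuation it invokes Lemma~\ref{lem:reginjection} at a smaller index to show the class was already $a$-torsion on an earlier page, and for high valuation it exhibits an explicit $d_{2^{n+1}-1}$-preimage using an inverse $v_n'$ of $v_n$ modulo $(2,v_1,\dots,v_{n-1})$. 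Your route avoids both the case split and the explicit boundary construction; the paper's route is more hands-on and makes visible which high-filtration classes are boundaries versus earlier torsion. Your flagged caveat is on point and easily checked: the proofs of Lemmas~\ref{lem:regmisc}(3) and \ref{lem:reginjection} for $k=n$ only invoke differentials $d_j$ with $j\le 2^{n+1}-1$ and the non-zero-divisor property of $v_1,\dots,v_n$, all of which your hypotheses supply.
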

\begin{proof}
We claim the following for $i\leq n$: All $d_{2^{i+1}-1}$-cycles of the form $u^{2^{i-1}m}\overline{w}$ with $m$ odd and $w \in \overline{\pi_{2*}R}$ are $a^{2^i-1}$-torsion on $E_{2^i}$. Indeed, suppose
$$0 = d_{2^{i+1}-1}(u^{2^{i-1}m}\overline{w}) = a^{2^{i+1}-1}\vb_i\overline{w}.$$
By Lemma \ref{lem:reginjection}, $\vb_i\overline{w} \in (2, \vb_1,\dots, \vb_{i-1})\overline{\pi_{2*}R}$. As $\vb_i$ acts as a non-zero divisor on the quotient ring $\overline{\pi_{2*}R}/(2,\vb_1,\dots, \vb_{i-1})$, this implies $\wb \in (2, \vb_1,\dots, \vb_{i-1})$ and the whole ideal is killed by $a^{2^i-1}$ on $E_{2^i}$.

Suppose that $x = a^ku^{2^{i-1}m}\overline{w}$ is a cycle on $E_{2^{n+1}-1}$, where $k\geq 2^{n+1}-1$, the number $m$ is odd, $\overline{w}\in\overline{\pi_{2*}R}$ and $i\geq n$. Automatically, $u^{2^{i-1}m}\overline{w}$ is a cycle as well. We have seen above that $u^{2^{i-1}m}\overline{w}$ is killed by $a^{2^i-1}$ on $E_{2^i}$.

 Thus, it remains to consider the case $x = a^ku^{2^nm}\overline{w}$ with $k$ and $\overline{w}$ as above, but $m$ \emph{even}. Let $v_n'$ be an element with $v_nv_n' \equiv 1 \mod (2,v_1,\dots, v_{n-1})$. Then
 \[d_{2^{n+1}-1}(a^{k-2^{n+1}+1}u^{2^n(m+1)}\wb\vb_n') = a^ku^{2^nm}\wb\vb_n'\vb_n = x\]
 as $(2,\vb_1,\dots, \vb_{n-1})$ is killed by $a^k$.
\end{proof}

\subsection{Examples}\label{sec:examples}
\begin{example}
 The ring spectrum $M\R$ of Real bordism has regular HFPSS (see \cite{H-K01} or the appendix of \cite{G-M16} for a rederivation).
\end{example}

\begin{example}\label{exa:Lubin}
 Let $\Gamma$ be a height-$n$ formal group law of height $n$ over a finite field $k$ of characteristic $2$. By \cite[Section 7]{G-H04} there is a corresponding Lubin--Tate spectrum $E(\Gamma,k)$. Hahn and Shi \cite{HahnShi} showed that $E(\Gamma,k)$ has regular HFPSS;\footnote{Actually, in their original preprint Hahn and Shi only addressed the case of $\Gamma$ being pushed forward from $\F_2$, but they explain in Remark 5.2 of the published version how to deduce the general case using Proposition \ref{prop:injective}.} here, the $C_2$-action is induced by the automorphism $[-1]$ of $\Gamma$. The case $n=1$ of this is classical and the case $n=2$ could have been deduced from \cite{M-R09} or \cite{H-M17} if $\Gamma$ is the formal group law of a supersingular elliptic curve.

 As having regular HFPSS is closed under retracts, it follows that $E(\Gamma,k)^{hG}$ has regular HFPSS as well for $G \subset \mathbb{G}_n$ a subgroup of odd order of the Morava stabilizer group. Moreover, if $H$ is a subgroup of $\Gal(k/\F_2)$ and $G$ is closed under conjugation by $H$, then we claim that $\left(E(\Gamma,k)^{hG}\right)^{hH} \simeq E(\Gamma,k)^{hG\rtimes H}$ has regular HFPSS as well. Indeed, as $G$ acts by $W(k)$-linear maps, $H$ acts $W(k)$-semilinearly on the fixed points $\pi_*E(\Gamma,k)^{hG}$. The functor of taking $H$-fixed points from $W(k)$-modules with semilinear $H$-action to $W(k^H)$-modules is an equivalence by Galois descent and in particular exact. We see that $$\pi_*\left(E(\Gamma,k)^{hG}\right)^{hH} \cong \left(\pi_*E(\Gamma,k)\right)^{G\rtimes H}.$$
Because of the exactness of fixed points, we see that
$$\left(\pi_*E(\Gamma,k)\right)^{G\rtimes H}/(2,v_1,\dots, v_i) = \left(\pi_*E(\Gamma,k)/(2,v_1,\dots, v_i) \right)^{G\rtimes H}$$
injects into $\pi_*E(\Gamma,k)/(2,v_1,\dots, v_i)$. Thus, we can apply Proposition \ref{prop:injective} to see that $E(\Gamma,k)^{hG\rtimes H}$ has regular HFPSS.
\end{example}

Combined with Proposition \ref{prop:injective} this provides quite a powerful tool as one can often write the $K(n)$-localization of $E(n)$-local spectra $R$ as a product of factors of the form $E(k,\Gamma)^{hG}$. If $R$ has a $C_2$-action compatible with this splitting and the orders of $G$ are odd, showing that $R$ has regular HFPSS becomes then a matter of checking the injectivity assumption from Proposition \ref{prop:injective}, which often can be done. We will follow this route for the $C_2$-spectra $\TMF_1(n)$ and deduce consequences for $\Tmf_1(n)$ and $\tmf_1(n)$. Actually, we will work more generally with $\TMF(\Gamma)$, where $\Gamma = \Gamma(n)$ or $\Gamma_1(n)\subset \Gamma \subset \Gamma_0(n)$ with $[\Gamma:\Gamma_1(n)]$ odd.

\begin{example}\label{exa:tmf}
Let $\Gamma\subset SL_2(\Z)$ be a congruence subgroup.
 There are two possible $C_2$-actions on $\TMF(\Gamma)$. Understanding them requires first a study of the functoriality of $\OO^{top}$. If $\XX$ is a stack on a given site, there is a strict $2$-category $\mathrm{Stacks}/\XX$. Its objects are morphisms $\YY \to \XX$ of stacks. Given two such morphisms $f\colon \YY \to \XX$ and $g\colon \YY' \to \XX$ a morphism between them consists of a morphism $h\colon \YY \to \YY'$ and a 2-morphism $H\colon f \Rightarrow gh$. A $2$-morphism between $(h, H)$ and $(h', H')$ consists of a $2$-morphism $K\colon h \to h'$ such that $(gK)H = H'$. This is a specialization of the general theory of slice $\infty$-categories to the context of $2$-categories.

 Constructing the sheaf $\OO^{top}$ on $\MM_{ell}$ in the world of $\infty$-categories (as in \cite{LurEllII}) implies that $\OO^{top}$ defines an $\infty$-functor
 $$(\mathrm{Stacks}/\MM_{ell})^{\acute{e}t, op} \to \CAlg(\Sp)$$
 from the $2$-category of stacks \'etale over $\MM_{ell}$ into the $\infty$-category of $E_\infty$-ring spectra.

 Recall that $\TMF(\Gamma)$ is defined as $\OO^{top}(\MM(\Gamma) \xrightarrow{f} \MM_{ell})$. Let $t\colon \MM_1(n) \to \MM_1(n)$ be the automorphism sending $(E,P)$ to $(E, -P)$. For every $\Gamma_1(n) \subset \Gamma\subset \Gamma_0(n)$ this induces an automorphism $t$ on the quotient stack $\MM(\Gamma)$ as well. On $\MM(n)$, we define $t$ by sending $(E, \alpha)$ to $(E, -\alpha)$, where $\alpha\colon E[n] \to (\Z/n)^2$ is an isomorphism.

 As $ft = f$, the pair $(t, \id_f)$ is an automorphism of $f$ of order $2$ and thus $f \in (\mathrm{Stacks}/\MM_{ell})^{\acute{e}t}$ obtains a $C_2$-action. Another such $C_2$-action is given by $(\id_{\MM(\Gamma)}, [-1])$, where $[-1]$ stands short for the natural transformation sending a pair $(E,P)$ to the automorphism $[-1]\colon E \to E$. In the $2$-category of stacks over $\MM_{ell}$, there is $2$-isomorphism $\phi\colon (t, \id_t) \Rightarrow (\id_{\MM(\Gamma)}, [-1])$. This $\phi$ consists of the $2$-morphism $[-1]\colon t \to \id_{\MM(\Gamma)}$ and actually shows that the two $C_2$-actions on $f$ are equivalent (even isomorphic) in the $2$-category of objects with $C_2$-action in $(\mathrm{Stacks}/\MM_{ell})^{\acute{e}t}$. As $\OO^{top}$ preserves equivalences (as every $\infty$-functor does), the actions $(t,\id_f)^*$ and $(\id_f, [-1])^*$
on $\TMF_1(n)$ become equivalent in $\Fun(BC_2, \CAlg(\Sp))$. In particular, the action on $\TMF_1(n)$ is trivial for $n\leq 2$.

In the following, we fix the prime $2$ and will always assume that the level of $\Gamma$ is odd. The construction of $L_{K(2)}\OO^{top}$ in Section 12.4 of \cite{TMF} implies that $L_{K(2)}\TMF(\Gamma)$ is equivalent to $\prod E(k_i, \Gamma_i)^{hG_i}$ if the fiber of $\MM(\Gamma) \to \MM_{ell}$ over the supersingular locus is of the form $\coprod \Spec k_i/G_i$ and $\Gamma_i$ is the corresponding formal group of the supersingular elliptic curve over $k_i$. The stack $\MM(n)$ is representable for $n\geq 3$ and the stack $\MM_1(n)$ is representable for $n\geq 4$. Thus the groups $G_i$ can be chosen to be trivial in these cases. In the case of $\Gamma_1(3)$, we claim that the fiber of $\MM_1(3) \to \MM_{ell}$ over the supersingular locus is equivalent to $\Spec \F_4/(C_3\rtimes C_2)$. Here, we use the map $\Spec \F_4/(C_3\rtimes C_2) \to \MM_1(3)$ corresponding to the supersingular elliptic curve
 $$C: y^2+y = x^3$$
 with $3$-torsion point $(0,0)$ and automorphisms given by $(x,y) \mapsto (x,\zeta_3y)$ and the Galois action of $C_2$ on $\F_4$. Note that $C_3 \subset \Aut_{\Fb_2}(C)$ is exactly the stabilizer of $(0,0)$. Moreover, the action of $\Aut_{\Fb_2}(C)$ on $E[3]_{\Fb_2}$ identifies it with $SL_2(\F_3)$ (see e.g.\ the proof of Proposition 3.3 in \cite{MeiDecMod}) and thus $\Aut_{\Fb_2}(C)$ acts transitively on all $\Gamma_1(3)$-level structures on $C$. If $\Gamma_1(n) \subset \Gamma \subset \Gamma_0(n)$ for $n\geq 4$, the groups $G_i$ are isomorphic to subgroups of $\Gamma/\Gamma_1(n)$.

The identifications of the $K(2)$-localizations are compatible with the $C_2$-action on the Lubin--Tate spectra (given by $[-1]$) if we choose the action by $(\id_f, [-1])^*$ on $\TMF(\Gamma)$. By Example \ref{exa:Lubin}, the homotopy fixed point spectral sequence for $L_{K(2)}\TMF(\Gamma)$ is thus regular if $n\geq 3$ and $\Gamma = \Gamma(n)$ or $\Gamma_1(n)\subset \Gamma \subset \Gamma_0(n)$ with $[\Gamma:\Gamma_1(n)]$ odd. These are precisely those congruence subgroups $\Gamma$ that are tame for $l=2$. This will be our assumption on $\Gamma$ from now on.

Note that as $\TMF(\Gamma)$ is complex oriented and $E(2)$-local,
 \[\pi_*L_{K(2)}\TMF(\Gamma) \cong \widehat{\pi_*\TMF(\Gamma)}_{(2,v_1)}\]
 by \cite[Proposition 7.10]{H-S99}.

 The rings $H^0(\MMb(\Gamma); \omega^{\tensor *})$ and $H^0(\MMb(\Gamma); \omega^{\tensor *})/2\subset H^0(\MMb(\Gamma)_{\F_2}; \omega^{\tensor *})$ are integral domains by (the proof of) \cite[Proposition 2.13]{MeiDecMod}. Inverting $\Delta$ yields that $\pi_*\TMF(\Gamma)$ and $\pi_*\TMF(\Gamma)/2$ are integral domains. In general, the Krull intersection theorem implies that the map from a noetherian integral domain into any of its completion is an injection. We obtain that
 \[\pi_*\TMF(\Gamma) \to \pi_*L_{K(2)}\TMF(\Gamma) \quad\text{ and }\quad \pi_*\TMF(\Gamma)/2 \to \pi_*L_{K(2)}\TMF(\Gamma)/2\]
 are injections. Moreover,
 \[\pi_*\TMF(\Gamma)/(2,v_1) \cong \pi_*L_{K(2)}\TMF(\Gamma)/(2,v_1)\quad\text{ and }\quad\pi_*\TMF(\Gamma)/(2,v_1,v_2) = 0\]
  as the common vanishing locus of $2,v_1$ and $v_2$ is empty because there is no elliptic curve of height higher than $2$. Thus, we can apply Proposition \ref{prop:injective} and Example \ref{exa:Lubin} to conclude that the homotopy fixed point spectral sequence for the natural $C_2$-action on $\TMF(\Gamma)$ is regular. Proposition \ref{prop:stronglyeven} implies that the $C_2$-spectra $\TMF_1(n)$ are strongly even.

We remark that the derivation of the differentials in the homotopy fixed point spectral sequence for $\TMF_1(3)$ here is philosophically different from the one in \cite{H-M17}. In \cite{H-M17}, we used basic knowledge of $\pi_*\mathbb{S}$ (namely the behavior of the Hopf maps $\eta$ and $\nu$) and the map $\mathbb{S} \to \TMF_1(3)$. In contrast, here we are mapping $\tmf_1(3)$ into something we understand better, namely the Lubin--Tate spectrum. The differentials for the Lubin--Tate spectra are deduced from $M\R$, which does not involve any knowledge about $\pi_*\mathbb{S}$.
\end{example}

\begin{example}\label{exa:Tmf}
We can use the preceding example to understand $\Tmf(\Gamma)$ as a $C_2$-spectrum as well, where we consider again a congruence subgroup $\Gamma$ that is tame for $l=2$.

We have a fiber square
\begin{align}\label{eq:Tmf(Gamma)}
\xymatrix{
\Tmf(\Gamma) \ar[r] \ar[d] & \TMF(\Gamma)\ar[d] \\
\Tmf(\Gamma)[c_4^{-1}] \ar[r] &\TMF(\Gamma)[c_4^{-1}]
}
\end{align}
that is compatible with the $(\Z/n)^\times$-actions and in particular the $C_2$-actions; this follows as $D(c_4)$ and $D(\Delta)$ cover $\MMb(\Gamma)$ and $\OO^{top}$ is a sheaf.

We claim that $c_4 \equiv v_1^4 \mod 2$ in the base ring of any elliptic curve in Weierstrass form. Here we use Hazewinkel's $v_1$, which is twice the $x^2$-coefficient in the logarithm of the corresponding formal group. A computation shows that this coefficient is $\frac12 a_1$ and moreover $a_1^4 \equiv c_4 \mod 2$ (see the formulae in \cite[Section III.3]{Sil09}).

As the HFPSS of $\TMF(\Gamma)$ is regular, the same is true for that of $\TMF(\Gamma)[c_4^{-1}]$. Indeed, it is clear that the HFPSS of the latter is regular up to $E_3$ (including differentials). By Proposition \ref{prop:vanishingline}, the HFPSS of $\TMF(\Gamma)[c_4^{-1}]$ collapses at $E_4$.

Note that
$$\pi_*\TMF(\Gamma)[c_4^{-1}] \cong \pi_*\Tmf(\Gamma)[c_4^{-1}][\Delta^{-1}]$$
is concentrated in even degrees and
$$\pi_{2*}\Tmf(\Gamma)[c_4^{-1}] \cong H^0(\MMb(\Gamma); \omega^{\tensor *})[c_4^{-1}].$$
Here, we use that all the elements of $\pi_*\Tmf(\Gamma)$ corresponding to elements in $H^1(\MMb(\Gamma); \omega^{\tensor *})$ are killed by powers of $c_4$ as $H^1(\MMb(\Gamma); \omega^{\tensor i})= 0$ for $i>1$ by Proposition \ref{prop:coh}. As noted above, $H^0(\MMb(\Gamma); \omega^{\tensor *})$ and $H^0(\MMb(\Gamma); \omega^{\tensor *})/2$ are integral domains. Thus
\[\pi_*\Tmf(\Gamma)[c_4^{-1}] \to \pi_*\TMF(\Gamma)[c_4^{-1}] \quad\text{ and }\quad \pi_*\Tmf(\Gamma)[c_4^{-1}]/2 \to \pi_*\TMF(\Gamma)[c_4^{-1}]/2 \]
are injections.  As $c_4$ is congruent to $v_1^4$ modulo $2$,
$$\pi_*\Tmf(\Gamma)[c_4^{-1}]/(2,v_1) =0.$$
Our criterion Proposition \ref{prop:injective} implies that $\Tmf(\Gamma)[c_4^{-1}]$ has regular homotopy fixed point spectral sequence. In particular, all the $C_2$-spectra in the fiber square \eqref{eq:Tmf(Gamma)} except for $\Tmf(\Gamma)$ are strongly even.

Let $K_k$ be the kernel and $R_k$ be the cokernel of the map
$$\pi_{2k}\TMF(\Gamma) \oplus \pi_{2k}\Tmf(\Gamma)[c_4^{-1}] \to \pi_{2k}\TMF(\Gamma)[c_4^{-1}].$$
As the Mayer--Vietoris sequence for cohomology shows, we have isomorphisms
\[K_k \cong H^0(\MMb(\Gamma); \omega^{\tensor k}) \quad\text{ and }\quad R_k \cong H^1(\MMb(\Gamma); \omega^{\tensor k}).\]
Here, we use that the non-vanishing loci of $c_4$, $\Delta$ and $c_4\Delta$ are of the form $\Spec A/\G_m$ and thus have no higher cohomology.

If $R$ is strongly even, we have an isomorphism $\underline{\pi}_{k\rho+1}(R) \cong G \tensor_{\Z} \pi_{2k+2}R$ of Mackey functors, where $G$ denotes the Mackey functor with $G(C_2/C_2) = \Z/2$ and $G(C_2/e) = 0$ \cite[Lemma 2.15]{H-M17}. Thus, we have a short exact sequence
\begin{align}\label{eq:GK}0 \to G \tensor R_{k+1} \to \underline{\pi}_{k\rho}\Tmf(\Gamma) \to \underline{K_k} \to 0\end{align}
and an isomorphism $\underline{\pi}_{k\rho-1}\Tmf(\Gamma) \cong \underline{R_k}$ of Mackey functors. The image of $\Z/2 \tensor R_{k+1}$ in $\pi_{k\rho}^{C_2}\Tmf(\Gamma)$ is detected in the HFPSS in the group
$$H^1(C_2, \pi_{k\rho+1}\Tmf(\Gamma)) = H^1(C_2, R_{k+1}),$$
where the action on $R_{k+1}$ is by sign. Indeed, $\pi_{k\rho +1}^{C_2}\TMF(\Gamma)[c_4^{-1}]$ is spanned by classes of the form $a\overline{x}$ with $|\overline{x}| = (k+1)\rho$ as the HFPSS of $\TMF(\Gamma)[c_4^{-1}]$ is regular and all multiples of $a^3$ are sources or targets of $d_3$-differentials. These classes are detected in the first line of the HFPSS and map non-trivally to the HFPSS for $\Tmf(\Gamma)$ if the corresponding class $x \in \pi_{2k+2}\TMF(\Gamma)[c_4^{-1}]$ is nonzero in $R_{k+1}\tensor \Z/2$. The action on $\pi_{k\rho+1}\Tmf(\Gamma)$ is by sign as $C_2$ acts by $(-1)^{k+1}$ on $H^1(\MMb(\Gamma); \omega^{\tensor (k+1)})$, as $k\rho +1$ differs from $2(k+1)-1$ by $k(1-\sigma)$ and $(-1)^k(-1)^{k+1} = -1$.

Note that either kernel or cokernel in the sequence \eqref{eq:GK} is zero unless $k=0$ by Proposition \ref{prop:coh}. We claim that the sequence splits for $k=0$ as Mackey functors if $\Gamma_1(n)\subset \Gamma\subset \Gamma_0(n)$. Note that $\pi_0 \Tmf(\Gamma) = \Z[\tfrac1n]$ and $\pi_0^{C_2}\Tmf(\Gamma) \cong \Z[\tfrac1n] \oplus (\Z/2 \tensor R_1)$. We can thus write $\tr(1) = 2+g$ with $2\in\Z[\tfrac1n]$ and $g\in (\Z/2 \tensor R_1)$. Hence, \eqref{eq:GK} splits if $g = 0$ in this decomposition. The homotopy groups of the spheres spectrum $\mathbb{S}$ map trivially to $\pi_*\Tmf(\Gamma)$ in positive degrees as $\eta$ is zero in $\pi_1\Tmf(\Gamma)$ by \cite[Proposition 2.16]{MeiDecMod} and $\pi_i\Tmf(\Gamma)$ is torsionfree for $i>1$ by Proposition \ref{prop:coh}. We have seen above that every non-trivial element in $\pi_0^{C_2}\Tmf(\Gamma)$ is detected in the zero- or one-line of the HFPSS. Thus, every element in $\pi_0 \mathbb{S}^{hC_2}$ that is not detected in the zero-line maps to zero in $\pi_0^{C_2}\Tmf(\Gamma)$. Indeed, an element of $\pi_0 \mathbb{S}^{hC_2}$ of filtration $\geq 1$ must map to filtration $\geq 2$ in $\pi_0^{C_2}\Tmf(\Gamma)$ and is thus zero. In particular, this is true for $\tr(1) -2$ (if $1$ is viewed as an element of $\pi_0\mathbb{S}$) and thus $g$ is zero.

As summarized in \cite[Section 2.4]{H-M17}, this information is for all $\Gamma$ that are tame for $l=2$ enough to calculate all slices and we get
\[P^{2k}_{2k}\Tmf(\Gamma) \simeq \Sigma^{k\rho}H\underline{K_k} \quad\text{ and }\quad P^{2k-1}_{2k-1}\Tmf(\Gamma) \simeq \Sigma^{k\rho-1}H\underline{R_k}.\]
In particular, the connective cover $\tau_{\geq 0}\Tmf(\Gamma)$ is strongly even if $H^1(\MMb(\Gamma); \omega) =0$ by Proposition \ref{prop:stronglyequivalent} and because connective cover and slice connective cover agree by \cite[Proposition 4.11]{HHR}. If $\Gamma = \Gamma_1(n)$, this holds for all $n<23$ by \cite[Lemma 2]{Buz}. In these cases, $\tmf(\Gamma) = \tau_{\geq 0}\Tmf(\Gamma)$ seems to be the appropriate connective version of $\Tmf(\Gamma)$.
\end{example}

We summarize the main results of the previous two examples in the following theorem.

\begin{thm}
Let $\Gamma \subset SL_2(\Z)$ be a congruence subgroup that is tame for $l=2$. Then $\TMF(\Gamma)$ has regular HFPSS for its natural $C_2$-action and is strongly even. The slices of $\Tmf(\Gamma)$ can be computed as
\[P^{2k}_{2k}\Tmf(\Gamma) \simeq \Sigma^{k\rho}H\underline{H^0(\MMb(\Gamma);\omega^{\tensor k})} \quad\text{ and }\quad P^{2k-1}_{2k-1}\Tmf(\Gamma) \simeq \Sigma^{k\rho-1}H\underline{H^1(\MMb(\Gamma);\omega^{\tensor k})}.\]
\end{thm}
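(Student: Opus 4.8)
The statement is essentially a summary theorem, so the plan is to assemble it from Examples \ref{exa:Lubin}, \ref{exa:tmf} and \ref{exa:Tmf}, spelling out which pieces go where. First I would recall from Example \ref{exa:tmf} that for a congruence subgroup $\Gamma$ tame for $l=2$, the $K(2)$-localization $L_{K(2)}TMF(\Gamma)$ splits as a finite product $\prod E(k_i,\Gamma_i)^{hG_i}$ with each $G_i$ of odd order; by Example \ref{exa:Lubin} each factor has regular HFPSS, hence so does the product, and this identification is $C_2$-equivariant for the $(\id_f,[-1])^*$-action. Then I would invoke the injectivity statements already established: $\pi_*TMF(\Gamma)$ and $\pi_*TMF(\Gamma)/2$ inject into their $K(2)$-localizations (Krull intersection, using that the relevant rings of modular forms are integral domains by \cite[Proposition 2.13]{MeiDecMod}), that $\pi_*TMF(\Gamma)/(2,v_1)$ maps isomorphically, and that $\pi_*TMF(\Gamma)/(2,v_1,v_2)=0$ since no elliptic curve has height $>2$. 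Feeding this into Proposition \ref{prop:injective} gives that $TMF(\Gamma)$ itself has regular HFPSS, and Proposition \ref{prop:stronglyeven} (whose hypothesis on the $v_k$ being zero or non zero-divisors is met, again because the relevant quotient rings are domains) then yields that $TMF(\Gamma)$ is strongly even.

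For the slice computation I would follow Example \ref{exa:Tmf} verbatim in outline. One sets up the pullback square \eqref{eq:Tmf(Gamma)} relating $Tmf(\Gamma)$ to $TMF(\Gamma)$ and $Tmf(\Gamma)[c_4^{-1}]$, notes that $c_4\equiv v_1^4 \bmod 2$ so that $Tmf(\Gamma)[c_4^{-1}]/(2,v_1)=0$, and applies Proposition \ref{prop:injective} once more (using the same domain-and-completion injectivity argument, now for $c_4$-inverted rings) to conclude $Tmf(\Gamma)[c_4^{-1}]$ has regular HFPSS; Proposition \ref{prop:vanishingline} shows its HFPSS collapses at $E_4$, so it is strongly even, as is $TMF(\Gamma)[c_4^{-1}]$. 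Then the Mayer--Vietoris sequence identifies the kernel $K_k$ and cokernel $R_k$ of $\pi_{2k}TMF(\Gamma)\oplus\pi_{2k}Tmf(\Gamma)[c_4^{-1}]\to\pi_{2k}TMF(\Gamma)[c_4^{-1}]$ with $H^0(\MMb(\Gamma);\omega^{\tensor k})$ and $H^1(\MMb(\Gamma);\omega^{\tensor k})$ respectively. Using the strong evenness of the three outer corners of the square and the Mackey-functor identity $\underline{\pi}_{k\rho+1}R\cong G\tensor\pi_{2k+2}R$ from \cite[Lemma 2.15]{H-M17}, one gets the exact sequence \eqref{eq:GK} computing $\underline{\pi}_{k\rho}Tmf(\Gamma)$ and the identification $\underline{\pi}_{k\rho-1}Tmf(\Gamma)\cong\underline{R_k}$; the only extension issue, at $k=0$, is handled as in Example \ref{exa:Tmf} by showing the transfer element $\tr(1)-2$ has too-high filtration to be nonzero. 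The recipe in \cite[Section 2.4]{H-M17} then reads off the slices as $P^{2k}_{2k}Tmf(\Gamma)\simeq \Sigma^{k\rho}H\underline{K_k}$ and $P^{2k-1}_{2k-1}Tmf(\Gamma)\simeq\Sigma^{k\rho-1}H\underline{R_k}$, which after substituting the cohomological identifications is exactly the asserted formula.

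Since almost every ingredient is already in place, the write-up is mostly bookkeeping; the one point deserving genuine care — and the main obstacle — is verifying that all the hypotheses of Proposition \ref{prop:injective} and Proposition \ref{prop:stronglyeven} really do hold uniformly across \emph{all} tame $\Gamma$, not just $\Gamma_1(n)$, $\Gamma(n)$, $\Gamma_0(n)$. Concretely this means checking that $H^0(\MMb(\Gamma);\omega^{\tensor *})$ and its reduction mod $2$ are integral domains in the intermediate cases $\Gamma_1(n)\subsetneq\Gamma\subsetneq\Gamma_0(n)$ (so that $v_k$ is zero or a non zero-divisor in each quotient, and so the Krull-intersection injectivity applies), and that the supersingular fibre of $\MM(\Gamma)\to\MM_{ell}$ still has the form $\coprod\Spec k_i/G_i$ with $|G_i|$ odd. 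Both should follow from the arguments already cited — the domain statement from \cite[Proposition 2.13]{MeiDecMod} together with Lemma \ref{lem:compactificationsGamma}, and the supersingular fibre from the fact that $G_i\hookrightarrow\Gamma/\Gamma_1(n)$ has odd order by the tameness hypothesis — but I would state each explicitly rather than leave it implicit. A secondary, milder obstacle is making the $C_2$-equivariance of the $K(2)$-local splitting precise, which Example \ref{exa:tmf} already does via the $2$-isomorphism $\phi\colon(t,\id_t)\Rightarrow(\id_{\MM(\Gamma)},[-1])$; I would simply cite that discussion.
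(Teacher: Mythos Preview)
Your proposal is correct and follows exactly the paper's approach: the theorem is explicitly introduced in the paper as a summary of Examples \ref{exa:tmf} and \ref{exa:Tmf}, and you have accurately reconstructed the arguments of those two examples, including the use of Propositions \ref{prop:injective}, \ref{prop:stronglyeven}, and \ref{prop:vanishingline}, the Mayer--Vietoris identification of $K_k$ and $R_k$, the exact sequence \eqref{eq:GK}, and the slice recipe from \cite[Section 2.4]{H-M17}. Your care about the intermediate groups $\Gamma_1(n)\subsetneq\Gamma\subsetneq\Gamma_0(n)$ is appropriate and handled in the paper just as you suggest (via the proof of \cite[Proposition 2.13]{MeiDecMod} and the observation that $G_i\hookrightarrow\Gamma/\Gamma_1(n)$).
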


\begin{remark}
 We can also apply the same methods to the kind of topological automorphic forms considered in \cite{H-L10b} and \cite{Law15}; in particular, it seems that we can recover the calculations of \cite[Section 12]{Law15}, at least after forcing the spectrum to be periodic.

 If we want to apply the same style of argument to topological automorphic forms of higher heights $n$, the crucial point to check is that for the given Shimura variety $\XX$, each locus $\XX^{[h]}$ of points of height $\geq h$ contains in each irreducible component a point of height $n$: in \cite[Chapter 14]{B-L10} it is shown that completing at these points gives a map into a product of homotopy fixed points of Lubin--Tate spectra; if enough level is present these fit into the scope of Example \ref{exa:Lubin} and the assumption about irreducible components allows to apply Proposition \ref{prop:injective}. It is not clear to the author, how generally this assumption holds.
\end{remark}

\subsection{$C_2$-equivariant decompositions}
Let $\Gamma$ throughout this section be a congruence subgroup that is tame for $l=2$ and which satisfies $\Gamma_1(n) \subset \Gamma \subset \Gamma_0(n)$. The goal of this section is to prove the following two theorems, answering a question by Mike Hill.
\begin{thm}\label{thm:C2}
 There is a $C_2$-equivariant decomposition of $\TMF(\Gamma)_{(2)}$ into copies of $\Sigma^{?\rho}\TMF_1(3)_{(2)}$ if we have this decomposition non-equivariantly. The same holds after $2$-completion so that we obtain for all $n$ odd
 $$\widehat{\TMF(\Gamma)}_2 \simeq_{C_2} \bigoplus \Sigma^{?\rho}\widehat{\TMF_1(3)}_{2}.$$
\end{thm}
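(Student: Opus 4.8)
The plan is to upgrade the non-equivariant comparison map constructed in the proof of Theorem~\ref{thm:dectopcompact} to a $C_2$-equivariant equivalence, working throughout with cofree $C_2$-spectra (so that $C_2$-fixed points are homotopy fixed points) and with the $(\id_f,[-1])$-action of Example~\ref{exa:tmf}, under which all the relevant objects are strongly even. First I would reinterpret both sides as quasi-coherent sheaves of $\OO^{top}$-modules carrying the $[-1]$-action of $C_2$: the sheaf $\EE=g_*\OO^{top}_{\MM(\Gamma)}$ on $\MM_{ell,(2)}$, with global sections $TMF(\Gamma)_{(2)}$, and the candidate $\FF=\bigoplus_j\Sigma^{j\rho}(f_3)_*(f_3)^*\OO^{top}$, whose multiplicities are those of the algebraic splitting $g_*\OO_{\MM(\Gamma)_{(2)}}\cong\bigoplus_j(f_3)_*\OO_{\MM_1(3)_{(2)}}\tensor\omega^{\tensor(-j)}$ of Theorem~\ref{thm:decuncompact} (available $2$-locally precisely when the non-equivariant decomposition holds, cf.\ Theorem~\ref{thm:dectopcompact}). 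Both $\EE$ and $\FF$ are strongly even — for $\EE$ by Example~\ref{exa:tmf}, and for $\FF$ because $TMF_1(3)$ is strongly even and $\Sigma^{k\rho}$ and direct sums preserve this — so for each of them $\underline\pi_{k\rho}$ is the constant Mackey functor on $\pi_{2k}$. In particular $C_2$ acts trivially on $\pi_0$ of each, and the algebraic splitting upgrades to an (automatically $C_2$-equivariant) isomorphism $h\colon\pi_0\FF\xrightarrow{\cong}\pi_0\EE$ of $\OO_{\MM_{ell}}$-modules; by construction $\pi_{2k}\FF\cong\pi_{2k}\EE$ for all $k$, with matching $\rho$-graded Mackey-functor homotopy.

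The core step is to lift $h$ to a $C_2$-equivariant map $\FF\to\EE$. The obstruction object is $\Hom_{\OO^{top}}^{C_2}(\FF,\EE)=(\Hom_{\OO^{top}}(\FF,\EE))^{hC_2}$. Using that $f_3\colon\MM_1(3)\to\MM_{ell}$ is finite étale — so that $(f_3)_*\OO^{top}_{\MM_1(3)}$ is self-dual as an $\OO^{top}$-module — together with the projection formula and flat base change, one identifies $\Hom_{\OO^{top}}(\FF,\EE)\simeq\bigoplus_j\Sigma^{-j\rho}(g\circ q)_*\OO^{top}_{\MM(\Gamma)\times_{\MM_{ell}}\MM_1(3)}$, where $q\colon\MM(\Gamma)\times_{\MM_{ell}}\MM_1(3)\to\MM(\Gamma)$ is the base change of $f_3$. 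The global sections $\OO^{top}(\MM(\Gamma)\times_{\MM_{ell}}\MM_1(3))$ are again of the type handled in Example~\ref{exa:tmf}: the fibre over the supersingular locus of $\MM_{ell}$ has automorphism groups of odd order (fixing a nonzero torsion point kills the order-two automorphisms), so its $K(2)$-localization splits into pieces $E(k_i,\Gamma_i)^{hG_i}$ with $|G_i|$ odd, and the domain-and-completion argument of Example~\ref{exa:tmf} together with Proposition~\ref{prop:injective} shows that it has regular homotopy fixed point spectral sequence, hence is strongly even by Proposition~\ref{prop:stronglyeven}. Therefore $\Hom_{\OO^{top}}(\FF,\EE)$ is strongly even as well, so that $\pi_0^{C_2}\Hom_{\OO^{top}}^{C_2}(\FF,\EE)\cong\pi_0\Hom_{\OO^{top}}(\FF,\EE)=\Hom_{\OO_{\MM_{ell}}}(\pi_0\FF,\pi_0\EE)$ via restriction, and $h$ lifts to a $C_2$-equivariant map $\phi\colon\FF\to\EE$.

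The rest follows the proof of Theorem~\ref{thm:dectopcompact}: the underlying map of $\phi$ induces $h$ on $\pi_0$, hence an isomorphism on all $\pi_*$ (both sheaves being even with $\pi_{2k}\cong\pi_0\tensor\omega^{\tensor k}$), so $\phi$ is an underlying equivalence and therefore an equivalence of cofree $C_2$-spectra. Taking global sections, which is a $C_2$-equivariant equivalence of module categories by the $0$-affineness of $\MM_{ell}^{top}$ \cite{MM15}, yields $TMF(\Gamma)_{(2)}\simeq_{C_2}\bigoplus_j\Sigma^{j\rho}TMF_1(3)_{(2)}$. For the unconditional $2$-completed statement I would repeat the argument with $\widehat{\OO^{top}}_2$ in place of $\OO^{top}$, exactly as in the passage from Theorem~\ref{thm:dectopcompact} to Theorem~\ref{thm:dectopuncompact}: $2$-completion commutes with homotopy limits (hence with homotopy fixed points) and is monoidal, the completed algebraic splitting of Theorem~\ref{thm:decuncompact} holds for every odd $n$ with no torsionfreeness hypothesis, and the relevant cohomological vanishing on $\widehat{\MM(\Gamma)}_2$ used there is still available.

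I expect the main obstacle to lie in the middle paragraph — pinning down the identification of $\Hom_{\OO^{top}}(\FF,\EE)$ (and of its residual $C_2$-action) well enough to invoke regularity of its homotopy fixed point spectral sequence, i.e.\ verifying that this $C_2$-action really is the strongly even one and, relatedly, that the representation-graded suspensions $\Sigma^{?\rho}$ rather than merely even integer suspensions are the correct twists. Once strong evenness of the Hom-object is in hand, the lifting of $h$ and hence the equivariant splitting are essentially formal.
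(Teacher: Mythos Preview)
Your approach is correct and follows the same overall architecture as the paper: both reduce to showing that the $C_2$-spectrum $\Hom_{TMF}(TMF_1(3),TMF(\Gamma))$ (with conjugation action) is strongly even, then use that the restriction $\pi_{k\rho}^{C_2}\to\pi_{2k}$ is an isomorphism to lift the non-equivariant splitting data. The genuine difference lies in how this Hom-object is identified. The paper passes through Anderson duality --- using $I_{\Z}Tmf\simeq\Sigma^{21}Tmf$ and the equivariant $I_{\Z}Tmf_1(3)\simeq_{C_2}\Sigma^{5+2\rho}Tmf_1(3)$ from \cite{H-M17} --- to obtain $\Hom_{TMF}(TMF_1(3),TMF(\Gamma))\simeq_{C_2}\Sigma^{-6\rho}TMF(\Gamma')$, and then separately treats the case $3\mid n$ by factoring through $\MM_1(3)$. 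You instead use that $f_3\colon\MM_1(3)\to\MM_{ell}$ is finite \'etale, so $(f_3)_*\OO^{top}_{\MM_1(3)}$ is $C_2$-equivariantly self-dual via the trace pairing, and obtain the Hom-object directly as $\OO^{top}(\MM(\Gamma)\times_{\MM_{ell}}\MM_1(3))$ with no shift. These two answers are compatible: since $\Delta=a_3^3(a_1^3-27a_3)$, the element $a_3$ is invertible in $\pi_*TMF_1(3)$ and its lift $\overline{a_3}\in\pi_{3\rho}^{C_2}TMF_1(3)$ is invertible by strong evenness, so $TMF(\Gamma')$ is $3\rho$-periodic and the $\Sigma^{-6\rho}$ shift disappears.

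Your route is arguably more direct in the periodic case and avoids importing the equivariant Anderson dual of $Tmf_1(3)$. Two small points deserve care in a full write-up: first, you should verify (as you anticipate) that under your identification the conjugation $C_2$-action on the Hom really becomes the standard $[-1]$-action on $\OO^{top}$ of the fiber product --- this follows because the trace map is $[-1]$-equivariant and $[-1]$ is a ring automorphism, but it should be said. Second, when $3\mid n$ the fiber product $\MM(\Gamma)\times_{\MM_{ell}}\MM_1(3)$ need not be connected, so the domain-and-completion argument of Example~\ref{exa:tmf} must be applied componentwise. Conversely, the paper's Anderson-duality route is what makes the compactified Theorem~\ref{thm:C2cpt} accessible, since $\MMb_1(3)\to\MMb_{ell}$ is ramified at the cusps and the \'etale self-duality fails there.
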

\begin{thm}\label{thm:C2cpt}
 There is a $C_2$-equivariant decomposition of $\Tmf(\Gamma)_{(2)}$ into copies of the $C_2$-spectra $\Sigma^{?\rho}\Tmf_1(3)_{(2)}$ if we have this decomposition non-equivariantly, i.e.\ if and only if $\pi_1\Tmf(\Gamma)$ has no $2$-torsion.
\end{thm}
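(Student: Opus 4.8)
The plan is to run the proof of Theorem~\ref{thm:dectopcompact} $C_2$-equivariantly, translating each use of the descent spectral sequence and of the Hom-sheaf obstruction argument into its $RO(C_2)$-graded analogue from Section~\ref{sec:C2}. The ``only if'' direction is immediate: $\pi_*Tmf_1(3)$ is $2$-torsionfree, and since $\rho$ restricts to $\R^2$ the underlying spectrum of any wedge $\bigoplus\Sigma^{?\rho}Tmf_1(3)_{(2)}$ is $\bigoplus\Sigma^{2?}Tmf_1(3)_{(2)}$, which has $2$-torsionfree $\pi_1$; a $C_2$-equivariant splitting of $Tmf(\Gamma)_{(2)}$ therefore forces $\pi_1Tmf(\Gamma)$ to be $2$-torsionfree, and the equivalence of that condition with the existence of the non-equivariant splitting is Theorem~\ref{thm:dectopcompact}. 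For the converse, assume $\pi_1Tmf(\Gamma)$ has no $2$-torsion. Throughout we work $2$-locally and regard $\MMb(\Gamma)_{(2)}$ and $\MMb_1(3)_{(2)}$ with the $C_2$-action induced by $-1\in(\Z/n)^\times$ (equivalently, by the automorphism $t$ of Section~\ref{sec:C2}), both lying over $\MMb_{ell,(2)}$ equipped with the trivial action; all spectra in play are viewed as Borel (cofree) $C_2$-spectra, for which underlying equivalences are equivalences.

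The heart of the argument is a $C_2$-equivariant refinement of Theorem~\ref{thm:deccompact}: under the torsion-freeness hypothesis the isomorphism there can be taken compatibly with the $C_2$-actions, so that
\[
g_*\OO_{\MMb(\Gamma)_{(2)}}\;\cong\;\bigoplus_i\bigl(\EE_2\tensor\omega^{\tensor i}\bigr)^{\oplus a_i}
\]
as $C_2$-equivariant vector bundles on $\MMb_{ell,(2)}$, where $C_2$ acts trivially on $\omega$ and via $t$ on $\EE_2=(f_3)_*\OO_{\MMb_1(3)_{(2)}}$. I would prove this by rerunning the obstruction argument of Section~\ref{sec:splittings} inside $\QCoh(\MMb_{ell,(2)}\times BC_2)$: reduce mod~$2$, where a $C_2$-equivariant form of Proposition~\ref{prop:split} follows from Proposition~\ref{prop:split} together with the modular representation theory of $C_2$ over $\F_2$ (whose only indecomposables are the trivial and the regular module, the latter corresponding to the bundle induced from $\MMb_0(3)_{\F_2}$), and then lift, the obstruction lying in an $H^1$ of a Hom-sheaf on $\MMb_{ell,\F_2}\times BC_2$. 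Once this algebraic splitting is available, the topological statement follows exactly as in Theorem~\ref{thm:dectopcompact}, but phrased through an $RO(C_2)$-graded descent spectral sequence for $\mathrm{Hom}$; here one uses that $\EE$ and $\EE_2$ have cohomological dimension $\le1$ over $\MMb_{ell}$ (Proposition~\ref{prop:coh}) together with the regularity of the homotopy fixed point spectral sequences of $TMF_1(3)$ and $TMF(\Gamma)$ and the strong-evenness results of Example~\ref{exa:Tmf} — and the pullback square~\eqref{eq:Tmf(Gamma)} to pass from the $TMF$-statement to the $Tmf$-statement — to see that no differential and no extension obstructs lifting the equivariant sheaf isomorphism to a $C_2$-equivariant equivalence $\bigoplus\Sigma^{?\rho}Tmf_1(3)_{(2)}\xrightarrow{\simeq}Tmf(\Gamma)_{(2)}$. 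Being an underlying equivalence between Borel $C_2$-spectra, it is a $C_2$-equivalence.

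I expect the main obstacle to be the $C_2$-equivariant algebraic splitting, and specifically the vanishing of the relevant obstruction group $H^1(\MMb_{ell,\F_2}\times BC_2;-)$: since $2$ is not inverted, the group cohomology of $C_2$ does not vanish, so the estimate ``cohomological dimension $\le1$ on $\MMb_{ell}$'' is not by itself enough. The resolution I have in mind is that, after pushing down along the affine map $g$, the Hom-sheaf in question is a summand of one whose $C_2$-action is induced from the trivial subgroup (reflecting that the ``regular'' summands of $\EE_2$ come from $\MMb_0(3)$), so that its $C_2$-cohomology is concentrated in degree~$0$ while its stack-cohomology is concentrated in degrees~$0$ and~$1$; a Lyndon--Hochschild--Serre bookkeeping then kills $H^1$. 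All remaining steps are a routine transcription of Section~\ref{sec:splittings} into the $RO(C_2)$-graded framework of Section~\ref{sec:C2}.
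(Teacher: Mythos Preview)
Your outline diverges substantially from the paper's argument, and the spot you yourself flag as the main obstacle is a genuine gap rather than a detail to be filled in.

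The paper never attempts a $C_2$-equivariant algebraic splitting of $g_*\OO_{\MMb(\Gamma)}$. It takes the non-equivariant $Tmf$-module equivalence from Theorem~\ref{thm:dectopcompact} as given, views it as a class in $\bigoplus_i \pi_{2k_i}H$ for the mapping spectrum $H=\Hom_{Tmf}(Tmf_1(3),Tmf(\Gamma))$ with the conjugation $C_2$-action, and then lifts it to $\bigoplus_i \pi^{C_2}_{k_i\rho}H$. The entire content is the surjectivity of the restriction $\pi^{C_2}_{k\rho}H\to\pi_{2k}H$. To obtain this, the paper uses Anderson duality to identify $H$ as a $\rho$-shift of $Tmf_1(3)\sm_{Tmf}Tmf(\Gamma)$ (Lemma~\ref{lem:SWdual}), identifies the periodic version with $\Sigma^{-6\rho}TMF(\Gamma')$ for a suitable tame $\Gamma'$ (Lemma~\ref{lem:pullback}) so that Proposition~\ref{prop:functionstronglyeven} gives regularity of the HFPSS, checks regularity of the HFPSS for the $c_4$- and $(c_4\Delta)$-localizations via Proposition~\ref{prop:injective}, and then runs the Mayer--Vietoris argument of Example~\ref{exa:Tmf} to get the analogue of the short exact sequence~\eqref{eq:GK} for $H$, from which the surjectivity is read off. (When $3\mid n$ there is a finite flat map $\MMb(\Gamma)\to\MMb_1(3)$ and the argument simplifies: one works directly with $Tmf(\Gamma)$ and uses the surjectivity in~\eqref{eq:GK} itself.)

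Your route tries instead to rerun the deformation argument of Section~\ref{sec:splittings} inside $\QCoh(\MMb_{ell}\times BC_2)$. The obstruction to lifting from $\F_2$ to $\Z_{(2)}$ then sits in an $H^1$ on $\MMb_{ell,\F_2}\times BC_2$, and since $BC_2$ has infinite cohomological dimension at~$2$ the stack estimate ``$\le 1$'' is simply not enough, as you note. Your proposed fix---that the relevant Hom-sheaf is a summand of one induced from the trivial subgroup---is unsupported and, as far as I can see, false in the required generality: $(f_3)_*\OO_{\MMb_1(3)}$ with its $C_2$-action is not globally an induced $C_2$-module (the $C_2$-action on $\MMb_1(3)$ is not free at the cusps or over the extra-automorphism locus), and there is no reason the conjugation action on $\mathcal{H}om((f_3)_*\OO_{\MMb_1(3)}, g_*\OO_{\MMb(\Gamma)})$ should be induced either. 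Even the mod~$2$ step you need---a $C_2$-equivariant form of Proposition~\ref{prop:split}---does not follow formally from the non-equivariant statement together with the classification of $\F_2[C_2]$-modules, because the $C_2$-structure on a vector bundle is global, not fiberwise, data. The paper's method avoids all of this by never leaving the realm of ordinary (non-equivariant) vector bundles and instead controlling the lift to $\pi^{C_2}_{\ast\rho}$ via the regular HFPSS machinery and Anderson duality.
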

In the following, we will always implicitly $2$-localize. Let us now explain how to prove the second theorem if the level $n$ of $\Gamma$ is divisible by $3$ and the first will be analogous in this case. We claim that the map $\MMb(\Gamma) \to \MMb_{ell}$ factors over a map $h\colon \MMb(\Gamma) \to \MMb_1(3)$ and this map is finite and flat. By the universal property of normalization (see \cite[Tag 035I]{STACKS}, which is easily adapated to stacks), it suffices to construct such a morphism $\MM(\Gamma) \to \MM_1(3)$ on uncompactified moduli. If $n=3k$, multiplication by $k$ defines a morphism $h\colon \MM_1(3k) \to \MM_1(3)$ and this factors over the stack quotient $\MM_1(3k)/G$ for $G \subset (\Z/n)^\times$ a group projecting trivially to $(\Z/3)^\times$ (e.g.\ if $|G|$ is odd). The morphism $h$ is finite because source and target are finite over $\MMb_{ell}$ and it is flat because source and target are smooth over $\Z[\tfrac1n]$ (arguing as in \cite[Proposition 2.4]{MeiDecMod}).

We see that $h_*\OO_{\MMb(\Gamma)}$ is a vector bundle. As by our assumptions, the first cohomology of $h_*\OO_{\MMb(\Gamma)}\tensor (f_3)^*\omega^{\tensor k}$ is torsionfree for all $k$ and $\MMb_1(3)$ is a weighted projective line, \cite[Theorem A.8]{MeiDecMod} implies that $h_*\OO_{\MMb(\Gamma)}$ splits into a sum of $(f_3)^*\omega^{\tensor ?}$; in the uncompactified case (corresponding to Theorem \ref{thm:C2}) one can use \cite[Theorem 3.9]{Mei13} instead. An argument analogously to Theorem \ref{thm:dectopcompact} implies that $\Tmf(\Gamma)$ splits as a $\Tmf_1(3)$-module into copies of $\Sigma^{2?}\Tmf_1(3)$. Choosing such an equivalence provides an element in
\begin{align*}
\pi_0\Hom_{\Tmf_1(3)}(\bigoplus_i \Sigma^{2k_i}\Tmf_1(3), \Tmf(\Gamma)) &\cong \bigoplus_i \pi_{2k_i}\Hom_{\Tmf_1(3)}(\Tmf_1(3), \Tmf(\Gamma))\\
 &\cong \bigoplus_i \pi_{2k_i} \Tmf(\Gamma).\end{align*}
By \eqref{eq:GK}, the restrictions $\pi_{k_i\rho}^{C_2}\Tmf(\Gamma) \to \pi_{2k_i}\Tmf(\Gamma)$ are surjective. Thus, we can lift this element to an element in
$$\pi_0^{C_2}\Hom_{\Tmf_1(3)}(\bigoplus_i \Sigma^{k_i\rho}\Tmf_1(3), \Tmf(\Gamma))  \cong \bigoplus_i \pi^{C_2}_{k_i\rho} \Tmf(\Gamma).$$
Automatically, the corresponding $C_2$-equivariant map
\[\bigoplus_i \Sigma^{k_i\rho}\Tmf_1(3) \to \Tmf(\Gamma)\]
 is an (underlying) equivalence.

Our goal is to follow an analogous (but more involved) strategy if $3$ does not divide $n$, which we will assume from now on. A key step is the following proposition.

\begin{prop}\label{prop:functionstronglyeven}
If $n$ is not divisible by $3$, the function spectrum
$$\Hom_{\TMF}(\TMF_1(3), \TMF(\Gamma))$$
 with $C_2$-action by conjugation has regular HFPSS and is thus strongly even.
\end{prop}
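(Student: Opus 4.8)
The plan is to identify $\Hom_{TMF}(TMF_1(3),TMF(\Gamma))$, with its conjugation $C_2$-action, as $TMF(\Gamma'')$ for a congruence subgroup $\Gamma''$ of level $3n$ that is again tame for $l=2$, and then to quote the conclusion of Example~\ref{exa:tmf} (the theorem summarising this subsection). The first ingredient is that $TMF_1(3)$ is a self-dual perfect $TMF$-module, $C_2$-equivariantly: since $\MM_1(3)\to\MM_{ell}$ is finite \'etale and affine, $(f_3)_*\OO^{top}$ is a locally free $\OO^{top}$-module of finite rank, so $TMF_1(3)$ is a perfect $TMF$-module by the $0$-affineness of $\MM_{ell}^{top}$ (\cite{MM15}), and the trace pairing exhibits $(f_3)_*\OO^{top}$, hence $TMF_1(3)$, as self-dual. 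As the $C_2$-action on $TMF_1(3)$ is induced by the ring automorphism $(E,P)\mapsto(E,-P)$ and trace forms are invariant under ring automorphisms, this self-duality is $C_2$-equivariant and intertwines the given action with its contragredient. Hence there is a $C_2$-equivariant equivalence
\[
\Hom_{TMF}(TMF_1(3),TMF(\Gamma))\simeq TMF_1(3)\wedge_{TMF}TMF(\Gamma),
\]
where the left side carries the conjugation action and the right side the diagonal action; in particular the right side is an $E_\infty$-ring with $C_2$-action, so the setup of Section~\ref{sec:C2} applies.

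Next I would use that $\Gamma\colon\QCoh(\MM_{ell}^{top})\to TMF\modules$ is a symmetric monoidal equivalence together with \'etale base change for $\OO^{top}$ to see that $TMF_1(3)\wedge_{TMF}TMF(\Gamma)\simeq\OO^{top}(\MM_1(3)\times_{\MM_{ell}}\MM(\Gamma))$. Since $3\nmid n$, combining a point of exact order $3$ with one of exact order $n$ into a point of exact order $3n$ gives $\MM_1(3)\times_{\MM_{ell}}\MM_1(n)\cong\MM_1(3n)$, and passing to the quotient by $H=\Gamma/\Gamma_1(n)\subset(\Z/n)^\times$ yields $\MM_1(3)\times_{\MM_{ell}}\MM(\Gamma)\cong\MM_1(3n)/H=\MM(\Gamma'')$, where $\Gamma_1(3n)\subset\Gamma''\subset\Gamma_0(3n)$ with $\Gamma''/\Gamma_1(3n)=\{1\}\times H$ under $(\Z/3n)^\times\cong(\Z/3)^\times\times(\Z/n)^\times$. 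Under this identification the diagonal $C_2$-action corresponds to the natural action on $\MM(\Gamma'')$, namely negation of every point and hence of the whole level structure. Because $n$ is odd and $[\Gamma:\Gamma_1(n)]$ is odd (as $\Gamma$ is tame for $l=2$), the level $3n$ is odd and $\gcd(6,[\Gamma'':\Gamma_1(3n)])=\gcd(6,[\Gamma:\Gamma_1(n)])$ is odd, so $\Gamma''$ is tame for $l=2$. Example~\ref{exa:tmf} then gives that $TMF(\Gamma'')$ has regular HFPSS for its natural $C_2$-action and is strongly even, which is the assertion.

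The step I expect to be the main obstacle is the bookkeeping with the $C_2$-actions: one must verify carefully that conjugation on $\Hom_{TMF}(TMF_1(3),TMF(\Gamma))$ matches the diagonal action on $TMF_1(3)\wedge_{TMF}TMF(\Gamma)$ (this uses that the self-duality isomorphism of $TMF_1(3)$ intertwines the original and contragredient actions, which in turn uses the $C_2$-invariance of the trace pairing), and that under $\OO^{top}$ the fibre-product description carries the natural $C_2$-action of $\MM(\Gamma'')$. As an alternative that sidesteps the last identification one can instead apply Proposition~\ref{prop:injective} directly to $TMF_1(3)\wedge_{TMF}TMF(\Gamma)$, with target its $K(2)$-localization, checking the injectivity of $\pi_*(-)/(2,v_1,\dots,v_i)$ into the $(2,v_1)$-completion exactly as for $TMF(\Gamma)$ in Example~\ref{exa:tmf}; but the identification with a level-structure spectrum makes this machinery unnecessary here.
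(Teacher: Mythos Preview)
Your proof is correct and follows the same two-step strategy as the paper: first identify $\Hom_{TMF}(TMF_1(3), TMF(\Gamma))$ $C_2$-equivariantly with $TMF_1(3)\sm_{TMF}TMF(\Gamma)$ (with diagonal action), then identify the latter with $TMF(\Gamma')$ for a tame $\Gamma'$ of level $3n$ and invoke Example~\ref{exa:tmf}. Your second step is exactly the paper's Lemma~\ref{lem:pullback}.

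The genuine difference is in the dualisation step. The paper proves $\Hom_{TMF}(TMF_1(3),TMF)\simeq_{C_2}\Sigma^{-6\rho}TMF_1(3)$ via Anderson duality, combining Stojanoska's $I_{\Z}Tmf\simeq\Sigma^{21}Tmf$ with the $C_2$-equivariant $I_{\Z[1/3]}Tmf_1(3)\simeq\Sigma^{5+2\rho}Tmf_1(3)$ from \cite{H-M17} and then using the $(16-8\rho)$-periodicity of $TMF_1(3)$. You instead use that $\MM_1(3)\to\MM_{ell}$ is finite \'etale, so the trace form on $(f_3)_*\OO^{top}$ is perfect and, by $0$-affineness, yields $\Hom_{TMF}(TMF_1(3),TMF)\simeq_{C_2}TMF_1(3)$ with no shift. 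The two answers agree because $a_3\in\pi_6TMF_1(3)$ is a unit (since $\Delta=a_3^3(a_1^3-27a_3)$ is inverted), whence $\overline{a_3}\in\pi^{C_2}_{3\rho}TMF_1(3)$ is an invertible permanent cycle and $\Sigma^{6\rho}TMF_1(3)\simeq_{C_2}TMF_1(3)$. Your argument is more elementary and avoids the heavy Anderson-duality input; the paper's route, however, simultaneously produces the compactified identity $\Hom_{Tmf}(Tmf_1(3),Tmf(\Gamma))\simeq_{C_2}\Sigma^{-16+2\rho}Tmf_1(3)\sm_{Tmf}Tmf(\Gamma)$ (Lemma~\ref{lem:SWdual}), which is what is actually needed for Theorem~\ref{thm:C2cpt}. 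Since $\MMb_1(3)\to\MMb_{ell}$ is only log-\'etale, the trace pairing is no longer perfect over the cusps and your shortcut does not extend to that case.
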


We can deduce Theorem \ref{thm:C2} from this proposition in a manner very similar to the above. Indeed, assume that we have a $\TMF$-linear equivalence
\begin{align}\label{eq:splittingper}\bigoplus_{i} \Sigma^{2k_i}\TMF_1(3) \to \TMF(\Gamma).\end{align}
This defines an element in
$$\pi_0\Hom_{\TMF}(\bigoplus_i \Sigma^{2k_i}\TMF_1(3), \TMF(\Gamma)) \cong \bigoplus_i \pi_{2k_i}\Hom_{\TMF}(\TMF_1(3), \TMF(\Gamma)).$$
By Proposition \ref{prop:functionstronglyeven}, we can lift this to an element in
$$\pi_0^{C_2}\Hom_{\TMF}(\bigoplus_i \Sigma^{k_i\rho}\TMF_1(3), \TMF(\Gamma)) \cong \bigoplus_i \pi_{k_i\rho}^{C_2}\Hom_{\TMF}(\TMF_1(3), \TMF(\Gamma)).$$
The corresponding $C_2$-equivariant map
$$\bigoplus_{i} \Sigma^{k_i\rho}\TMF_1(3) \to \TMF(\Gamma),$$
is an (underlying) equivalence. The same proof works $2$-completely using the following lemma.
\begin{lemma}
The $p$-completion of a strongly even $C_2$-spectrum $X$ is strongly even again if the underlying homotopy of $X$ has no infinitely $p$-divisible elements.
\end{lemma}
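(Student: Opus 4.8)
The plan is to read off the two defining conditions of strong evenness, namely $\underline{\pi}_{k\rho-1}\widehat{X}_p=0$ and $\underline{\pi}_{k\rho}\widehat{X}_p$ constant for all $k\in\Z$, directly from the homotopy Mackey functors of $X$ via the universal coefficient sequence for $p$-completion. Write $\widehat{X}_p=\holim_n X/p^n$ in genuine $C_2$-spectra, where $X/p^n$ is the cofibre of multiplication by $p^n$. For $H=e$ and $H=C_2$ the functor $\pi^H_{\bigstar}(-)$ sends a sequential homotopy limit to a Milnor $\lim^1$-sequence and a cofibre sequence to a long exact sequence; combining the Milnor sequence for $\widehat{X}_p$ with the cofibre sequences $X\xrightarrow{p^n}X\to X/p^n$ (and using that the resulting tower $\{(\pi^H_{\bigstar}X)/p^n\}$ is Mittag--Leffler) gives, levelwise and naturally, for every $V\in RO(C_2)$ a short exact sequence of Mackey functors
\[
0\to \Ext^1_{\Z}(\Z/p^\infty,\underline{\pi}_V X)\to \underline{\pi}_V\widehat{X}_p\to \Hom_{\Z}(\Z/p^\infty,\underline{\pi}_{V-1}X)\to 0,
\]
where $\Ext^1_{\Z}(\Z/p^\infty,-)$ and $\Hom_{\Z}(\Z/p^\infty,-)$ are the $0$-th and first derived functors of $p$-completion (so that $\Ext^1_{\Z}(\Z/p^\infty,A)$ is the $p$-completion of $A$ when $A$ is finitely generated). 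This is the standard spectrum-level statement (see \cite[Section II.7.3]{SAG}) applied to $X^{C_2}$ and $X^e$, reindexed over $RO(C_2)$ using dualizability of representation spheres; setting it up cleanly at both orbit levels is the one step I would be careful about, but it is routine.

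Next I would feed in the hypotheses. Interpret ``$X$ has underlying homotopy with no infinitely $p$-divisible elements'' as: $\bigcap_m p^m\,\underline{\pi}_V X(T)=0$ for every $V\in RO(C_2)$ and every $C_2$-orbit $T$ (automatic when these groups are finitely generated, which is the only case we shall use). Then $\Hom_{\Z}(\Z/p^\infty,\underline{\pi}_V X)=0$ for all $V$, since an element of $\Hom_{\Z}(\Z/p^\infty,A)$ is a compatible system $(a_n)$ with $a_n\in A[p^n]$ and $p\,a_{n+1}=a_n$, whence $a_n=p^m a_{n+m}\in\bigcap_m p^m A=0$. By strong evenness of $X$ we also have $\underline{\pi}_{k\rho-1}X=0$ for all $k$, and $\underline{\pi}_{k\rho}X$ constant.

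Finally I would run the two computations. For $V=k\rho-1$ the sequence reads
\[
0\to \Ext^1_{\Z}(\Z/p^\infty,\underline{\pi}_{k\rho-1}X)\to \underline{\pi}_{k\rho-1}\widehat{X}_p\to \Hom_{\Z}(\Z/p^\infty,\underline{\pi}_{k\rho-2}X)\to 0;
\]
the left term vanishes because $\underline{\pi}_{k\rho-1}X=0$, and the right term vanishes by the previous paragraph, so $\underline{\pi}_{k\rho-1}\widehat{X}_p=0$. For $V=k\rho$ the sequence reads
\[
0\to \Ext^1_{\Z}(\Z/p^\infty,\underline{\pi}_{k\rho}X)\to \underline{\pi}_{k\rho}\widehat{X}_p\to \Hom_{\Z}(\Z/p^\infty,\underline{\pi}_{k\rho-1}X)\to 0,
\]
whose right term is $\Hom_{\Z}(\Z/p^\infty,0)=0$, so $\underline{\pi}_{k\rho}\widehat{X}_p\cong \Ext^1_{\Z}(\Z/p^\infty,\underline{\pi}_{k\rho}X)$ as Mackey functors. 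Since $\underline{\pi}_{k\rho}X$ is constant and $\Ext^1_{\Z}(\Z/p^\infty,-)$ is an additive functor applied levelwise, hence commuting with the $\Z$-linear restriction, transfer and Weyl-group structure maps, the right-hand side is the constant Mackey functor on the $p$-completion of $\underline{\pi}_{k\rho}X(C_2/e)$; in particular it is constant. Together with the previous vanishing this is exactly the assertion that $\widehat{X}_p$ is strongly even. The only real obstacle is the bookkeeping in the first paragraph; the rest is formal.
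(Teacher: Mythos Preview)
Your proof is correct and follows essentially the same strategy as the paper: write $\widehat{X}_p = \holim_k X/p^k$, use the cofiber sequences for $X/p^k$ together with the Milnor sequence to control $\underline{\pi}_{\bigstar}\widehat{X}_p$ in terms of $\underline{\pi}_{\bigstar}X$, and then feed in the strong evenness of $X$. The paper packages the output slightly differently, first showing that $\widehat{X}_p$ is underlying even and then invoking the characterization $\pi^{C_2}_{*\rho-i}=0$ for $i=1,2$ from Proposition~\ref{prop:stronglyequivalent}, whereas you verify the Mackey functor conditions $\underline{\pi}_{k\rho-1}\widehat{X}_p=0$ and $\underline{\pi}_{k\rho}\widehat{X}_p$ constant directly via your $\Ext^1/\Hom$ sequence. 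Your route is arguably a bit cleaner: to see that $\underline{\pi}_{k\rho}\widehat{X}_p$ is constant you only need $\underline{\pi}_{k\rho-1}X=0$ (so the $\Hom$-term vanishes) and that applying the additive functor $\Ext^1_{\Z}(\Z/p^\infty,-)$ levelwise preserves constancy, and you never have to touch $\pi^{C_2}_{k\rho-3}X$.

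One small point: your reinterpretation of the hypothesis as ``$\bigcap_m p^m\,\underline{\pi}_VX(T)=0$ for all $V$ and all orbits $T$'' is stronger than what the lemma actually assumes (which concerns only the underlying homotopy) and is not needed. The only place you invoke it is to kill $\Hom_{\Z}(\Z/p^\infty,\underline{\pi}_{k\rho-2}X)$. At the $C_2/e$-level this is exactly the stated hypothesis, and at the $C_2/C_2$-level it holds because $\pi^{C_2}_{k\rho-2}X=0$ by Proposition~\ref{prop:stronglyequivalent} applied to the strongly even spectrum $X$. Replacing your blanket claim by this observation makes the argument prove the lemma as written rather than a variant of it.
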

\begin{proof}
Recall that the $p$-completion is the homotopy limit over $X/p^k$ and that we have short exact sequences
$$0 \to (\pi_m X)/p^k \to \pi_m(X/p^k) \to \Hom(\Z/p^k, \pi_{m-1}X) \to 0$$
and similarly for the $C_2$-equivariant groups. As $\pi_*X$ has no infinitely $p$-divisible elements, $\lim_k \Hom(\Z/p^k, \pi_*X) =0$ and thus $\pi_m$ of the $p$-completion of $X$ is an extension of $\widehat{(\pi_mX)}_p$ and $\lim_k^1\Hom(\Z/p^k, \pi_mX)$. In particular, the $p$-completion of $X$ is underlying even again.

Now we can use that $\pi_{*\rho-i}^{C_2}X = 0$ for $i=1,2$ by Proposition \ref{prop:stronglyequivalent}.
\end{proof}
Finally note that the $2$-complete version of the splitting \eqref{eq:splittingper} is always possible by Theorem \ref{thm:dectopuncompact}. This reduces the proof of Theorem \ref{thm:C2} to Proposition \ref{prop:functionstronglyeven}.

For the proof of Proposition \ref{prop:functionstronglyeven}, we need two lemmas.

\begin{lemma}\label{lem:SWdual}
 There are $C_2$-equivariant equivalences
$$ \Hom_{\Tmf}(\Tmf_1(3), \Tmf(\Gamma))\simeq \Sigma^{-16+2\rho}\Tmf_1(3)\sm_{\Tmf}\Tmf(\Gamma)$$
  and
 $$\Hom_{\TMF}(\TMF_1(3), \TMF(\Gamma))\simeq \Sigma^{-6\rho}\TMF_1(3)\sm_{\TMF}\TMF(\Gamma).$$
\end{lemma}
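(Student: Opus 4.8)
The plan is to deduce both equivalences from Grothendieck--Serre duality for the finite flat morphism $f_3\colon\MMb_1(3)\to\MMb_{ell}$ over $\Z[\tfrac13]$, upgraded first to the derived and then to the $C_2$-equivariant setting.

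\emph{Step 1: reduction to an internal dual.} By Proposition \ref{prop:compactness} the module $Tmf_1(3)$ is compact, hence dualizable, over $Tmf$, and likewise $TMF_1(3)$ over $TMF$. Since $Tmf$ and $TMF$ carry the trivial $C_2$-action, a $Tmf$-module with $C_2$-action (an object of $\Fun(BC_2,\Mod_{Tmf})$) is dualizable there precisely when its underlying $Tmf$-module is perfect: the dual, evaluation and coevaluation of such an object are formed pointwise and are automatically $C_2$-equivariant. Hence, writing $D_R(M)=F_R(M,R)$, we get $C_2$-equivariant equivalences $F_{Tmf}(Tmf_1(3),Tmf(\Gamma))\simeq D_{Tmf}(Tmf_1(3))\sm_{Tmf}Tmf(\Gamma)$ and similarly over $TMF$. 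It therefore suffices to identify $D_{Tmf}(Tmf_1(3))\simeq\Sigma^{-16+2\rho}Tmf_1(3)$ and $D_{TMF}(TMF_1(3))\simeq\Sigma^{-6\rho}TMF_1(3)$ as $C_2$-spectra.

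\emph{Step 2: the underlying identification.} Using the $0$-affineness of $\MMb_{ell}^{top}$ (a main theorem of \cite{MM15}) together with Lemma \ref{lem:HomSheaves}, $D_{Tmf}(Tmf_1(3))$ is the global sections of the quasi-coherent $\OO^{top}$-module $\mathcal{H}om_{\OO^{top}}((f_3)_*\OO^{top}_{\MMb_1(3)},\OO^{top})$, which for the finite morphism $f_3$ equals $(f_3)_*(f_3^!\OO^{top})$. As $\MMb_1(3)$ and $\MMb_{ell}$ are smooth (hence Gorenstein) over $\Z[\tfrac13]$, the relative dualizing complex $f_3^!\OO^{top}$ is an invertible $\OO^{top}_{\MMb_1(3)}$-module with $\pi_{2k}(f_3^!\OO^{top})\cong\omega_{f_3}\tensor f_3^*\omega^{\tensor k}$, where $\omega_{f_3}=\Omega^1_{\MMb_1(3)/\Z[\frac13]}\tensor f_3^*\bigl(\Omega^1_{\MMb_{ell}/\Z[\frac13]}\bigr)^{-1}\cong f_3^*\omega^{\tensor -4}\tensor f_3^*\omega^{\tensor 10}=f_3^*\omega^{\tensor 6}$, using Corollary \ref{cor:Omega} (for $n=3$) and \cite[Lemma 2.8]{MeiDecMod}. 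Thus $\pi_{2k}\mathcal{H}om_{\OO^{top}}((f_3)_*\OO^{top},\OO^{top})\cong\pi_{2k+12}((f_3)_*\OO^{top})$, so $D_{Tmf}(Tmf_1(3))$ has underlying $Tmf$-module $\Sigma^{-12}Tmf_1(3)$; inverting $\Delta$ gives the analogous statement for $D_{TMF}(TMF_1(3))$. This is consistent with the shifts $-16+2\rho$ and $-6\rho$, both of which restrict to $-12$ non-equivariantly.

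\emph{Step 3: the $C_2$-equivariant twist and the passage to $TMF$.} It remains to identify the $C_2$-structure of $f_3^!\OO^{top}$ as an invertible equivariant $\OO^{top}_{\MMb_1(3)}$-module; the $C_2$-action is induced by the universal inversion $[-1]$, i.e.\ the diamond operator (cf.\ Example \ref{exa:tmf}), acting by $(-1)^k$ on all $\pi_{2k}$. Consequently $f_3^!\OO^{top}$, $\Sigma^{-16+2\rho}\OO^{top}_{\MMb_1(3)}$ and $\Sigma^{-6\rho}\OO^{top}_{\MMb_1(3)}$ all share the same underlying module and the same homotopy Mackey functors, so the distinction is an exotic one in the $C_2$-equivariant Picard group. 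The plan is to pin the twist of $f_3^!\OO^{top}$ down to $\Sigma^{-16+2\rho}$ by comparing permanent cycles in the regular $RO(C_2)$-graded homotopy fixed point spectral sequences and slice towers of $Tmf_1(3)$ and $TMF_1(3)$ established in Examples \ref{exa:Tmf} and \ref{exa:tmf}, yielding $D_{Tmf}(Tmf_1(3))\simeq\Sigma^{-16+2\rho}Tmf_1(3)$. Finally, since $\MMb_1(3)$ has genus $0$ the HFPSS of $TMF_1(3)$ is regular with $v_2$ invertible modulo $(2,v_1)$, so by Proposition \ref{prop:vanishingline} it collapses at $E_8$; one checks that $u^{4}\in\pi^{C_2}_{8-8\sigma}TMF_1(3)$ is then a permanent cycle mapping isomorphically onto a unit underlying, hence is invertible. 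Because $8\rho-16=-(8-8\sigma)$, multiplication by $u^{4}$ upgrades $D_{TMF}(TMF_1(3))=\Sigma^{-16+2\rho}TMF_1(3)$ to $\Sigma^{-6\rho}TMF_1(3)$, as required.

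\emph{Main obstacle.} The genuinely nontrivial point is the identification in Step 3: the three candidate dualizing twists agree on all homotopy Mackey functors, so distinguishing them cannot be done formally and requires equivariant input, here the regular-HFPSS and slice structure of $Tmf_1(3)$ and $TMF_1(3)$; everything else is bookkeeping with Grothendieck duality, $0$-affineness and localization at $\Delta$.
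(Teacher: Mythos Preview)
Your Step~1 and the passage from the $Tmf$-statement to the $TMF$-statement via the $(16-8\rho)$-periodicity of $TMF_1(3)$ are correct and are exactly what the paper does. The difference lies in how the $C_2$-equivariant dual $D_{Tmf}(Tmf_1(3))$ is identified.

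The paper does \emph{not} go through sheaf-level Grothendieck--Serre duality. Instead it uses two Anderson duality statements as black boxes: Stojanoska's $I_{\Z}Tmf\simeq\Sigma^{21}Tmf$ (with trivial $C_2$-action, so this is automatically equivariant) and the $C_2$-equivariant $I_{\Z[\frac13]}Tmf_1(3)\simeq\Sigma^{5+2\rho}Tmf_1(3)$ from \cite{H-M17}. One then simply computes
\[
\Hom_{Tmf}(Tmf_1(3),Tmf)\;\simeq\;\Hom_{Tmf}(Tmf_1(3),\Sigma^{-21}I_{\Z}Tmf)\;\simeq\;\Sigma^{-21}I_{\Z}Tmf_1(3)\;\simeq\;\Sigma^{-16+2\rho}Tmf_1(3),
\]
the middle step being the defining adjunction for Anderson duals. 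This gives the equivariant shift in one line.

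Your Step~2 is a perfectly good alternative route to the \emph{underlying} equivalence $D_{Tmf}(Tmf_1(3))\simeq\Sigma^{-12}Tmf_1(3)$, and indeed is morally the algebraic content behind the Anderson duality calculations. But your Step~3 is where the argument is incomplete: you state a ``plan'' to pin down the $C_2$-twist by comparing permanent cycles and slices, without carrying it out. As you yourself note, the three candidate suspensions agree on homotopy Mackey functors, so distinguishing them requires a genuine equivariant calculation. That calculation is precisely the content of the $C_2$-equivariant Anderson duality of $Tmf_1(3)$ established in \cite{H-M17}; you would essentially be re-proving that result inside this lemma. Either cite it directly (as the paper does) or supply the actual spectral-sequence argument --- as written, Step~3 is a gap.
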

\begin{proof}
 As $\Tmf_1(3)$ is a finite $\Tmf$-module by Proposition \ref{prop:compactness}, we have
 $$\Hom_{\Tmf}(\Tmf_1(3), \Tmf(\Gamma)) \simeq \Hom_{\Tmf}(\Tmf_1(3), \Tmf) \sm_{\Tmf} \Tmf(\Gamma).$$
 Moreover, we have the following chain of $C_2$-equivariant equivalences.
 \begin{align*}
  \Hom_{\Tmf}(\Tmf_1(3), \Tmf) &\simeq \Hom_{\Tmf}(\Tmf_1(3), \Sigma^{-21}\Z^{\Tmf}) \\
  &\simeq \Sigma^{-21}\Z^{\Tmf_1(3)}\\
  &\simeq \Sigma^{-21} \Sigma^{5+2\rho}\Tmf_1(3) \\
  &\simeq \Sigma^{-16+2\rho}\Tmf_1(3)
 \end{align*}
 Here, we use the computations of the Anderson duals $\Z^{\Tmf}$ by Stojanoska (Theorem \ref{thm:Stojanoska}) and $\Z^{\Tmf_1(3)}$ (the latter $C_2$-equivariant and proven in \cite{H-M17}). Thus,
 $$\Hom_{\Tmf}(\Tmf_1(3), \Tmf(\Gamma)) \simeq \Sigma^{-16+2\rho}\Tmf_1(3)\sm_{\Tmf} \Tmf(\Gamma).$$
 This implies the first claim.
 The $C_2$-spectrum $\TMF_1(3)$ is $(16-8\rho)$-periodic as the class $u^4$ is a permanent cycle in the $RO(C_2)$-graded homotopy fixed point spectral sequence: indeed, the HFPSS for $\TMF_1(3)$ is regular by Example \ref{exa:tmf} and $v_2$ is invertible (see also \cite[Section 4.2]{H-M17} for a more detailed treatment). Thus, we obtain the second claim.
\end{proof}

\begin{lemma}\label{lem:pullback}
 If $n$ is not divisible by $3$, then $\TMF_1(3)\sm_{\TMF}\TMF(\Gamma)$ is $C_2$-equivariantly equivalent to $\TMF(\Gamma')$, where $\Gamma'$ is the preimage of
 $$\Gamma/\Gamma_1(n) \subset (\Z/n)^\times\subset (\Z/3n)^\times$$
  along the homomorphism
 $$\Gamma_0(3n) \xrightarrow{\det} (\Z/3n)^\times.$$
\end{lemma}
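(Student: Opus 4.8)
The plan is to realize the smash product as the global sections of $\OO^{top}$ on a fibre product of moduli stacks and then to identify that fibre product with $\MM(\Gamma')$, carrying along the $C_2$-actions throughout. Write $p\colon\MM(\Gamma)\to\MM_{ell}$ and $q\colon\MM_1(3)\to\MM_{ell}$ for the two finite flat projections, and put $Z=\MM(\Gamma)\times_{\MM_{ell}}\MM_1(3)$ with structure map $r\colon Z\to\MM_{ell}$. Since $\MM_{ell}^{top}$ is $0$-affine by \cite{MM15}, global sections give a symmetric monoidal equivalence $\QCoh(\MM_{ell}^{top})\simeq TMF\modules$ under which $TMF(\Gamma)$ and $TMF_1(3)$ correspond to $p_*\OO^{top}_{\MM(\Gamma)}$ and $q_*\OO^{top}_{\MM_1(3)}$ (the pushforwards being quasi-coherent by Lemma~\ref{lem:pushpull}), and $\sm_{TMF}$ to $\sm_{\OO^{top}}$. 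First I would combine flat base change along the square defining $Z$ with the projection formula for the affine map $p$ (as in \cite{SAG}) to obtain
\[p_*\OO^{top}_{\MM(\Gamma)}\sm_{\OO^{top}}q_*\OO^{top}_{\MM_1(3)}\simeq p_*p^*q_*\OO^{top}_{\MM_1(3)}\simeq r_*\OO^{top}_Z,\]
and then take global sections to conclude $TMF_1(3)\sm_{TMF}TMF(\Gamma)\simeq\OO^{top}(Z)$.

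Next I would identify $Z$ with $\MM(\Gamma')$. Using $\MM(\Gamma)=\MM_1(n)/H$ with $H=\Gamma/\Gamma_1(n)\subset(\Z/n)^\times$, and that forming stack quotients commutes with the base change to $\MM_1(3)$, we get $Z\simeq(\MM_1(n)\times_{\MM_{ell}}\MM_1(3))/H$. Because $\gcd(3,n)=1$, the assignment $(E,Q,P)\mapsto(E,P+Q)$ is an isomorphism $\MM_1(n)\times_{\MM_{ell}}\MM_1(3)\xrightarrow{\sim}\MM_1(3n)$: a point $Q$ of exact order $n$ and a point $P$ of exact order $3$ generate a cyclic subgroup of order $3n$ with generator $P+Q$, and $P,Q$ are recovered from $R=P+Q$ via $P=nbR$ and $Q=3aR$ where $1=3a+nb$. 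Under this isomorphism the action of $k\in H\subset(\Z/n)^\times$ on $\MM_1(n)$ translates into multiplication by the unique $\tilde k\in(\Z/3n)^\times$ with $\tilde k\equiv 1\pmod 3$ and $\tilde k\equiv k\pmod n$; hence $H$ acts on $\MM_1(3n)$ through its image $\tilde H$ under the Chinese-remainder inclusion $(\Z/n)^\times\hookrightarrow(\Z/3n)^\times$, and $Z\simeq\MM_1(3n)/\tilde H=\MM(\Gamma')$ with $\Gamma'$ the preimage of $\tilde H$ under $\det\colon\Gamma_0(3n)\to(\Z/3n)^\times$, i.e.\ exactly the congruence subgroup in the statement. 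Along the way one records that $\Gamma'$ is again tame for $l=2$, since $3n$ and $|\tilde H|=|H|$ are odd, so that $TMF(\Gamma')$ is defined.

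Finally I would track the $C_2$-actions. Using the description of the action via the automorphisms $t$ sending level structures to their negatives (as in Example~\ref{exa:tmf}), $t$ on $\MM(\Gamma)$ and on $\MM_1(3)$ both cover the identity of $\MM_{ell}$; thus the whole square defining $Z$ is $C_2$-equivariant with $C_2$ acting trivially on $\MM_{ell}$, and consequently every equivalence in the first paragraph can be chosen $C_2$-equivariantly, for the diagonal action on the left and the $(t,t)$-action on $Z$ on the right. Under the isomorphism $Z\simeq\MM_1(3n)/\tilde H$, the action $(t,t)$ sends $(E,Q,P)$ to $(E,-Q,-P)$, hence $R=P+Q$ to $-R$; that is, it is the automorphism $t$ of $\MM(\Gamma')$, and applying $\OO^{top}$ identifies it with the natural $C_2$-action on $TMF(\Gamma')$. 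Combining the two equivalences gives $TMF_1(3)\sm_{TMF}TMF(\Gamma)\simeq TMF(\Gamma')$ $C_2$-equivariantly.

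The main obstacle I anticipate is the $C_2$-equivariance of the base-change step: one must ensure that the $0$-affineness equivalence of \cite{MM15} and the flat-base-change/projection-formula equivalences can be chosen functorially enough to be compatible with the $C_2$-actions. This is essentially formal, as all the constructions are natural in the $C_2$-equivariant diagram $\MM(\Gamma)\to\MM_{ell}\leftarrow\MM_1(3)$, but it is the part that requires the most care to state precisely; an alternative that avoids $0$-affineness is to prove directly that $\OO^{top}$ sends the fibre product $Z$ to $\OO^{top}(\MM(\Gamma))\sm_{TMF}\OO^{top}(\MM_1(3))$ by a descent argument in the variable $\MM_1(3)$, as in the proof of Proposition~\ref{prop:compactness}, which visibly respects the $C_2$-actions.
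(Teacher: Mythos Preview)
Your proof is correct and follows essentially the same strategy as the paper's: use $0$-affineness to identify $\sm_{TMF}$ with $\sm_{\OO^{top}}$, then apply the projection formula and flat base change to express the result as global sections of $\OO^{top}$ on the fibre product, and finally identify that fibre product with $\MM(\Gamma')$. Your argument is in fact more explicit than the paper's in two respects---you spell out the Chinese-remainder isomorphism $\MM_1(n)\times_{\MM_{ell}}\MM_1(3)\simeq\MM_1(3n)$ and you track the $C_2$-actions via the automorphisms $t$---whereas the paper simply asserts the pullback square and leaves the $C_2$-equivariance implicit.
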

\begin{proof}
 Consider the pullback square
 \[
 \xymatrix{ \MM_1(3n) \ar[r] \ar[d] & \MM_1(n) \ar[d] \\
 \MM_1(3) \ar[r] & \MM_{ell}
 }\]
 Taking a stack quotient by $\Gamma/\Gamma_1(n)$ in the upper row gives a pullback square
  \[
 \xymatrix{ \MM(\Gamma') \ar[r]^{f'} \ar[d]^{g'} & \MM(\Gamma) \ar[d]^{g} \\
 \MM_1(3) \ar[r]^f & \MM_{ell}
 }\]
 Because the equivalence in Proposition \ref{prop:0affine} is symmetric monoidal, we know that
 \begin{align*}
   \TMF_1(3)\sm_{\TMF}\TMF(\Gamma) &\simeq \Gamma(f_*\OO^{top}_{\MM_1(3)})\sm_{\Gamma(\OO^{top})} \Gamma(g_*\OO^{top}_{\MM(\Gamma)}) \\
   &\simeq \Gamma(f_*\OO^{top}_{\MM_1(3)}\sm_{\OO^{top}} g_*\OO^{top}_{\MM(\Gamma)})
  \end{align*}
 By the projection formula and base change (the algebraic form implies the topological one)
   \begin{align*}
   f_*\OO^{top}_{\MM_1(3)}\sm_{\OO^{top}} g_*\OO^{top}_{\MM(\Gamma)} &\simeq f_*f^*g_*\OO^{top}_{\MM(\Gamma)}\\
  &\simeq f_*(g')_*(f')^*\OO^{top}_{\MM(\Gamma)} \\
   &\simeq (g'f)_* \OO^{top}_{\MM(\Gamma')}.
  \end{align*}
 The global sections of this sheaf are exactly $\TMF(\Gamma')$.
\end{proof}

\begin{proof}[Proof of Proposition \ref{prop:functionstronglyeven}]
As $\TMF(\Gamma')$ (for $\Gamma'$ as in the last lemma) has regular HFPSS and is strongly even by Example \ref{exa:tmf}, we get in the case of $3$ not dividing $n$ that
$$\Hom_{\TMF}(\TMF_1(3), \TMF(\Gamma)) \simeq \Sigma^{-6\rho}\TMF(\Gamma')$$
with $\Gamma'$ as in Lemma \ref{lem:pullback} is strongly even as well.
\end{proof}

While this settles the periodic case Theorem \ref{thm:C2}, we still need to prove Theorem \ref{thm:C2cpt}. To that purpose we will analyse the $C_2$-spectrum $H = \Hom_{\Tmf}(\Tmf_1(3), \Tmf(\Gamma))$, where we again act by conjugation. Recall that Lemma \ref{lem:SWdual} implies a $C_2$-equivalence
$$H \simeq \Sigma^{-16+2\rho}\Tmf_1(3)\sm_{\Tmf}\Tmf(\Gamma).$$
\begin{cor}
The HFPSS for $H[(c_4\Delta)^{-1}]$ and $H[c_4^{-1}]$ are regular and hence these $C_2$-spectra are strongly even.
\end{cor}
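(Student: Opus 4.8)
We give the argument for $n$ not divisible by $3$, as assumed in this subsection (the case $3\mid n$ being handled by the discussion preceding Proposition \ref{prop:functionstronglyeven}). Write $A=Tmf_1(3)\sm_{Tmf}Tmf(\Gamma)$; this is an $E_\infty$-ring with $C_2$-action, and since $[-1]$ acts by $-1$ on $\omega$ and tensor products add weights, the $C_2$-action is by $(-1)^k$ on $\pi_{2k}A$, so the standing assumptions of this section apply to $A$ and to its localizations. By Lemma \ref{lem:SWdual} there are $C_2$-equivalences $H\simeq\Sigma^{-16+2\rho}A$, hence $H[c_4^{-1}]\simeq\Sigma^{-16+2\rho}A[c_4^{-1}]$ and $H[(c_4\Delta)^{-1}]\simeq\Sigma^{-16+2\rho}A[(c_4\Delta)^{-1}]$. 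Inverting $\Delta$ turns $Tmf$ into $TMF$, so $A[\Delta^{-1}]\simeq TMF_1(3)\sm_{TMF}TMF(\Gamma)\simeq TMF(\Gamma')$ by Lemma \ref{lem:pullback}, where $\Gamma'$ has level $3n$; as $n$ is odd, $\Gamma'$ has odd level and $[\Gamma':\Gamma_1(3n)]=[\Gamma:\Gamma_1(n)]$ is odd, so $\Gamma'$ is tame for $l=2$. Thus $H[(c_4\Delta)^{-1}]\simeq\Sigma^{-16+2\rho}TMF(\Gamma')[c_4^{-1}]$ and $H[c_4^{-1}]\simeq\Sigma^{-16+2\rho}A[c_4^{-1}]$.

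The plan is to show that both $TMF(\Gamma')[c_4^{-1}]$ and $A[c_4^{-1}]$ have regular HFPSS, are strongly even, and are $(8-4\rho)$-periodic. The corollary then follows, since for such a spectrum $R$ one has $\Sigma^{-16+2\rho}R\simeq\Sigma^{-6\rho}R$ (because $-16+2\rho=-6\rho-2(8-4\rho)$), and being strongly even and having regular HFPSS — the latter understood, for a $\rho$-twisted suspension of a ring spectrum, as the corresponding property of that ring — are invariant under suspensions $\Sigma^{j\rho}$. For $TMF(\Gamma')[c_4^{-1}]$ everything is contained in Example \ref{exa:Tmf}: it has regular HFPSS and is strongly even, its HFPSS collapses at $E_4$ by Proposition \ref{prop:vanishingline} since $v_1$ is invertible modulo $2$ (as $c_4\equiv v_1^4$), and then $u^2$ — a $d_3$-cycle because $2a=0$ — is a permanent cycle, giving the $(8-4\rho)$-periodicity.

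It remains to treat $A[c_4^{-1}]$. By the $0$-affineness of $\MMb(\Gamma)^{top}$ (Proposition \ref{prop:0affine}) and flat base change, $A$ corresponds to $g^*(f_3)_*\OO^{top}_{\MMb_1(3)}$, whose homotopy sheaves are $g^*(f_3)_*\OO_{\MMb_1(3)}\tensor\omega^{\tensor k}$ in degree $2k$ by Lemma \ref{lem:pushpull}; these have cohomological dimension $\leq 1$ by Proposition \ref{prop:coh}, so the descent spectral sequence degenerates and $\pi_{2k}A$, $\pi_{2k-1}A$ are the $H^0$, $H^1$ of $g^*(f_3)_*\OO_{\MMb_1(3)}\tensor\omega^{\tensor k}$. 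Pushing $H^1$ down to $\MMb_{ell}$ and splitting the resulting vector bundle on $\MMb_{ell}\simeq\mathbb{P}(4,6)$ into line bundles shows its $H^1$ is annihilated by powers of $c_4$; hence $\pi_*A[c_4^{-1}]$ is concentrated in even degrees, torsionfree, and equal to $H^0\big(\MMb_1(3)\times_{\MMb_{ell}}\MMb(\Gamma);\widetilde\omega^{\tensor *}\big)[c_4^{-1}]$ with $\widetilde\omega$ the pullback of $\omega$. Now apply Proposition \ref{prop:injective} to $A[c_4^{-1}]\to A[(c_4\Delta)^{-1}]=TMF(\Gamma')[c_4^{-1}]$: the target is regular; $c_4\equiv v_1^4\bmod 2$ (inherited from $\pi_*Tmf$) gives $\pi_*A[c_4^{-1}]/(2,v_1)=0$, making the injectivity on $\pi_*/(2,v_1,\dots,v_i)$ automatic for $i\geq 1$, while for $i=0$ it amounts to $\Delta$ acting injectively on $H^0(\widetilde\omega^{\tensor *})$ and on its reduction modulo $2$. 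This holds because $\MMb_1(3)\times_{\MMb_{ell}}\MMb(\Gamma)$ is finite flat over the regular stack $\MMb_1(3)$, hence Cohen--Macaulay of dimension $2$, so $\widetilde\omega^{\tensor k}$ has no embedded components, whereas $\Delta$ and $\Delta\bmod 2$ vanish only along the cuspidal divisor and are therefore nonzerodivisors. The $v_k$-hypotheses of Propositions \ref{prop:injective} and \ref{prop:stronglyeven} hold as well: $v_0=2$ is a nonzerodivisor, $v_1$ is a unit, and each $v_k$ with $k\geq 2$ lies in the zero ring $\pi_{2*}A[c_4^{-1}]/(2,v_1)$. Hence $A[c_4^{-1}]$ has regular HFPSS and, by Proposition \ref{prop:stronglyeven}, is strongly even; exactly as in Example \ref{exa:Tmf} it collapses at $E_4$ with $u^2$ a permanent cycle, so it is $(8-4\rho)$-periodic, completing the proof.

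The step needing the most care is the mod-$2$ injectivity of multiplication by $\Delta$: the Cohen--Macaulayness and flatness of the fibre product $\MMb_1(3)\times_{\MMb_{ell}}\MMb(\Gamma)$ and of its special fibre over the appropriate base must be used to replace the integral-domain arguments available for $Tmf(\Gamma)[c_4^{-1}]$ in Example \ref{exa:Tmf}. The remaining ingredients — the identification $A[\Delta^{-1}]\simeq TMF(\Gamma')$, the collapse of the spectral sequences at $E_4$, and the $\rho$-periodicity rewriting of the suspension $\Sigma^{-16+2\rho}$ — are routine given what is already available.
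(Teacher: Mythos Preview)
Your argument is correct and reaches the same conclusion, but it is organised differently from the paper's proof. The paper works on $\MMb_1(3)$: it identifies $Tmf_1(3)\sm_{Tmf}Tmf(\Gamma)$ with the global sections of $(f_3)^*g_*\OO^{top}_{\MMb(\Gamma)}$ on $\MMb_1(3)$, and then uses that $\MMb_1(3)$ is normal (so its local rings are integral domains) to see that for any vector bundle $\FF$ on $\MMb_1(3)$ the sheaf map $\FF[c_4^{-1}]\to\FF[(c_4\Delta)^{-1}]$ is injective, both integrally and after base change to $\F_2$; Proposition \ref{prop:injective} then applies directly to $H[c_4^{-1}]\to H[(c_4\Delta)^{-1}]$. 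You instead work on the fibre product $\MMb_1(3)\times_{\MMb_{ell}}\MMb(\Gamma)$ and replace the normality argument by a Cohen--Macaulay argument (no embedded primes, $\Delta$ cuts out a divisor, hence is a nonzerodivisor), and you handle the suspension $\Sigma^{-16+2\rho}$ explicitly by proving the $(8-4\rho)$-periodicity via the permanent cycle $u^2$ after $E_4$-collapse. The paper's route is shorter because it stays on the regular stack $\MMb_1(3)$ and does not unwind the shift; your route has the virtue of making precise what ``regular HFPSS'' means for the non-ring spectrum $H$ by reducing it to the ring $A$ and then transporting strong evenness along a $\rho$-multiple suspension.
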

\begin{proof}
For $H[(c_4\Delta)^{-1}]$, this follows from Proposition \ref{prop:functionstronglyeven} analogously to the argument in the fourth paragraph of Example \ref{exa:Tmf}. For $H[c_4^{-1}]$ we want to apply the criterion of Proposition \ref{prop:injective}. Denote by $g$ the projection $\MMb(\Gamma) \to \MMb_{ell}$. As the equivalence in Proposition \ref{prop:0affine} is monoidal and by the projection formula, we obtain equivalences
\begin{align*}
\Gamma((f_3)^*g_*\OO^{top}_{\MMb(\Gamma)}) &\simeq
\Gamma((f_3)_*\OO^{top}_{\MMb_1(3)} \sm_{\OO^{top}}g_*\OO^{top}_{\MMb(\Gamma)}) \\
&\simeq \Tmf_1(3) \sm_{\Tmf} \Tmf(\Gamma).
\end{align*}
The non-vanishing loci of $c_4$ and $c_4\Delta$ on $\MMb_1(3)$ are of the form $\Spec A/\G_m$ and thus have no higher cohomology. Hence, we obtain that on underlying homotopy groups the map $H[c_4^{-1}] \to H[(c_4\Delta)^{-1}]$ is (up to shift) isomorphic to
$$H^0(\MMb_1(3); (f_3)^*g_*\omega^{\tensor *})[c_4^{-1}] \to H^0(\MMb_1(3); (f_3)^*g_*\omega^{\tensor *})[(c_4\Delta)^{-1}].$$
As the non-vanishing loci of $c_4$ and $\Delta$ are dense in $\MMb_1(3)$ and $\MMb_1(3)$ is normal (so that all local rings are integral domains), we know that the map
$$\FF\tensor_{\OO_{\MMb_1(3)}} (f_3)^*\omega^{\tensor *}[c_4^{-1}] \to \FF\tensor_{\OO_{\MMb_1(3)}} (f_3)^*\omega^{\tensor *}[(c_4\Delta)^{-1}]$$
must be an injection of sheaves for every vector bundle $\FF$ on $\MMb_1(3)$; in particular, this is true for $(f_3)^*g_*\OO_{\MMb(\Gamma)}$. Thus, $\pi_*H[c_4^{-1}] \to\pi_* H[(c_4\Delta)^{-1}]$ is an injection and the same is true after quotienting by $2$ as we can repeat the above argument after base change to $\F_2$. Lastly, $\pi_*H[c_4^{-1}]/(2,v_1)$ vanishes as $c_4$ is congruent to $v_1^4$ modulo $2$ as noted in Example \ref{exa:Tmf}. Thus, Proposition \ref{prop:injective} applies and the HFPSS for $H[c_4^{-1}]$ is regular.
\end{proof}

Let $K_k$ be the kernel and $R_k$ be the cokernel of the map
$$\pi_{2k}H[\Delta^{-1}] \oplus \pi_{2k}H[c_4^{-1}] \to \pi_{2k}H[(c_4\Delta)^{-1}].$$
As in Example \ref{exa:Tmf}, we obtain a short exact sequence
\begin{align*}0 \to G \tensor R_{k+1} \to \underline{\pi}_{k\rho}H \to \underline{K_k} \to 0\end{align*}
By the snake lemma one sees that the restriction map $\pi^{C_2}_{k\rho}H \to \pi_{2k}H$ is surjective. From this fact one argues as before to show Theorem \ref{thm:C2cpt}.

\begin{remark}\label{rem:AndersonC2}
 Theorem \ref{thm:C2} has implications for our previous applications of our decomposition results. If localized at $2$, Theorem \ref{thm:Gsplitting} becomes $C_2$-equivariant if we consider the action of $C_2 \subset \Aut(G)$ on $\TMF^G$, where $C_2$ acts by inversion.

Moreover, we can refine Theorem \ref{thm:Duality} $2$-locally to provide a $C_2$-equivariant computation of Anderson duals $I_{\Z[\tfrac1n]}\Tmf_1(n)$ in the listed cases. This is easily made explicit using the equivalence $I_{\Z[\tfrac13]}\Tmf_1(3) \simeq_{C_2}\Sigma^{5+2\rho}\Tmf_1(3)$ from \cite{H-M17} and the third table in \cite[Appendix C]{MeiDecMod}. For the odd values of $n$ in Theorem \ref{thm:Duality} we obtain
\begin{align*}
I_{\Z_{(2)}} \Tmf_1(5)_{(2)} &\simeq_{C_2} \Sigma^5\Tmf_1(5)_{(2)}\\
I_{\Z_{(2)}} \Tmf_1(7)_{(2)} &\simeq_{C_2} \Sigma^{5-\rho}\Tmf_1(7)_{(2)}\\
I_{\Z_{(2)}} \Tmf_1(11)_{(2)} &\simeq_{C_2} \Sigma^{5-2\rho}\Tmf_1(11)_{(2)}\\
I_{\Z_{(2)}} \Tmf_1(15)_{(2)} &\simeq_{C_2} \Sigma^{5-2\rho}\Tmf_1(15)_{(2)}\\
I_{\Z_{(2)}} \Tmf_1(23)_{(2)} &\simeq_{C_2} \Sigma^{5-3\rho}\Tmf_1(23)_{(2)}
\end{align*}

\end{remark}

We end with a corollary from the two theorems of this subsection.

\begin{cor}\label{cor:0n}
Assume that $n\geq 3$ is odd and $\varphi(n)$ is not divisible by $4$. Then $\TMF_0(n)$ decomposes as a $\TMF$-module into modules of the form $(\Sigma^{k\rho}\TMF_1(3))^{hC_2}$ after $2$-completion.

Assume further that $H^1(\MMb(\Gamma); g^*\omega)$
has no $2$-torsion. Then $\Tmf_0(n)_{(2)}$ decomposes as a $\Tmf$-module into modules of the form $(\Sigma^{k\rho}\TMF_1(3)_{(2)})^{hC_2}$.
\end{cor}
\begin{proof}
	Consider $\Gamma_1(n) \subset \Gamma \subset \Gamma_0(n)$ with the second inclusion being of index $2$. Then $\TMF_0(n) \simeq \TMF(\Gamma)^{hC_2}$ and similarly for $\TMF_0(n)$. Thus our claims follows directly from the two theorems of this subsection if we apply $C_2$-homotopy fixed points to $\widehat{\TMF(\Gamma)}_2$ and $\Tmf(\Gamma)$ respectively as $\Gamma$ is tame.
\end{proof}
Let $\Gamma$ be as in the last proof. As $\Gamma$ is tame and using  Lemma \ref{lem:compactificationsGamma} if $n$ is not squarefree, we can identify $H^1(\MMb(\Gamma); g^*\omega)$  with $H^1(\MMb_1(n); g^*\omega)^{\Gamma/\Gamma_1(n)}$. Moreover, $[-1]\in (\Z/n)^\times$ acts by $-1$ on this cohomology group and this class represents the non-trivial element of $\Gamma_0(n)/\Gamma$. Thus, the $2$-torsion of $H^1(\MMb_1(n); g^*\omega)^{\Gamma/\Gamma_1(n)}$ agrees with
$$H^1(\MMb_1(n); g^*\omega)^{(\Z/n)^\times} \cong (\pi_1\Tmf_1(n))^{(\Z/n)^\times}.$$

\appendix
\section{Complements on the moduli stacks $\MMb(\Gamma)$}\label{app:squarefree}
Let $\Gamma \subset \Gamma_0(n)$ be a subgroup containing $\Gamma_1(n)$. We view $\Gamma/\Gamma_1(n)$ by the map sending a matrix to its upper-left entry as a subgroup $(\Z/n)^\times$ and denote by $\MMb(\Gamma)'$ the stack quotient of $\MMb_1(n)$ by this group. The map $\MMb_1(n) \to \MMb(\Gamma)$ that we obtain by the functoriality of normalization induces a map $c\colon \MMb(\Gamma)' \to \MMb(\Gamma)$.

This map $c$ is an equivalence if and only if $\MMb(\Gamma)' \to \MMb_{ell, \Z[\tfrac1n]}$ is representable. Indeed, it is then automatically finite (as $\MMb_1(n) \to \MMb_{ell, \Z[\tfrac1n]}$ is) and as both $\MMb(\Gamma)'$ and $\MMb(\Gamma)$ are normal (even smooth over $\Z[\tfrac1n]$) the result follows by \cite[Lemma 4.4]{H-M17}.

The map $\MMb_0(n) \to \MMb_{ell,\Z[\tfrac1n]}$ is representable if the level $n$ is squarefree \cite[Remark 4.1.5]{Con07}, but not if $n$ is not squarefree \cite{Ces17}. Indeed let $n = km$ with $k|m$ and $k\neq 1$ and let $K$ be a field of characteristic not dividing $n$ and containing an $n$-th root of unity $\zeta_n$. Consider the $K$-valued point of $\MMb_1(n)$ that corresponds to the point $P = ([1],\zeta_n)$ on a N\'eron $m$-gon. The automorphism $([i], x) \mapsto ([i], \zeta_k^ix)$ with $\zeta_k = \zeta_n^m$ preserves the subgroup generated by $P$ and contracts to the identity on the corresponding N\'eron $1$-gon. Thus, the image of $P$ in $\MMb_0(n)'$ has a non-trivial automorphism that becomes trivial on $\MMb_{ell,\Z[\tfrac1n]}$ and thus $\MMb_0(n)' \to \MMb_{ell,\Z[\tfrac1n]}$ cannot be representable. In summary, $c$ is always an equivalence if $n$ is squarefree (as $\MMb(\Gamma)' \to \MMb_0(n)$ is representable), but not in general if $n$ is not squarefree.

We want to investigate the cohomological behavior of $c$ if it is not an equivalence. For this, we will use coarse moduli spaces. Recall that by Keel--Mori \cite{Con05} the coarse moduli space of a Deligne--Mumford stack $\XX$ over a scheme $S$ exists if $\XX$ is locally of finite presentation over $S$ and $\XX$ has finite inertia stack. The latter condition is always fulfilled if $\XX$ is separated. The following lemma should be well-known.

\begin{lemma}\label{lem:coarse}
Let $X$ be the coarse moduli space of a Deligne--Mumford stack $\XX$ that is locally of finite type and separated over a noetherian scheme $S$. If $\XX$ is normal, $X$ is normal as well. Moreover, $\XX \to S$ is proper if and only if $X \to S$ is proper.
\end{lemma}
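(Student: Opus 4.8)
The plan is to verify the two claims of Lemma \ref{lem:coarse} separately, using standard properties of coarse moduli spaces as developed in the Keel--Mori framework (see \cite{Con05}). Throughout, write $\pi\colon \XX \to X$ for the coarse moduli map and recall its two defining properties: it is initial among maps from $\XX$ to algebraic spaces, and it induces a bijection on geometric points. I will also use that the formation of coarse moduli spaces is \'etale-local on $X$, and that \'etale-locally on $X$ one may write $\XX = [U/G]$ with $U$ an affine scheme and $G$ a finite group, so that $X = U/G$ is the quotient scheme; this local description reduces both assertions to facts about finite group quotients of affine schemes.

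For normality: working \'etale-locally on $X$ we may assume $\XX = [U/G]$ and $X = U/G = \Spec(A^G)$ where $U = \Spec A$. Since $\XX$ is normal, $U$ is normal, so $A$ is a finite product of normal domains; one reduces to the case $A$ a normal domain (the $G$-action may permute the factors, but $A^G$ is then a product of rings of the form $(\prod A)^{G}$, each isomorphic to $B^{H}$ for $B$ a normal domain and $H$ a subgroup, using the orbit--stabilizer decomposition). Then $A^G = A \cap \mathrm{Frac}(A)^G$ inside $\mathrm{Frac}(A)$, and an intersection of two normal (indeed, integrally closed) subrings of a field is integrally closed; alternatively, $A^G$ is integrally closed in $\mathrm{Frac}(A)^G$ because $A$ is integrally closed in $\mathrm{Frac}(A)$ and integral closure commutes with taking $G$-invariants. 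Hence $A^G$ is normal, so $X$ is normal.

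For properness: I would use that $\pi\colon \XX \to X$ is always proper and surjective — properness of $\pi$ follows from the local description $[U/G]\to U/G$, which is finite hence proper, together with \'etale descent on the target, and surjectivity is part of the Keel--Mori theorem. Now if $X \to S$ is proper, then $\XX \to X \to S$ is a composite of proper maps, hence proper. Conversely, if $\XX \to S$ is proper, then the composite $\XX \xrightarrow{\pi} X \to S$ is proper; since $\pi$ is surjective and $X \to S$ is separated (it is locally of finite type and separated — separatedness of $X$ follows from that of $\XX$ via the valuative criterion, using that $\pi$ is a bijection on geometric points and surjective) and of finite type, the standard cancellation property for proper morphisms (if $g\circ f$ is proper, $f$ is surjective, and $g$ is separated of finite type, then $g$ is proper) shows $X \to S$ is proper. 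One should double-check that $X \to S$ is of finite type: this follows because $\XX \to S$ is locally of finite type, $\pi$ is a universal geometric quotient so $X \to S$ is at least locally of finite type, and properness of $\XX\to S$ then forces quasi-compactness of $X$ over $S$.

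The main obstacle I anticipate is organizing the reduction to the affine local model $[U/G]$ cleanly — in particular being careful that $U$ can be chosen affine and $G$-stable, which requires invoking the slice-type presentation of a separated Deligne--Mumford stack near a point (this uses that the inertia is finite, equivalently $\XX$ is separated, so that each point has a stabilizer-quotient \'etale neighborhood) — and then checking that the normality argument survives the passage from a single orbit to the product decomposition of $A$ into domains. The properness direction is essentially formal once the cancellation property and the properness of $\pi$ are in hand; the only subtlety there is confirming separatedness and finite-typeness of $X \to S$, which are mild point-set/EGA-style verifications rather than anything deep.
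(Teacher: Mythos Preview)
Your proposal is correct and follows essentially the same approach as the paper: both reduce normality to the \'etale-local presentation $[U/G]$ with coarse space $\Spec R^G$ and invoke that invariants of a normal ring under a finite group are normal, and both deduce properness from the fact that $\pi\colon \XX \to X$ is proper and surjective. The only cosmetic difference is that for the converse direction of properness the paper uses that $\pi$ is a homeomorphism to read off universal closedness and quasi-compactness of $X\to S$ directly, whereas you package the same content as the cancellation lemma for proper morphisms; these are equivalent arguments.
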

\begin{proof}
Assume that $\XX$ is normal. By (the proof of) Theorem 2.12 from \cite{Ols06}, we can choose for every geometric point $x$ of $X$ an \'etale neighborhood $W$ such that $\XX\times_X W \cong U/G$ for some affine scheme $U = \Spec R$ and some finite group $G$ acting on $U$. As the formation of coarse moduli is compatible with \'etale base change, $W \simeq \Spec R^G$; as $R$ is normal, $W$ is normal as well (see e.g.\ \cite{MONormal}). As normality descends via \'etale maps, $X$ is normal.

By \cite{Con05}, the map $\XX \to X$ is proper. Thus $\XX \to S$ is proper if $X\to S$ is proper. Now assume that $\XX \to S$ is proper. As $\XX \to X$ is surjective (even a homeomorphism), $X \to S$ is universally closed. Moreover, $X \to S$ is separated and locally of finite type by \cite{Con05}. As $\XX
\to S$ is quasi-compact and $\XX \to X$ is a homeomorphism, we see that $X\to S$ is quasi-compact as well and thus actually of finite type.
\end{proof}

The following lemma is a generalization of Proposition 2.6 of \cite{MeiDecMod}.

\begin{lemma}\label{lem:compactificationsGamma}
Let $\Gamma_1(n)\subset \Gamma\subset \Gamma_0(n)$ and $c\colon \MMb(\Gamma)' \to \MMb(\Gamma)$ be as above. For every quasi-coherent sheaf $\FF$ on $\MMb(\Gamma)$, the canonical map $\FF \to c_*c^*\FF$ is an isomorphism.
  Moreover, for every quasi-coherent sheaf $\GG$ on $\MMb(\Gamma)'$, the canonical map
  $$ H^i(\MMb(\Gamma); c_*\GG) \to H^i(\MMb(\Gamma)'; \GG)$$
  is an isomorphism for all $i\geq 0$.
\end{lemma}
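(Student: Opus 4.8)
The plan is to reduce both assertions to a computation in an \'etale neighbourhood of the cusps. Since $\MMb(\Gamma)'$ restricted over $\MM_{ell,\Z[\frac1n]}$ is by definition $\MM_1(n)/(\Gamma/\Gamma_1(n)) = \MM(\Gamma)$, the map $c$ is an isomorphism over the dense open substack $\MM(\Gamma)\subset \MMb(\Gamma)$, so only the cusps can contribute. Whether $\FF \to c_*c^*\FF$ is an isomorphism may be checked \'etale-locally on $\MMb(\Gamma)$, since $c^*$ and $c_*$ commute with \'etale base change on the target; and for the second assertion I would use the Leray spectral sequence $H^p(\MMb(\Gamma);R^qc_*\GG) \Rightarrow H^{p+q}(\MMb(\Gamma)';\GG)$, which degenerates to the claimed isomorphism once $R^qc_*\GG = 0$ for $q>0$ --- again \'etale-local on $\MMb(\Gamma)$ and trivial away from the cusps. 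For $n\leq 4$, where $\MMb_1(n)$ is not a scheme, I would argue by hand as in the footnote conventions; so assume $n\geq 5$.

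To produce the \'etale neighbourhoods I would pass to coarse spaces. Both $\MMb(\Gamma)$ and $\MMb(\Gamma)'$ are normal, proper Deligne--Mumford stacks of relative dimension one over $\Z[\frac1n]$, finite over $\MMb_{ell,\Z[\frac1n]}$ (for $\MMb(\Gamma)'$ because $\MMb_1(n)\to\MMb(\Gamma)'$ is a finite surjective $(\Gamma/\Gamma_1(n))$-torsor and finiteness descends along such maps). By Lemma \ref{lem:coarse} their coarse spaces are normal proper curves over $\Z[\frac1n]$, finite over the coarse space $\mathbb{P}^1_{\Z[\frac1n]}$ of $\MMb_{ell,\Z[\frac1n]}$ and with function field $K(\MM(\Gamma))$; by uniqueness of normalization $c$ therefore induces an isomorphism on coarse spaces. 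Writing $\overline X$ for the common coarse space, we may work \'etale-locally on $\overline X$ around a cusp, where $\MMb(\Gamma)'\cong[\Spec C'/\mu']$ and $\MMb(\Gamma)\cong[\Spec C/\mu]$ for finite groups $\mu'$, $\mu$, with $c$ a morphism over $\overline X$.

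The key structural claim --- and the main obstacle --- is that in this local picture $\mu'\to\mu$ is surjective, $C=(C')^N$ where $N:=\ker(\mu'\to\mu)$, and $|N|$ divides $n$ (hence is invertible on $\Z[\frac1n]$). Since $\MMb_1(n)$ is a scheme for $n\geq 5$, the group $\mu'$ is the stabilizer of the cusp in $\Gamma/\Gamma_1(n)$, and $N$ consists of those stabilizing elements for which the connecting automorphism of the N\'eron polygon can be chosen in its rotation subgroup $\mu_m\subset\Aut(\text{N\'eron }m\text{-gon})$ --- the automorphisms contracting to the identity on the corresponding N\'eron $1$-gon, as in the example of \cite{Ces17}; as the cusp width $m$ divides $n$, this yields $|N|\mid n$. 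Making this precise requires the Deligne--Rapoport / Katz--Mazur description of $\MMb_1(n)$ at the cusps in \cite{D-R73}, \cite{K-M85}, of the $\Gamma/\Gamma_1(n)$-action there, and the universal property of normalization.

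Granting the claim, $c$ is \'etale-locally on $\overline X$ the quotient by the residual $(\mu'/N)$-action of the map $q\colon[\Spec C'/N]\to\Spec (C')^N$. As $|N|$ is invertible, the averaging idempotent $e_N=\tfrac1{|N|}\sum_{g\in N}g$ exists, $q_*$ sends an $N$-equivariant module $M$ to $e_NM=M^N$, whence $q_*$ is exact with $R^{>0}q_*=0$, and $q_*q^*L=(L\otimes_{(C')^N}C')^N=L\otimes_{(C')^N}(C')^N=L$ for any $(C')^N$-module $L$. The same identities survive the quotient by $\mu'/N$, since \'etale-locally on the target that torsor trivializes and one is reduced to the statement for $q$ itself. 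Hence near each cusp $c_*c^*\FF\cong\FF$ and $R^{>0}c_*\GG=0$, while away from the cusps these hold trivially; combined with the reductions of the first paragraph, this proves both assertions.
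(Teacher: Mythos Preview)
Your approach is broadly correct and arrives at the same endpoint as the paper's proof --- both reduce to an \'etale-local statement where one takes invariants by a finite group of invertible order and uses the averaging idempotent --- but the route is genuinely different. The paper does not analyze the cusps directly. Instead it works over the open substack $D\subset\MMb_{ell}$ where $j\neq 0,1728$, which has the form $X/C_2$ for $X=\Spec\Z[j,(j(j-1728))^{-1}]$ with the $C_2$ corresponding to the $[-1]$-automorphism. The key observation is that after this base change (and a further pull to $X$ when $-1\in\Gamma$), the map $c$ becomes literally the coarse moduli map of $\MMb(\Gamma)'$; from there one quotes \cite[Proposition 2.6]{MeiDecMod}, which handles the pushforward/pullback identities for tame coarse moduli maps. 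Your approach instead keeps track of both stabilizers $\mu'\to\mu$ explicitly and isolates the kernel $N$; this is more hands-on and avoids the reduction to a prior result, at the cost of needing the detailed Deligne--Rapoport cusp description that you flag as the main obstacle.

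Two small points. First, the map $\MMb_1(n)\to\MMb(\Gamma)'$ is not a $(\Gamma/\Gamma_1(n))$-torsor: the action on $\MMb_1(n)$ is not free at the cusps (this is precisely why $\MMb(\Gamma)'$ can fail to be representable over $\MMb_{ell}$). The map is still finite, so your coarse-moduli argument goes through, but the word ``torsor'' should go. Second, it is worth noting why working with the kernel $N$ rather than all of $\mu'$ matters: when $-1\in\Gamma$ and $n$ is odd, the full stabilizer $\mu'$ has even order (since $-1$ fixes every cusp via inversion) and $2$ is not inverted, so one cannot average over $\mu'$. Your observation that the $-1$-element maps to the nontrivial element of $\mu\subset\{\pm 1\}$ and hence lies outside $N$ is exactly what saves this; the paper handles the same issue by passing from $D$ to the scheme $X$, where the $C_2$-action corresponding to $[-1]$ becomes the identity when $-1\in\Gamma$.
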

\begin{proof}
We claim first that $c$ induces an isomorphism on coarse moduli spaces. By \cite[Th\'eor\`eme IV.3.4]{D-R73}, $\MMb(\Gamma)$ is smooth and proper over $\Spec \Z[\tfrac1n]$ and thus its coarse moduli space $X(\Gamma)$ is normal and proper over $\Spec \Z[\tfrac1n]$ by the preceding lemma. This implies that the induced map $X(\Gamma) \to \mathbb{P}^1_{\Z[\tfrac1n]}$ to the coarse moduli space of $\MMb_{ell,\Z[\tfrac1n]}$ is proper as well. As it is quasi-finite (as $\MMb(\Gamma)\to \MMb_{ell,\Z[\tfrac1n]}$ is finite), it is finite. A similar argument shows that the coarse moduli space $X(\Gamma)'$ of $\MMb(\Gamma)'$ is normal and finite over $\mathbb{P}^1$ as well (using the known properties of $\MMb_1(n)$). As $\MMb(\Gamma)' \to \MMb_{ell,\Z[\tfrac1n]}$ and $\MMb(\Gamma) \to \MMb_{ell,\Z[\tfrac1n]}$ are faithfully flat (and hence open), $\MM(\Gamma)$ lies open and dense in $\MMb(\Gamma)'$ and $\MMb(\Gamma)$ and thus also its coarse moduli space in $X(\Gamma)'$ and $X(\Gamma)$. Thus, $c$ induces an isomorphism $X(\Gamma)' \to X(\Gamma)$ (see e.g.\ \cite[Lemma 4.4]{H-M17}).

The common nonvanishing locus $D$ of $j$ and $j-1728$ on $\MMb_{ell}$ is of the form $X/C_2$ with the $C_2$-action on $X = \Spec \Z[j, (j(j-1728))^{-1}]$ trivial \cite[Lemma 3.2]{ShinBrauer}. The corresponding automorphism of every point in $D$ is given by the $[-1]$-automorphism of the corresponding elliptic curve. We denote by
\begin{align}\label{eq:mmbsquare}
\xymatrix{\MMb(\Gamma)_X' \ar[r]^{c_X}\ar[d] & \MMb(\Gamma)_X \ar[d]\\
\MMb(\Gamma)_D' \ar[r]^{c_D} & \MMb(\Gamma)_D
}
\end{align}
the base changes of $c$ along the open inclusion $D \to \MMb_{ell}$ and the map $X \to \MMb_{ell}$.

As $c$ induces an isomorphism on coarse moduli, the same is true for $c_D$. If $-1 \notin \Gamma$, all points in $\MMb(\Gamma)_D$ have trivial automorphism group and hence $\MMb(\Gamma)_D$ is an algebraic space by \cite[Theorem 2.2.5]{Con07} and the map $c_D$ identifies $\MMb(\Gamma)_D$ with a coarse moduli space for $\MMb(\Gamma)_D'$. In general, only $\MMb(\Gamma)_X$ is an algebraic space (and even a scheme) because $\MMb(\Gamma) \to \MMb_{ell}$ is representable. If $-1 \in \Gamma$, then the $C_2$-actions on $\MMb(\Gamma)_X'$ and $\MMb(\Gamma)_X$ are isomorphic to the identity and the vertical maps in \eqref{eq:mmbsquare} induce isomorphisms on coarse moduli. Hence $c_X$ induces an isomorphism on coarse moduli in this case as well. (Note that this is also true if $-1 \notin\Gamma$ as the formation of coarse moduli commutes with \'etale base change.)

From here, the proof is analogous to \cite[Proposition 2.6]{MeiDecMod}.
\end{proof}

These two lemmas allow us to give an alternative proof of the following proposition of Hill and Lawson.

\begin{prop}
For $\Gamma_1(n) \subset \Gamma \subset \Gamma_0(n)$, the map $\Tmf(\Gamma) \to \Tmf_1(n)$ induces an equivalence $\Tmf(\Gamma) \to \Tmf_1(n)^{h\Gamma/\Gamma_1(n)}$. 
\end{prop}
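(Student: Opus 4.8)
The plan is to realize $Tmf_1(n)^{h\Gamma/\Gamma_1(n)}$ as the global sections of $\OO^{top}$ over the stack quotient $\MMb(\Gamma)' = \MMb_1(n)/(\Gamma/\Gamma_1(n))$ and then to show that the canonical map $c\colon \MMb(\Gamma)' \to \MMb(\Gamma)$ constructed before Lemma \ref{lem:compactificationsGamma} induces an equivalence on $\OO^{top}$-global sections. Write $G = \Gamma/\Gamma_1(n) \subset (\Z/n)^\times$. Since $\MMb_1(n) \to \MMb(\Gamma)'$ is a $G$-torsor, hence finite \'etale, and the $G$-action on $\MMb_1(n)$ lifts canonically to $\OO^{top}_{\MMb_1(n)}$ by functoriality of $\OO^{top}$, \'etale descent produces a hypercomplete sheaf $\OO^{top}_{\MMb(\Gamma)'}$ of $E_\infty$-ring spectra on the \'etale site of $\MMb(\Gamma)'$ with $\pi_{2k}\OO^{top}_{\MMb(\Gamma)'} \cong (g')^*\omega^{\tensor k}$ and odd homotopy zero, where $g'\colon \MMb(\Gamma)' \to \MMb_{ell,\Z[\frac1n]}$. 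Faithfully flat descent along the torsor $\MMb_1(n) \to \MMb(\Gamma)'$, whose \v{C}ech nerve is $\MMb_1(n)\times G^{\bullet}$, identifies its global sections with $Tmf_1(n)^{hG}$. Since the map $\MMb_1(n) \to \MMb(\Gamma)$ is log-\'etale, the sheaf $\OO^{top}_{\MMb(\Gamma)}$ restricts to $\OO^{top}_{\MMb_1(n)}$, the restriction map $Tmf(\Gamma) \to Tmf_1(n)$ is $G$-equivariant for the trivial $G$-action on the source, and the induced comparison map $Tmf(\Gamma) \to Tmf_1(n)^{hG}$ is the inverse of the equivalence asserted in the proposition; identifying the two is a formal matter of unwinding the functoriality of $\OO^{top}$.

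To prove that $Tmf(\Gamma) \to Tmf_1(n)^{hG}$ is an equivalence I would compare descent spectral sequences. The map $c$ is finite: $\MMb(\Gamma) \to \MMb_{ell}$ is separated and $\MMb(\Gamma)' \to \MMb_{ell}$ is finite, so $c$ is proper and affine, hence finite. Therefore $\MMb(\Gamma)' \to \MMb_{ell}$ is affine, and since $\MMb_{ell} \to \MM_{FG}$ is quasi-affine, the derived stack $(\MMb(\Gamma)')^{top}$ is $0$-affine by the main theorem of \cite{MM15}; its descent spectral sequence $H^q(\MMb(\Gamma)'; (g')^*\omega^{\tensor p}) \Rightarrow \pi_{2p-q}Tmf_1(n)^{hG}$ then has an eventual horizontal vanishing line by \cite[Theorem 3.14]{Mat13Thick}, just as for $\MMb(\Gamma)$ in Proposition \ref{prop:0affine}. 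The map $Tmf(\Gamma) \to Tmf_1(n)^{hG}$ is a map of these conditionally convergent spectral sequences, and on $E_2$-pages it is the pullback $c^*\colon H^q(\MMb(\Gamma); g^*\omega^{\tensor p}) \to H^q(\MMb(\Gamma)'; c^*g^*\omega^{\tensor p})$, where $(g')^*\omega^{\tensor p} = c^*g^*\omega^{\tensor p}$. By Lemma \ref{lem:compactificationsGamma} the unit $g^*\omega^{\tensor p} \to c_*c^*g^*\omega^{\tensor p}$ is an isomorphism and the canonical map $H^q(\MMb(\Gamma); c_*\GG) \to H^q(\MMb(\Gamma)'; \GG)$ is an isomorphism for every quasi-coherent sheaf $\GG$; their composite is precisely $c^*$, which is therefore an isomorphism on every $E_2$-page. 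Comparing the two spectral sequences yields the equivalence. This argument uses no squarefreeness hypothesis on $n$, so it reproves the theorem of Hill and Lawson in full generality.

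The step I expect to be the main obstacle is the careful construction and identification of $\OO^{top}_{\MMb(\Gamma)'}$ and of the comparison map: because $\MMb(\Gamma)' \to \MMb_{ell}$ need not be log-\'etale when $n$ is not squarefree, one cannot apply Hill--Lawson directly to $\MMb(\Gamma)'$ and must instead descend along the $G$-torsor $\MMb_1(n) \to \MMb(\Gamma)'$; one then has to verify that this descended sheaf is compatible, via $c$, with $\OO^{top}_{\MMb(\Gamma)}$, so that the map $Tmf(\Gamma) \to Tmf_1(n)^{hG}$ is genuinely the one appearing in the proposition and really does induce $c^*$ on the $E_2$-pages. Alternatively one can avoid spectral sequences and prove directly that $\OO^{top}_{\MMb(\Gamma)} \to c_*\OO^{top}_{\MMb(\Gamma)'}$ is an equivalence of sheaves by computing homotopy sheaves with Lemma \ref{lem:pushpull} (applicable since $c$ is finite) together with Lemma \ref{lem:compactificationsGamma}, and then pass to global sections; this bypasses the convergence bookkeeping at the cost of the same compatibility checks.
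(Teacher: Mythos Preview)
Your proposal is correct and follows essentially the same route as the paper: construct a sheaf on $\MMb(\Gamma)'$ via descent along the $G$-torsor $\MMb_1(n)\to\MMb(\Gamma)'$ with global sections $Tmf_1(n)^{hG}$, then compare descent spectral sequences using Lemma~\ref{lem:compactificationsGamma} and conclude via the eventual horizontal vanishing line; the paper compresses the first step into a reference to \cite[Proposition~2.15]{MM15}. Your caution about $\MMb(\Gamma)'\to\MMb_{ell}$ not being log-\'etale is well placed, and your separate argument for $0$-affineness of $(\MMb(\Gamma)')^{top}$ is fine but not strictly needed: once the $E_2$-pages agree, the vanishing line for $\MMb(\Gamma)$ from Proposition~\ref{prop:0affine} transfers automatically.
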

\begin{proof}
The $\Gamma/\Gamma_1(n)$-action on $\MMb_1(n)$ equips the stack quotient $\MMb(\Gamma)'$ with a sheaf of $E_\infty$-rings $\OO^{top}_{\MMb(\Gamma)}$ with global sections $\Tmf_1(n)^{h\Gamma/\Gamma_1(n)}$, analogously to \cite[Proposition 2.15]{MM15}. The corresponding map of descent spectral sequences on the $E_2$-pages
$$H^q(\MMb(\Gamma);g^*\omega^{\tensor p}) \to H^q(\MMb(\Gamma)'; c^*g^*\omega^{\tensor p})$$
is an isomorphism by Lemma \ref{lem:compactificationsGamma} and thus an isomorphism on $E_\infty$. By the eventual horizontal vanishing line from Proposition \ref{prop:0affine} the result follows.
\end{proof}

%The next step would be to define connective versions, which we will leave to a future treatment.

\begin{remark}
The preceding proposition might let it appear that the difference between $\MMb(\Gamma)$ and $\MMb(\Gamma)'$ is a non-issue in topology. But the theory of \cite{MM15} shows that $$\Tmf_0(n) \to \Tmf_1(n)$$ is only a faithful $(\Z/n)^\times$-Galois extension in the sense of Rognes \cite{Rog08} if $\MMb_1(n) \to \MMb_0(n)$ is a $(\Z/n)^\times$-Galois cover, i.e.\ if and only if $n$ is squarefree.
\end{remark}

\section{Derived algebraic geometry}\label{sec:DAG}
Recall our definition of a derived stack. 
\begin{defi}
	Let $\XX_0$ be a Deligne--Mumford stack and let $\XX_0^{\acute{e}t, \mathrm{aff}}$ be the site of affine schemes with an \'etale map to $\XX_0$. Let further $\OO_{\XX}$ be a sheaf of $E_{\infty}$-ring spectra on $\XX_0^{\acute{e}t, \mathrm{aff}}$ together with an isomorphism $\pi_0\OO_{\XX} \cong \OO_{\XX_0}$. Then we call $(\XX_0, \OO_{\XX})$ a \emph{derived stack} if 
	the assignment $U\mapsto \pi_n(\OO_{\XX}(U))$ defines for every $n\in\Z$ a quasi-coherent sheaf on $\XX_0^{\acute{e}t, \mathrm{aff}}$.
\end{defi}

To make the connection to the extensive work on Lurie on derived algebraic geometry, we need to explain that our definition of a derived stack is essentially equivalent to that of a non-connective spectral Deligne--Mumford stack in the sense of \cite[Section 1.4.4]{SAG} whose underlying $\infty$-topos is $1$-localic. To be more precise, let us denote for a site $\CC$ by $\Shv(\CC)$ the $\infty$-category of sheaves that are valued in the $\infty$-category $\mathcal{S}$ of spaces. An $\infty$-topos is $1$-localic if and only if it is equivalent to $\Shv(\CC)$ for a site $\CC$. 
\begin{lemma}\label{lem:derivedstacksequivalence}
	Let $\XX_0$ be a Deligne--Mumford stack and $\OO_{\XX}$ a sheaf of $E_{\infty}$-rings on $\XX_0^{\acute{e}t, \mathrm{aff}}$ with an isomorphism $\pi_0\OO_{\XX} \cong \OO_{\XX_0}$. Then $(\XX_0, \OO_{\XX})$ is a derived stack if and only if $(\Shv(\XX_0^{\acute{e}t, \mathrm{aff}}), \OO_{\XX})$ is a non-connective spectral Deligne--Mumford stack. Moreover, every non-connecive spectral Deligne--Mumford stack whose underlying $\infty$-topos is $1$-localic arises up to equivalence in this way. 
\end{lemma}
\begin{proof}
	We will use the criterion \cite[Theorem 1.4.8.1]{SAG}. As $\Shv(\CC)^{\heartsuit}$ for a site $\CC$ is equivalent to the category of sheaves of sets on $\CC$, items (1) and (2) of this criterion are automatically fulfilled. Our condition for a derived stack can be separated into two conditions. Firstly that the sheafified homotopy groups $\pi_n\OO_{\XX}$ are a quasi-coherent $\pi_0\OO_{\XX}$-module (which is exactly item (3)). Secondly that $\pi_n(\OO_{\XX}(U)) \to (\pi_n\OO_{\XX})(U)$ is an isomorphism for $U$ affine, which we call condition (4'). We claim that, given the other conditions, (4') is equivalent to item (4), i.e.\ to the hypercompleteness of $\OO_{\XX}$. Assume (4) and let $q\colon\Spec A_0 \to \X_0$ be an \'etale morphism. The pullback $q^*\OO_{\XX}$ is still hypercomplete and thus $(\Spet A_0, q^*\OO_{\XX})$ defines an affine non-connective spectral Deligne--Mumford stack, which is automatically of the form $\Spet A$ for some $E_{\infty}$-ring by \cite[Proposition 1.4.7.2]{SAG}. \'Etale extensions of $A_0$ lift uniquely to \'etale extension of $A$ and thus $\pi_n(q^*\OO_{\XX}(B_0)) \cong \pi_nA \tensor_{A_0} B_0$ for any \'etale extension $A_0 \to B_0$. This already defines a sheaf and thus 
    \[(\pi_n\OO_{\XX})(A_0)\cong (\pi_nq^*\OO_{\XX})(A_0) \cong \pi_n(\OO_{\XX}(A_0)),\]
    which shows (4'). 
    
     For the opposite direction, assume that (4') holds. We claim that 
	\[\OO_{\XX}(U) \to \lim_n (\tau_{\leq n}\OO_{\XX})(U)\]
	is an equivalence for every affine scheme $U$. But because of (4'), we have an equivalence $(\tau_{\leq n}\OO_{\XX})(U) \simeq \tau_{\leq n}(\OO_{\XX}(U))$ and thus our claim reduces to the known convergence of the Postnikov tower for spectra. Thus, we can write $\OO_{\XX}$ as limit of truncated sheaves. Truncated sheaves are automatically hypercomplete \cite[Lemma 6.5.2.9]{HTT} and hypercomplete sheaves are closed under limits (since they are defined as local objects for $\infty$-connected morphisms \cite[Definition 1.3.3.4]{SAG}). 
	
	It remains to show that every non-connective spectral Deligne--Mumford stack whose underlying $\infty$-topos is $1$-localic arises in this way. Let $\XX = \Shv(\CC)$ be such a $1$-localic $\infty$-topos for $\CC$ a site and assume that $(\XX, \OO_{\XX})$ is a non-connective spectral Deligne--Mumford stack. By \cite[Theorem 1.4.8.1]{SAG} again, $(\CC, \pi_0\OO_{\XX})$ is a Deligne--Mumford stack and thus $(\XX, \OO_{\XX})$ has the required form. 
\end{proof}

Let $(\XX, \OO_{\XX})$ be a spectrally ringed $\infty$-topos. Assume that $\OO_{\XX}$ is connective. Then $\XX$ is $1$-localic iff the mapping space from $\Spec R$ (for a discrete commutative ring $R$) to $(\XX, \OO_{\XX})$ is $1$-truncated \cite[Proposition 1.6.8.5]{SAG}. This can be used to show that the $\infty$-category of derived stacks has pullbacks, at least under certain conditions. For this we recall that a sheaf $\OO$ of $E_{\infty}$-ring spectra on some $\infty$-topos is \emph{even-periodic} if $\pi_2\OO$ is an invertible $\pi_0\OO$-module, $\pi_1\OO=0$ and the canonical morphism $\pi_2\OO\tensor_{\pi_0\OO}\pi_n\OO\to \pi_{n+2}\OO$ is an isomorphism for all $n$. 

\begin{lemma}\label{lem:derivedpullback}
	Let 
	\[
	\xymatrix{
	 & (\YY, \OO_{\YY}) \ar[d] \\
		(\ZZ, \OO_{\ZZ}) \ar[r] & (\WW, \OO_{\WW})	
	}\]
	be a diagram of derived stacks such that either all structure sheaves are connective or all structure sheaves are even periodic. Then the pullback in non-connective spectral Deligne--Mumford stacks -- which exists by \cite[Proposition 1.4.11.1]{SAG} -- is a derived stack again. In the connective case or in the even-periodic case if one of the maps is flat, the underlying Deligne--Mumford stack of the pullback is the pullback of the underlying Deligne--Mumford stacks. 
\end{lemma}
\begin{proof}
	Assume first that all structure sheaves are connective. Then our claim follows directly from \cite[Proposition 1.6.8.5 and Remark 1.6.8.4]{SAG}.
	
Assume now that we are in the even-periodic case. Consider the pullback square of the connective covers
\[\xymatrix{
	(\PP, \OO_{\PP}) \ar[r] \ar[d] & (\YY, \tau_{\geq 0}\OO_{\YY}) \ar[d] \\
	(\ZZ, \tau_{\geq 0}\OO_{\ZZ}) \ar[r] & (\WW, \tau_{\geq 0}\OO_{\WW})	
}\] 
As noticed above, $(\PP, \OO_{\PP})$ is a derived stack, i.e.\ $\PP$ is $1$-localic.

We observe that for an arbitrary even-periodic non-connective Deligne--Mumford stack $(\XX, \OO_{\XX})$ and an arbitary non-connective Deligne--Mumford stack $(\XX', \OO_{\XX'})$, the map 
\[\Map((\XX', \OO_{\XX'}), (\XX, \OO_{\XX})) \to \Map((\XX', \OO_{\XX'}), (\XX, \tau_{\geq 0}\OO_{\XX}))\]
is an inclusion of those path-components that satisfy the following condition: The image of every local section of $\pi_2\OO_{\XX} = \pi_2 \tau_{\geq 0}\OO_{\XX}$ that is invertible in $\pi_*\OO_{\XX}$ is invertible in $\pi_*\OO_{\XX'}$. 

%We claim first that the square
%\[
%\xymatrix{
%(\YY, \OO_{\YY}) \ar[r]\ar[d] & (\YY, \tau_{\geq 0}\OO_{\YY}) \\
%(\WW, \OO_{\WW}) \ar[r] & (\WW, \tau_{\geq 0}\OO_{\WW})  } 
%\]
%is cartesian. This can be checked \'etale locally, where there is actually an invertible global section $\beta$ of $\pi_2\OO_{\WW}$ and hence of $\pi_2\OO_{\YY}$. The claim follows as the space of maps into $(\WW, \OO_{\WW})$ consists of those connected components of the mapping space into $(\WW, \tau_{\geq 0}\OO_{\WW})$ where $\beta$ is invertible in the source, and similarly for $\YY$. 

We define a new structure sheaf on $\OO_{\PP}'$ on $\PP$: It suffices to define $\OO_{\PP}'$ on affine $\Spec A$ mapping into $(\PP, \pi_0\OO_{\PP})$ such that the pullback of $\pi_2\OO_{\WW}$ to $\Spec A$ has an invertible global section $\beta$. In this case we define $\OO_{\PP}'(\Spec A) = A[\beta^{-1}]$. 

By the observation above we obtain a diagram 
\[\xymatrix{
	(\PP, \OO_{\PP}') \ar[r] \ar[d] & (\YY, \OO_{\YY}) \ar[d] \\
	(\ZZ, \OO_{\ZZ}) \ar[r] & (\WW, \OO_{\WW}),	
}\] 
which is again by the observation above cartesian. We see in particular that the underlying $\infty$-topos of the pullback is $1$-localic. 

Now assume that the morphism $(\ZZ, \OO_{\ZZ}) \to (\WW, \OO_{\WW})$ is flat. Then the same is true for the connective versions and thus also $(\PP, \OO_{\PP}) \to (\YY, \tau_{\geq 0}\OO_{\YY})$ is flat \cite[Remark 2.8.2.6]{SAG}. We obtain that $\pi_i\OO_{\PP}$ is for $i\geq 0$ the pullback of $\pi_i\OO_{\YY}$ along $(\PP^{\heartsuit}, \pi_0\OO_{\PP}) \to (\YY^{\heartsuit}, \pi_0\OO_{\YY})$. In particular, $\pi_2\OO_{\PP}$ is a line bundle and $\pi_2\OO_{\PP}^{\tensor n} \cong \pi_{2n}\OO_{\PP}$. It follows that $\pi_0\OO_{\PP} \cong \pi_0\OO_{\PP}'$ and thus that $(\PP^{\heartsuit}, \pi_0\OO_{\PP}')$ is the pullback of the underlying Deligne--Mumford stacks. 
\end{proof}

\bibliographystyle{alpha}
\bibliography{Chromaticb}
\end{document}